\title{The least prime with a given cycle type}
\author{Peter J. Cho}
\address{Department of Mathematical Sciences, Ulsan National Institute of Science and \newline
	\indent Technology, UNIST-gil 50, Ulsan 44919, Korea}
\email{petercho@unist.ac.kr}
\author{Robert J. Lemke Oliver}
\address{Department of Mathematics, University of Wisconsin-Madison, Madison, WI 53706, USA\ \indent Department of Mathematics, Tufts University, Medford, MA 02155, USA}
\email{lemkeoliver@wisc.edu}
\author{Asif Zaman}
\address{Department of Mathematics, University of Toronto, Toronto, ON M5S 2E4, CANADA}
\email{asif.zaman@utoronto.ca}
\newtheorem{theorem}{Theorem}
\newtheorem{lemma}[theorem]{Lemma}
\newtheorem{corollary}[theorem]{Corollary}
\newtheorem{proposition}[theorem]{Proposition}
\theoremstyle{definition} \newtheorem{definition}[theorem]{Definition}
\theoremstyle{example} \newtheorem{example}[theorem]{Example}
\theoremstyle{remark} \newtheorem{remark}{Remark}[theorem]
\numberwithin{equation}{section}
\numberwithin{theorem}{section}
\renewcommand{\epsilon}{\varepsilon}
\renewcommand{\leq}{\leqslant}
\renewcommand{\geq}{\geqslant}
\newcommand{\N}{\mathrm{N}} 
\newcommand{\R}{\mathbb{R}} 
\newcommand{\Q}{\mathbb{Q}}
\newcommand{\kp}{\mathfrak{p}}
\newcommand{\kD}{\mathfrak{D}}
\newcommand{\kf}{\mathfrak{f}}
\newcommand{\Gal}{\mathrm{Gal}}
\newcommand{\Frob}{\mathrm{Frob}}
\newcommand{\Res}{\mathop{\mathrm{Res}}}
\newcommand{\Ind}{\mathrm{Ind}}
\newcommand{\Disc}{\mathrm{Disc}}
\newcommand{\sgn}{\mathrm{sgn}}
\newcommand{\std}{\mathrm{std}}
\begin{document}

	\begin{abstract}
		Let $G$ be a finite group. Let $K/k$ be a Galois extension of number fields with Galois group isomorphic to $G$, and let $C \subseteq \mathrm{Gal}(K/k) \simeq G$ be a conjugacy invariant subset.  It is well known that there exists an unramified prime ideal $\mathfrak{p}$ of $k$ with Frobenius element lying in $C$ and norm satisfying $\N\kp \ll |\mathrm{Disc}(K)|^{\alpha}$ for some constant $\alpha = \alpha(G,C)$. There is a rich literature establishing unconditional admissible values for $\alpha$, with most approaches proceeding by studying the zeros of $L$-functions. We give an alternative approach, not relying on zeros, that often substantially improves this exponent $\alpha$ for any fixed finite group $G$, provided $C$ is a union of rational equivalence classes. 
		As a particularly striking example, we prove that there exist absolute constants $c_1,c_2 > 0$ such that for any $n\geq 2$ and any conjugacy class $C \subset S_n$, one may take $\alpha(S_n,C) = c_1 e^{-c_2n}$. Our approach reduces the core problem to a question in character theory. 
	\end{abstract}
	
	\maketitle

\section{Introduction}

	Let $f$ be a monic irreducible polynomial of degree $n$ over a number field $k$, and let $K$ be its splitting field.  By means of its action on the $n$ roots of $f$ over an algebraic closure, we may regard the Galois group $G:=\mathrm{Gal}(K/k)$ as a subgroup of the symmetric group $S_n$.  Each element $\sigma \in G$ therefore has a cycle type $\lambda = (\lambda_1,\dots,\lambda_r)$ for some partition $\lambda$ of $n$.  In fact, if $\mathfrak{p}$ is a prime of $k$ that is unramified in $K$ and so that $f \pmod{\mathfrak{p}}$ is separable, then the cycle type of the Frobenius element $\mathrm{Frob}_\mathfrak{p}$ agrees with the factorization type of $f \pmod{\mathfrak{p}}$.
	
	The Frobenius density theorem (a precursor to the Chebotarev density theorem) implies that as $\mathfrak{p}$ varies, the cycle type of $\mathrm{Frob}_\mathfrak{p}$ is equidistributed; more concretely, it states for any cycle type $\lambda$ occuring in $G$ that
		\[
			\frac{ \#\{ \N\mathfrak{p}  \leq x : \mathrm{Frob}_\mathfrak{p} \text{ has cycle type $\lambda$}\}}{ \#\{\N\mathfrak{p} \leq x\}}
				\sim \frac{\#\{\sigma \in G : \sigma \text{ has cycle type $\lambda$}\}}{|G|}
		\]
	as $x \to \infty$.  For many applications of the Chebotarev density theorem, this generally weaker statement is sufficient.  See, for example, the beautiful article of Stevenhagen and Lenstra \cite{LenstraStevenhagen-Chebotarev} for more on this theorem and the history of these results.
	
	In this paper, we develop a new approach to bound the least prime for which $\mathrm{Frob}_\mathfrak{p}$ has a given cycle type $\lambda$.  Compared to previous methods, which we survey more properly in the next section, this new approach is comparatively elementary, yet yields bounds that are often substantial improvements.  We highlight a few consequences of our method.

	\begin{theorem}\label{thm:least-prime-general}
		Let $f$ be a monic irreducible polynomial of degree $n$ over a number field $k$. Let $K$ be its splitting field, and let $G := \mathrm{Gal}(K/k)$ be its Galois group, viewed as a subgroup of $S_n$ as described above.  Let $\lambda = (\lambda_1,\dots,\lambda_r)$ be a cycle type occuring in $G$, and set $\ell := \mathrm{lcm}\{\lambda_1,\dots,\lambda_r\}$.  Then for any $\epsilon>0$, there is a degree $1$ prime $\mathfrak{p}$ of $k$ such that $\N\mathfrak{p}$ does not divide $\mathrm{Disc}(K)$ (the absolute discriminant of $K$),  $\mathrm{Frob}_\mathfrak{p}$ has cycle type $\lambda$, and $\N\mathfrak{p}$ satisfies the bound
			\[
				\N\mathfrak{p}
					\ll_{n,[k:\mathbb{Q}],\epsilon} |\mathrm{Disc}(K)|^{\frac{2^{\omega(\ell)-1}}{\ell} + \epsilon}
					\leq |\mathrm{Disc}(K)|^{\frac{1}{2} + \epsilon},
			\]
		where $\omega(\ell)$ denotes the number of distinct prime factors of $\ell$.  The implied constant above is effectively computable if $\epsilon > \frac{2}{|G|[k:\mathbb{Q}]}$, or if $[k:\mathbb{Q}]$ is odd and $G$ does not admit a nontrivial quadratic character.
	\end{theorem}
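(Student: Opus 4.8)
The plan is to trade the original Chebotarev-type problem for a question about a single \emph{cyclic} cover, and then to settle that question by an elementary, zero‑free counting argument whose resulting discriminant exponent is governed only by $\ell$ and $\omega(\ell)$. \emph{Step 1 (reduction using rationality).} Fix $\sigma\in G$ of cycle type $\lambda$, so $\sigma$ has order $\ell$; put $H:=\langle\sigma\rangle\simeq\mathbb{Z}/\ell\mathbb{Z}$ and $F:=K^{H}$, so that $\mathrm{Gal}(K/F)\simeq H$ and $[F:\mathbb{Q}]=[k:\mathbb{Q}]\,|G|/\ell$. The one place rationality is used is the observation that \emph{every} generator of $H$ has cycle type $\lambda$ (the generators are the $\sigma^{m}$ with $\gcd(m,\ell)=1$, and raising to an exponent prime to $\ell$ preserves cycle type). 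Hence if $\mathfrak{q}$ is a degree‑$1$ prime of $F$ not dividing $\mathrm{Disc}(K)$ that is \emph{inert} in $K/F$, and $\mathfrak{p}:=\mathfrak{q}\cap\mathcal{O}_{k}$, then $\mathfrak{p}$ is a degree‑$1$ prime of $k$ with $\mathrm{N}\mathfrak{p}=\mathrm{N}\mathfrak{q}\nmid\mathrm{Disc}(K)$, and since $f(\mathfrak{q}/\mathfrak{p})=1$ the Frobenius of a prime of $K$ above $\mathfrak{q}$ is simultaneously a representative of $\mathrm{Frob}_{\mathfrak{p}}$ and a generator of $\mathrm{Gal}(K/F)=H$, so $\mathrm{Frob}_{\mathfrak{p}}$ has cycle type $\lambda$. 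It therefore suffices to find a degree‑$1$ prime of $F$ inert in $K/F$ with norm at most $|\mathrm{Disc}(K)|^{2^{\omega(\ell)-1}/\ell+\epsilon}$.

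\emph{Step 2 (cyclic group theory and discriminant bookkeeping).} Write $\ell=p_{1}^{a_{1}}\cdots p_{t}^{a_{t}}$ with $t=\omega(\ell)$ and $r:=p_{1}\cdots p_{t}=\mathrm{rad}(\ell)$. As $\mathrm{Gal}(K/F)$ is cyclic, a degree‑$1$ prime $\mathfrak{q}$ unramified in $K$ is inert in $K/F$ exactly when $\mathrm{Frob}_{\mathfrak{q}}$ generates $\mathrm{Gal}(K_{1}/F)$, where $K_{1}:=K^{\Phi(H)}$ satisfies $\mathrm{Gal}(K_{1}/F)\simeq\mathbb{Z}/r\mathbb{Z}$; equivalently, $\mathfrak{q}$ splits completely in none of the $t$ intermediate fields $F_{i}'$ with $[F_{i}':F]=p_{i}$. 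Each $F_{i}'/F$ is cyclic of prime degree, cut out by Hecke characters of $F$ of order $p_{i}$ and conductor $\mathfrak{f}_{i}$, and — this is what makes the method quantitative — the tower and conductor–discriminant formulas together with $F_{i}'\subseteq K$ give $|\mathrm{Disc}(F)|\le|\mathrm{Disc}(K)|^{1/\ell}$ and $\mathrm{N}\mathfrak{f}_{i}\le|\mathrm{Disc}(F_{i}')|^{1/(p_{i}-1)}\le|\mathrm{Disc}(K)|^{p_{i}/(\ell(p_{i}-1))}$.

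\emph{Step 3 (the zero‑free core).} It remains to show: given a number field $F$ and characters $\chi_{1},\dots,\chi_{t}$ of ray class groups of $F$ of prime orders $p_{i}$ and conductors $\mathfrak{f}_{i}$, there is a degree‑$1$ prime $\mathfrak{q}$ with $\chi_{i}(\mathfrak{q})\ne1$ for every $i$ and $\mathrm{N}\mathfrak{q}$ suitably small in terms of the $|\mathrm{Disc}(F_{i}')|$ and $|\mathrm{Disc}(F)|$. I would run the classical ``least non‑residue'' argument, adapted to number fields and to several characters at once, invoking \emph{no} zero of any $L$‑function: assuming every degree‑$1$ prime $\mathfrak{q}$ of $F$ with $\mathrm{N}\mathfrak{q}\le y$ (coprime to the $\mathfrak{f}_{i}$ and to $\mathrm{Disc}(K)$) splits completely in some $F_{i}'$, expand the indicator of ``$\mathrm{Frob}_{\mathfrak{q}}$ generates $\mathrm{Gal}(K_{1}/F)$'' by inclusion–exclusion over the $2^{\omega(\ell)}$ squarefree divisors $d$ of $r$; this reduces everything to counting, for each $d$, the degree‑$1$ primes $\mathfrak{q}$ with $\mathrm{N}\mathfrak{q}\le y$ that split completely in the intermediate field $E_{d}$ of $K_{1}/F$. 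Each such count I would extract not from the prime ideal theorem but from counts of integral ideals of $F$ that are products of degree‑$1$ primes — ``splitting completely in $E_{d}$'' being multiplicative there — so that Landau's lattice‑point estimate supplies the expected main term with no appeal to zeros, while a Polya–Vinogradov–type bound of the shape $\sum_{\mathrm{N}\mathfrak{a}\le y}\psi(\mathfrak{a})\ll(\mathrm{N}\mathfrak{f}_{\psi}\,|\mathrm{Disc}(F)|)^{1/2}y^{\theta}$ for the Hecke characters $\psi$ cutting out the $E_{d}$ (obtained from the functional equation and a convexity bound, again without zeros) controls the remainder. Balancing these, and tracking the inclusion–exclusion terms against $|\mathrm{Disc}(F)|\le|\mathrm{Disc}(K)|^{1/\ell}$ and the conductor bounds of Step 2, forces a contradiction once $y$ exceeds $|\mathrm{Disc}(K)|^{2^{\omega(\ell)-1}/\ell+\epsilon}$ — the factor $2^{\omega(\ell)-1}$ reflecting the divisor lattice of $\mathrm{rad}(\ell)$ entering the inclusion–exclusion together with the square‑root saving, and the factor $1/\ell$ the small discriminant of the cyclic base $F$.

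\emph{The main obstacle} is Step 3, and there two points are delicate. The first is to keep the exponent $\theta$ of $y$ in the Polya–Vinogradov–type bound bounded away from $1$ \emph{uniformly}, with in particular no loss exponential in $[F:\mathbb{Q}]=[k:\mathbb{Q}]\,|G|/\ell$ (which may be enormous); without this uniformity the argument recovers only trivial bounds. The second is that one needs a lower bound for the residue of $\zeta_{F}$ at $s=1$ — equivalently, for the $L(1,\chi_{i})$ — in order to know the ideal counts above really have size $\asymp y$; the sole obstruction to effectivity here is a Siegel zero of a quadratic Hecke $L$‑function of $F$, which is exactly why the implied constant is effective only under the stated hypotheses: either $\epsilon>2/(|G|[k:\mathbb{Q}])$, where the trivial effective lower bound already suffices, or $[k:\mathbb{Q}]$ odd and $G$ admitting no nontrivial quadratic character, so that no such $L$‑function intervenes. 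Discarding the ramified primes and those dividing the conductors, and absorbing logarithmic factors, affects only the admissible range of $\epsilon$.
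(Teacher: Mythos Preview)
Your Steps 1 and 2 are correct and match the paper's reduction: one passes to the cyclic extension $K/F$ with $F=K^{\langle\sigma\rangle}$, and detecting a prime of $k$ with Frobenius of cycle type $\lambda$ becomes detecting a degree-$1$ prime of $F$ whose Frobenius generates $\mathrm{Gal}(K/F)$. The discriminant bookkeeping and the diagnosis of the effectivity obstruction (quadratic Siegel zeros) are also right.

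The gap is in Step 3, and it is exactly the point where your heuristic for the exponent breaks. You propose to expand the \emph{indicator} of ``$\mathrm{Frob}_{\mathfrak q}$ generates'' by inclusion--exclusion into $2^{\omega(\ell)}$ indicators of ``splits in $E_d$'', and then pass to ideal sums. But an indicator of complete splitting in $E_d$ is not a single Hecke character: it is $\frac{1}{d}\sum_{\psi}\psi$ over all $d$ characters of $\mathrm{Gal}(E_d/F)$. So once you package the result as a pair $\Psi_+,\Psi_-$ of genuine characters of $H$ (which you must, in order to invoke convexity for an \emph{entire} $L$-function rather than a ratio), you pick up all $\mathrm{rad}(\ell)$ characters of $H$, with total multiplicity on each side of order $2^{\omega(\ell)-1}\phi(\mathrm{rad}(\ell))$ rather than $2^{\omega(\ell)-1}$. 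Already for $\ell=p$ prime this gives $\Psi_+=(p-1)\cdot\mathbf 1$, $\Psi_-=\sum_{\chi\ne\mathbf 1}\chi$, hence exponent $(p-1)/p$ instead of $1/p$; for general $\ell$ you lose a factor $\phi(\mathrm{rad}(\ell))$ in the exponent.

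What the paper actually does is replace the indicator by the class function $\phi_g=\prod_{p\mid\ell}(1-\xi^{\ell/p})$ on $\langle\sigma\rangle$, where $\xi$ is a fixed faithful character. This is \emph{supported} on the generators (so the comparison argument still goes through) but is \emph{not} the indicator: it expands into exactly $2^{\omega(\ell)}$ one-dimensional characters $\xi^{\sum_{p\mid d}\ell/p}$, one for each squarefree $d\mid\ell$, with coefficients $\pm 1$. Splitting by the sign of $\mu(d)$ gives monomial $\Psi_+,\Psi_-$ that are each a sum of only $2^{\omega(\ell)-1}$ characters, and \emph{that} is where the exponent $2^{\omega(\ell)-1}/\ell$ comes from. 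Your sentence ``the factor $2^{\omega(\ell)-1}$ reflecting the divisor lattice of $\mathrm{rad}(\ell)$ entering the inclusion--exclusion together with the square-root saving'' is therefore misattributed: the divisor lattice supplies $2^{\omega(\ell)}$ \emph{characters}, not $2^{\omega(\ell)}$ \emph{indicators}, and only because of the specific choice $\phi_g$. With that one change, the rest of your outline is essentially the paper's proof.
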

	
	In general, the cycle type of an element $\sigma \in G$ does not determine its conjugacy class, but for certain groups it does.  This is notably the case when $G=S_n$, and here, we in fact give a much stronger bound than Theorem~\ref{thm:least-prime-general}.
	
	\begin{theorem} \label{thm:Sn-cycle-intro}
		Let $K/k$ be a Galois extension of number fields with $\mathrm{Gal}(K/k) \cong S_n$ for some $n \geq 2$.  For a conjugacy class $C \subset S_n$, let $\ell_1,\dots,\ell_m \geq 2$ denote the lengths of the nontrivial cycles in  $C$, and define
			\[
				\alpha(S_n,C)
					:= 2^{m-1} \prod_{i=1}^m \frac{(\ell_i-2)2^{\ell_i-2}+1}{\ell_i!}.
			\]
		Then for any $\epsilon>0$, there is a prime $\mathfrak{p}$ of $k$, unramified in $K$, with $\mathrm{Frob}_\mathfrak{p} \in C$ satisfying
			\[
				\N\mathfrak{p}
					\ll_{n,[k:\mathbb{Q}],\epsilon} |\mathrm{Disc}(K)|^{\alpha(S_n,C)+\epsilon}.
			\]
		The implied constant is effectively computable if $\epsilon > \frac{2^m \prod_{i=1}^m (\ell_i-1)}{n! [k:\mathbb{Q}]}$.
	\end{theorem}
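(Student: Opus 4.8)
\medskip
\noindent\textbf{Proof plan for Theorem~\ref{thm:Sn-cycle-intro}.}
Fix $\sigma\in C$ with nontrivial-cycle decomposition $\sigma=c_1\cdots c_m$, where $c_i$ has length $\ell_i$. The plan has three parts: (i) translate $\Frob_{\kp}\in C$ into a nonnegative $\mathbb{Q}$-linear combination of splitting conditions in a chosen family of intermediate fields $K^H$, $H\leq S_n$; (ii) feed such a combination into the zero-free machinery behind Theorem~\ref{thm:least-prime-general}, which outputs a prime of norm $|\Disc(K)|$ to an explicit power controlled by the conductors that appear; and (iii) choose the family to minimize that power --- a finite problem in the character theory of $S_{\ell_1},\dots,S_{\ell_m}$ whose answer is $\alpha(S_n,C)$. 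For (i) I would use that $S_n$ is rational: two elements are conjugate once they generate conjugate cyclic subgroups, so an element of $\langle c_1\rangle\times\cdots\times\langle c_m\rangle$ is conjugate to $\sigma$ exactly when it is a generator in each factor. Hence $\Frob_{\kp}\in C$ iff $\kp$ has a prime $\mathfrak{P}$ of $M:=K^{\langle c_1\rangle\times\cdots\times\langle c_m\rangle}$ of residue degree $1$ over $k$ whose Frobenius in the abelian extension $K/M$ is a generator in each factor; Möbius inversion on each cyclic subgroup lattice rewrites the indicator of that set --- hence the count of admissible $\mathfrak{P}$ above $\kp$ --- as a $\mathbb{Q}$-combination with coefficients $\prod_i\mu(\ell_i/d_i)\tfrac{d_i}{\ell_i}$ of the values $\big(\Ind_H^{S_n}\mathbf 1\big)(\Frob_{\kp})$, i.e.\ of the counts of residue-degree-$1$-over-$k$ primes of $K^H$ above $\kp$, as $H$ ranges over products of subgroups of the $\langle c_i\rangle$. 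The product form of these coefficients over the $m$ cycles forces the product over $i$ in $\alpha(S_n,C)$ and the $2^{m-1}$; a more efficient detector comes from enlarging each $\langle c_i\rangle$ to $S_{\ell_i}$ and, inside $S_{\ell_i}$, writing the indicator of the $\ell_i$-cycles as a minimal-conductor $\mathbb{Q}$-combination of $\Ind_H^{S_{\ell_i}}\mathbf 1$'s --- the character-theoretic core.

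For (ii) I would feed a combination $\theta=\sum_H c_H\Ind_H^{S_n}\mathbf 1$, nonnegative on $S_n$ and strictly positive exactly on $C$, into the argument of Theorem~\ref{thm:least-prime-general}. Since $L(s,\Ind_H^{S_n}\mathbf 1-\mathbf 1)=\zeta_{K^H}(s)/\zeta_k(s)$, this yields a prime $\kp$ unramified in $K$, $\Frob_{\kp}\in C$, with $\N\kp\ll_{n,[k:\Q],\epsilon}\mathfrak{q}(\theta)^{1/2+\epsilon}$, where $\mathfrak{q}(\theta)$ is an explicit product of the conductors that occur, weighted by the $|c_H|$; no input about zeros is used beyond the pole of $\zeta_k$ and a lower bound for the values at $s=1$ of the real ($\sgn$-isotypic) constituents --- the only source of ineffectivity, removable once $\epsilon$ passes the stated threshold so that a putative Siegel zero cannot reach the relevant range. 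The conductor--discriminant formula $\log|\Disc(K)|=\sum_\chi\chi(1)\log\kf(\chi)$ over $\mathrm{Irr}(\Gal(K/k))$ then converts $\mathfrak{q}(\theta)^{1/2+\epsilon}$ into a power of $|\Disc(K)|$.

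For (iii), since the cycle-type datum factors over the cycles, so does the optimal $\theta$; one is reduced to detecting an $\ell_i$-cycle inside $S_{\ell_i}$, and balancing the permutation characters of the cyclic subgroups generated by an $\ell_i$-cycle and its powers (with $\sgn$ handled separately) should show the per-block optimum, after the $\tfrac12$-power of (ii), to equal $\frac{(\ell_i-2)2^{\ell_i-2}+1}{\ell_i!}$; multiplying over $i$ and keeping the single uncancelled factor $2$ gives $\alpha(S_n,C)=2^{m-1}\prod_i\frac{(\ell_i-2)2^{\ell_i-2}+1}{\ell_i!}$, the effectivity threshold recording the normalization of $\theta$ against $[K:\Q]=n!\,[k:\Q]$.

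The step I expect to be the main obstacle is (iii), and especially the behaviour of $\sgn$: whenever the cycle type is odd, $\sgn$ enters $\theta$ with nonzero coefficient and its conductor may be as large as $|\Disc(K)|$ itself, so one must verify that after all cancellations the quadratic pieces never dominate and that the optimal balancing of the remaining permutation characters really produces $\frac{(\ell-2)2^{\ell-2}+1}{\ell!}$ --- in particular that no further saving is possible. Arranging the reduction in (i) to be simultaneously faithful and conductor-efficient, uniformly in $n$, is the other delicate point.
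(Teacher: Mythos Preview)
Your architecture --- construct $\Psi_\pm$ in the monomial cone with $\Psi_+-\Psi_-$ supported on $C$, bound $q(\Psi_\pm)\le|\Disc(K)|^{2\Psi_\pm(1)/n!}$, and factor through $S_{\ell_1}\times\cdots\times S_{\ell_m}\le S_n$ --- matches the paper. The genuine gap is in step (iii), which you rightly flag as the crux but for which your concrete plan produces the wrong exponent.

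Your proposed detector inside $S_\ell$, ``balancing the permutation characters of the cyclic subgroups generated by an $\ell$-cycle and its powers,'' is exactly the construction of Proposition~\ref{prop:first-construction}: M\"obius-invert the indicator of generators of $\langle c\rangle$ and induce. This yields $\Psi_\pm$ of degree $2^{\omega(\ell)-1}(\ell-1)!$ and hence the Linnik exponent $2^{\omega(\ell)-1}/\ell$ of Theorem~\ref{thm:least-prime-general}, \emph{not} $\tfrac{(\ell-2)2^{\ell-2}+1}{\ell!}$; for $\ell=7$ these are $1/7$ versus $161/5040$. More broadly, your restriction throughout to combinations of $\Ind_H^{S_n}\mathbf{1}$ (so that only Dedekind zeta functions $\zeta_{K^H}$ appear) is too narrow: the paper's construction requires monomial characters induced from \emph{nontrivial} one-dimensional characters. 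Specifically, the Murnaghan--Nakayama rule shows that the only irreducibles of $S_\ell$ nonzero on an $\ell$-cycle are the hook characters $\chi_i=\wedge^i\std_{\ell-1}$, with $\chi_i(\text{$\ell$-cycle})=(-1)^i$; the key lemma (via Pieri's formula) is that each $\chi_{i-1}+\chi_i=\Ind_{S_i\times S_{\ell-i}}^{S_\ell}(\sgn\times\mathbf{1})$ is monomial. One then takes $\Psi_+=\chi_0+2(\chi_2+\chi_3)+4(\chi_4+\chi_5)+\cdots$ and $\Psi_-=(\chi_1+\chi_2)+3(\chi_3+\chi_4)+\cdots$, both explicitly in the monomial cone, and a direct binomial-coefficient sum gives $\Psi_\pm(1)=(\ell-2)2^{\ell-2}+1$; the multiplicity $\langle\Psi_+,\sgn\rangle\le\ell-1$ is what produces the stated effectivity threshold. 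This hook-character telescoping is the entire content of (iii), and nothing in your cyclic-subgroup or permutation-character framework points toward it.
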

	
	When $C = [(1\,2\dots n)]$ is the class of an $n$-cycle, then $\alpha(S_n,C) = \frac{(n-2)2^{n-2}+1}{n!}$, which decays more than exponentially as $n\to \infty$ on appealing to Stirling's formula.  By contrast, the previously best bound for large $n$ \cite{ThornerZaman-Explicit} only shows there is a prime $\mathfrak{p}$ with $\mathrm{Frob}_\mathfrak{p}$ an $n$-cycle satisfying $\N\mathfrak{p} \ll_{n,[k:\mathbb{Q}]} |\mathrm{Disc}(K)|^{c/n}$ for an explicit large constant $c>0$.  In fact, the class of an $n$-cycle is simultaneously the best case for our method and the worst case for previous methods, and by optimizing the two methods against each other we obtain the following (simplified) asymptotic improvement over what each method yields individually.

	\begin{theorem} \label{thm:Sn-asymptotic-simplified}
		For any Galois extension of number fields $K/k$ with $\mathrm{Gal}(K/k) \cong S_n$ for some $n \geq 2$ and any conjugacy class $C \subset S_n$, there is a prime $\mathfrak{p}$ with $\mathrm{Frob}_\mathfrak{p} \in C$ satisfying
			\[
				\N\mathfrak{p}
					\ll_{n,[k:\mathbb{Q}]} |\mathrm{Disc}(K)|^{\frac{500}{\exp(n/4)}}.
			\]
	\end{theorem}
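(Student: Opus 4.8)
The plan is to combine \cref{thm:Sn-cycle-intro} with the classical analytic machinery and, for each conjugacy class $C$, keep whichever of the two resulting bounds is stronger. Fix $n$, and record the cycle type of $C$ by letting $a_j$ be the number of $j$-cycles for each $j\geq 1$, so that $a_1$ counts fixed points, $\sum_j j a_j=n$, and $Z=Z(C):=\prod_{j\geq 1}j^{a_j}a_j!$ is the order of the centralizer of $C$ in $S_n$ (thus $|C|/|S_n|=1/Z$). \cref{thm:Sn-cycle-intro} supplies a prime with $\Frob_\mathfrak{p}\in C$ and $\N\mathfrak{p}\ll_{n,[k:\Q],\epsilon}|\Disc(K)|^{\alpha(S_n,C)+\epsilon}$, while the classical approach --- via the explicit effective Chebotarev theorem of Thorner and Zaman \cite{ThornerZaman-Explicit} and the log-free density estimates underlying it --- supplies a (possibly different) prime with $\Frob_\mathfrak{p}\in C$ and $\N\mathfrak{p}\ll_{n,[k:\Q]}|\Disc(K)|^{\alpha'(S_n,C)}$, where the exponent obeys $\alpha'(S_n,C)\leq c/Z$ for an \emph{absolute} constant $c$; specializing to the class of an $n$-cycle, where $Z=n$, recovers the bound $|\Disc(K)|^{c/n}$ quoted in the introduction. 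Because $n$ is fixed we may take $\epsilon=\epsilon(n)>0$ as small as we please, so it suffices to prove
\[
  \min\bigl(\alpha(S_n,C),\,\alpha'(S_n,C)\bigr)\ \leq\ \frac{499}{\exp(n/4)}
  \qquad\text{for all }n\geq 2\text{ and all }C\subset S_n,
\]
the gap between $499$ and $500$ absorbing $\epsilon(n)$ and the rounding; we then use whichever input realizes the minimum.

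The two exponents pull in opposite directions. Writing $g(\ell):=\frac{(\ell-2)2^{\ell-1}+2}{\ell!}=2\cdot\frac{(\ell-2)2^{\ell-2}+1}{\ell!}$, one has $\alpha(S_n,C)=\tfrac12\prod_{j\geq 2}g(j)^{a_j}$, and an elementary check gives $g(2)=g(3)=1$, $0<g(\ell)<1$ for $\ell\geq 4$, and $g(\ell)\leq 2^{\ell-1}/(\ell-1)!$ for all $\ell\geq 2$; hence $\alpha(S_n,C)\leq\tfrac12$ always, with near-equality exactly when $C$ is built from $2$- and $3$-cycles, and with $\alpha(S_n,C)$ collapsing super-exponentially as soon as long cycles appear. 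Dually, $\alpha'(S_n,C)\leq c/Z$ is tiny precisely when $Z$ is large, i.e.\ when $C$ is a \emph{large} conjugacy class; and a cycle type assembled from many short cycles is automatically large, since then some $a_j=\Theta(n)$ and $Z\geq a_j!$ is enormous. So the extreme cases for one input are favorable for the other, and the task is to quantify this.

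The cleanest way to do so is to bound the minimum by the geometric mean: from $\alpha'(S_n,C)\leq c/Z$ we get $\min(\alpha(S_n,C),\alpha'(S_n,C))\leq\sqrt{\alpha(S_n,C)\cdot c/Z}$, so after taking logarithms it suffices to prove
\[
  \log\frac{1}{\alpha(S_n,C)}\ +\ \log Z\ \geq\ \frac n2\ -\ O(1).
\]
The point is that this inequality \emph{factors through the cycle type}: its left side equals $\log 2+\sum_j a_j\log\frac{j}{g(j)}+\sum_j\log a_j!$, and writing $h(j):=\log\frac{j}{g(j)}-\frac j2$ (with the convention $g(1)=1$) it becomes $\sum_{j\geq 1}\bigl(a_j h(j)+\log a_j!\bigr)\geq -O(1)$. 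Now $h(j)>0$ for every $j\geq 6$ --- indeed $h(j)\to\infty$, by Stirling applied to $g(j)$ --- so $a_j h(j)+\log a_j!\geq 0$ there; and for $j\in\{1,2,3,4,5\}$ one has $h(j)\in[-\tfrac12,0)$, while the sequence $a_j h(j)+\log a_j!$ is non-decreasing in $a_j\geq 1$ (its increment $h(j)+\log(a_j+1)\geq h(j)+\log 2>0$), so its minimum over $a_j\geq 0$ is $\min(0,h(j))=h(j)$. Summing gives $\sum_{j\geq1}(a_j h(j)+\log a_j!)\geq\sum_{j=1}^5 h(j)>-2$, the infimum being attained at the partition $(5,4,3,2,1)$ of $15$. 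Feeding this back through the geometric-mean step yields $\min(\alpha(S_n,C),\alpha'(S_n,C))\leq (C_0\sqrt c)\,e^{-n/4}$ for an absolute $C_0$, and the explicit value of $c$ furnished by \cite{ThornerZaman-Explicit} is small enough that $C_0\sqrt c\leq 499$; the constant $500$ in the statement is wasteful, a convenient round number dominated entirely by this explicit input.

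The one point that requires genuine care is the form of the classical bound: the off-the-shelf effective Chebotarev theorems give only an absolute \emph{exponent}, which is useless here because it already exceeds $\tfrac12\geq\alpha(S_n,C)$, so one must invoke the refinement in which a conjugacy class of relative size $|C|/|S_n|$ enjoys an exponent proportional to $|C|/|S_n|=1/Z$ --- this is precisely the sense in which the $n$-cycle is simultaneously the worst case for the classical method and the best case for \cref{thm:Sn-cycle-intro}. Granting that, the remainder --- the geometric-mean trick, the factorization of the key inequality through the cycle type, the verification that $h(j)$ is eventually positive, and the bookkeeping of $\epsilon(n)$ and of fields of bounded discriminant --- is routine.
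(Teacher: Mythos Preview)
Your argument has a genuine gap in its analytic input. You assert that the explicit Chebotarev machinery of Thorner--Zaman furnishes a Linnik exponent $\alpha'(S_n,C)\leq c/Z$ with $Z=|C_{S_n}(g)|$ the full centralizer order, but no such result is known. What Thorner--Zaman actually prove (see \cref{lem:abelian-least-prime}, which extracts it from \cite{ThornerZaman-Explicit}) is $\alpha'\leq 1042/|A|$ where $A$ is an \emph{abelian} subgroup meeting $C$. The centralizer of a permutation with cycle type $(1^{a_1},2^{a_2},\dots)$ is $\prod_j (C_j\wr S_{a_j})$, which is almost never abelian; the largest abelian subgroup meeting $C$ has order only $\prod_{j\geq 2}j^{a_j}\cdot 3^{\lfloor a_1/3\rfloor}$ (\cref{lem:abelian-bound}), missing all of the $\prod_j a_j!$ factor that your argument uses. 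For a single transposition, say, $Z=2(n-2)!$ whereas the largest abelian $A$ has order $\asymp 2\cdot 3^{(n-2)/3}$, and the gap between these is precisely what drives your inequality $\sum_j(a_jh(j)+\log a_j!)\geq -O(1)$.

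If you replace $Z$ by the correct $|A|$, the $\log a_j!$ terms disappear and your geometric-mean bound becomes $\sqrt{\alpha\,\alpha'}\leq C\exp\bigl(-\tfrac{n}{2}\sum_\ell \lambda_\ell\,\tfrac{a_\ell+b_\ell}{\ell}\bigr)$ in the paper's notation, with the linear form $\sum_\ell \lambda_\ell(a_\ell+b_\ell)/\ell$ minimized at $\ell=2$ with value $\tfrac{\log 2}{4}\approx 0.173<\tfrac14$. So the geometric-mean trick, while elegant, is genuinely too lossy here: $\min(x,y)\leq\sqrt{xy}$ throws away exactly the leverage needed. The paper instead keeps the honest $\min$ and solves the linear program $\min_{\lambda}\max\bigl(\sum a_\ell\lambda_\ell,\sum b_\ell\lambda_\ell\bigr)$ subject to $\sum\ell\lambda_\ell=1$, finding the optimum on the hyperplane where the two sums agree (supported on $\ell\in\{2,7\}$) and obtaining $c_2\approx 0.295>1/4$.
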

	We also give strong bounds for \emph{small} symmetric groups that are often the best known; see Theorem~\ref{thm:small-Sn-intro} below.

\section{History and discussion of results}
	\label{sec:Results}

	We now turn to the promised proper discussion of previous results and to the statements of our main theorems.

\subsection{Linnik exponents} 
	Fix a finite group $G$.  Let $K/k$ be a Galois extension of number fields with Galois group $\Gal(K/k) \cong G$ and relative discriminant ideal $\kD_{K/k} \subseteq  \mathcal{O}_k$. Let $C \subseteq G$ be a conjugacy invariant subset of $G$. Every prime ideal $\kp \nmid \kD_{K/k}$ of $k$ has an associated Frobenius element $\Frob_{\kp} \in G$, well-defined up to conjugacy. The celebrated Chebotarev density theorem states that these Frobenius elements are equidistributed:
	\[
	\frac{|\{ \N\kp \leq x : \kp \nmid \kD_{K/k}, \Frob_{\kp} \in C \}|}{|\{ \N\kp \leq x  : \kp \nmid \kD_{K/k} \}|} \longrightarrow \frac{|C|}{|G|} \qquad \text{ as } x \to \infty,
	\]
	where $\N = \mathrm{N}_{k/\Q}$ is the absolute norm of $k$ over $\Q$. 
	An effective unconditional version was first established by Lagarias--Odlyzko \cite{LagariasOdlyzko-EffectiveChebotarev} and strengthened by Thorner--Zaman \cite{ThornerZaman-Unified}. Conditional versions assuming the grand Riemann hypothesis (GRH) have been investigated by many authors, e.g. 
	Lagarias--Odlyzko \cite{LagariasOdlyzko-EffectiveChebotarev}, Oesterl\'e \cite{Oesterle-CDTGRH}, Murty--Murty--Saradha \cite{MurtyMurtySaradha-ChebotarevI}, Bella\"{i}che \cite{Bellaiche-Chebotarev}, Greni\'{e}--Molteni \cite{GrenieMolteni-ExplicitCDTGRH}.

	Inspired by Linnik's famous theorem on the least prime in arithmetic progressions and its many applications, there is a vast literature studying the least prime ideal with Frobenius element lying in $C$. Assuming the grand Riemann hypothesis (GRH), Lagarias--Odlyzko \cite{LagariasOdlyzko-EffectiveChebotarev} (cf. Bach--Sorenson \cite{BachSorenson-ResidueClasses}) also showed there exists a prime ideal $\kp$ of degree one which does not ramify in $K$ and with $\Frob_{\kp} \in C$ such that 
	\[
	\N\kp \ll (\log |\Disc(K)|)^2.
	\]
	Unconditional bounds for the least prime ideal take many different forms due to the many number field invariants, but we will focus on a convenient form: there exists a prime ideal $\kp$ of $k$ of degree one which does not ramify in $K$ and with $\Frob_{\kp} \in C$ such that 
	\begin{equation} \label{eqn:LinnikExponent}
	\N\kp \ll_{|G|,[k:\Q],\epsilon} |\Disc(K)|^{\alpha(G,C)+\epsilon},		
	\end{equation}
	where $\epsilon > 0$ is arbitrary and $\alpha(G,C) \geq 0$ is some constant depending only on the finite group $G \cong \Gal(K/k)$ and the conjugacy invariant subset $C \subseteq G$. For example, the value $\alpha(G,C) = 0$ is a weak consequence of GRH. When stating our theorems, we shall refer to the quantity $\alpha(G,C)$ as a \textit{Linnik exponent} for $G$ and $C$. Most arguments, including ours, can make the implied constant in \eqref{eqn:LinnikExponent} explicit, or at least more uniform in some parameters, but we suppress these technicalities for simplicity.
	
	We separate our discussion of previous results into those relying on studying the zeros of Hecke and Artin $L$-functions and those not relying on such an analysis.  In particular, the two general unconditional bounds that hold for any finite group $G$ and any conjugacy invariant subset $C \subseteq G$ both rely on studying zeros.  The first method was pioneered by Lagarias, Montgomery, and Odlyzko \cite{LagariasMontgomeryOdlyzko-LeastPrime}, who showed that there is \emph{some} Linnik exponent $\alpha(G,C)$ in the sense above, and indeed that it may be taken to be a constant independent of $G$ and $C$.  This was made explicit by Zaman \cite{Zaman-Least}, and the current record via this approach is due to Kadiri, Ng, and Wong \cite{KadiriNgWong-LeastPrime}, who showed that
	\begin{equation}
		\label{eqn:LinnikExponent-KNW}
		\alpha(G,C) = 16
	\end{equation}
	is admissible in \eqref{eqn:LinnikExponent} for every $G$ and $C$.  The second general method is essentially due to Weiss \cite{Weiss}, and from its refinement by Thorner and Zaman \cite{ThornerZaman-Explicit} one may deduce (see Lemma~\ref{lem:abelian-least-prime}) that if $A \leq G$ is an abelian subgroup intersecting $C$, then one may take
	\begin{equation}
		\label{eqn:LinnikExponent-TZ}
		\alpha(G,C) = \frac{1042}{|A|}.
	\end{equation}

	Results not relying on studying the zeros of $L$-functions have been heretofore more ad hoc and restricted, but when applicable, have often yielded better explicit exponents, particularly for small groups or special choices of $C$.  For general $G$, we are aware of two types of results.  In the special case of completely split primes, i.e. $C = \{1\}$, Ge, Milinovich, and Pollack \cite{GeMilinovichPollack-Split} showed
	\begin{equation} \label{eqn:LinnikExponent-GMP}
		\alpha(G,\{1\}) = \frac{1}{2} 
	\end{equation}
	is an admissible Linnik exponent in \eqref{eqn:LinnikExponent}; note that this improves over \eqref{eqn:LinnikExponent-KNW} always, and over \eqref{eqn:LinnikExponent-TZ} unless $G$ has an abelian subgroup of order at least $2085$.  
	In fact, though they write it somewhat differently (and work only over $k=\mathbb{Q}$, though this is not essential), we consider the approach of Ge, Milinovich, and Pollack to be an extremely special case of our method; we comment on this more in the next section.
	
	The second general result we are aware of is in the exactly complementary situation, when we are looking for primes that do not split completely, i.e. $C = G \setminus \{1\}$.  Building on work of Murty and Patankar \cite{MurtyPatankar-Tate} and refining a result of Li \cite{Li-NonSplit}, Zaman \cite{Zaman-NonSplit} showed that if $k=\mathbb{Q}$, then
	\begin{equation} \label{eqn:LinnikExponent-Zaman}
		\alpha(G, G \setminus \{1\} ) = \frac{1}{4(1-2|G|^{-2/3})(|G|-1)},
	\end{equation}
	is admissible in \eqref{eqn:LinnikExponent}.  
	This argument makes use of standard properties of completely split primes and Dedekind zeta functions, including  the non-negativity of its Dirichlet series coefficients, but does not seem ripe for generalization.  Additionally, we note that this is coarser than determining the cycle type of a prime unless $G$ is cyclic of prime order.  A version of the result does hold over a general base field $k \ne \mathbb{Q}$, but it requires the discriminant of the extension $K/k$ to be sufficiently large relative to that of $k$.  As such, it does not yield the level of uniformity in $k$ asked for by \eqref{eqn:LinnikExponent}, but it is nevertheless noteworthy in not relying on zeros.
	
	We are aware of one final result that applies to non-abelian $G$.  For each $n \geq 3$ and each non-identity conjugacy class $C \subset S_n$, Cho and Kim \cite{ChoKim-SignChange} construct a character $\chi_C$ such that $\chi_C(g) < 0$ if and only if $g \in C$.  By assuming the Artin conjecture and comparing the properties of $L(s,\chi_C)$ against those of $L(s,\chi_C \otimes \chi_C)$, they obtain a bound on $\alpha(S_n,C)$.  However, the Artin conjecture in this setting is known only if $n=3$ or $n=4$, or if $n=5$ and $K$ is an $S_5$ extension of $\mathbb{Q}$ satisfying certain local conditions.  As such, it is only in these cases that their method applies unconditionally.  We comment more directly on the comparison between their bounds and ours in Section~\ref{subsec:small-groups-intro}.

\subsection{Our approach}
	\label{subsec:approach}

	While we consider our approach new for non-abelian $G$, it is modeled on extremely classical work of Vinogradov on the least quadratic non-residue, or equivalently on the least inert prime in a quadratic field, or equivalently when $G \cong \mathbb{Z}/2\mathbb{Z}$ and $k =\mathbb{Q}$.  Special cases of cyclic $G$ and $k =\mathbb{Q}$ were also studied by   Vinogradov--Linnik \cite{VinogradovLinnik-LeastPrimeQuadraticResidue} as well as Elliott \cite{Elliott-LeastPrimePowerResidue}. When $G$ is abelian and $k=\mathbb{Q}$, Pollack \cite{Pollack-PrimeSplittingAbelian} extended these ideas further, giving bounds and arguments resembling our own approach in many ways. These works crucially rely on the holomorphy of Dirichlet $L$-functions. Our analysis of all groups $G$ generalizes these works despite limited progress towards Artin's holomorphy conjecture, and despite the more subtle representation theory of non-abelian groups.
	
	To describe a slightly simplified form of our approach, we begin by letting $C$ be any conjugacy invariant subset of $G$. Assume $\lambda_{\Psi_+}$ and $\lambda_{\Psi_-}$ are any two multiplicative functions supported on the squarefree ideals of $\mathcal{O}_k$ coprime to $\mathfrak{D}_{K/k}$ such that 
	\begin{equation}
		\label{eqn:simplified-constraint}
	\lambda_{\Psi_+}(\mathfrak{p}) = \lambda_{\Psi_-}(\mathfrak{p}) \text{ for all primes $\mathfrak{p}$ with $\mathrm{Frob}_\mathfrak{p} \not\in C$.}
	\end{equation}
	(The notation $\Psi_+$ and $\Psi_-$ will be explained shortly.) If there is some $x$ such that there is no prime $\mathfrak{p}$ with $\mathrm{Frob}_\mathfrak{p} \in C$ satisfying $\N\mathfrak{p} \leq x$, then we would have  
	\begin{equation}\label{eqn:simplified-approach}
			\sum_{\N\mathfrak{n} \leq x} \lambda_{\Psi_+}(\mathfrak{n}) 
				= \sum_{\N\mathfrak{n} \leq x} \lambda_{\Psi_-}(\mathfrak{n}).
	\end{equation}
	This leads to a contradiction for large enough $x$ if the asymptotic behaviors of the summations on the two sides of \eqref{eqn:simplified-approach} are different.  For example, if $k = \mathbb{Q}$ and $d$ is a fundamental discriminant, we may take $\lambda_{\Psi_+}(p)$ to be $0$ or $1$ according to whether or not $p \mid d$, and we may take $\lambda_{\Psi_-}(p) = (\frac{d}{p})$; hence $\lambda_{\Psi_+}(p) = \lambda_{\Psi_-}(p)$ unless $(\frac{d}{p}) = -1$, i.e. $p$ is inert in $\mathbb{Q}(\sqrt{d})$.  The left-hand side of \eqref{eqn:simplified-approach} then is $\gg x / (\log\log |d|)$, while the right-hand side is $O_{\epsilon}(x^{1/2} |
	d|^{1/4+\epsilon})$ by a convexity bound (or equivalently P\'olya--Vinogradov).  This is a contradiction for $x \gg_\epsilon |d|^{1/2+\epsilon}$, leading to the Linnik exponent $\alpha(S_2, (1\,2)) = 1/2$.  This is the core idea behind Vinogradov's bound, though he showed that one may do better over $\mathbb{Q}$ with a bit of multiplicative number theory; in fact, by also incorporating better bounds on short character sums due to Burgess, it is now known that we may take $\alpha(S_2,(1\, 2)) = 1/4\sqrt{e}$ when $k=\mathbb{Q}$.  However, obtaining a contradiction to \eqref{eqn:simplified-approach} remains the best known strategy toward this classical problem, and it is this strategy we adapt to the Chebotarev setting.
	
	 Given the constraint \eqref{eqn:simplified-constraint}, it is natural to assume that $\lambda_{\Psi_+}$ and $\lambda_{\Psi_-}$ arise as the Dirichlet series coefficients of Artin $L$-functions $L(s,\Psi_+)$ and $L(s,\Psi_-)$ associated to Artin characters $\Psi_+$ and $\Psi_-$ of $G$ satisfying the equivalent constraint 
	 \[
	 \Psi_+(g) = \Psi_-(g) \qquad \text{ for } g \not\in C. 
	 \] 
	To obtain the desired contradiction in \eqref{eqn:simplified-approach}, we then wish for the summation of one function (say $\lambda_{\Psi_-}$) to cancel and for the other to grow asymptotically in $x$. Analogous approaches have been used for other arithmetically interesting problems, including effective multiplicity one theorems for $\mathrm{GL}(n)$ cusp forms (e.g. Brumley \cite{Brumley-EffectiveMultiplicityOne}).  When dealing with $L$-functions whose analytic properties are understood, this is ensured by asking for $L(s,\Psi_+)$ to have a pole at $s=1$ and for $L(s,\Psi_-)$ to be entire.  This reveals the reason our approach does not yield fully general bounds on the least prime in the Chebotarev density theorem: for certain pairs $(G,C)$, the simplest of which is $(C_3, (1\,2\,3))$, there do not exist $\Psi_+$ and $\Psi_-$ as above for which $L(s,\Psi_+)$ has an integral order pole (even conjecturally). To obtain a contradiction in \eqref{eqn:simplified-approach} would thus apparently require something akin to a subtle analysis via the Selberg--Delange method, and we do not carry this out.
	
	Instead, by taking $C$ to be the union of conjugacy classes with the same cycle type (or, slightly better, those classes that are \emph{rationally equivalent}; see \cref{subsec:results-general-groups}), we show there are many characters $\Psi_+$, $\Psi_-$ for which \eqref{eqn:simplified-approach} provably leads to a contradiction.  The value of $x$ at which this contradiction is obtained is related to the conductors of the $L$-functions $L(s,\Psi_+)$ and $L(s,\Psi_-)$ (see Theorem~\ref{thm:general-approach}), so to optimize the resulting Linnik exponent, one wishes to find a ``contradictory pair'' $(\Psi_+,\Psi_-)$ whose conductors are as small as possible.\footnote{Mechanically, the conductors arise from an appeal to the convexity bound for holomorphic Artin $L$-functions.  Assuming a suitable form of the Lindel\"of hypothesis would yield $\alpha(G,C) = 0$ via our method, while a subconvex bound would afford an improvement over what we have stated.  We have not pursued such improvements, preferring to keep our results fully general, but we note that such improvements should be available for many small groups over $k=\mathbb{Q}$.}  This leads to a technical subtlety of our work: if all Artin $L$-functions attached to the extension $K/k$ are entire away from $s=1$, then there is a natural optimal choice of $(\Psi_+,\Psi_-)$.  We analyze the resulting Linnik exponent in Theorem~\ref{thm:least-prime-rational-class-artin-intro}.  However, the holomorphy of Artin $L$-functions is not understood in general (this is the content of the Artin conjecture), so this ``optimal'' result is not known to apply unconditionally.  Instead, we prove there is a reasonably efficient and general choice $(\Psi_+,\Psi_-)$ for which holomorphy is known, and this leads to Theorem~\ref{thm:least-prime-general}.  By exploiting the explicit representation theory of the symmetric group, we provide a much more efficient choice when $G=S_n$, and this leads to Theorem~\ref{thm:Sn-cycle-intro}.  
	
	Additionally, even in cases where the Artin conjecture is not known, it can happen that the optimal choices of $\Psi_+,\Psi_-$ are nevertheless provably holomorphic.  For example, this happens for any $G$ when $C=\{1\}$ is the conjugacy class of the identity, where the optimal choices satisfy $L(s,\Psi_+) = \zeta_K(s)$ and $L(s,\Psi_-) = 1$. Comparing these two functions is at the heart of the argument of Ge, Milinovich, and Pollack \cite{GeMilinovichPollack-Split} mentioned above, and it is in this sense that their approach is a special case of ours.  By happenstance, the optimal choices are also provably holomorphic in some special cases, including for certain conjugacy classes when $G\cong S_5$ and $G \cong S_6$; we record the consequences of these and other explicit choices for small symmetric groups in Section~\ref{subsec:small-groups-intro}.
	
	Lastly, while we have indicated (truthfully!) that the Linnik exponent $\alpha(G,C)$ one obtains from this approach depends on the conductors of the $L$-functions $L(s,\Psi_+)$ and $L(s,\Psi_-)$, there is one final subtlety: it also depends on lower bounds for the residue $\Res_{s=1}L(s,\Psi_+)$.  Essentially as a consequence of the Brauer--Siegel theorem, it is known that $\Res_{s=1}L(s,\Psi_+) \gg_{[K:\mathbb{Q}],\epsilon} |\mathrm{Disc}(K)|^{-\epsilon}$ for every $\epsilon > 0$, but where the implied constant is possibly ineffective for $\epsilon$ close to $0$; this would translate into an ineffective implied constant in the Linnik bound \eqref{eqn:LinnikExponent} for sufficiently small $\epsilon$.  However, it has been known since work of Stark \cite{Stark-EffectiveBrauerSiegel} that the source of this ineffectivity is confined to the quadratic subfields of $K$ (though see also \cite{CLOZ}).  Consequently, in our theorems below, we consider it an important point to indicate when the implied constant is effectively computable, but this sometimes entails a discussion of the quadratic subfields of the base field $k$ and the quadratic characters of the group $G$.

\subsection{Results for general groups} \label{subsec:results-general-groups}
	
	The approach described above applies to arbitrary Galois groups $G$ as follows.  Given a finite group $G$, two elements $g,h \in G$ are \emph{rationally equivalent} if they generate conjugate cyclic subgroups.  If $g$ and $h$ are conjugate, then they are also rationally equivalent, so each rational equivalence class breaks up as a union of conjugacy classes.  In the symmetric group $S_n$, two elements are rationally equivalent if and only if they are conjugate, so each rational equivalence class consists of a single conjugacy class, but this need not be true for arbitrary $G$.  In general, there is a sequence of implications
		\[
			\text{conjugacy} \Longrightarrow \text{rational equivalence} \Longrightarrow \text{same cycle decomposition},
		\]
	where the cycle decomposition of $g$ and $h$ is determined by any homomorphism $G \to S_n$.  In particular, rational equivalence is sufficient to determine the splitting types of primes in any subextension of the Galois extension $K/k$, i.e. its cycle type.  
	
	The construction is a little too technical to describe here, but we show in Proposition~\ref{prop:first-construction} below that there is always a contradictory pair $(\Psi_+,\Psi_-)$ in the sense above that suffices to detect an arbitrary rational equivalence class $C \subset G$.  This leads to the following generalization of Theorem~\ref{thm:least-prime-general}.
	
	\begin{theorem}\label{thm:least-prime-rational-class-intro}
		Let $G$ be any finite group, let $C$ be a rational equivalence class in $G$, let $n$ be the order of an element $g \in C$, and let $\omega(n)$ be the number of distinct primes dividing $n$.  The Linnik exponent
			\[
			\alpha(G,C) = \frac{2^{\omega(n)-1}}{n}
			\]
		is admissible in \eqref{eqn:LinnikExponent}, and the implied constant in \eqref{eqn:LinnikExponent} is effectively computable if $\epsilon > \frac{2}{
		|G|[k:\mathbb{Q}]}$, or for all $\epsilon>0$ if $k$ does not admit a quadratic subfield and there is no quadratic character $\chi$ of $G$ such that $\chi(g) = 1$ for some $g \in C$.
	\end{theorem}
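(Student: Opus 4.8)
The plan is to deduce the theorem from the general mechanism of Theorem~\ref{thm:general-approach} by producing, for the rational class $C$, a ``contradictory pair'' of Artin characters $(\Psi_+,\Psi_-)$ with $\Psi_+(\sigma)=\Psi_-(\sigma)$ for every $\sigma\notin C$, with $L(s,\Psi_+)$ having a pole at $s=1$ and $L(s,\Psi_-)$ entire; constructing such a pair is the content of Proposition~\ref{prop:first-construction}, which I now sketch. Set $H:=\langle g\rangle\cong\Z/n\Z$ and let $E:=K^{H}$, so that $\Gal(K/E)\cong H$, $[K:E]=n$, and $E\supseteq k$. Since $C$ is rational, $\sigma\in C$ exactly when some $G$-conjugate of $\sigma$ generates $H$. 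Define $f\colon H\to\Q$ by $f(h)=1$ if $h$ generates $H$ and $f(h)=0$ otherwise. Möbius inversion over the subgroup lattice of $H$ --- equivalently inclusion--exclusion over the primes $p\mid n$ via $\mathbf 1[\gcd(j,n)=1]=\sum_{e\mid\mathrm{rad}(n)}\mu(e)\mathbf 1[e\mid j]$, where $\mathrm{rad}(n)=\prod_{p\mid n}p$, combined with the identity writing the indicator of the index-$e$ subgroup of $H$ as $\frac1e$ times the sum of the $e$ characters of $H$ trivial on it --- yields
\[
	f \;=\; \frac{\phi(n)}{n}\sum_{e\mid\mathrm{rad}(n)}\frac{\mu(e)}{\phi(e)}\sum_{\substack{\psi\in\widehat H\\ \mathrm{ord}(\psi)=e}}\psi .
\]
Because $\phi(e)\mid\phi(n)$ when $e\mid n$, the function $nf$ has integer coefficients when expanded in the characters $\widehat H$; inducing to $G$ (each $\Ind_H^G\psi$ being a genuine monomial Artin character) and collecting terms according to the sign of $\mu(e)$ produces effective Artin characters
\[
	\Psi_\pm \;:=\; \sum_{\substack{e\mid\mathrm{rad}(n)\\ \mu(e)=\pm1}}\frac{\phi(n)}{\phi(e)}\sum_{\substack{\psi\in\widehat H\\ \mathrm{ord}(\psi)=e}}\Ind_H^G\psi ,
\]
satisfying $\Psi_+-\Psi_-=n\,\Ind_H^G f$.

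Next I would verify that $(\Psi_+,\Psi_-)$ is a valid contradictory pair. For $\sigma\notin C$ no $G$-conjugate of $\sigma$ generates $H$, so $(\Ind_H^G f)(\sigma)=\frac1{|H|}\#\{\tau\in G:\tau^{-1}\sigma\tau\text{ generates }H\}=0$; hence $\Psi_+(\sigma)=\Psi_-(\sigma)$ off $C$, the constraint of \eqref{eqn:simplified-constraint}. The trivial character $\mathbf 1_H$ (the $e=1$ term) occurs in $\Psi_+$ with multiplicity $\phi(n)>0$ and does not occur in $\Psi_-$; since $L(s,\Ind_H^G\mathbf 1_H)=\zeta_E(s)$, while $L(s,\Ind_H^G\psi)$ for $\psi\neq\mathbf 1_H$ is a nontrivial abelian Hecke $L$-function over $E$ and hence entire, the $L$-function $L(s,\Psi_+)$ has a pole at $s=1$ of order $\phi(n)$ and $L(s,\Psi_-)$ is entire. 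This is exactly what Theorem~\ref{thm:general-approach} requires, so (after the standard excision of ramified Euler factors) it produces a degree-one prime $\kp\nmid\mathrm{Disc}(K)$ with $\Frob_\kp\in C$ whose norm is bounded by a power of $|\mathrm{Disc}(K)|$ determined by the analytic conductors, degrees, and pole order of the pair.

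To extract the exponent $\tfrac{2^{\omega(n)-1}}{n}$ I would use the conductor--discriminant formula, in the forms $\prod_{\psi\in\widehat H}\mathfrak q(\Ind_H^G\psi)=|\mathrm{Disc}(K)|$ and $\prod_{\mathrm{ord}(\psi)\mid e}\mathfrak q(\Ind_H^G\psi)=|\mathrm{Disc}(K^{\langle g^e\rangle})|\le|\mathrm{Disc}(K)|^{e/n}$ (using $[K:K^{\langle g^e\rangle}]=n/e$), to control $\mathfrak q(\Psi_\pm)$; the relevant degrees are $d(\Psi_\pm)=2^{\omega(n)-1}[G:H]\phi(n)$ for $\omega(n)\ge1$, and the pole order is $\phi(n)$. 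Substituting these into the bound of Theorem~\ref{thm:general-approach} and invoking the Brauer--Siegel lower bound $\Res_{s=1}L(s,\Psi_+)\gg_{[K:\Q],\epsilon}|\mathrm{Disc}(K)|^{-\epsilon}$ should collapse the exponent to $\tfrac{2^{\omega(n)-1}}{n}$, with the factor $2^{\omega(n)-1}$ counting the squarefree divisors of $n$ of a fixed Möbius sign and the denominator $n=[K:E]$ entering through the conductor normalization against the pole order. I expect this last reduction to be the main obstacle: the conductor estimates have to be applied with some care to reach the clean exponent rather than a slightly weaker one, and the degenerate endpoint $\omega(n)=0$ (where $n=1$, $C=\{1\}$, $\Psi_-=0$, recovering $\alpha(G,\{1\})\le\tfrac12$) must be handled separately.

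Finally, for effectivity, the only ineffective ingredient is the lower bound for $\Res_{s=1}L(s,\Psi_+)$, and since $L(s,\Psi_+)$ equals $\zeta_E(s)^{\phi(n)}$ times entire factors, Stark's theorem \cite{Stark-EffectiveBrauerSiegel} confines the ineffectivity to a possible Siegel zero attached to a quadratic subfield of $E=K^{H}$. Such a subfield is either a quadratic subfield of $k$ or the fixed field of an index-$2$ subgroup of $G$ containing $H$, i.e.\ corresponds to a quadratic character $\chi$ of $G$ with $\chi(g)=1$; ruling out both cases gives an effective implied constant for every $\epsilon>0$. Alternatively, the ineffective contribution to Brauer--Siegel is confined to $\epsilon$ below a threshold comparable to $1/[K:\Q]=1/(|G|[k:\Q])$, and tracking the constants carefully yields the stated effective range $\epsilon>\tfrac2{|G|[k:\Q]}$.
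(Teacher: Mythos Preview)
Your construction is natural but differs from the paper's in a way that creates a genuine gap. You Fourier-expand the full indicator $f$ of generators of $H=\langle g\rangle$, obtaining
\[
\Psi_\pm = \sum_{\substack{e\mid\mathrm{rad}(n)\\ \mu(e)=\pm1}}\frac{\phi(n)}{\phi(e)}\sum_{\mathrm{ord}(\psi)=e}\Ind_H^G\psi,
\]
so that $\Psi_+-\Psi_-=n\,\Ind_H^G f$. This gives $\langle\Psi_+,\mathbf{1}_G\rangle=\phi(n)$, i.e.\ $L(s,\Psi_+)$ has a pole of order $\phi(n)$ at $s=1$, and $\Psi_\pm(1)=2^{\omega(n)-1}\phi(n)\,|G|/n$. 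But Theorem~\ref{thm:general-approach} explicitly requires a \emph{simple} pole (hypothesis (ii)), and its output depends on $q(\Psi_\pm)^{1/2}$ with no compensating factor of the pole order in the denominator. Your assertion that ``the denominator $n=[K:E]$ [enters] through the conductor normalization against the pole order'' is therefore not justified: even a version of Theorem~\ref{thm:general-approach} allowing a higher-order pole would still compare a main term of size $\asymp x(\log x)^{\phi(n)-1}$ against an error of size $x^\sigma q^{(1-\sigma)/2}$, and the resulting exponent is governed by $q(\Psi_\pm)^{1/2}$ alone. With the degree bound (Lemma~\ref{lem:conductor-vs-degree}) this yields the Linnik exponent $\phi(n)\cdot 2^{\omega(n)-1}/n$, off by the factor $\phi(n)$; your proposed conductor--discriminant bounds do not close this gap either.

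The paper avoids this by a leaner construction (Proposition~\ref{prop:first-construction}): rather than averaging over all characters of each order, it selects \emph{one} character per squarefree divisor of $n$ via the product $\phi_g=\prod_{p\mid n}(1-\xi^{n/p})$, whose expansion has exactly $2^{\omega(n)}$ monomial terms split evenly between $\Psi_+$ and $\Psi_-$. This $\phi_g$ is still supported on generators of $H$ but satisfies $\langle\phi_g,\mathbf{1}_H\rangle=1$, so $L(s,\Psi_+)$ has a simple pole and $\Psi_\pm(1)=2^{\omega(n)-1}|G|/n$, whence Lemma~\ref{lem:conductor-vs-degree} gives $q(\Psi_\pm)\le|\mathrm{Disc}(K)|^{2^{\omega(n)}/n}$ and Corollary~\ref{cor:general-approach} delivers the exponent $2^{\omega(n)-1}/n$ directly. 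Your construction can be salvaged by dividing through by $\phi(n)$: the resulting $\Psi_\pm'$ are non-negative \emph{rational} (not integral) combinations of monomial characters, which is still enough for holomorphy (Lemma~\ref{lem:Monomial}), restores the simple pole, and matches the paper's degree and conductor bounds. Your effectivity discussion is essentially correct and parallels the paper's.
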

	
	\begin{remark}
		Our method suffices also to give a lower bound on the number of primes produced in this range, and in particular, it produces more than one such prime.  See Theorem~\ref{thm:general-approach} for the full statement of such results.
	\end{remark}
		
	As a simple example, we have the following.
	
	\begin{corollary}
		Let $p$ be prime, let $F/k$ be a degree $p$ extension, and let $K$ be its normal closure.  
		Then there is a prime $\mathfrak{p}$ of $k$ that is inert in $F$, has degree $1$ over $\Q$, and satisfies
			\[
				\N\mathfrak{p} \ll_{[k:\Q],p,\epsilon} |\mathrm{Disc}(K)|^{1/p + \epsilon}.
			\]
		The implied constant is effectively computable for $\epsilon > \frac{2}{|G|[k:\mathbb{Q}]}$ in general, and for all $\epsilon > 0$ if $k$ does not contain a quadratic subfield and if $G$ does not admit a quadratic character.
	\end{corollary}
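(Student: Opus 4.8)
The plan is to deduce this directly from Theorem~\ref{thm:least-prime-rational-class-intro}, the only real work being to identify the correct set $C \subseteq G$ and check it is a rational equivalence class. First I would fix notation: since $[F:k] = p$, the action of $G := \Gal(K/k)$ on the $p$ embeddings $F \hookrightarrow K$ (equivalently, on the cosets $G/H$ with $H := \Gal(K/F)$ of index $p$) realizes $G$ as a transitive subgroup of $S_p$. For a prime $\mathfrak{p}$ of $k$ of degree $1$ over $\Q$ unramified in $K$, the primes of $F$ above $\mathfrak{p}$ correspond to the orbits of $\langle \Frob_{\mathfrak{p}} \rangle$ on $G/H$, with residue degrees equal to the orbit lengths; hence $\mathfrak{p}$ is inert in $F$ exactly when $\Frob_{\mathfrak{p}}$ acts as a single $p$-cycle. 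So the target conjugacy-invariant set is $C := \{\, g \in G : g \text{ acts as a } p\text{-cycle on } G/H \,\}$.

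The crux is the group-theoretic observation that $C$ is a single rational equivalence class. Transitivity of $G$ forces $p \mid |G|$ (Cauchy), so $C \neq \emptyset$; and since $|G| \mid p!$ while $p^2 \nmid p!$, a Sylow $p$-subgroup of $G$ has order $p$. An element $\sigma \in S_p$ of order $p$ has cycle type consisting only of parts of length $1$ or $p$ (its order being the least common multiple of the cycle lengths, a prime), hence — there being no room for fixed points — is a single $p$-cycle; conversely every $p$-cycle has order $p$. Thus $C$ is precisely the set of elements of order $p$ in $G$. Any two such elements generate subgroups of order $p$, i.e. Sylow $p$-subgroups of $G$, which are all conjugate by Sylow's theorem; hence any two elements of $C$ generate conjugate cyclic subgroups and are therefore rationally equivalent. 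So $C$ is a rational equivalence class, and every $g \in C$ has $n := \mathrm{ord}(g) = p$, whence $\omega(n) = 1$.

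Applying Theorem~\ref{thm:least-prime-rational-class-intro} to this $C$ now yields a prime $\mathfrak{p}$ of $k$ of degree $1$ over $\Q$, unramified in $K$ (a fortiori in $F$), with $\Frob_{\mathfrak{p}} \in C$ — equivalently, inert in $F$ — satisfying $\N\mathfrak{p} \ll_{[k:\Q],p,\epsilon} |\Disc(K)|^{\alpha(G,C)+\epsilon}$ with $\alpha(G,C) = 2^{\omega(p)-1}/p = 1/p$, which is the asserted bound. For effectivity, Theorem~\ref{thm:least-prime-rational-class-intro} already provides an effective implied constant once $\epsilon > 2/(|G|[k:\Q])$. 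For the unconditional range, note that if $k$ contains no quadratic subfield and $G$ admits no (nontrivial) quadratic character, then in particular there is no quadratic character $\chi$ of $G$ with $\chi(g) = 1$ for some $g \in C$, so the hypothesis of Theorem~\ref{thm:least-prime-rational-class-intro} is met and the constant is effective for all $\epsilon > 0$; this is vacuous when $p = 2$ (where $G \cong \Z/2\Z$), as it must be. Beyond the Sylow argument isolating $C$, every step is routine bookkeeping against the cited theorem, so I expect no substantive obstacle.
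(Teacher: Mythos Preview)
Your proposal is correct and is exactly the argument the paper intends: the corollary is stated immediately after Theorem~\ref{thm:least-prime-rational-class-intro} with no separate proof, so the paper's implicit argument is precisely ``identify the inert primes as those with $p$-cycle Frobenius, observe this is a single rational equivalence class (via Sylow, since $|G|\mid p!$), and apply Theorem~\ref{thm:least-prime-rational-class-intro} with $n=p$, $\omega(n)=1$.'' Your write-up spells out each of these steps carefully, including the effectivity bookkeeping, and nothing is missing; the only tacit point is that since $|G|\le p!$ the dependence of the implied constant on $|G|$ in \eqref{eqn:LinnikExponent} can be absorbed into dependence on $p$.
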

	\begin{remark}
		In the special case that $G$ is cyclic of order $p$ (or, more generally, a Frobenius group of degree $p$), then the exponent $1/p + \epsilon$ above may be improved to $1/2(p-1) + \epsilon$.  This follows from Theorem~\ref{thm:semidirect-product} below.
	\end{remark}
	
	The Chebotarev density theorem can be improved under the assumption of Artin's holomorphy conjecture (see, e.g., Murty \cite{Murty-ChebotarevII} and Thorner--Zhang \cite{ThornerZhang}). Our Linnik exponents can also be similarly improved for general groups $G$, as we may then use the optimal choices of $\Psi_+$ and $\Psi_-$. Below is a simplified version of what one obtains in this case. 

	\begin{theorem}\label{thm:least-prime-rational-class-artin-intro}
	Let $G$ be any finite group and let $C$ be a rational equivalence class in $G$. Let $H \leq G$ be a subgroup such that $H$ meets $C$. Let $C_H(g)$ be the centralizer subgroup in $H$ of an element $g \in C \cap H$. If the Artin $L$-function $L(s,\chi)$ is entire for each nontrivial irreducible character $\chi$ of $H$ such that $\chi(g) \ne 0$, then the Linnik exponent
		\[
			\alpha(G,C) = \frac{|C_H(g)|^{1/2}}{2|H|^{1/2}}
		\]
	is admissible in \eqref{eqn:LinnikExponent}.  The implied constant in \eqref{eqn:LinnikExponent} is effectively computable if $\epsilon > \frac{2}{
	|G|[k:\mathbb{Q}]}$, or if the subfield $K^H$ of $K$ fixed by $H$ does not admit a quadratic subfield and there is no quadratic character $\chi$ of $H$ such that $\chi(g) = 1$.
	\end{theorem}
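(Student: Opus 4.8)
\emph{Reduction to a character-theoretic construction.} The plan is to instantiate the general mechanism of \cref{thm:general-approach} for the extension $K/k$, feeding it a ``contradictory pair'' of Artin characters $(\Psi_+,\Psi_-)$ of $G$ built by induction from $H$. Recall that \cref{thm:general-approach} produces an unramified prime $\kp$ with $\Frob_\kp\in C$ and $\N\kp\ll_{n,[k:\Q],\epsilon}|\Disc(K)|^{\alpha+\epsilon}$ once one has Artin characters $\Psi_+,\Psi_-$ of $G$ with $\Psi_+(s)=\Psi_-(s)$ for $s\notin C$, with $L(s,\Psi_+,K/k)$ having a pole at $s=1$ and $L(s,\Psi_-,K/k)$ entire; the exponent $\alpha$ one obtains is (up to the $\epsilon$'s) one half the exponent of $|\Disc(K)|$ in the analytic conductor of $L(s,\Psi_-)$, together with a correction coming from the lower bound for $\Res_{s=1}L(s,\Psi_+)$. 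Mechanically, the pole makes the left-hand side of the analogue of \eqref{eqn:simplified-approach} grow like $x$ (times $|\Disc(K)|^{-\epsilon}$, by Brauer--Siegel), entirety together with the convexity bound controls the right-hand side, and the two become incompatible once $x$ exceeds roughly the square root of the conductor of $L(s,\Psi_-)$. So the task is to choose $(\Psi_+,\Psi_-)$, using the subgroup $H$ and the partial information on holomorphy afforded by the hypothesis, so that the conductor of $L(s,\Psi_-)$ is as small as possible.

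\emph{The construction.} Fix $g\in C\cap H$ and work in the tower $K\supseteq K^H\supseteq k$, with $\Gal(K/K^H)\simeq H$. I would take $\eta$ to be a suitably normalized integral multiple of the indicator function of the rational conjugacy class of $g$ inside $H$; this is a virtual character of $H$, it is supported on a subset of $C\cap H$, and by the second orthogonality relation it has the form $\eta=\sum_{\chi}m_\chi\chi$ with $m_{\mathbf 1}>0$ and with every constituent $\chi$ satisfying $\chi(g)\neq 0$ (the constituents of the indicator of the $H$-class of $g$ are precisely the $\chi$ with $\chi(g)\neq 0$, and passing to the rational class only permutes these under $\Gal(\overline{\Q}/\Q)$). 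Write $\eta=\psi_+-\psi_-$ where $\psi_+=\sum_{m_\chi>0}m_\chi\chi$ and $\psi_-=\sum_{m_\chi<0}|m_\chi|\chi$ are honest characters of $H$, and set $\Psi_\pm:=\Ind_H^G\psi_\pm$. Then $\Psi_+-\Psi_-=\Ind_H^G\eta$ is supported on the $G$-classes meeting $\operatorname{supp}(\eta)\subseteq C$; by Frobenius reciprocity $\langle\Psi_+-\Psi_-,\mathbf 1_G\rangle=\langle\eta,\mathbf 1_H\rangle=m_{\mathbf 1}>0$, so $L(s,\Psi_+,K/k)=L(s,\psi_+,K/K^H)$ contains the factor $\zeta_{K^H}(s)^{m_{\mathbf 1}}$ and has a pole at $s=1$ (its residue is a power of $\Res_{s=1}\zeta_{K^H}$ times values $L(1,\chi)\neq 0$ of entire Artin $L$-functions); and $L(s,\Psi_-,K/k)=L(s,\psi_-,K/K^H)=\prod_\chi L(s,\chi,K/K^H)^{|m_\chi|}$ is a product of $L$-functions of nontrivial irreducible characters $\chi$ of $H$ with $\chi(g)\neq 0$, hence entire by hypothesis. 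Finally, the comparison of prime sums over $k$ demanded by \cref{thm:general-approach} is obtained from the corresponding comparison over $K^H$ through the identity $\sum_{\mathfrak P\mid\kp,\ f(\mathfrak P/\kp)=1}\psi(\Frob_{\mathfrak P})=(\Ind_H^G\psi)(\Frob_\kp)$ for $\psi$ a class function on $H$, together with the trivial bound $\N\kp\le\N_{K^H/\Q}\mathfrak P$ (so that a small degree-one prime of $K^H$ detected by the method lies over an at least as small prime of $k$).

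\emph{The conductor estimate.} It remains to bound the analytic conductor of $L(s,\psi_-,K/K^H)$ by $|\Disc(K)|^{\sqrt{|C_H(g)|/|H|}+\epsilon}$. I would split it into an archimedean factor $|\Disc(K^H)|^{\psi_-(1)}$ and a finite factor $\N_{K^H/\Q}\mathfrak f(\psi_-)=\prod_\chi\N_{K^H/\Q}\mathfrak f(\chi)^{|m_\chi|}$. For the archimedean factor one uses $|\Disc(K^H)|\le|\Disc(K)|^{1/|H|}$ together with
\[
\psi_-(1)=\sum_{m_\chi<0}|m_\chi|\chi(1)\le\Big(\sum_\chi m_\chi^2\Big)^{1/2}\Big(\sum_\chi\chi(1)^2\Big)^{1/2}=\Big(\sum_\chi m_\chi^2\Big)^{1/2}|H|^{1/2},
\]
the point being to normalize $\eta$ so that $\sum_\chi m_\chi^2$ is $O(|C_H(g)|)$ (for the indicator of the $H$-class of $g$ scaled by $|C_H(g)|$ one gets exactly $\sum_\chi m_\chi^2=|C_H(g)|$); this gives archimedean conductor exponent at most $\psi_-(1)/|H|\le(|C_H(g)|/|H|)^{1/2}$. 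For the finite factor one uses the conductor--discriminant formula $\prod_\chi\mathfrak f(\chi)^{\chi(1)}=\mathfrak d_{K/K^H}$, balancing the multiplicities $m_\chi$ against the degrees $\chi(1)$; this has to be done carefully so as to land on the exponent $(|C_H(g)|/|H|)^{1/2}$ rather than the crude exponent $1$. Combining the two and feeding the result into \cref{thm:general-approach} yields the admissible Linnik exponent $\alpha(G,C)=\tfrac12(|C_H(g)|/|H|)^{1/2}$. For effectivity, the only possible source of an ineffective implied constant is a Siegel zero in the lower bound $\Res_{s=1}L(s,\psi_+)\gg|\Disc(K)|^{-\epsilon}$; by Stark's theorem this is confined to quadratic subfields of $K^H$ and to quadratic characters $\chi$ of $H$ occurring in $\psi_+$, i.e.\ those with $\chi(g)=1$, so excluding both gives an effective constant for every $\epsilon>0$, while for $\epsilon>\tfrac{2}{|G|[k:\Q]}$ the Siegel-zero contribution is dominated in any case.

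\emph{Main obstacle.} The delicate point is the finite-conductor bound in the third step: bounding $\N_{K^H/\Q}\mathfrak f(\psi_-)$ by $|\Disc(K)|$ raised to the power $\sqrt{|C_H(g)|/|H|}$, rather than to the first power, requires exploiting the smallness of $\sum_\chi m_\chi^2$ simultaneously with the conductor--discriminant relation, and extracting exactly the square-root exponent (and the constant $\tfrac12$) is what needs real care; the remaining ingredients are the now-standard formalism of induced Artin $L$-functions and the machinery already packaged in \cref{thm:general-approach}.
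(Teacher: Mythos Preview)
Your overall approach matches the paper's: work in $H$ via induction, build $\Psi_\pm$ from a virtual character of $H$ supported on the rational class of $g$, feed into \cref{thm:general-approach}/\cref{cor:general-approach}, and bound the conductor through a Cauchy--Schwarz estimate on the degree. Two points, however, need correction.

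\emph{The class function.} Your $\eta$ (an integral multiple of the indicator of the rational class of $g$ in $H$) is not the paper's choice, and it does not quite deliver the stated exponent. If that rational class in $H$ breaks into $r>1$ $H$-conjugacy classes, then the smallest normalization making all $m_\chi$ integers gives $m_{\mathbf 1}=r$, so $L(s,\psi_+)$ acquires a pole of order $r$ at $s=1$, violating the simple-pole hypothesis of \cref{thm:general-approach}; and one computes $\sum_\chi m_\chi^2=r\,|C_H(g)|$, so the resulting exponent is off by a factor $\sqrt r$. The paper instead uses the specific virtual character $\Delta_g=\Ind_{\langle g\rangle}^H\phi_g$ of \cref{lem:class-function-construction}, which is engineered (via the product $\prod_{p\mid n}(1-\xi^{n/p})$) to have $\langle\Delta_g,\mathbf 1_H\rangle=1$ with integer coefficients, and which satisfies $\|\Delta_g\|_2\le|C_H(g)|^{1/2}$ (\cref{lem:delta-l2}). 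That is the construction you are missing. Your observation that every constituent $\chi$ of $\eta$ has $\chi(g)\ne 0$ is correct and is exactly what lets the holomorphy hypothesis of the theorem feed into \cref{lem:class-functions-artin}.

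\emph{The conductor bound.} What you flag as the ``main obstacle'' is in fact routine. There is no need to separate archimedean and finite parts or to ``balance'' against the conductor--discriminant formula. The paper uses the single local estimate $v_{\mathfrak P}(\mathfrak f_\chi)\le\frac{2\chi(1)}{|H|}\,v_{\mathfrak P}(\mathfrak D_{K/K^H})$ (\cref{lem:conductor-vs-degree}), which immediately gives $q(\Psi_-)\le|\Disc(K)|^{2\Psi_-(1)/|H|}$. Combined with $\Psi_+(1)=\Psi_-(1)=\tfrac12\sum_\chi|\langle\Delta_g,\chi\rangle|\chi(1)\le\tfrac12\|\Delta_g\|_2\,|H|^{1/2}$ (note the factor $\tfrac12$, coming from $\Delta_g(1)=0$, which your Cauchy--Schwarz line omits), one obtains $q(\Psi_-)\le|\Disc(K)|^{|C_H(g)|^{1/2}/|H|^{1/2}}$ and hence the claimed Linnik exponent via \cref{cor:general-approach}. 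Your effectivity discussion is correct.
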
	

	We will use Theorem~\ref{thm:least-prime-rational-class-artin-intro} later in Section~\ref{subsec:alternative-Sn} to give unconditional bounds on certain classes in symmetric groups (including bounds on $\alpha(S_n, [(12)])$, for example) that are stronger than either Theorem~\ref{thm:Sn-cycle-intro} or Theorem~\ref{thm:least-prime-rational-class-intro}, but we give here a different example of its utility.
	
	\begin{corollary}
		\label{cor:unipotent}
		Let $q$ be a prime power, let $r \geq 2$ be an integer, and let $G = \mathrm{GL}_r(\mathbb{F}_q)$.  Let $C$ be the set of unipotent elements of $G$ consisting of a single Jordan block (equivalently, fixing a unique line in $\mathbb{F}_q^r$).  Then the Linnik exponent
			\[
				\alpha(G,C) = \frac{1}{2q^{(r-1)(r-2)/4}}
			\]
		is admissible in \eqref{eqn:LinnikExponent}. 
	\end{corollary}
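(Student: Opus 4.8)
The plan is to deduce this from Theorem~\ref{thm:least-prime-rational-class-artin-intro}, taking for the auxiliary subgroup $H$ the group $U \leq G$ of upper triangular matrices with $1$'s on the diagonal. Three things then need to be checked: that $C$ is a rational equivalence class, that $U$ meets $C$ with the correct order and centralizer data, and that the relevant Artin $L$-functions attached to $U$ are entire.

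First I would verify that $C$ is a single rational equivalence class. An element of $C$ is conjugate to $g = I + N$ where $N$ is a single nilpotent Jordan block of size $r$ (equivalently $\mathrm{rank}(N) = r-1$, so $\ker N$ is one-dimensional and $g$ fixes a unique line). Writing $q$ as a power of the prime $p$, for any integer $a$ coprime to $p$ we have $g^a = I + aN + \binom{a}{2}N^2 + \cdots$, so $g^a - I$ still has rank $r-1$ and $g^a$ is again regular unipotent; conversely $g^a$ generates $\langle g \rangle$ precisely when $\gcd(a,p)=1$. Since any two regular unipotent elements of $\mathrm{GL}_r(\mathbb{F}_q)$ are conjugate (they share the Jordan type consisting of one block of size $r$), the set of elements conjugate to a generator of $\langle g\rangle$ is exactly the conjugacy class $C$; that is, $C$ is a rational equivalence class.

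Next I would record the group-theoretic data for $H = U$. We have $|U| = q^{r(r-1)/2}$, and $U$ meets $C$ since the standard regular unipotent (with $1$'s on the first superdiagonal) lies in $U$. For such a $g = I+N \in C\cap U$, an element of $M_r(\mathbb{F}_q)$ commutes with $g$ if and only if it commutes with the regular nilpotent $N$, and the centralizer of $N$ in $M_r(\mathbb{F}_q)$ is the $r$-dimensional algebra $\mathbb{F}_q[N]$. Intersecting with $U$ forces the coefficient of $I$ to equal $1$, since $N^j$ is strictly upper triangular for $j\geq 1$; hence $C_U(g) = \{I + c_1 N + \cdots + c_{r-1}N^{r-1} : c_i \in \mathbb{F}_q\}$ has order $q^{r-1}$. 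Substituting $|H| = q^{r(r-1)/2}$ and $|C_H(g)| = q^{r-1}$ into Theorem~\ref{thm:least-prime-rational-class-artin-intro} yields
\[
\alpha(G,C) = \frac{|C_H(g)|^{1/2}}{2|H|^{1/2}} = \frac{q^{(r-1)/2}}{2\, q^{r(r-1)/4}} = \frac{1}{2\, q^{(r-1)(r-2)/4}},
\]
on simplifying $\tfrac{r-1}{2} - \tfrac{r(r-1)}{4} = -\tfrac{(r-1)(r-2)}{4}$.

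Finally I would verify the holomorphy hypothesis of Theorem~\ref{thm:least-prime-rational-class-artin-intro}. The group $U$ is a finite $p$-group, hence nilpotent, hence supersolvable, hence monomial: every irreducible character $\chi$ of $U$ is of the form $\mathrm{Ind}_M^U\psi$ for a linear character $\psi$ of some subgroup $M \leq U$, and if $\chi$ is nontrivial then so is $\psi$ (otherwise $\mathrm{Ind}_M^U \psi$ would contain the trivial character and could not be irreducible unless equal to it). By the inductive property of Artin $L$-functions, $L(s,\chi)$ equals the Hecke $L$-function over the fixed field $K^M$ attached to the nontrivial finite-order character $\psi$, which is entire. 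Thus $L(s,\chi)$ is entire for every nontrivial irreducible character of $U$ --- a fortiori for those with $\chi(g)\neq 0$ --- so Theorem~\ref{thm:least-prime-rational-class-artin-intro} applies and gives the stated exponent. There is no substantial obstacle here beyond this bookkeeping; the one point that genuinely uses something is the observation that the rational equivalence class of a regular unipotent consists only of regular unipotents, which is what makes the choice $H = U$ both legitimate and clean.
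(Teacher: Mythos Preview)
Your proof is correct and follows essentially the same approach as the paper: both apply Theorem~\ref{thm:least-prime-rational-class-artin-intro} with $H$ the upper unitriangular subgroup, compute $|H|=q^{r(r-1)/2}$ and $|C_H(g)|=q^{r-1}$, and appeal to the fact that $H$ is a $p$-group (hence monomial) for the holomorphy hypothesis. Your verification that $C$ is a rational equivalence class is in fact slightly more careful than the paper's, which asserts that a regular unipotent has order $p$ (true only when $r\le p$); your argument via the rank of $g^a - I = N(aI+\binom{a}{2}N+\cdots)$ handles all $r$ correctly.
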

	
	An astute reader might notice that when $G = S_n$ and $C$ is the class of an $n$-cycle, the unconditional bound given by \cref{thm:Sn-cycle-intro} is much stronger than that provided by the conditional bound \cref{thm:least-prime-rational-class-artin-intro}.  This is because Theorem~\ref{thm:Sn-cycle-intro} exploits the explicit representation theory of the symmetric group, while Theorem~\ref{thm:least-prime-rational-class-artin-intro} is proved for general groups $G$ without exploiting detailed knowledge of their representation theory. 
	We provide less simplified bounds than Theorem~\ref{thm:least-prime-rational-class-artin-intro} in Section~\ref{subsec:less-simplified} that may yield improved bounds with greater input from representation theory.  
	In our view, though, \cref{thm:Sn-cycle-intro,thm:least-prime-rational-class-artin-intro} are reasons for optimism that Linnik exponents for other groups $G$ can hopefully be unconditionally improved beyond what \cref{thm:least-prime-rational-class-intro} offers.
	
	We build on this by now exploring other explicit examples of what our method yields.

\subsection{Explicit results for small symmetric groups} 
	\label{subsec:small-groups-intro}

	Besides Theorems~\ref{thm:Sn-cycle-intro} and \ref{thm:Sn-asymptotic-simplified}, which yield strong Linnik exponents for large symmetric groups, our approach also yields the best-known Linnik exponent for small symmetric groups outside of a few special cases when $k=\mathbb{Q}$ that we discuss below.  In particular, we have:
	
	\begin{theorem} 
		\label{thm:small-Sn-intro}
		Let $3 \leq n \leq 10$. For all non-identity conjugacy classes $C \subset S_n$, the Linnik exponent $\alpha(S_n,C)$ given in the following table is  admissible in \eqref{eqn:LinnikExponent}: 
		\begin{center}
			\begin{tabular}{|l|c|c|c|c|c|c|c|c|}
				\hline $n$ & $3$ & $4$ & $5$ & $6$ & $7$ & $8$ & $9$ & $10$ \\
				\hline 
				$\alpha(S_n,C)= $ & \bigg. $\dfrac{1}{4}$ & $\dfrac{1}{6} $ & $\dfrac{13}{120}$ & $\dfrac{31}{360}$ & $\dfrac{17}{252}$ & $\dfrac{331}{5760}$ & $\dfrac{17819}{362880}$ & $\dfrac{78941}{1814400}$ \\
				\hfill $\leq$ & $0.250$ & $0.167$ & $0.108$ & $0.086$ & $0.067$ & $0.057$ & $0.049$ & $0.043$ \\ \hline
				$\epsilon_\mathrm{eff}^\mathbb{Q}(n)$ \bigg. & $\dfrac{1}{3}$ & $\dfrac{1}{12}$ & $\dfrac{1}{30}$ & $0$ & $0$ & $\dfrac{1}{3360}$ & $\dfrac{11}{181440}$ & $\dfrac{1}{181440}$ \\ \hline
			\end{tabular}
		\end{center}
	Furthermore, if $k$ does not contain a quadratic subfield, the implied constant in \eqref{eqn:LinnikExponent} is effectively computable for $\epsilon > \frac{\epsilon_\mathrm{eff}^\mathbb{Q}(n)}{[k:\Q]}$. If $k$ does contain a quadratic subfield, it is effectively computable when $\epsilon > \max\left(\frac{2}{n! [k:\mathbb{Q}]}, \frac{\epsilon_\mathrm{eff}^\mathbb{Q}(n)}{[k:\Q]}\right)$.
	\end{theorem}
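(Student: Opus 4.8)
The plan is to treat Theorem~\ref{thm:small-Sn-intro} as a finite verification.  For each $n$ with $3\le n\le 10$ there are only boundedly many non-identity conjugacy classes $C\subset S_n$, and for each we must exhibit an admissible Linnik exponent that is no larger than the value $A(n)$ recorded in the table, together with the asserted effective range.  Since enlarging the exponent only weakens (but does not invalidate) the statement, it suffices to determine, for each fixed $n$, the worst case over the classes: $A(n)$ will be the maximum over non-identity $C\subset S_n$ of the best exponent our method supplies for $(S_n,C)$, and $\epsilon^{\mathbb{Q}}_{\mathrm{eff}}(n)$ the corresponding worst-case threshold.

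For a single pair $(n,C)$ the engine is Theorem~\ref{thm:general-approach}, applied to a subgroup $H\leq S_n$ meeting $C$ (possibly $H=S_n$) and a ``contradictory pair'' $(\Psi_+,\Psi_-)$ of characters of $H$: they must agree off $C$, the Artin $L$-function $L(s,\Psi_+)$ must have a pole at $s=1$ of known positive order, and $L(s,\Psi_-)$ must be entire.  The resulting Linnik exponent is then, up to the usual $\epsilon$, governed by how large the (norms of the) Artin conductors of $L(s,\Psi_+)$ and $L(s,\Psi_-)$ can be relative to $|\Disc(K)|$; concretely one bounds these conductors using the conductor--discriminant formula for $K/K^H$ together with the tower relation $|\Disc(K^H)|^{[K:K^H]}\mid |\Disc(K)|$, and one needs the Brauer--Siegel lower bound $\Res_{s=1}L(s,\Psi_+)\gg_{\epsilon}|\Disc(K)|^{-\epsilon}$.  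So for each class I would choose $H$ and the pair to make both conductors as small a power of $|\Disc(K)|$ as possible, subject to the holomorphy requirements, and then read off the exponent and effective range from Theorem~\ref{thm:general-approach}.

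The delicate point is the holomorphy of $L(s,\Psi_-)$ once $n\geq 5$, since the irreducible Artin $L$-functions of a general $S_n$-extension are not known to be entire.  We do not need this: we need only the particular product $L(s,\Psi_-)$ to be entire, so I would arrange $\Psi_-$ to be a non-negative integer combination of two provably entire kinds of constituent --- (i) characters $\Ind_{H'}^{H}\psi$ with $\psi$ a nontrivial abelian character of a subgroup $H'\leq H$, whose $L$-function is the abelian Hecke $L$-function $L(s,\psi)$ over $K^{H'}$ and hence entire, and (ii) differences $\Ind_{H''}^{H}\mathbf{1}-\Ind_{H'}^{H}\mathbf{1}$ with $H''\subseteq H'$, whose $L$-function is $\zeta_{K^{H''}}(s)/\zeta_{K^{H'}}(s)$ and hence entire by the Aramata--Brauer theorem.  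Because $\Psi_+-\Psi_-$ must be a positive multiple of the class-indicator virtual character $\tfrac{|C|}{|H|}\sum_{\chi}\overline{\chi(g)}\,\chi$, column orthogonality forces certain irreducibles into $\Psi_-$, and the work will consist of packaging these into constituents of types (i) and (ii) of minimal conductor, while $\Psi_+$ is given a Dedekind-zeta constituent $\Ind_{H'}^{H}\mathbf{1}$ (or a power thereof) to produce the pole.  For $n\leq 4$ all the groups involved are solvable, so there is no holomorphy obstruction at all and one simply optimizes the conductor; for $n=5$ and $n=6$ the optimal pairs for the extremal classes already have $\Psi_-$ of the above shape; for $7\leq n\leq 10$ one restricts throughout to pairs built from constituents of types (i) and (ii), which still suffices to beat the generic bounds of Theorems~\ref{thm:least-prime-rational-class-intro} and~\ref{thm:Sn-cycle-intro} on the extremal classes.

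Effectivity is then tracked through the Brauer--Siegel residue bound: by Stark's refinement, the only possible source of ineffectivity in $\Res_{s=1}L(s,\Psi_+)$ is a constituent that is the Dedekind zeta function of a quadratic field or a quadratic Hecke character.  Over $k=\mathbb{Q}$ such a constituent can only arise from the sign character $\sgn$ of $S_n$, so if the pair chosen for the extremal class can be taken with $L(s,\Psi_+)$ free of $\sgn$, the implied constant is effective for every $\epsilon>0$ (this is what produces $\epsilon^{\mathbb{Q}}_{\mathrm{eff}}(6)=\epsilon^{\mathbb{Q}}_{\mathrm{eff}}(7)=0$); otherwise the pole order and the dimensions occurring in the pair give the stated threshold $\epsilon^{\mathbb{Q}}_{\mathrm{eff}}(n)$, and passing to a base field $k$ with a quadratic subfield introduces the extra term $\tfrac{2}{n!\,[k:\mathbb{Q}]}$ exactly as in Theorem~\ref{thm:least-prime-rational-class-intro}.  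The main obstacle will be the case-by-case optimization for $5\leq n\leq 10$: for each non-identity class one must meet simultaneously the combinatorial constraint on $\Psi_+-\Psi_-$, the holomorphy constraint that $\Psi_-$ decompose into constituents of types (i) and (ii), and the demand that the conductor exponents be minimal after the conductor--discriminant and tower bookkeeping; and one must then verify that the maximum of the resulting exponents over all classes is precisely $A(n)$ (and similarly for $\epsilon^{\mathbb{Q}}_{\mathrm{eff}}(n)$).  This is a finite but elaborate computation; the analytic ingredients --- convexity for the holomorphic $L$-functions, Brauer--Siegel, Stark, Aramata--Brauer --- are all applied off the shelf through Theorem~\ref{thm:general-approach}.
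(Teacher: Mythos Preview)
Your overall architecture is right and matches the paper: treat the theorem as a finite check, for each non-identity class $C\subset S_n$ produce a pair $(\Psi_+,\Psi_-)$ satisfying the hypotheses of Theorem~\ref{thm:general-approach}, bound the conductors against $|\Disc(K)|$, and take the worst case over $C$.  Two points deserve sharpening, one of which is a genuine gap.

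\textbf{The conductor bound.}  This is the gap.  You write that the conductors are handled ``using the conductor--discriminant formula for $K/K^H$ together with the tower relation''.  Read literally, that route gives only the degree bound $q(\Psi_-)\le|\Disc(K)|^{2\Psi_-(1)/|G|}$ of Lemma~\ref{lem:conductor-vs-degree}, and this is \emph{not} strong enough to reach the tabulated values.  For instance, for $S_3$ and $C=[(123)]$ the optimal $\Psi_-=\std_2$ has degree $2$, so the degree bound yields $\alpha\le 1/3$, whereas the table claims $1/4$; for $S_5$ and $C=[(12)]$ the degree-optimal $\Psi_-$ has dimension $20$, giving $\alpha\le 1/6$, but the table claims $13/120$.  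What the paper actually uses is the refined tame estimate
\[
\frac{\log q(\Psi_-)}{\log|\Disc(K)|}\ \le\ \max_{1\neq I\leq S_n\ \mathrm{cyclic}}\ \frac{\Psi_-(1)-\frac{1}{|I|}\sum_{g\in I}\Psi_-(g)}{|S_n|\bigl(1-1/|I|\bigr)}\ +\ O_{n,[k:\mathbb{Q}]}(1),
\]
which exploits the actual values $\Psi_-(g)$ on cyclic inertia subgroups (wild primes being absorbed into the implied constant).  You need this refinement, or something equivalent, to hit the stated exponents.

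\textbf{Holomorphy and optimization.}  Your building blocks (i) inductions of nontrivial one-dimensional characters and (ii) Aramata--Brauer differences are fine, but the paper works entirely inside the \emph{monomial cone}: it requires both $\Psi_-$ and $\Psi_+=\Psi_-+\Delta_g$ to be non-negative real combinations of monomial characters (with $\langle\Psi_-,\mathbf{1}\rangle=0$, which forces the monomial summands of $\Psi_-$ to come from nontrivial one-dimensional characters, i.e.\ your type (i)).  Your type (ii) is extra flexibility the paper does not invoke.  More importantly, the paper carries out the search for the degree-minimal $\Psi_-$ by a two-step linear program in Magma/Python: first extract a finite generating set for the monomial cone, then solve a mixed-integer LP for each class.  ``Finite but elaborate computation'' is correct as a description, but you should be aware that the specific numbers in the table are the output of that LP, followed by the refined conductor bound above.

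\textbf{Effectivity.}  One small correction: the value $\epsilon_{\mathrm{eff}}^{\mathbb{Q}}(n)=0$ for $n=6,7$ is not simply because the $\alpha$-extremal class happens to have $\langle\Psi_+,\sgn\rangle=0$.  Rather, once one replaces each class-specific $\alpha(S_n,C)$ by the uniform (larger) table value $A(n)$, the slack $A(n)-\alpha(S_n,C)$ can absorb the ineffective $\epsilon$-loss for the other classes; for $n\in\{6,7\}$ this slack covers every class, yielding $\epsilon_{\mathrm{eff}}^{\mathbb{Q}}(n)=0$.  Your account should track $\alpha(S_n,C)+\epsilon_{\mathrm{eff}}^{\mathbb{Q}}(C)$ against $A(n)$ for each class, not just the extremal one.
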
 
	
	Several remarks are in order.  First, we give more refined Linnik exponents $\alpha(S_n,C)$ that depend on $C$ in Theorem~\ref{thm:small-Sn-bowels} below; the table above collects the \emph{worst case} bounds from that theorem.  Second, Theorem~\ref{thm:small-Sn-intro} excludes $C = \{1\}$, the conjugacy class of the identity, because our method reproduces the Linnik exponent $\alpha(S_n,\{1\}) = 1/2$ that was previously proved by Ge--Milinovich--Pollack \cite{GeMilinovichPollack-Split}. Third, it is an exercise to check (see Lemma~\ref{lem:abelian-bound}) that the largest abelian subgroup $A$ of $S_n$  satisfies $|A| < 2085$ for $n \leq 20$; as a result, the Linnik exponents \eqref{eqn:LinnikExponent-KNW},\eqref{eqn:LinnikExponent-TZ} given by zero-based approaches will be inferior to the bound $\alpha(S_n,\{1\}) = 1/2$, to those given in Theorem~\ref{thm:small-Sn-intro} when $n \leq 10$, and to those given by Theorem~\ref{thm:least-prime-rational-class-intro} when $11 \leq n \leq 20$.  We expect our method to yield even stronger results when $n \geq 11$ (as is partially justified by Theorems~\ref{thm:Sn-cycle-intro} and \ref{thm:Sn-asymptotic-simplified}) but Theorem~\ref{thm:small-Sn-intro} is proved by means of an explicit computation that we have not carried out for any $n \geq 11$.  However, when $n \geq 11$, better bounds than those provided by Theorem~\ref{thm:least-prime-rational-class-intro} (and sometimes even than Theorem~\ref{thm:Sn-cycle-intro}) are available by combining Theorem~\ref{thm:product-of-cycles-characters} and Lemma~\ref{lem:alternate-Sn-choices} as suggested in Lemma~\ref{lem:product-of-characters} below; we carry this out in an example in Section~\ref{subsec:alternative-Sn}.
	
	Lastly, when $k=\mathbb{Q}$, there are three cases where Cho and Kim \cite{ChoKim-SignChange} give stronger bounds than those provided by Theorem~\ref{thm:small-Sn-intro} -- or, indeed, its stronger variant Theorem~\ref{thm:small-Sn-bowels} below.  To our knowledge, these are the only cases where Theorem~\ref{thm:small-Sn-intro} is not the best known bound.  We discuss these three cases quickly in turn:
		\begin{itemize}
			\item $G=S_3$, $C=[(12)]$:  Let $p$ be a prime unramified in an $S_3$ extension of $\mathbb{Q}$. Then, the prime $p$ has has $\mathrm{Frob}_p \in C$ if and only if it is inert in the quadratic resolvent field $\mathbb{Q}(\sqrt{d})$. By exploiting the work of Vinogradov and Burgess mentioned earlier, they obtain the Linnik exponent $\alpha(S_3,[(1\,2)]) = {1}/{8\sqrt{e}}$.  

			\item $G=S_3$, $C=[(123)]$: If $K/\mathbb{Q}$ is a complex $S_3$ extension, by exploiting a result on sign changes of modular forms \cite{Matomaki-SignChange}, they obtain $\alpha(S_3,C) = 3/16$.  If instead $K/\mathbb{Q}$ is a real $S_3$ extension, by exploiting a  result on sign changes of Maass forms \cite{Qu-SignChange} using a subconvexity bound for Maass form $L$-functions \cite{MichelVenkatesh-SubconvexityGL2}, they obtain $\alpha(S_3,C) = 1/4 - \delta$ for some $\delta > 0$. 

			\item $G=S_4$, $C=[(123)]$: Let $p$ be a prime unramified in an $S_4$ extension of $\mathbb{Q}$.  Then $p$ has $\mathrm{Frob}_p \in C$ if and only if its Frobenius in the resolvent $S_3$ extension is also a $3$-cycle.  Exploiting this and the previous example, they obtain either $\alpha(S_4,C)=3/64$ or $\alpha(S_4,C)=1/16 - \delta/4$ depending on the signature of the $S_3$ resolvent.
		\end{itemize}
	
	Our method as described below would easily incorporate some of these improvements (notably those arising from subconvex bounds), while others would take more care and may not work fully uniformly over general number fields as we do.
	
\subsection{Explicit results for some non-symmetric groups}	
	
	While the applicability of our approach to symmetric groups is somewhat special, in that rational equivalence classes and conjugacy classes are the same in this case, we do not expect the \emph{strength} of the Linnik exponents our approach yields to be substantially worse for general groups.	
	This is already partially demonstrated in results like Corollary~\ref{cor:unipotent}, but we also have the following obtained from a computation analogous to that leading to Theorem~\ref{thm:small-Sn-intro}.

	\begin{theorem} \label{thm:simple-examples}
	Let $C$ be a non-identity rational equivalence class of a finite simple group $G$.
	\begin{itemize}
		\item If $G$ is isomorphic to one of the Mathieu groups $M_{11}, M_{12}, M_{22}, M_{23},$ and $M_{24}$, then the Linnik exponent $\alpha(G,C) =\frac{131}{2280} < 0.046$ is admissible in \eqref{eqn:LinnikExponent}. 
		\item If $G$ is isomorphic to $\mathrm{PSL}_2(\mathbb{F}_q)$ for some prime power $q$ satisfying $7 \leq q \leq 23$, then the Linnik exponent $\alpha(G,C) = \frac{19}{112} < 0.17$ is admissible in \eqref{eqn:LinnikExponent}.
	\end{itemize}
	In both cases, the implied constant is effectively computable for all $\epsilon > 0$ unless $k$ has a quadratic subfield, in which case it is effectively computable whenever $\epsilon > \frac{2}{|G|[k:\mathbb{Q}]}$.
	\end{theorem}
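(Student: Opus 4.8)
The plan is to deduce the theorem from the general mechanism of \cref{thm:general-approach}: for a finite group $G$ and a rational equivalence class $C \subseteq G$, any \emph{contradictory pair} $(\Psi_+,\Psi_-)$ of Artin characters of $G$ --- a pair with $\Psi_+(g) = \Psi_-(g)$ for all $g \notin C$, with $L(s,\Psi_-)$ entire, and with $L(s,\Psi_+)$ holomorphic away from a pole of positive integer order at $s=1$ --- yields an admissible Linnik exponent $\alpha(G,C)$ determined by the Artin conductors of $\Psi_+$ and $\Psi_-$, bounded in terms of $|\Disc(K)|$ via the conductor--discriminant formula, together with a lower bound for $\Res_{s=1}L(s,\Psi_+)$ and the convexity bound for holomorphic Artin $L$-functions. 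Thus, exactly as in the proof of \cref{thm:small-Sn-intro} and its refinement \cref{thm:small-Sn-bowels}, the entire statement reduces to a finite search over the character tables of the groups involved.

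Concretely, I would proceed as follows. For each $G$ in the two families and each non-identity rational class $C$ --- data available from the \textsc{Atlas} and \textsc{Gap} --- enumerate the subgroups $H \leq G$ meeting $C$ and search among Artin characters of $H$ for a contradictory pair detecting $C \cap H$, then push it back up to $G$ by induction as in \cref{prop:first-construction}. The essential constraint is that the building blocks of $\Psi_\pm$ must be \emph{provably} holomorphic, so I would restrict to monomial characters (inductions of one-dimensional characters of subgroups, whose $L$-functions are abelian Hecke $L$-functions, hence entire away from $s=1$) together with differences of permutation characters (such as the Steinberg-type character $\Ind_B^G 1 - 1$ for $\mathrm{PSL}_2(\mathbb{F}_q)$ with $B$ a Borel). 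For each candidate pair I would check that $\Psi_+-\Psi_-$ vanishes off $C$, verify that $L(s,\Psi_+)$ has an honest pole of order $\langle \Psi_+, 1\rangle \geq 1$ while $L(s,\Psi_-)$ is entire, bound the conductors of $\Psi_\pm$ in terms of $|\Disc(K)|$, and read off the exponent from \cref{thm:general-approach}. Taking the worst such exponent over all non-identity rational classes within a given family then yields the uniform values $\tfrac{131}{2280}$ and $\tfrac{19}{112}$ claimed; one can surely do better group by group, but the point is a single clean bound across each family.

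The effectivity assertion should be comparatively painless. Each $G$ here is a nonabelian simple group, hence has trivial abelianization and in particular no nontrivial quadratic character, so by the discussion in \S\ref{subsec:approach} the only possible source of ineffectivity in the implied constant is a quadratic subfield of $k$. When $k$ has no quadratic subfield, Stark's effective form of Brauer--Siegel \cite{Stark-EffectiveBrauerSiegel} gives $\Res_{s=1} L(s,\Psi_+) \gg_{[K:\Q],\epsilon} |\Disc(K)|^{-\epsilon}$ effectively for all $\epsilon>0$, and the argument becomes effective; when $k$ does have a quadratic subfield, one falls back on the unconditional (but $\epsilon$-ineffective) Brauer--Siegel bound together with the standard maneuver of absorbing the ambiguous quadratic factor, which costs the restriction $\epsilon > \tfrac{2}{|G|[k:\Q]}$ exactly as in \cref{thm:least-prime-rational-class-intro}.

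I expect the main obstacle to be the holomorphy bookkeeping, and it is precisely this that separates the two families. For the Mathieu groups, essentially all the characters one needs are monomial or differences of permutation characters, so a rather efficient contradictory pair is available and the exponent comes out small. For $\mathrm{PSL}_2(\mathbb{F}_q)$, by contrast, the cuspidal (discrete series) characters of degree $q-1$ are not monomial and their $L$-functions are not known to be entire, so they are off-limits; the search must make do with the principal series characters (monomial, induced from the Borel) and Steinberg-type characters, which forces larger conductors and hence the larger exponent $\tfrac{19}{112}$. Ensuring that each candidate $\Psi_+$ genuinely has a pole of the expected integral order --- rather than merely a nonnegative expected order that could a priori be cancelled by an unexpected zero --- is exactly what the restriction to provably-automorphic building blocks secures, and arranging that restriction while keeping conductors as small as possible is the delicate part of the computation.
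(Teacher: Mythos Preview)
Your proposal is correct and matches the paper's approach: \cref{thm:simple-examples} is proved via the more refined \cref{thm:better-simple-examples}, whose proof simply states that it follows from a computation analogous to that of \cref{thm:small-Sn-bowels}, with code on the associated repository. Your framing as a search over subgroups $H \leq G$ is operationally the same as the paper's linear-programming search for a minimal-degree offset in the monomial cone of $G$, and your effectivity argument via the absence of nontrivial quadratic characters of a nonabelian simple group is exactly right; your heuristic that non-monomiality of the cuspidal characters of $\mathrm{PSL}_2(\mathbb{F}_q)$ is what forces the larger exponent there is plausible but is not part of the paper's argument, which simply reports the numerical outputs in Tables~\ref{tbl:simple-exponents-Mathieu} and~\ref{tbl:simple-exponents-PSL2}.
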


	\begin{remark}
		As with Theorem~\ref{thm:small-Sn-intro}, this theorem is intended as a summary; in Section~\ref{sec:other-groups}, we prove a more precise version that yields smaller Linnik exponents for most rational classes.
	\end{remark}
	
	We close with two more general examples, showing how our approach can meaningfully apply outside of the context of symmetric groups.
	\begin{theorem}
		\label{thm:trace-zero}
		Let $q$ be an odd prime power, and let $C \subset \mathrm{GL}_2(\mathbb{F}_q)$ consist of those matrices with trace $0$.  Then the Linnik exponent
			\[
				\alpha(\mathrm{GL}_2(\mathbb{F}_q),C)	
					= \frac{1}{4(q^2+1)} + \frac{1}{2(q^3-q)}
			\]
		is admissible in \eqref{eqn:LinnikExponent}.  If $k$ does not admit a quadratic subfield and $-1$ is not a square in $\mathbb{F}_q^\times$, the implied constant is effecitvely computable for all $\epsilon>0$.  Otherwise, it is effectively computable for all $\epsilon >  \frac{2}{(q^2-1)(q^2-q) [ k:\mathbb{Q}]}$.
	\end{theorem}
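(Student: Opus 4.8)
The plan is to invoke the general mechanism of Theorem~\ref{thm:general-approach}, which reduces everything to producing an explicit \emph{contradictory pair} of Artin characters $(\Psi_+,\Psi_-)$ of $G := \mathrm{GL}_2(\mathbb{F}_q)$ --- two characters with $\Psi_+(g) = \Psi_-(g)$ whenever $\mathrm{tr}(g)\neq 0$, such that $L(s,\Psi_+)$ has a pole at $s=1$ while $L(s,\Psi_-)$ is entire --- together with small upper bounds for the Artin conductors $\mathfrak{q}(\Psi_\pm)$ and the Brauer--Siegel lower bound for $\mathrm{Res}_{s=1}L(s,\Psi_+)$. Theorem~\ref{thm:general-approach} then manufactures the desired prime, with a Linnik exponent determined by the conductor exponents of $\Psi_\pm$; so the whole task is representation-theoretic, and the two summands $\tfrac{1}{4(q^2+1)}$ and $\tfrac{1}{2(q^3-q)}$ should emerge as the two conductor contributions, the second of which (note $q^3-q = |\mathrm{PGL}_2(\mathbb{F}_q)|$) is the contribution of a one-dimensional character factoring through $\det$.

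The first step is to identify $C$ structurally. By Cayley--Hamilton, $g^2 = \mathrm{tr}(g)\,g - \det(g)I$, so $g^2$ is a scalar exactly when $g$ is central or $\mathrm{tr}(g) = 0$; hence $C \cup Z(G)$ is the full preimage under $\mathrm{GL}_2(\mathbb{F}_q)\twoheadrightarrow\mathrm{PGL}_2(\mathbb{F}_q)$ of the set $I$ of involutions of $\mathrm{PGL}_2(\mathbb{F}_q)$, and since $C$ is a union of $Z(G)$-cosets it suffices to work with characters inflated from $\mathrm{Gal}(K^{Z(G)}/k)\cong\mathrm{PGL}_2(\mathbb{F}_q)$ and detect $I$. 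The involutions of $\mathrm{PGL}_2(\mathbb{F}_q)$ fall into two conjugacy classes --- the one coming from the unique involution in the split torus $T_s$ (of order $q-1$) and the one from the unique involution in the nonsplit torus $T_{ns}$ (of order $q+1$) --- and I expect $T_{ns}$, together with the elementary but key fact that every element of the normalizer $N(T_{ns})$ lying outside $T_{ns}$ has trace zero, to be the source of the ``$q^2+1$''.

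Next I would build $(\Psi_+,\Psi_-)$ as integral combinations of permutation characters $\mathrm{Ind}_H^G\mathbf{1}$ and of characters induced from one-dimensional characters $\mathrm{Ind}_H^G\psi$, with $H$ among $G$, $N(T_s)$, $N(T_{ns})$ and a Borel subgroup $B$, and with $\psi$ either a one-dimensional character of $B$ (parabolic induction) or the order-two sign character of a Weyl quotient $N(T)/T$. Restricting to such building blocks is exactly what keeps the argument unconditional: $L(s,\Psi_+)$ is then a product of Dedekind zeta functions and Hecke $L$-functions (holomorphic except for the pole at $s=1$), and $L(s,\Psi_-)$ a product of entire Hecke $L$-functions, so no case of the Artin conjecture is invoked. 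Using the character tables of $\mathrm{GL}_2(\mathbb{F}_q)$ and $\mathrm{PGL}_2(\mathbb{F}_q)$, one computes these induced characters on the handful of class types (split and elliptic semisimple, regular-unipotent-times-center, central, and the two involution classes), notes that on trace-nonzero classes the $N(T)$-inductions only see the torus part, and solves the resulting small linear system for a combination agreeing off $C$ with $\langle\Psi_+,\mathbf{1}\rangle\geq 1$ and $\langle\Psi_-,\mathbf{1}\rangle = 0$ whose building blocks all have tiny conductor. Each conductor $\mathfrak{q}(\mathrm{Ind}_H^G\psi)$ is bounded via the conductor--discriminant formula and the tower estimate $|\mathrm{Disc}(K^{H'})|\leq|\mathrm{Disc}(K)|^{1/[K:K^{H'}]}$; the residue of $L(s,\Psi_+)$, really the residue of a Dedekind zeta, is $\gg_\epsilon|\mathrm{Disc}(K)|^{-\epsilon}$ by the class number formula and Brauer--Siegel. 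For the effectivity clause I would invoke Stark's refinement confining ineffectivity to quadratic subfields, and observe that the only nontrivial quadratic character of $\mathrm{GL}_2(\mathbb{F}_q)$ is $\eta\circ\det$ with $\eta$ the quadratic character of $\mathbb{F}_q^\times$, whose value on $C$ equals $\eta(\det g) = \pm\eta(-1)$; tracking this as in the effectivity clause of Theorem~\ref{thm:least-prime-rational-class-artin-intro}, the potential obstruction disappears precisely when $-1$ is not a square in $\mathbb{F}_q^\times$ and $k$ has no quadratic subfield.

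The main obstacle is the construction above: exhibiting a contradictory pair whose conductor exponents are as small as $\tfrac{2}{q^3-q}$ and $\tfrac{1}{4(q^2+1)}$, rather than the much larger exponents (of order $1/q$) that a single naive torus induction produces. This demands simultaneously (i) agreeing on \emph{all} trace-nonzero classes, including the non-semisimple ones, which have nonzero trace and are invisible to torus inductions, (ii) retaining the trivial constituent on the $\Psi_+$ side while destroying it on the $\Psi_-$ side, and (iii) choosing the inducing data --- very likely characters of small order, hence with small-conductor $L$-functions, possibly combined with a self-duality/functional-equation input in the spirit of Theorem~\ref{thm:least-prime-rational-class-artin-intro} --- so as to push the conductor down to the claimed size. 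Verifying that this optimization succeeds is a finite but delicate computation with the representation theory of $\mathrm{GL}_2(\mathbb{F}_q)$, and it is the crux of the proof.
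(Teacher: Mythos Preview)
Your plan correctly identifies the overall mechanism (Theorem~\ref{thm:general-approach} plus Corollary~\ref{cor:general-approach}), but it stops short of the one thing that actually matters: an explicit pair $(\Psi_+,\Psi_-)$. You concede as much in your final paragraph. The paper's proof does not pass to $\mathrm{PGL}_2$, does not use torus normalizers, and does not solve any linear system; it writes down $\Psi_\pm$ in one line. Let $B\leq G$ be the upper-triangular Borel, and for each character $\chi$ of $\mathbb{F}_q^\times$ write $(\chi,\overline\chi)$ for the abelian character $\left(\begin{smallmatrix}a&*\\0&d\end{smallmatrix}\right)\mapsto\chi(a)\overline\chi(d)$ of $B$. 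The orthogonality relation
\[
\sum_{\chi}\chi(-1)\,(\chi,\overline\chi)\;=\;(q-1)\,\mathbf{1}_{C\cap B}
\]
is the entire idea: it says that splitting the $q-1$ antidiagonal characters according to the sign $\chi(-1)$ and inducing gives
\[
\Psi_+=\sum_{\chi(-1)=1}\mathrm{Ind}_B^G(\chi,\overline\chi),\qquad
\Psi_-=\sum_{\chi(-1)=-1}\mathrm{Ind}_B^G(\chi,\overline\chi),
\]
both monomial, with $\Psi_+-\Psi_-$ supported on $C$ and $\langle\Psi_+,\mathbf 1\rangle=1$, $\langle\Psi_-,\mathbf 1\rangle=0$. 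No further verification on non-semisimple classes is needed, because the construction is an equality of class functions on $B$ before induction.

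The conductor bound is equally direct and does not require any optimization. Since the $(\chi,\overline\chi)$ are exactly the characters of the cyclic quotient $B\to\mathbb{F}_q^\times$, $a\mapsto a/d$, one has $L(s,\Psi_+)L(s,\Psi_-)=\zeta_F(s)$ for the corresponding degree-$(q-1)$ extension $F/K^B$; since $[K:F]=|B|/(q-1)=q^2-q$, this gives $q(\Psi_++\Psi_-)=|\mathrm{Disc}(F)|\leq|\mathrm{Disc}(K)|^{1/(q^2-q)}$. For $\Delta=\Psi_+-\Psi_-$ one bounds $v_{\mathfrak p}(\mathfrak f_\Delta)$ at tamely ramified primes by noting that at most half the powers of a generator of inertia can have trace zero, whence $q(\Psi_--\Psi_+)\ll|\mathrm{Disc}(K)|^{2/(q^3-q)}$. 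Averaging yields $q(\Psi_-)\ll|\mathrm{Disc}(K)|^{\frac{1}{2(q^2-q)}+\frac{1}{q^3-q}}$, and halving gives the exponent. (In particular the $q^2+1$ in the statement is evidently a misprint for $q^2-q$; your attempt to source it from the nonsplit torus was chasing a typo.) The effectivity clause is exactly as you say: the unique quadratic character of $G$ is $\eta\circ\det$, and $\langle\Psi_+,\eta\circ\det\rangle$ vanishes precisely when $-1$ is a nonsquare.
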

	
	\begin{theorem}
		\label{thm:semidirect-product}
		Let $p$ be prime, let $r \geq 1$, and let $G_0 \leq \mathrm{GL}_r(\mathbb{F}_p)$ act irreducibly on $\mathbb{F}_p^r$.  Let $G = \mathbb{F}_p^r \rtimes G_0$ and let $C = \mathbb{F}_p^r \setminus \{0\} \subset G$.  Then
			\[
				\alpha(G,C)
					= \frac{1}{2(p^r-1)}
			\]
		is an admissible Linnik exponent in \eqref{eqn:LinnikExponent}.  The implied constant is effectively computable for all $\epsilon > \frac{2}{p^r|G_0|[k:\mathbb{Q}]}$in general, and for all $\epsilon > 0$ if $k$ does not admit a quadratic subfield and $G_0$ does not admit a nontrivial quadratic character.
	\end{theorem}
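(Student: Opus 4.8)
The plan is to exhibit an explicit contradictory pair $(\Psi_+,\Psi_-)$ of Artin characters of $G$ in the sense of \cref{subsec:approach}, built from the normal abelian subgroup $V := \mathbb{F}_p^r \trianglelefteq G$, and then to invoke \cref{thm:general-approach}. First I would record that $C = V\setminus\{0\}$ is conjugacy invariant (the abelian group $V$ centralizes $C$, while $G_0$ permutes $V\setminus\{0\}$) and in fact is a union of rational equivalence classes, since $V\trianglelefteq G$ forces $\langle g\rangle\subseteq V$ and every conjugate of $\langle g\rangle$ to remain inside $V$ for each $g\in C$. Put $F := K^V$. Because $V$ is normal, $F/k$ is Galois with $\Gal(F/k)\cong G/V\cong G_0$, and $K/F$ is abelian with $\Gal(K/F)\cong V$. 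Inductivity of Artin $L$-functions then gives $L(s,\Ind_V^G\psi) = L(s,\psi,K/F)$ for every character $\psi$ of $V$; in particular $L(s,\Ind_V^G\mathbf{1}_V) = \zeta_F(s)$ has a simple pole at $s=1$, while $L(s,\psi,K/F)$ is an entire Hecke $L$-function of $F$ for every $\psi\neq\mathbf{1}_V$ (no appeal to Artin's conjecture is needed, as $\psi$ is one-dimensional).

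Next I would set $\Psi_+ := \Ind_V^G\mathbf{1}_V$ and $\Psi_- := \Ind_V^G\chi$, where $\chi$ is a nontrivial character of $V$ to be chosen in the last step. A direct computation with the induction formula---using that $(w,h)^{-1}(v,1)(w,h) = (h^{-1}v,1)$ is independent of $w$, and that conjugation maps $G\setminus V$ into itself since $V\trianglelefteq G$---shows that for any function $f$ on $V$ the class function $\Ind_V^G f$ vanishes on $G\setminus V$ and satisfies $\Ind_V^G f(v,1) = \sum_{h\in G_0} f(hv)$. Consequently $\Psi_\pm$ both vanish on $G\setminus V$ and both equal $[G:V] = |G_0|$ at the identity, so that $\Psi_+(g) = \Psi_-(g)$ for every $g\notin C$: this is the constraint \eqref{eqn:simplified-constraint} in its character-theoretic form. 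Moreover $\Psi_+\neq\Psi_-$, since $\langle\Psi_+,\mathbf{1}_G\rangle = 1$ whereas $\langle\Psi_-,\mathbf{1}_G\rangle = \langle\chi,\mathbf{1}_V\rangle = 0$ by Frobenius reciprocity; thus $\Psi_+-\Psi_-$ is a nonzero class function supported on $C$. Finally $\Psi_+$ is a permutation character, hence $\Psi_+(g)\geq 0$ for all $g$, which makes the associated coefficient $\lambda_{\Psi_+}$ nonnegative---the feature needed so that the summation on the $\Psi_+$ side of \eqref{eqn:simplified-approach} admits a genuine lower bound. Hence $(\Psi_+,\Psi_-)$ is a contradictory pair detecting $C$, with $L(s,\Psi_+)$ having a pole at $s=1$ and $L(s,\Psi_-)$ entire.

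The crux is to choose $\chi$ with small conductor. Writing $\mathfrak{q}(\chi) = |\Disc(F)|\cdot\N_{F/\Q}(\mathfrak{f}_\chi)$ for the analytic conductor of $L(s,\Psi_-) = L(s,\chi,K/F)$, with $\mathfrak{f}_\chi\subseteq\mathcal{O}_F$ the conductor of $\chi$, the conductor--discriminant formula for the abelian extension $K/F$---whose group $V\cong(\Z/p\Z)^r$ has precisely $p^r-1$ nontrivial characters---gives $\N_{F/\Q}(\mathfrak{d}_{K/F}) = \prod_{\psi\neq\mathbf{1}_V}\N_{F/\Q}(\mathfrak{f}_\psi)$. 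Pigeonholing over these $p^r-1$ factors produces a nontrivial $\chi$ with $\N_{F/\Q}(\mathfrak{f}_\chi)\leq\N_{F/\Q}(\mathfrak{d}_{K/F})^{1/(p^r-1)}$, and combining this with the tower formula $|\Disc(K)| = |\Disc(F)|^{p^r}\N_{F/\Q}(\mathfrak{d}_{K/F})$ together with $|\Disc(F)|\geq 1$ yields
\[
\mathfrak{q}(\chi)\;\leq\; |\Disc(F)|^{-\frac{1}{p^r-1}}\,|\Disc(K)|^{\frac{1}{p^r-1}}\;\leq\; |\Disc(K)|^{\frac{1}{p^r-1}}.
\]
Feeding the pair $(\Psi_+,\Psi_-)$ into \cref{thm:general-approach}---with $L(s,\Psi_+) = \zeta_F$ contributing a simple pole at $s=1$, of residue $\gg_{[F:\Q],\epsilon}|\Disc(F)|^{-\epsilon}$ by Brauer--Siegel, and $L(s,\Psi_-)$ entire of conductor at most $|\Disc(K)|^{1/(p^r-1)}$---then produces a degree one prime $\mathfrak{p}$ of $k$, unramified in $K$, with $\Frob_\mathfrak{p}\in C$ and $\N\mathfrak{p}\ll_{|G|,[k:\Q],\epsilon}|\Disc(K)|^{\frac{1}{2(p^r-1)}+\epsilon}$, the extra factor $\tfrac12$ in the exponent being the loss from the convexity bound built into that theorem. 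For the effectivity statement I would note that $\Psi_+$ factors through $G/V\cong G_0$, so the only possible source of ineffectivity is a Siegel zero of a quadratic $L$-function dividing $\zeta_F$ or $\zeta_k$, i.e.\ one attached to a quadratic subfield of $F$ (equivalently a nontrivial quadratic character of $G_0$, as $\Gal(F/k)\cong G_0$) or of $k$; the quantitative form of \cref{thm:general-approach} then gives the thresholds as asserted, with $\tfrac{2}{p^r|G_0|[k:\Q]} = \tfrac{2}{|G|[k:\Q]}$.

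The only step that is not routine is the conductor bound, whose key point---that the $p^r-1$ nontrivial characters of $V$ have entire $L$-functions with conductors multiplying to $\N_{F/\Q}(\mathfrak{d}_{K/F})\leq|\Disc(K)|$, so that one of them has conductor at most $|\Disc(K)|^{1/(p^r-1)}$---is exactly what forces the unusually small exponent; there is no delicate analysis hidden here. I expect any residual difficulty to lie only in confirming that the crude estimates $\N_{F/\Q}(\mathfrak{d}_{K/F})\leq|\Disc(K)|$ and $|\Disc(F)|\geq 1$ cost nothing and in matching the effectivity bookkeeping to the hypotheses of \cref{thm:general-approach}; the rest is manipulation with inductivity of Artin $L$-functions and the conductor--discriminant and tower formulae.
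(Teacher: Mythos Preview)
Your argument is correct and follows the paper's route: take $\Psi_+=\Ind_V^G\mathbf{1}_V$ and $\Psi_-=\Ind_V^G\chi$ for a nontrivial $\chi\in\widehat V$, identify $L(s,\Psi_+)=\zeta_F(s)$ and $L(s,\Psi_-)=L(s,\chi,K/F)$, and bound $q(\Psi_-)$ via the conductor--discriminant formula for the abelian extension $K/F$. The one difference worth flagging is how $\chi$ is selected. The paper asserts that all $p^r-1$ nontrivial characters of $V$ share the same conductor over $F$ (so any $\chi$ works, yielding the exact identity $q(\Psi_+)\,q(\Psi_-)^{p^r-1}=|\Disc K|$), whereas you pigeonhole over the factorization $\N_{F/\Q}\mathfrak d_{K/F}=\prod_{\psi\ne\mathbf{1}}\N_{F/\Q}\mathfrak f_\psi$ to locate one $\chi$ of small conductor. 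Your version is if anything more robust: the paper's equal-conductor claim relies on $G_0$ acting transitively on the lines of $V$, and mere irreducibility does not force this (for instance $G_0\cong C_5$ acting irreducibly on $\mathbb F_2^{\,4}$ has three orbits on nonzero vectors), so pigeonhole sidesteps a delicate point.

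Two cosmetic items to make the write-up self-contained. First, \cref{thm:general-approach} involves $\max\{q(\Psi_+),q(\Psi_-)\}$, so you should record the trivial inequality $q(\Psi_+)=|\Disc F|\le |\Disc F|\cdot\N_{F/\Q}\mathfrak f_\chi=q(\Psi_-)$. Second, that theorem is literally stated for a single rational class, while you (correctly) only verify that $C$ is a \emph{union} of rational classes; simply note that the proof of \cref{thm:general-approach} uses nothing about $C$ beyond conjugacy invariance (the only role of $C$ is the implication $\sigma_{\kp}\notin C\Rightarrow\Psi_+(\sigma_\kp)=\Psi_-(\sigma_\kp)$), so it applies verbatim here. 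The observation that $\lambda_{\Psi_+}\ge 0$ is true but not needed for that theorem.
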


\subsection{Organization and layout of this paper}
	This paper is organized as follows.
	
	In Section~\ref{sec:prelim}, we prepare some preliminaries on Artin $L$-functions and basic estimates. 
	
	In Section~\ref{sec:approach}, we outline our general approach, and prove \cref{thm:general-approach} which reduces the problem of detecting the least prime in a rational equivalence class to a problem purely in the character theory of the finite group $G$.
	
	In Section~\ref{sec:class-function}, we analyze this character theory problem in general, which leads to a proof of Theorems~\ref{thm:least-prime-rational-class-intro} and \ref{thm:least-prime-rational-class-artin-intro}, as well as the less simplified Corollary~\ref{cor:less-simplified-conductor}.  
	
	In Section~\ref{sec:symmetric-groups}, we analyze this character theory problem for symmetric groups in particular, which leads to the proof of Theorem~\ref{thm:Sn-cycle-intro}.  We also carry out in this section the optimization between arguments that leads to the proof of Theorem~\ref{thm:Sn-asymptotic-simplified}.
	
	In Section~\ref{sec:small-symmetric}, we specialize to small symmetric groups and establish \cref{thm:small-Sn-intro}.  We also introduce in Section~\ref{subsec:alternative-Sn} alternative class functions that will yield better results for certain classes in general symmetric groups.
	
	In Section~\ref{sec:other-groups}, we carry out the proofs of Corollary~\ref{cor:unipotent} and Theorems~\ref{thm:simple-examples}--\ref{thm:semidirect-product}.

\section*{Acknowledgements}
The authors would like to thank Jesse Thorner for many useful conversations, and Paul Pollack for suggesting related references when $G$ is abelian. 
PJC was supported by the National Research Foundation of Korea (NRF) grant funded by the Korea government (MSIT) (No. RS-2022-NR069491 and No. RS-2025-02262988 ).  RJLO was partially supported by NSF grant DMS-2200760 and by the Office of the Vice Chancellor for Research at the University of Wisconsin-Madison with funding from the Wisconsin Alumni Research Foundation.  AZ was partially supported by NSERC grant RGPIN-2022-04982.  

\section{Preliminaries on Artin $L$-functions}
	\label{sec:prelim}

To keep the exposition self-contained, we will fix notation for Artin $L$-functions as well as gather some standard lemmas. This discussion closely follows \cite[Section 4]{LemkeOliverThornerZaman-ApproximateArtin} and more details can be found in \cite[Chapter 2]{MurtyMurty-Nonvanishing}.  

\subsection{Artin characters} Let $K/k$ be a Galois extension of number fields with Galois group $G=\Gal(K/k)$. Let $\mathcal{O}_k$ be the ring of integers of $k$. Let $\N = \mathrm{N}_{k/\Q}$ denote the absolute norm for integral ideals of $k$. For each prime $\kp$ of $k$ and each prime $\mathfrak{P}$ of $K$ lying over $\mathfrak{p}$, let $D_{\mathfrak{P}}=\Gal(K_{\mathfrak{P}}/k_{\kp})$, where $K_{\mathfrak{P}}$ and $k_{\kp}$ are the completions of $K$ and $k$ at $\mathfrak{P}$ and $\kp$, respectively.  Let $F_\mathfrak{P}$ and $F_\mathfrak{p}$ denote the residue fields of $\mathfrak{P}$ and $\mathfrak{p}$.  There is a map from $D_{\mathfrak{P}}$ to $\Gal(F_{\mathfrak{P}}/F_{\kp})$ that is surjective by Hensel's lemma.  Define $I_{\mathfrak{P}}$ to be the kernel of this map; we then have an exact sequence
\[
1\to I_{\mathfrak{P}}\to D_{\mathfrak{P}}\to \Gal(F_{\mathfrak{P}}/F_{\kp})\to 1.
\]
The group $\Gal(F_{\mathfrak{P}}/F_{\kp})$ is cyclic with generator $x\mapsto x^{\N\kp}$.  Choose $\sigma_{\mathfrak{P}}\in D_{\mathfrak{P}}$ whose image in $\Gal(F_{\mathfrak{P}}/F_{\kp})$ is this generator; it is only defined modulo $I_{\mathfrak{P}}$.  We have $I_{\mathfrak{P}}=1$ for all unramified $\kp$, so for these $\kp$, $\sigma_{\mathfrak{P}}$ is well-defined.   If we choose another prime $\mathfrak{P}'$ above $\kp$, then $I_{\mathfrak{P}'}$ and $D_{\mathfrak{P}'}$ are conjugates of $I_{\mathfrak{P}}$ and $D_{\mathfrak{P}}$.  For $\kp$ unramified, we denote by $\sigma_{\kp}$ the conjugacy class of Frobenius automorphisms at primes $\mathfrak{P}$ above $\kp$.

Let $\psi : G \to \mathbb{C}$ be a character of $G$ with degree $\psi(1)$. Let $\rho_{\psi}\colon G \to\mathrm{GL}_{\psi(1)}(\mathbb{C})$ be its corresponding complex representation of $G$, and let $V$ be the underlying complex vector space on which $\rho_{\psi}$ acts.  We may restrict this action to the decomposition group $D_{\mathfrak{P}}$ and see that the quotient $D_{\mathfrak{P}}/I_{\mathfrak{P}}$ acts on the subspace $V^{I_{\mathfrak{P}}}$ of $V$ on which $I_{\mathfrak{P}}$ acts trivially.  Any $\sigma_{\mathfrak{P}}$ will have the same characteristic polynomial on this subspace.

Let $\mathbf{1}_G$ denote the character of the trivial representation on $G$. Let $\langle \cdot \, ,\,  \cdot \rangle_G$  denote the $G$-inner product on characters, so $\langle \psi_1, \psi_2 \rangle_G =  \frac{1}{|G|}\sum_{g \in G} \psi_1(g) \overline{\psi_2}(g)$ for any characters $\psi_1,\psi_2$ of $G$. We shall write $\langle \cdot \, ,\,  \cdot \rangle$ instead of $\langle \cdot \, ,\,  \cdot \rangle_G$ when there is no ambiguity about the group.

\subsection{Local and global Artin $L$-functions}
For the nonarchimedean place $v_{\kp}$ of $k$ associated to the prime ideal $\kp$, we define the local Artin $L$-function of $\psi$ at $\mathfrak{p}$ by  
\begin{equation}
	\label{eqn:Artin-LocalDefinition}
L_{v_{\kp}}(s,\psi)=\det(1 - \rho_{\psi}(\sigma_{\mathfrak{P}})|V^{I_{\mathfrak{P}}}\N \kp^{-s})^{-1} = \prod_{j=1}^{\psi(1)} \Big(1 - \frac{\alpha_{j,\psi}(\mathfrak{p})}{(\mathrm{N}\mathfrak{p})^s} \Big)^{-1} \qquad \text{for } \Re(s) > 0,
\end{equation}
where  $\{ \alpha_{j,\psi}(\mathfrak{p}) \}_{j=1}^{\psi(1)}$ are the complex numbers satisfying $|\alpha_{j,\psi}(\mathfrak{p})| \leq 1$. Hence,  $L_{\mathfrak{p}}(s,\psi)$ does not vanish for $\Re(s) > 0$. Note that the matrix $\rho_{\psi}(\sigma_{\mathfrak{P}})|V^{I_{\mathfrak{P}}}$ remains the same if one changes the prime $\mathfrak{P}$ lying above $\kp$; indeed, if $\kp$ is unramified, then $\rho_{\psi}(\sigma_{\mathfrak{P}})|V^{I_{\mathfrak{P}}}=\rho_{\psi}(\sigma_{\kp})$.  

Since $|\alpha_{j,\psi}(\kp)|\leq 1$ for all $j$ and $\kp$, we may define the (global) Artin $L$-function of $\psi$ to be 
\begin{equation}
	\label{eqn:Artin-EulerProduct}
L(s,\psi) = \prod_{\kp}L_{v_{\kp}}(s,{\psi})
\qquad \text{for } \Re(s) > 1,
\end{equation}
where the absolutely convergent Euler product runs over all prime ideals $\mathfrak{p}$ of $k$. By formally expanding this Euler product, this Artin $L$-function can be rewritten as a Dirichlet series over rational integers and an Euler product rational primes: 
\begin{equation}
	\label{eqn:Artin-RationalEulerProduct}
L(s,\psi) = \sum_{n=1}^{\infty} \lambda_\psi(n) n^{-s} = \prod_p \Big(1 + \sum_{j=1}^{\infty} \frac{\lambda_\psi(p^j)}{p^{js}} \Big) \qquad \text{for } \Re(s) > 1, 
\end{equation}
where $\lambda_{\psi}$ is a multiplicative function defined via this identity. Thus, for any prime $p$, 
\begin{equation} \label{eqn:Artin-LocalFactorComparison}
1 + \sum_{j=1}^{\infty} \frac{\lambda_\psi(p^j)}{p^{js}} = \prod_{\kp \, \mid \, (p)} \prod_{j=1}^{\psi(1)} \Big(1 - \frac{\alpha_{j,\psi}(\mathfrak{p})}{(\mathrm{N}\mathfrak{p})^s} \Big)^{-1}
\end{equation}
and, for any prime $p \nmid D_K$, 
\begin{equation} \label{eqn:Artin-PrimeCoefficient}
\lambda_{\psi}(p) = \sum_{\N\kp = p} \sum_{j=1}^{\psi(1)} \alpha_{j,\psi}(\kp)  = \sum_{\N\kp = p} \psi(\sigma_{\kp}). 
\end{equation}
Moreover, for any integer $n \geq 1$,  as there are $[k:\Q]$ prime ideals of $k$ above a rational prime, 
\begin{equation} \label{eqn:Artin-DivisorBound}
|\lambda_\psi(n)| \leq \tau_{\psi(1) [k:\mathbb{Q}]}(n),
\end{equation}
where $\tau_{m}$ is the $m$-fold divisor function over the integers with Dirichlet series equal to the $m$-th power of the Riemann zeta function.

\subsection{Completed Artin $L$-function and Artin's conjecture} 
Let $a = a(\psi)$ be the dimension of the $+1$ eigenspace of complex conjugation. For each archimedean place $v$ of $k$, define
\[
L_v(s,\psi) = \begin{cases}
	\Gamma_{\R}(s)^{n}\Gamma_{\R}(s+1)^n&\mbox{if $k_v=\mathbb{C}$,}\\
\Gamma_{\R}(s)^a \Gamma_{\R}(s+1)^{n-a}&\mbox{if $k_v=\R$,}
\end{cases}
\]
where $\Gamma_{\R}(s) := \pi^{-s/2}\Gamma(s/2)$. Define the gamma factor of $\psi$ by 
\[
\gamma(s,\psi) :=\prod_{\textup{$v$ archim.}}L_v(s,\psi).
\]
Let the integral ideal $\kf_{{\psi}}\subseteq\mathcal{O}_k$  denote the arithmetic conductor of ${\psi}$ over $k$ (see \eqref{eqn:artin-conductor-def}), and let $q(\psi)$ be the analytic conductor of $\psi$ over $k$ which is given by 
\begin{equation} \label{eqn:Artin-AnalyticConductor}
q(\psi) := |\Disc(k)|^{\psi(1)} \N\kf_{{\psi}}.	
\end{equation}
The completed Artin $L$-function of $\psi$ is therefore defined as
\begin{equation}
\label{eqn:Artin-Completed}
\Lambda(s,{\psi}):= q(\psi)^{s/2} (s(s-1))^{\langle \psi, \mathbf{1}_G\rangle} L(s,\psi)\gamma(s,\psi),
\end{equation}
where $\langle \psi, \mathbf{1}_G \rangle$ is the $G$-inner product of $\psi$ with the character $\mathbf{1}_G$ of the trivial representation of $G$. The root number $W(\psi)$ is a complex number of modulus one such that
\[
	\Lambda(s,\psi) = W(\psi) \Lambda(1-s,\overline{\psi})
\]
for all $s\in\mathbb{C}$ at which $\Lambda(s,\psi)$ is holomorphic, where $\overline{\psi}$ is the complex conjugate of $\psi$.

The Artin conjecture asserts that $\Lambda(s,\psi)$ is entire or, equivalently, that  $L(s,\psi)$ can be analytically continued to  $\mathbb{C}$ apart possibly from a pole at $s=1$ of order $\langle \psi, \mathbf{1}_G \rangle$.  However, this conjecture is unknown in general. As a special case, if $\psi$ is $1$-dimensional, then Artin reciprocity shows that $L(s,\psi)$ is a Hecke $L$-function and hence   $(s-1)^{\langle \psi, \mathbf{1}_G\rangle} L(s,\psi)$  is entire.

\subsection{Monomial characters} Artin $L$-functions respect representation theoretic properties. 
First, for any character $\psi$, if there are  coefficients $c_i \in \Q$ and characters $\chi_i$ of $G$ such that
\begin{equation} 
	\label{eqn:Artin-Summation}
	\psi = \sum_{i=1}^k c_i \chi_{i} \quad \text{ then } \quad	L(s,\psi) = \prod_{i=1}^k L(s,\chi_{i})^{c_i} \quad \text{ and }\quad q(\psi) = \prod_{i=1}^k q(\chi_i)^{c_i}.  
\end{equation}
Second, if $\phi$ is a character of a subgroup $H \subseteq G$, then $\Ind_H^G \phi$ is the character of $G$ induced by $\phi$ and there is an equality of the associated $L$-functions:
\begin{equation} 
	\label{eqn:Artin-Induction}
L(s,\phi) = L(s,\Ind_H^G \phi).
\end{equation}
Notice $L(s,\phi)$ and $L(s,\Ind_H^G \phi)$ are respectively associated to the extensions $K/K^H$ and $K/k$, where $K^H$ denotes the subfield fixed by $H$. 

A character of a group $G$ is monomial if it is induced from a $1$-dimensional character  of a subgroup $G$, i.e. $\Ind_H^G \phi$ is a monomial character if $\phi$ is $1$-dimensional. Class field theory implies that $L(s,\phi)$ is a Hecke $L$-function, hence entire apart from a simple pole at $s=1$ if $\phi$ is trivial.  We record two useful but standard lemmas, and introduce a convenient definition.

\begin{lemma}[Brauer] \label{lem:Brauer}
Every character $\psi$ of a finite group $G$ is an integer linear combination of monomial characters of $G$.  
As a consequence, every Artin $L$-function $L(s,\psi)$ continues meromorphically to all of $\mathbb{C}$ and the order of the possible pole at $s=1$ is precisely $\langle \psi, \mathbf{1}_G\rangle$.
\end{lemma}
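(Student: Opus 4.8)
Brauer's theorem (Lemma~\ref{lem:Brauer}) — every character of a finite group $G$ is an integer linear combination of monomial characters, and as a consequence every Artin $L$-function continues meromorphically to $\mathbb{C}$ with pole at $s=1$ of order exactly $\langle \psi, \mathbf{1}_G \rangle$.

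The plan is to obtain the first assertion by invoking Brauer's induction theorem off the shelf, and then to read off the analytic consequence from the formalism \eqref{eqn:Artin-Summation}--\eqref{eqn:Artin-Induction} already established above, together with one classical input: the nonvanishing of Hecke $L$-functions at $s=1$.

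For the representation-theoretic statement I would simply cite Brauer's theorem (see, e.g., Serre's \emph{Linear Representations of Finite Groups}, or \cite[Chapter 2]{MurtyMurty-Nonvanishing}): every character $\psi$ of $G$ can be written as $\psi = \sum_i c_i \Ind_{H_i}^G \phi_i$ with $c_i \in \Z$, each $H_i \leq G$, and each $\phi_i$ a one-dimensional character of $H_i$. The genuinely deep content here — Brauer's characterization of virtual characters via $p$-elementary subgroups — I would not reprove; it is the standard black box.

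For the consequence, combine \eqref{eqn:Artin-Summation} and \eqref{eqn:Artin-Induction} to write $L(s,\psi) = \prod_i L(s,\phi_i)^{c_i}$. By Artin reciprocity each $L(s,\phi_i)$ is a Hecke $L$-function over the field $K^{H_i}$, hence continues meromorphically to $\C$; therefore so does $L(s,\psi)$. To pin down the order of the pole at $s=1$, recall that $L(s,\phi_i)$ has a simple pole at $s=1$ when $\phi_i = \mathbf{1}_{H_i}$ and is holomorphic and \emph{nonzero} at $s=1$ otherwise (the nonvanishing being the classical fact that a Hecke $L$-function of a nontrivial character does not vanish on $\Re(s)=1$). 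Thus $\mathrm{ord}_{s=1} L(s,\phi_i) = -\langle \phi_i, \mathbf{1}_{H_i}\rangle_{H_i}$ for every $i$, and since the order of a zero or pole is additive over products, $\mathrm{ord}_{s=1} L(s,\psi) = -\sum_i c_i \langle \phi_i, \mathbf{1}_{H_i}\rangle_{H_i}$. Frobenius reciprocity gives $\langle \phi_i, \mathbf{1}_{H_i}\rangle_{H_i} = \langle \Ind_{H_i}^G \phi_i, \mathbf{1}_G\rangle_G$, so the sum equals $\langle \psi, \mathbf{1}_G\rangle_G$; since this quantity is nonnegative for a genuine character, $L(s,\psi)$ has a pole at $s=1$ of order precisely $\langle \psi, \mathbf{1}_G\rangle_G$ (and is holomorphic there when this number is $0$).

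The only real subtlety — and hence the main point where one cannot be fully self-contained — is ruling out a zero of some $L(s,\phi_i)$ at $s=1$ when the corresponding $c_i$ is negative: without the nonvanishing of Hecke $L$-functions on $\Re(s)=1$ one would obtain only the inequality $\mathrm{ord}_{s=1} L(s,\psi) \geq -\langle \psi, \mathbf{1}_G\rangle_G$ rather than equality. Since that nonvanishing is a classical theorem of Hecke, I would quote it rather than reprove it, and everything else is bookkeeping with \eqref{eqn:Artin-Summation}--\eqref{eqn:Artin-Induction}.
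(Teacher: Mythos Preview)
Your argument is correct and is precisely the standard deduction. Note, however, that the paper does not actually give a proof of this lemma: it is stated with the attribution ``[Brauer]'' and no proof, treating both Brauer's induction theorem and its analytic consequence as classical facts to be quoted. Your sketch---citing Brauer's induction theorem as a black box, factoring $L(s,\psi)$ via \eqref{eqn:Artin-Summation}--\eqref{eqn:Artin-Induction} into Hecke $L$-functions, and then invoking the nonvanishing of nontrivial Hecke $L$-functions at $s=1$ together with Frobenius reciprocity---is exactly the textbook argument one would supply (and is essentially what \cite[Chapter 2]{MurtyMurty-Nonvanishing} does), so there is no genuine difference in approach to discuss.
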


\begin{lemma}
	\label{lem:Monomial}
	If  $K/k$ is a Galois extension of number fields with $G = \mathrm{Gal}(K/k)$ and $\psi$ is a character of $G$ expressible as a non-negative $\mathbb{R}$-linear combination of monomial characters of $G$, then $L(s,\psi)$ is entire apart possibly from a pole at $s=1$ of order $\langle \psi,\mathbf{1}_G\rangle$. 
\end{lemma}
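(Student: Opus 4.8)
The plan is to reduce the claim to a statement about a product of Hecke $L$-functions with \emph{non-negative integer} exponents, where it is immediate, and then transfer the conclusion back to $L(s,\psi)$ itself using that $L(s,\psi)$ is already known to be meromorphic on $\mathbb{C}$.

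First I would arrange for the coefficients to be non-negative integers. Write $\psi = \sum_i c_i \theta_i$ with $\theta_i = \Ind_{H_i}^G \phi_i$ monomial and $c_i \geq 0$. Expanding $\psi$ and each $\theta_i$ in the basis of irreducible characters of $G$ produces vectors with non-negative integer coordinates, since $\psi$ and the $\theta_i$ are genuine characters, so the relation $\psi = \sum_i c_i\theta_i$ becomes a linear system $A\mathbf{c}=\mathbf{b}$ with $A$ and $\mathbf{b}$ integral. A rational linear system that has a non-negative real solution has a non-negative rational solution (a basic feasible solution is rational, by Cramer's rule), so we may take each $c_i \in \mathbb{Q}_{\geq 0}$; clearing denominators, there is an integer $N \geq 1$ with $Nc_i \in \mathbb{Z}_{\geq 0}$ for all $i$ and $N\psi = \sum_i (Nc_i)\,\theta_i$.

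Next I would factor. By \eqref{eqn:Artin-Summation} and \eqref{eqn:Artin-Induction},
\[
L(s,\psi)^N = L(s,N\psi) = \prod_i L(s,\phi_i)^{Nc_i}.
\]
Each $\phi_i$ is one-dimensional, so by Artin reciprocity $L(s,\phi_i)$ is a Hecke $L$-function over $K^{H_i}$: it is entire when $\phi_i$ is nontrivial, and it equals $\zeta_{K^{H_i}}(s)$ — holomorphic on $\mathbb{C}\setminus\{1\}$ with a simple pole at $s=1$ — when $\phi_i$ is trivial. Since each exponent $Nc_i$ is a non-negative integer, the right-hand side is a genuine finite product, so $L(s,\psi)^N$ is holomorphic on $\mathbb{C}\setminus\{1\}$ and meromorphic at $s=1$, with a pole there of order $\sum_{i:\,\phi_i=\mathbf{1}} Nc_i = \langle N\psi,\mathbf{1}_G\rangle$, the last equality by Frobenius reciprocity $\langle \Ind_{H_i}^G\phi_i,\mathbf{1}_G\rangle = \langle\phi_i,\mathbf{1}_{H_i}\rangle$.

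Finally I would descend from $L(s,\psi)^N$ to $L(s,\psi)$. By Lemma~\ref{lem:Brauer}, $L(s,\psi)$ is meromorphic on $\mathbb{C}$ and not identically zero, so near any point $s_0$ we may write $L(s,\psi) = (s-s_0)^a g(s)$ with $a \in \mathbb{Z}$ and $g$ holomorphic and non-vanishing at $s_0$; then $L(s,\psi)^N = (s-s_0)^{Na}g(s)^N$. For $s_0 \ne 1$, holomorphy of the left side forces $Na \geq 0$, i.e.\ $a \geq 0$, so $L(s,\psi)$ is holomorphic at $s_0$; at $s_0 = 1$, comparing pole orders gives $a = -\langle\psi,\mathbf{1}_G\rangle$, which is exactly the order-of-pole assertion already recorded in Lemma~\ref{lem:Brauer}. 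The only slightly delicate step is the real-to-rational reduction, which is the reason I would set up the linear system over the irreducible-character basis, where all the data are integral — the values of the monomial characters $\theta_i$ themselves are algebraic integers but in general not rational, so one cannot profitably work with them directly.
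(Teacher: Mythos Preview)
The paper records Lemma~\ref{lem:Monomial} as a ``standard lemma'' and gives no proof, so there is nothing to compare against directly. Your argument is correct and supplies exactly the kind of justification the paper omits: the reduction from non-negative real coefficients to non-negative rational ones via a rational basic feasible solution is sound (the feasible region $\{c : Ac=b,\ c\ge 0\}$ with $A,b$ integral is a pointed rational polyhedron, so if non-empty it has a rational vertex), and the passage from holomorphy of $L(s,\psi)^N$ to that of $L(s,\psi)$ using the meromorphy furnished by Lemma~\ref{lem:Brauer} is clean. One minor remark: for the order-of-pole assertion at $s=1$ you do not even need the $N$th-power computation, since Lemma~\ref{lem:Brauer} already gives it; the content of your argument is the holomorphy away from $s=1$.
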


\begin{definition} 
	The \textit{monomial cone} of a finite group $G$ is the set of characters of $G$ expressible as non-negative $\mathbb{R}$-linear combinations of monomial characters of $G$. 
\end{definition}

\subsection{Convexity and residues} We record two standard estimates for Artin $L$-functions. First, we cite a convexity estimate assuming holomorphy.

\begin{lemma} \label{lem:Convexity}
	Let $K/k$ be a Galois extension of number fields with $G = \mathrm{Gal}(K/k)$. Let $\psi$ be a character of $G$. If the function $(s-1)^{\langle \psi, \mathbf{1}_G\rangle} L(s,\psi)$	is entire then, for $\epsilon > 0$ and $s = \sigma+it$ with $0 \leq \sigma \leq 1$ and $t \in \R$, 
	\[
	\Big| \Big(\frac{s-1}{s+1}\Big)^{\langle \psi, \mathbf{1}_G\rangle} L(s,\psi)\Big| \ll_{\psi(1), [k:\Q],\varepsilon}   \big( q(\psi) |1+it|^{\psi(1) [k:\mathbb{Q}]} \big)^{\frac{1-\sigma}{2}+\varepsilon}. 
	\] 
\end{lemma}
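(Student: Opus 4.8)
The plan is to run a textbook Phragmén--Lindelöf (convexity) argument on the function
\[
G(s) := \Big(\frac{s-1}{s+1}\Big)^{\langle \psi, \mathbf{1}_G\rangle} L(s,\psi),
\]
whose modulus is exactly the quantity appearing on the left-hand side of the lemma. By the hypothesis that $(s-1)^{\langle \psi, \mathbf{1}_G\rangle} L(s,\psi)$ is entire, $G$ is holomorphic in the half-plane $\Re(s) > -1$, and hence in every vertical strip it meets there; moreover, Lemma~\ref{lem:Brauer} exhibits $L(s,\psi)$ as a finite product of integer powers of Hecke $L$-functions, each of which is meromorphic of order $1$, so $G$ has finite order in vertical strips. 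Write $d := \psi(1)[k:\mathbb{Q}]$. The first ingredient is the trivial bound on the right edge: for $s = 1+\epsilon+it$ one has $|(s-1)/(s+1)| \le 1$, while the absolutely convergent Dirichlet series \eqref{eqn:Artin-RationalEulerProduct} together with the divisor bound \eqref{eqn:Artin-DivisorBound} gives $|L(s,\psi)| \le \zeta(1+\epsilon)^{d} \ll_{\psi(1),[k:\mathbb{Q}],\epsilon} 1$; hence $|G(1+\epsilon+it)| \ll_{\psi(1),[k:\mathbb{Q}],\epsilon} 1$.

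The second ingredient is the matching bound on the left edge $\Re(s) = -\epsilon$, obtained from the functional equation. From the definition \eqref{eqn:Artin-Completed}, the relation $\Lambda(s,\psi) = W(\psi)\Lambda(1-s,\overline\psi)$, the invariances $q(\overline\psi)=q(\psi)$, $\langle\overline\psi,\mathbf{1}_G\rangle = \langle\psi,\mathbf{1}_G\rangle$, $|W(\psi)|=1$, and the elementary identity $(1-s)(-s) = s(s-1)$, the polynomial factors cancel and one is left with
\[
L(s,\psi) = W(\psi)\, q(\psi)^{\frac12 - s}\, \frac{\gamma(1-s,\overline\psi)}{\gamma(s,\psi)}\, L(1-s,\overline\psi).
\]
On $\Re(s) = -\epsilon$ one has $|q(\psi)^{1/2-s}| = q(\psi)^{1/2+\epsilon}$; since $a(\overline\psi)=a(\psi)$, both $\gamma(1-s,\overline\psi)$ and $\gamma(s,\psi)$ are products of $d$ factors $\Gamma_{\mathbb{R}}$ whose exponential parts cancel in the ratio, so Stirling's formula gives $|\gamma(1-s,\overline\psi)/\gamma(s,\psi)| \asymp_{\psi(1),[k:\mathbb{Q}]} |1+it|^{d(\frac12+\epsilon)}$ uniformly in $t$; and $|L(1-s,\overline\psi)| \ll_{\psi(1),[k:\mathbb{Q}],\epsilon} 1$ since $\Re(1-s)=1+\epsilon$, again by \eqref{eqn:Artin-DivisorBound}. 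Combined with the bound $|(s-1)/(s+1)|^{\langle\psi,\mathbf{1}_G\rangle} \ll_{\psi(1),\epsilon} 1$ valid on this line, this yields $|G(-\epsilon+it)| \ll_{\psi(1),[k:\mathbb{Q}],\epsilon} \big(q(\psi)|1+it|^d\big)^{\frac12+\epsilon}$.

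Finally, apply the Phragmén--Lindelöf principle in the strip $-\epsilon \le \Re(s) \le 1+\epsilon$ to $G$ (legitimate since $G$ is holomorphic there and of finite order), after the routine reduction of polynomially growing boundary data to the bounded case. Interpolating the two edge bounds gives, for $0 \le \sigma \le 1$,
\[
|G(\sigma+it)| \ll_{\psi(1),[k:\mathbb{Q}],\epsilon} \big(q(\psi)|1+it|^d\big)^{(\frac12+\epsilon)\frac{1+\epsilon-\sigma}{1+2\epsilon}},
\]
and since the exponent is $\le \frac{1-\sigma}{2} + c\epsilon$ for an absolute constant $c$ uniformly in $\sigma\in[0,1]$, replacing $\epsilon$ by $\epsilon/c$ gives the stated inequality. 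The step demanding the most care is the left-edge estimate, namely making the Stirling bound for the ratio of gamma factors uniform in $t$ (including near $t=0$) and in the parameters $\psi(1)$, $[k:\mathbb{Q}]$; but this is standard bookkeeping rather than a genuine obstacle, and the holomorphy hypothesis enters only to guarantee that $G$ is holomorphic across the strip so that the maximum principle applies.
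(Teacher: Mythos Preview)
Your argument is correct and complete: the Phragm\'en--Lindel\"of interpolation between the trivial bound on $\Re(s)=1+\epsilon$ and the functional-equation bound on $\Re(s)=-\epsilon$ is exactly the standard route, and your bookkeeping on the gamma ratio and the handling of the polar factor $(s-1)/(s+1)$ is fine. (In fact the interpolated exponent $(\tfrac12+\epsilon)\frac{1+\epsilon-\sigma}{1+2\epsilon}$ simplifies exactly to $\tfrac{1-\sigma}{2}+\tfrac{\epsilon}{2}$, so no rescaling constant $c$ is needed.)

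The paper does not prove this lemma at all; it simply cites \cite[Lemma~4.1]{LemkeOliver-Smith-FaithfulArtin} for an explicit version and observes that the dependence on $\langle\psi,\mathbf{1}_G\rangle$ may be absorbed into that on $\psi(1)$ since $0\le\langle\psi,\mathbf{1}_G\rangle\le\psi(1)$. Your write-up therefore supplies what the paper outsources, and the underlying mechanism (functional equation plus convexity) is the same as what the cited reference carries out.
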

\begin{proof}
See, for example, an explicit version by \cite[Lemma 4.1]{LemkeOliver-Smith-FaithfulArtin}. Note the implied constant's dependence on ${\langle \psi, \mathbf{1}_G\rangle} $ can be suppressed since $0 \leq {\langle \psi, \mathbf{1}_G\rangle}  \leq \psi(1)$. 
\end{proof}

Second, we record a recent effective lower bound on the residue of an Artin $L$-function. 
\begin{lemma} \label{lem:ResidueLowerBound}
	Let $K/k$ be a Galois extension of number fields with $G = \mathrm{Gal}(K/k)$. Let $\chi$ be any character of $G$ with $\langle \chi, \mathbf{1}_G \rangle= 1$. Let $\nu(\chi)$ be the maximum multiplicity $\langle \chi, \psi \rangle$ over all trivial or quadratic characters $\psi$ of $G$ whose Artin $L$-function $L(s,\psi)$ has a real zero $\beta_{\psi} > 1 - \frac{1}{4 \log |\mathrm{Disc}(K)|}$.   For $\epsilon > 0$,
	\[
	 \Res_{s=1} L(s,\chi)    \gg_{\chi(1),|G|,[k:\Q],\epsilon} |\mathrm{Disc}(K)|^{-\epsilon },
	\]
 where the implied constant is effectively computable if $\epsilon > \frac{\nu(\chi)}{[K:\Q]}$.
\end{lemma}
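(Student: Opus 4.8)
We sketch the approach. The plan is to reduce, by Brauer induction, to Hecke $L$-functions; to express $\Res_{s=1}L(s,\chi)$ as a product of Dedekind zeta residues and values at $s=1$ of Hecke $L$-functions; to bound these using the convexity estimate and the analytic class number formula; and finally to isolate the single potential Siegel zero responsible for any ineffectivity. Concretely, by Brauer's theorem (Lemma~\ref{lem:Brauer}) we write $\chi=\sum_i n_i\Ind_{H_i}^G\phi_i$ with $n_i\in\Z$, subgroups $H_i\leq G$, and $1$-dimensional characters $\phi_i$ of $H_i$, so that by \eqref{eqn:Artin-Summation} and \eqref{eqn:Artin-Induction}
\[
	L(s,\chi)=\prod_i L(s,\phi_i)^{n_i},
\]
where each $L(s,\phi_i)$ is the Hecke $L$-function over $F_i:=K^{H_i}$ attached to $\phi_i$ by Artin reciprocity --- entire if $\phi_i$ is nontrivial, and equal to $\zeta_{F_i}(s)$ if $\phi_i=\mathbf{1}_{H_i}$. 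Since $\langle\Ind_{H_i}^G\phi_i,\mathbf{1}_G\rangle=\langle\phi_i,\mathbf{1}_{H_i}\rangle$ is $1$ or $0$ according as $\phi_i$ is trivial or not, the hypothesis $\langle\chi,\mathbf{1}_G\rangle=1$ forces $\sum_{i:\phi_i=\mathbf{1}_{H_i}}n_i=1$. Extracting the leading Laurent coefficient at $s=1$, and using the classical non-vanishing of Hecke $L$-functions there, we obtain
\[
	\Res_{s=1}L(s,\chi)=\prod_{i:\,\phi_i=\mathbf{1}_{H_i}}\big(\Res_{s=1}\zeta_{F_i}(s)\big)^{n_i}\prod_{i:\,\phi_i\neq\mathbf{1}_{H_i}}L(1,\phi_i)^{n_i}.
\]
The number of factors and the $|n_i|$ may be bounded in terms of $\chi(1)$ and $|G|$ alone, and each $\phi_i$ has analytic conductor $q(\phi_i)\leq|\Disc(K)|$ by the conductor--discriminant formula (since $\Ind_{H_i}^G\phi_i$ is a constituent of the regular representation of $G$), so it suffices to bound each factor from above and below.

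The upper bounds are effective and follow from the convexity estimate (Lemma~\ref{lem:Convexity}) evaluated at $s=1$: $|L(1,\phi_i)|\ll_{\chi(1),[k:\Q],\epsilon}|\Disc(K)|^\epsilon$ and $\Res_{s=1}\zeta_{F_i}(s)\ll_{[k:\Q],\epsilon}|\Disc(F_i)|^\epsilon\leq|\Disc(K)|^\epsilon$. For the lower bounds, the analytic class number formula, with $h_{F_i}\geq1$, the standard lower bound for the regulator, and $w_{F_i}\ll_{[k:\Q]}1$, gives $\Res_{s=1}\zeta_{F_i}(s)\gg_{[k:\Q],\epsilon}|\Disc(K)|^{-1/2-\epsilon}$ effectively, and likewise $|L(1,\phi_i)|\gg_{[k:\Q],\epsilon}|\Disc(K)|^{-1/2-\epsilon}$ for real $\phi_i$, while $|L(1,\phi_i)|\gg1/\log|\Disc(K)|$ effectively for non-real $\phi_i$ (standard zero-free region, no exceptional zero); these can be improved to $\gg_\epsilon|\Disc(K)|^{-\epsilon}$ for every $\epsilon>0$ by Siegel's theorem and the (ineffective) Brauer--Siegel lower bound for $\Res_{s=1}\zeta_{F_i}(s)$. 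Combining these, and keeping track of the signs of the $n_i$ (so that upper bounds are used where $n_i<0$), we get $\Res_{s=1}L(s,\chi)\gg_{\chi(1),|G|,[k:\Q],\epsilon}|\Disc(K)|^{-\epsilon}$ for every $\epsilon>0$.

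It remains to make the implied constant effective when $\epsilon>\nu(\chi)/[K:\Q]$, and for this we must control the only source of ineffectivity above: a possible real zero $\beta_0>\tfrac{1}{4\log|\Disc(K)|}$ of some $\zeta_{F_i}$ or of some real $\phi_i$ (more precisely $\beta_0 > 1-\tfrac{1}{4\log|\Disc(K)|}$). By the classical Landau--Page argument (applied to the Dedekind zeta function of the compositum of the relevant quadratic fields, whose Dirichlet series has non-negative coefficients) such a zero is unique, and by Stark's theorem \cite{Stark-EffectiveBrauerSiegel} it is a simple zero of $\zeta_{F_0}$, hence of the quadratic Dirichlet $L$-function $L(s,\chi_{F_0})$, for some quadratic field $F_0\subseteq K$. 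Factoring $L(s,\chi)=\prod_\theta L(s,\theta)^{\langle\chi,\theta\rangle}$ over the irreducible characters $\theta$ of $G$: the zero $\beta_0$ is a simple zero of exactly one $L(s,\theta)$, necessarily for a trivial or quadratic character $\theta_0$ --- since a non-real or higher-dimensional $\theta$ would force $\beta_0$ to be a zero of $\zeta_K=\prod_\theta L(s,\theta)^{\theta(1)}$ of multiplicity at least $2$ --- and by Frobenius reciprocity the order of vanishing of $L(s,\chi)$ at $s=\beta_0$ equals $\langle\chi,\theta_0\rangle\leq\nu(\chi)$. Invoking the standard two-sided estimates $\Res_{s=1}\zeta_{F_i}(s)\asymp(1-\beta_0)\,|\Disc(K)|^{o(1)}$ and $|L(1,\phi_i)|\asymp(1-\beta_0)\,|\Disc(K)|^{o(1)}$ for the factors carrying the zero $\beta_0$ (a form of effective Brauer--Siegel in the presence of an exceptional zero), together with the effective lower bounds $\gg|\Disc(K)|^{-o(1)}$ for the remaining factors, we obtain $\Res_{s=1}L(s,\chi)\gg(1-\beta_0)^{\langle\chi,\theta_0\rangle}\,|\Disc(K)|^{-o(1)}$ effectively. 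Finally, since $[F_0:\Q]=2$ and $F_0\subseteq K$ we have $|\Disc(F_0)|\leq|\Disc(K)|^{2/[K:\Q]}$, so the effective Siegel-zero bound $1-\beta_0\gg|\Disc(F_0)|^{-1/2}(\log|\Disc(F_0)|)^{-2}$ yields $1-\beta_0\gg|\Disc(K)|^{-1/[K:\Q]-o(1)}$, and hence $\Res_{s=1}L(s,\chi)\gg|\Disc(K)|^{-\langle\chi,\theta_0\rangle/[K:\Q]-o(1)}\gg|\Disc(K)|^{-\nu(\chi)/[K:\Q]-o(1)}$ with an effective implied constant.

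The hard part will be the bookkeeping in the previous step: justifying the two-sided estimates that tie residues and central values to $1-\beta_0$ up to factors of $|\Disc(K)|^{o(1)}$ --- a quantitative effective Brauer--Siegel theorem with an exceptional zero, which is genuinely needed here because the $n_i$ may have either sign --- and verifying that the exceptional zero propagates through the Brauer decomposition with net multiplicity exactly $\langle\chi,\theta_0\rangle$ and not more, which is precisely where the passage through the irreducible factorization and Frobenius reciprocity is essential. The other ingredients (convexity, the class number formula, and the cited results of Stark and of Landau--Page) are routine.
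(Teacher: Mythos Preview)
Your ineffective bound is fine: Brauer induction, convexity upper bounds, and the ineffective Brauer--Siegel/Siegel lower bounds combine exactly as you say, once one bounds the number and size of the coefficients $n_i$ in terms of $|G|$ and $\chi(1)$.

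The effective argument, however, has a genuine gap. You assert that $\beta_0$ is a simple zero of exactly one irreducible $L(s,\theta)$, necessarily with $\theta_0$ trivial or quadratic, because ``a non-real or higher-dimensional $\theta$ would force $\beta_0$ to be a zero of $\zeta_K=\prod_\theta L(s,\theta)^{\theta(1)}$ of multiplicity at least $2$.'' That inference tacitly assumes each $m_\theta := \mathrm{ord}_{s=\beta_0}L(s,\theta) \geq 0$, which is exactly the Artin conjecture. Without it, some $L(s,\theta)$ could have a pole at $\beta_0$ cancelling a zero of another, and you cannot conclude that the order of $L(s,\chi)$ at $\beta_0$ is $\langle\chi,\theta_0\rangle$. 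Equivalently, in your Brauer product you need $e_i := \mathrm{ord}_{s=\beta_0}L(s,\phi_i)$ to equal $[\phi_i = \theta_0|_{H_i}]$ for every $i$, and this identification is not available from Stark and Landau--Page alone. The ``two-sided estimates'' you invoke, tying each exceptional factor to $(1-\beta_0)$ up to $|\Disc(K)|^{o(1)}$ effectively, are likewise not standard lemmas but a form of effective Brauer--Siegel in the presence of an exceptional zero; this is precisely the content of \cite[Theorem~1.1]{CLOZ}.

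The paper sidesteps both issues by working at the level of irreducible characters of $G$ rather than through a Brauer decomposition: one writes $\Res_{s=1}L(s,\chi) = \Res_{s=1}L(s,\chi-\nu(\chi)\psi)\cdot\bigl(\Res_{s=1}\zeta_F(s)/\Res_{s=1}\zeta_k(s)\bigr)^{\nu(\chi)}$ (or the analogous factorization when $\psi$ is trivial), applies \cite{CLOZ} to the first factor, which by construction no longer contains the exceptional character, and handles the second factor by Brauer--Siegel and standard upper bounds. This avoids any appeal to the orders $m_\theta$ for higher-dimensional $\theta$, and hence any dependence on Artin's conjecture.
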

\begin{proof}
	This follows from the Brauer--Siegel theorem, work of Stark \cite{Stark-EffectiveBrauerSiegel}, and the authors \cite[Theorem~1.1]{CLOZ}. More specifically, if $\nu(\chi) = 0$ or $\epsilon > \frac{\nu(\chi)}{[K:\Q]}$, this follows immediately from \cite[Theorem~1.1]{CLOZ}.  If $\nu(\chi) \geq 1$, then by \cite{Stark-EffectiveBrauerSiegel}, there is a unique character $\psi$ with a real zero $\beta_{\psi} > 1 - \frac{1}{4 \log |\mathrm{Disc}(K)|}$, and $\psi$ is necessarily either trivial or quadratic.  If $\psi$ is trivial, then $\nu(\chi) = 1$ and $\Res_{s=1} L(s,\chi) = L(1,\chi-\psi) \Res_{s=1} \zeta_k(s)$, where we recall that the character $\chi-\psi$ is defined by \eqref{eqn:Artin-Summation}.  The first term may be bounded by \cite{CLOZ} and the latter by the Brauer--Siegel theorem.  If $\psi$ is quadratic, then it is associated with a quadratic extension $F/k$.  We then write $\Res_{s=1} L(s,\chi) = \Res_{s=1}L(s,\chi-\nu(\chi)\psi) \cdot \left(\frac{\Res_{s=1} \zeta_F(s)}{\Res_{s=1}\zeta_k(s)}\right)^{\nu(\chi)}$.  We then invoke \cite{CLOZ} for the first factor, Brauer--Siegel for the numerator of the second, and standard upper bounds on $\Res_{s=1} \zeta_k(s)$ for the denominator.  In either case, we obtain the ineffective bound for all $\epsilon>0$ when $\nu(\chi) \geq 1$.
\end{proof}

\begin{remark}
	If $\nu(\chi) \geq 1$, then it follows from Stark's work that the unique exceptional character $\psi$ can be the trivial character only if $k$ has a quadratic subfield.  In particular, if $k$ does not have a quadratic subfield, then the exceptional character must be quadratic, and this explains the generally better ranges of effectivity given in our main results.  
	In general, because we are pursuing results that apply over arbitrary number fields $k$ (both with and without quadratic subfields), we must allow for the possibility that a quadratic subfield of $k$ has a large discriminant.  It is this possibility that leads to the alternate lower bound when $k$ has a quadratic subfield.  When the quadratic subfields of $k$ are understood, and in particular if they are sufficiently small, then the need to treat this case separately may be eliminated.
\end{remark}

\section{The general approach}
	\label{sec:approach}

	The main goal of this section is to prove the following general theorem that is based on the strategy sketched in Section~\ref{subsec:approach}.

	\begin{theorem}\label{thm:general-approach}
		Let $K/k$ be a Galois extension of number fields with  $G = \mathrm{Gal}(K/k)$. For a rational equivalence class $C \subset G$, assume there exists characters $\Psi_+$ and $\Psi_-$ of $G$ such that:
		
			\begin{enumerate}[(i)]
				\item the difference $\Psi_+ - \Psi_-$ is supported on elements of the rational class $C$; 
				\item the Artin $L$-function $L(s, \Psi_+)$ is entire apart from a simple pole at $s=1$; and
				\item the Artin $L$-function $L(s, \Psi_-)$ is entire.
			\end{enumerate}
		For $A \geq 2$ and $0  < \epsilon < 1$, there exists a sufficiently small constant $b > 0$ and sufficiently large constant $B > 0$ which depend at most on $\Psi_+(1), \Psi_-(1), [k:\Q], A$, and $\epsilon$, such that  if 
		\begin{equation} \label{eqn:general-approach-threshold}
		x \geq  B \Big( \big(\Res_{s=1} L(s,\Psi_+) \big)^{-A} \max\{q(\Psi_+),q(\Psi_-)\}^{1/2} \Big)^{1+\epsilon}, 
		\end{equation}
		then there exists at least $b (\log x)^A$ prime ideals $\mathfrak{p}$ of $k$ whose norm $\N\kp$ is a rational prime not dividing $|\Disc(K)|$ (and hence $\kp$ is unramified in $K$), and $\sigma_\mathfrak{p} = C$.
	\end{theorem}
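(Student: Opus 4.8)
Set $r_+ := \Res_{s=1} L(s,\Psi_+)$ and $q := \max\{q(\Psi_+),q(\Psi_-)\}$; note $r_+ > 0$ by Lemma~\ref{lem:ResidueLowerBound}. The strategy is to run the ``contradictory pair'' argument of Section~\ref{subsec:approach} in a weighted form. For a small parameter $\eta \in (0,\tfrac12)$, I would work with the Dirichlet series
\[
	\widetilde{F}_\pm(s) := \Big(\prod_{p \,\nmid\, \Disc(K)} \bigl(1 + \lambda_{\Psi_\pm}(p) p^{-s}\bigr)\Big)\,\zeta(s)^{A-1},
\]
$\zeta(s)$ being the Riemann zeta function. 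Passing to squarefree arguments coprime to $\Disc(K)$ excises the local factors at ramified primes and all prime-power coefficients (where $\lambda_{\Psi_+}$ and $\lambda_{\Psi_-}$ can disagree for reasons unrelated to $C$); the resulting correction factors form a product $\prod_p\bigl(1 + O_{\Psi_\pm(1),[k:\Q]}(p^{-2s})\bigr)$, absolutely convergent for $\Re s > \tfrac12$, so $\widetilde F_\pm(s) = L(s,\Psi_\pm)\,\widetilde E_\pm(s)\,\zeta(s)^{A-1}$ with $\widetilde E_\pm$ holomorphic and nonvanishing in that half-plane and of size $\ll_\epsilon q^{\epsilon}$ there. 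Inserting $\zeta(s)^{A-1}$ costs nothing in the conductor, since $q(\zeta) = |\Disc(\Q)| = 1$, but it promotes the simple pole of $L(s,\Psi_+)$ at $s=1$ to a pole of order $A$; this is what produces a main term of order $(\log x)^{A-1}$ and hence the lower bound $\gg(\log x)^A$ on the number of primes.

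Next I would evaluate $\sum_{n \le x} c_\pm(n)$, where $c_\pm$ denote the coefficients of $\widetilde F_\pm$, by Perron's formula with a mollifying weight, shifting the contour to the line $\Re s = \tfrac12 + \eta$. Holomorphy of $L(s,\Psi_\pm)$ away from $s=1$ (hypotheses (ii) and (iii)), together with the elementary half-plane behaviour of $\widetilde E_\pm$ and $\zeta$, allows the convexity bound of Lemma~\ref{lem:Convexity}, giving on that line $|\widetilde F_\pm(s)| \ll_{\Psi_\pm(1),[k:\Q],A,\epsilon} q^{1/4+\epsilon}\,|1+it|^{O_A(1)}$, the polynomial $t$-growth being absorbed by additional regularizing factors $(s(s+1)\cdots(s+J))^{-1}$ that leave the pole at $s=1$ intact. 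Since $L(s,\Psi_-)$ is entire while $L(s,\Psi_+)\widetilde E_+(s) \sim r_+\widetilde E_+(1)(s-1)^{-1}$ and $\zeta(s) \sim (s-1)^{-1}$ near $s=1$, collecting the residue at $s=1$ and invoking $r_+ \gg_\epsilon |\Disc(K)|^{-\epsilon}$ from Lemma~\ref{lem:ResidueLowerBound} gives
\[
	\sum_{n \le x} c_+(n) - \sum_{n \le x} c_-(n) = \frac{r_+\,\widetilde E_+(1)}{(A-1)!}\, x\,(\log x)^{A-1}\bigl(1+o(1)\bigr) + O_{\Psi_\pm(1),[k:\Q],A,\epsilon}\!\bigl(q^{1/4+\epsilon}x^{1/2+\epsilon}\bigr).
\]
Once $x$ exceeds a suitable constant multiple of $\bigl(r_+^{-A}q^{1/2}\bigr)^{1+\epsilon}$, the error term is dominated, so the left side is $\gg_A r_+ x(\log x)^{A-1}$; the effectivity caveats (either $\epsilon > \tfrac{2}{|G|[k:\Q]}$, or the absence of the relevant quadratic characters and subfields) are inherited verbatim from Lemma~\ref{lem:ResidueLowerBound}.

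Finally I would extract the primes. For a rational prime $p \nmid \Disc(K)$ one has $\lambda_{\Psi_+}(p) = \sum_{\N\mathfrak q = p}\Psi_+(\sigma_\mathfrak q)$, and likewise for $\Psi_-$, so by hypothesis (i) these coincide unless some degree-one prime $\mathfrak q \mid p$ has $\sigma_\mathfrak q = C$. Hence $c_+(n) - c_-(n) = 0$ unless $n$ is divisible by a prime of that kind; writing $p_1 < \cdots < p_T$ for the such primes up to $x$, the divisor bound \eqref{eqn:Artin-DivisorBound} yields $|c_+(n)-c_-(n)| \le 2\,\tau_{\Psi_+(1)[k:\Q]+A-1}(n)$, which controls the contribution of each $p_i$ to the left side of the displayed identity; carefully matching this tally against the main term $\gg r_+ x(\log x)^{A-1}$ forces $T \gg_{\Psi_\pm(1),[k:\Q],A,\epsilon}(\log x)^{A}$. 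In particular $T \ge 1$, and each $p_i$ is the norm of a degree-one prime ideal $\mathfrak p$ of $k$, unramified in $K$, with $\sigma_\mathfrak p = C$, giving at least $b(\log x)^A$ such prime ideals.

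The main obstacle is precisely the bookkeeping in this last step. One must confirm that the reduction to squarefree arguments coprime to $\Disc(K)$ truly confines the discrepancy $c_+-c_-$ to multiples of the $C$-primes: the naive sum $\sum_{n\le x}|\lambda_{\Psi_+}(n)-\lambda_{\Psi_-}(n)|$ is \emph{not} small, owing to primes $p$ with $\sigma_\mathfrak p \notin C$ but $\sigma_\mathfrak q^m \in C$ for some $m\ge 2$, and it is only after excising prime powers and ramified primes that the difference is governed by $C$ alone. Simultaneously one must keep the conductor of $\widetilde F_\pm$ at $q$ up to a harmless $q^{o(1)}$ factor coming from $\widetilde E_\pm$, and extract the full power $(\log x)^A$ from the order-$A$ pole. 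Carrying out this accounting, and calibrating $\eta$ and the number $J$ of regularizing factors in terms of $\Psi_\pm(1)$, $[k:\Q]$, $A$, and $\epsilon$, completes the argument.
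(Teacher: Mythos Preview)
The core of your argument---restrict to squarefree integers coprime to $\Disc(K)$, shift the contour, and read off a discrepancy forced by the simple pole of $L(s,\Psi_+)$---does produce at least one $C$-prime once $x$ exceeds the threshold. The gap is in the quantitative claim $T \gg (\log x)^A$. Your divisor bound gives $|c_+(n)-c_-(n)| \le 2\tau_M(n)$ with $M = \Psi_+(1)[k:\Q]+A-1$, and the contribution to $\sum_{n \le x}\tau_M(n)$ from multiples of a single $C$-prime $p_i$ is $\ll_M (x/p_i)(\log x)^{M-1}$; comparing the total to the main term $r_+\, x(\log x)^{A-1}$ yields only
\[
\sum_{i=1}^{T} \frac{1}{p_i} \;\gg_M\; r_+\,(\log x)^{A-M},
\]
which is vacuous because $M \ge A$. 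Inserting $\zeta(s)^{A-1}$ promotes the pole to order $A$ but inflates the average size of the coefficients by exactly the same factor $(\log x)^{A-1}$, so nothing is gained towards counting: a single small $C$-prime can already absorb the entire main term.

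The paper produces $(\log x)^A$ by a different mechanism, with no auxiliary $\zeta$-factor. One sets $m = \prod_{p \in \mathcal P} p$, where $\mathcal P$ collects the ramified primes and the $C$-primes up to $x$, and works with the \emph{sieved} sums $S_m(x,\Psi_\pm) = \sum_{(n,m)=1}\lambda_{\Psi_\pm}(n)\mu^2(n)\varphi(n/x)$, which are identically equal by hypothesis~(i). The contour is shifted only to $\sigma = 1 - \tfrac{1}{A} + \tfrac{\epsilon}{2A}$, not to $\tfrac12+\eta$, and the argument is a dichotomy: either $\sum_{p \in \mathcal P} p^{-\sigma} \le c\,\epsilon \log x$, in which case the sieve correction $H_m(s,\Psi_\pm)$ of Lemma~\ref{lem:FiniteEulerProduct} is tame and the equality $S_m(x,\Psi_+)=S_m(x,\Psi_-)$ contradicts \eqref{eqn:general-approach-threshold}; or that sum is large, and a H\"older inequality with exponents calibrated to this particular $\sigma$ converts it directly into $|\mathcal P| \gg (\log x)^A$. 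The choice of $\sigma$ near $1$ is exactly what ties the H\"older exponent to $A$ and simultaneously produces the factor $r_+^{-A}$ (rather than the $r_+^{-2}$ your shift to $\tfrac12+\eta$ would give) in the threshold.
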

 	
 	Combined with Lemma~\ref{lem:ResidueLowerBound}, this yields a natural corollary when $A=2$.
 	
 	\begin{corollary} \label{cor:general-approach} 
		Keep the assumptions of Theorem \ref{thm:general-approach} and define $\nu(\Psi_+)$ via \cref{lem:ResidueLowerBound}. Then for any $\epsilon>0$ there exists a prime ideal $\kp$ of $k$ such that $\sigma_{\kp} \in C$, its norm $\N\kp$ is a rational prime not dividing $|\mathrm{Disc}(K)|$, and 
 		\[
			\N\kp \ll_{\Psi_+(1),\Psi_-(1),|G|,[k:\Q],\epsilon} \max\{ q(\Psi_+), q(\Psi_-)\}^{1/2 + \epsilon} , 
 		\] 
 		where the implied constant is effectively computable when $\epsilon > \frac{2\nu(\Psi_+)}{|G|[k:\mathbb{Q}]}$.  
 	\end{corollary}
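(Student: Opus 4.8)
\textbf{Proof plan for Corollary~\ref{cor:general-approach}.} The plan is to simply feed the residue lower bound of \cref{lem:ResidueLowerBound} into Theorem~\ref{thm:general-approach} with the choice $A = 2$, and then unwind the resulting threshold on $x$ into a clean bound on $\N\kp$. First I would apply \cref{lem:ResidueLowerBound} to the character $\Psi_+$: by hypothesis (ii) of Theorem~\ref{thm:general-approach}, $L(s,\Psi_+)$ is entire apart from a simple pole at $s=1$, so $\langle \Psi_+, \mathbf{1}_G\rangle = 1$ and the lemma applies verbatim, giving, for any $\epsilon' > 0$,
\[
	\Res_{s=1} L(s,\Psi_+) \gg_{\Psi_+(1),|G|,[k:\Q],\epsilon'} |\mathrm{Disc}(K)|^{-\epsilon'},
\]
with the implied constant effectively computable once $\epsilon' > \frac{\nu(\Psi_+)}{[K:\Q]}$. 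Raising this to the $-A$ power with $A = 2$ yields $\big(\Res_{s=1}L(s,\Psi_+)\big)^{-2} \ll |\mathrm{Disc}(K)|^{2\epsilon'}$.

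Next I would substitute this into the threshold \eqref{eqn:general-approach-threshold} of Theorem~\ref{thm:general-approach}. Taking $A = 2$ and an auxiliary parameter $\epsilon'' \in (0,1)$ there, the theorem guarantees at least $b(\log x)^2 \geq 1$ primes $\kp$ with $\sigma_\kp \in C$, with $\N\kp$ a rational prime not dividing $|\mathrm{Disc}(K)|$, as soon as
\[
	x \geq B\Big( |\mathrm{Disc}(K)|^{2\epsilon'} \max\{q(\Psi_+),q(\Psi_-)\}^{1/2}\Big)^{1+\epsilon''}.
\]
Since $|\mathrm{Disc}(K)| \leq q(\Psi_+)$ (indeed $q(\Psi_+) = |\mathrm{Disc}(k)|^{\Psi_+(1)}\N\kf_{\Psi_+} \geq |\mathrm{Disc}(k)|^{\Psi_+(1)} \geq |\mathrm{Disc}(K)|$ by the conductor–discriminant formula, up to the trivial divisor-bound considerations), we may absorb the factor $|\mathrm{Disc}(K)|^{2\epsilon'}$ into $\max\{q(\Psi_+),q(\Psi_-)\}^{2\epsilon'}$, so the threshold becomes $x \geq B \max\{q(\Psi_+),q(\Psi_-)\}^{\frac12 + \delta}$ where $\delta = \delta(\epsilon',\epsilon'')$ can be made as small as desired by choosing $\epsilon'$ and $\epsilon''$ small. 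Taking $x$ to be (a constant times) the right-hand side then produces a prime $\kp$ with $\sigma_\kp \in C$ and
\[
	\N\kp \leq x \ll_{\Psi_+(1),\Psi_-(1),|G|,[k:\Q],\epsilon} \max\{q(\Psi_+),q(\Psi_-)\}^{1/2+\epsilon},
\]
which is the assertion. For the effectivity clause: the constants $b,B$ from Theorem~\ref{thm:general-approach} are effective (they depend only on $\Psi_+(1),\Psi_-(1),[k:\Q],A,\epsilon''$), so the only source of ineffectivity is the application of \cref{lem:ResidueLowerBound}, which is effective precisely when $\epsilon' > \frac{\nu(\Psi_+)}{[K:\Q]}$. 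Chasing how $\epsilon'$ enters the final exponent — a factor of $2$ from the power $-A = -2$ and a further harmless $(1+\epsilon'')$ — shows that the final $\epsilon$ in the $\N\kp$ bound can be taken to be essentially $2\epsilon'$, so the bound is effective when $\epsilon > \frac{2\nu(\Psi_+)}{[K:\Q]}$; using $[K:\Q] = |G|[k:\Q]$ this is the stated condition $\epsilon > \frac{2\nu(\Psi_+)}{|G|[k:\Q]}$.

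The only mildly delicate point — and the one I would write out carefully — is the bookkeeping of the three small parameters ($\epsilon'$ from the residue bound, $\epsilon''$ from Theorem~\ref{thm:general-approach}, and the target $\epsilon$ in the Corollary) and the verification that the crude bound $|\mathrm{Disc}(K)| \ll \max\{q(\Psi_+),q(\Psi_-)\}$ holds, so that the $|\mathrm{Disc}(K)|^{2\epsilon'}$ nuisance factor can legitimately be swallowed into the conductor without changing the shape of the estimate. Everything else is a direct quotation of the two cited results with $A = 2$.
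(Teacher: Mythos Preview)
Your overall plan is exactly the paper's: take $A=2$ in Theorem~\ref{thm:general-approach} and insert the residue lower bound of \cref{lem:ResidueLowerBound}. The effectivity bookkeeping ($\epsilon \approx 2\epsilon'$, $[K:\Q]=|G|[k:\Q]$) is also correct.

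There is, however, a genuine error in your absorption step. The chain
\[
q(\Psi_+) = |\Disc(k)|^{\Psi_+(1)}\N\kf_{\Psi_+} \geq |\Disc(k)|^{\Psi_+(1)} \geq |\Disc(K)|
\]
is false: over $k=\Q$ the middle term is $1$, while $|\Disc(K)|$ is unbounded. More concretely, for $G=S_3$, $C=[(12)]$, $\Psi_+=\mathbf{1}$, $\Psi_-=\sgn$, one has $q(\Psi_+)=|\Disc(k)|$ and $Q=|\Disc(k)|\cdot\N\kf_{\sgn}$, which can be a fixed quantity even as $|\Disc(K)|\to\infty$ (fix the quadratic resolvent and vary the cubic). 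So $|\Disc(K)|\leq Q$ simply does not hold, and you cannot swallow $|\Disc(K)|^{2\epsilon'}$ into $Q^{\epsilon}$ this way.

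The clean fix is not to compare $|\Disc(K)|$ with $Q$ at all, but to observe that the residue bound of \cref{lem:ResidueLowerBound} actually holds in the sharper form $\Res_{s=1}L(s,\Psi_+)\gg_{\epsilon'} q(\Psi_+)^{-\epsilon'}$: any exceptional quadratic (or trivial) character $\psi$ affecting the residue must be a constituent of $\Psi_+$, whence $q(\psi)\mid q(\Psi_+)$, and the Brauer--Siegel/Stark input is then governed by $q(\psi)\leq q(\Psi_+)$ rather than $|\Disc(K)|$. With this refinement one gets $(\Res)^{-2}\ll Q^{2\epsilon'}$ directly and the rest of your argument goes through. Alternatively, since every application in the paper immediately bounds $Q\leq |\Disc(K)|^{\alpha}$ and states the final result in terms of $|\Disc(K)|$, one may simply carry the nuisance factor $|\Disc(K)|^{2\epsilon'}$ through to that stage, where it is harmlessly absorbed into the $+\epsilon$ in the exponent of $|\Disc(K)|$.
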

	\begin{remark}
		 Pollack \cite[Theorem 4.1]{Pollack-PrimeSplittingAbelian} established a similar result when $G$ is abelian. 
	\end{remark}
	
	The construction of characters $\Psi_+$ and $\Psi_-$ satsifying the three hypotheses of Theorem~\ref{thm:general-approach} is essentially one of group theory or, perhaps more precisely, of character theory.  Since the difference of any two characters must take algebraically conjugates values on rationally equivalent conjugacy classes, if $C^\prime$ is a conjugacy class but not a full rational equivalence class, then there can be no characters $\Psi_+$ and $\Psi_-$ as in Theorem~\ref{thm:general-approach} whose difference is supported on $C^\prime$.  This is the fundamental reason why our results are restricted to rational classes.  However, granting this restriction, it is straightforward to find characters $\Psi_+$ and $\Psi_-$ satisfying Condition (i), as we now explain.
	
	For simplicity, suppose $C$ is a rational equivalence class comprised of a single conjugacy class.  This implies that $\chi(C) \in \mathbb{Z}$ for every irreducible character $\chi$.  Considering orthogonality, a natural choice arises:
		\[
			\frac{|G|}{|C|} \mathbf{1}_C = \sum_{\chi} \chi(C) \chi =  \underbrace{ \Big(\sum_{\chi(C) > 0}  \chi(C) \chi \Big)}_{\Psi_+} - \underbrace{\Big(\sum_{\chi(C) <  0}  -\chi(C) \chi \Big)}_{\Psi_-}.
		\]
	That is, $\Psi_+$ is the contribution from those irreducibles (including trivial) for which $\chi(C) > 0$ and $\Psi_-$ is the contribution from those with $\chi(C) < 0$.  Moreover, these are readily seen to be the optimal choices of characters satisfying the first hypothesis, in that any other choices $\Psi_+^\prime$ and $\Psi_-^\prime$ must contain $\Psi_+$ and $\Psi_-$ as summands.  (The difference $\Psi_+^\prime - \Psi_-^\prime$ must be an integral multiple of $\Psi_+ - \Psi_-$.)
	
	Conditions (ii) and (iii) remain and are more substantial.  Over general number fields, the state of the art toward understanding the holomorphy of Artin $L$-functions is essentially represented by Lemma~\ref{lem:Monomial}, and the optimal choices of $\Psi_+$ and $\Psi_-$ need not lie in the monomial cone.\footnote{Strictly speaking, it is possible to bootstrap from the few additional known holomorphy results over $\mathbb{Q}$ or totally real fields to slightly expand the class of $L$-functions for which holomorphy is understood, but such results are sufficiently rare they will not substantially extend the reach of our approach in general.}
	To rectify this, we show in Section~\ref{sec:class-function} that there is an ``offset'' character $\Psi_0$ so that both $\Psi_+ + \Psi_0$ and $\Psi_- + \Psi_0$ do lie in the monomial cone, and so that the order of any pole at $s=1$ does not change.  We are then able to apply Theorem~\ref{thm:general-approach} to these ``shifted'' characters.
	
	To maximize the efficiency of this approach, one therefore wishes to choose the offset $\Psi_0$ so that the conductor $q(\Psi_0)$ is as small as possible.  Finding the optimal offset is an interesting problem in character theory that depends substantially on both $G$ and the rational class $C$.  We expect it is possible to prove that the choice of offset we give in Section~\ref{sec:symmetric-groups} for certain classes $C \subset S_n$ is the optimal choice, but it is unclear to us whether there can be a fully systematic understanding of the optimal choice for given $G,C$.  For this reason, the choice of offset we give in Section~\ref{sec:class-function} is not typically the optimal one, but it is systematic and reasonably efficient.

\subsection{Proof of \cref{thm:general-approach}}
We first establish a basic  estimate for an Euler product that will  sift for squarefree norms without ramified factors. 

\begin{lemma}	\label{lem:FiniteEulerProduct}	
Let $K/k$ be a Galois extension of number fields. Let $\psi : G \to \mathbb{C}$ be a character of $G=\mathrm{Gal}(K/k)$. For any integer $m \geq 1$, the Euler product
\begin{equation} \label{eqn:H}
H_{m}(s,\psi)  := 
 \prod_{\substack{p \,\nmid \,  m}} \Big(1 + \frac{\lambda_{\psi}(p)}{p^s} \Big) \Big( 1 + \sum_{j=1}^{\infty} \frac{\lambda_{\psi}(p^j)}{p^{js}} \Big)^{-1}  
 \times  \prod_{\substack{p \,\mid\, m }}\Big( 1 + \sum_{j=1}^{\infty} \frac{\lambda_{\psi}(p^j)}{p^{js}} \Big)^{-1}  
\end{equation}
converges absolutely and is hence holomorphic in the region $\Re(s) > 1/2$. Moreover, for fixed $\sigma > 1/2$ and arbitrary $t \in \mathbb{R}$, 
\[
e^{-2\psi(1) [k:\Q] \sum_{p \mid m} p^{-\sigma}}     \ll_{\psi(1), [k:\Q], \sigma} |H_{m}(\sigma+it,\psi)| \ll_{\psi(1), [k:\Q],\sigma}   e^{2\psi(1) [k:\Q]  \sum_{p \mid m} p^{-\sigma} }.
\]
\end{lemma}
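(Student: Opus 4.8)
The plan is to reduce everything to the observation that, for each rational prime $p$, the series $1+\sum_{j\geq1}\lambda_\psi(p^j)p^{-js}$ is precisely the $p$-th Euler factor of $L(s,\psi)$. By \eqref{eqn:Artin-LocalFactorComparison} this quantity equals $\prod_{\kp\mid p}\prod_j\bigl(1-\alpha_{j,\psi}(\kp)\N\kp^{-s}\bigr)^{-1}$, so its reciprocal is the \emph{finite} product $\prod_{\kp\mid p}\prod_j\bigl(1-\alpha_{j,\psi}(\kp)\N\kp^{-s}\bigr)$, a polynomial in the variables $\N\kp^{-s}$ with at most $\psi(1)[k:\Q]$ factors, each of modulus between $1-p^{-\sigma}$ and $1+p^{-\sigma}$ on the line $\Re(s)=\sigma>0$ (using $\N\kp\geq p$ and $|\alpha_{j,\psi}(\kp)|\leq1$). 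Writing $N:=\psi(1)[k:\Q]$, I would first record the two-sided bound
\[
(1-p^{-\sigma})^{N}\;\leq\;\Bigl|\Bigl(1+\sum_{j\geq1}\tfrac{\lambda_\psi(p^j)}{p^{js}}\Bigr)^{-1}\Bigr|\;\leq\;(1+p^{-\sigma})^{N},
\]
which immediately controls every local factor of $H_m$ attached to a prime $p\mid m$, and also makes clear that each such factor is entire in $s$ and nonzero, as is each factor attached to $p\nmid m$ up to the zeros of $1+\lambda_\psi(p)p^{-s}$.

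Next I would pin down the size of the $j\geq2$ tail of each Euler factor. The divisor bound \eqref{eqn:Artin-DivisorBound} gives $|\lambda_\psi(p^j)|\leq\tau_N(p^j)=\binom{j+N-1}{N-1}\ll_N j^{N-1}$, so for fixed $\sigma>1/2$ one has $\sum_{j\geq2}|\lambda_\psi(p^j)|p^{-j\sigma}\ll_{N,\sigma}p^{-2\sigma}$. Hence the reciprocal Euler factor is $1-\lambda_\psi(p)p^{-s}+O_{N,\sigma}(p^{-2\sigma})$, and (since $\prod_{\kp\mid p}\prod_j(1-\alpha\N\kp^{-s})$ has modulus $O_{N,\sigma}(1)$) the local factor of $H_m$ at a prime $p\nmid m$, namely $(1+\lambda_\psi(p)p^{-s})\bigl(1+\sum_{j\geq1}\lambda_\psi(p^j)p^{-js}\bigr)^{-1}$, differs from $1$ by $O_{N,\sigma}(p^{-2\sigma})$. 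Because $2\sigma>1$ the sum $\sum_p\bigl|(\text{local factor})-1\bigr|$ converges, locally uniformly in $s$; together with the fact that each local factor is entire, this gives the absolute convergence of the Euler product and hence the holomorphy of $H_m(s,\psi)$ on $\Re(s)>1/2$, which is the first assertion.

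For the size estimates I would split $H_m=P_1P_2$ into the product $P_1$ over $p\mid m$ and the product $P_2$ over $p\nmid m$. For $P_2$: beyond a threshold $P_0=P_0(N,\sigma)$ (chosen so that $p^\sigma\geq 2N$ and the $O_{N,\sigma}(p^{-2\sigma})$ error above is $<\tfrac12$), the local factor lies in a fixed annulus about $1$, so its logarithm is $O_{N,\sigma}(p^{-2\sigma})$ and $\sum_{p>P_0}p^{-2\sigma}=O_{N,\sigma}(1)$ uniformly in $t$ and in $m$; the finitely many remaining factors ($2\leq p\leq P_0$, $p\nmid m$) are each bounded above by constants and, being nonzero on the line, bounded below by positive constants. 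Hence $|P_2|\asymp_{N,\sigma}1$, and this is exactly the $P_2$ contribution that the implied constants in the statement absorb. For $P_1$, the two-sided bound above gives $\prod_{p\mid m}(1-p^{-\sigma})^{N}\leq|P_1|\leq\prod_{p\mid m}(1+p^{-\sigma})^{N}$; the upper side is $\leq\exp\bigl(N\sum_{p\mid m}p^{-\sigma}\bigr)\leq\exp\bigl(2N\sum_{p\mid m}p^{-\sigma}\bigr)$ by $\log(1+x)\leq x$, and for the lower side I would invoke the elementary inequality $-\log(1-x)<2x$, valid on $(0,2^{-1/2}]$ (the function $2x+\log(1-x)$ vanishes at $0$, is increasing on $(0,\tfrac12)$, and is positive at $x=\tfrac12$ and $x=2^{-1/2}$), applied with $x=p^{-\sigma}<2^{-1/2}$ since $p\geq2$ and $\sigma>1/2$; this yields $|P_1|\geq\exp\bigl(-2N\sum_{p\mid m}p^{-\sigma}\bigr)$. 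Combining the two-sided bounds for $|P_1|$ with $|P_2|\asymp_{N,\sigma}1$ produces exactly the stated inequalities.

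I expect the one genuinely substantive step to be the tail estimate of the second paragraph: the polynomial-in-$j$ growth of $\tau_N(p^j)$ is precisely what forces each Euler factor of $L(s,\psi)$ to agree with its squarefree truncation $1+\lambda_\psi(p)p^{-s}$ up to $O_{N,\sigma}(p^{-2\sigma})$, and the hypothesis $\sigma>1/2$ enters exactly to make $2\sigma>1$ so that this error is summable over $p$. Everything else is the bookkeeping of finitely many small-prime factors into the implied constant, the only point requiring a moment's care being the verification that the local factors with $p\leq P_0$, $p\nmid m$ do not vanish on the line $\Re(s)=\sigma$ (equivalently that $1+\lambda_\psi(p)/p^s\neq 0$ there), after which the lower bound follows as above.
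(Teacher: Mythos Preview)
Your approach is essentially the same as the paper's: show that each local factor at $p\nmid m$ is $1+O_{N,\sigma}(p^{-2\sigma})$, bound each factor at $p\mid m$ by $(1\pm p^{-\sigma})^{N}$ via the explicit Euler product, and combine using $\log(1\pm u)\lessgtr \pm 2u$. Your tail estimate via $\tau_N(p^j)\ll_N j^{N-1}$ is in fact cleaner than the paper's use of $|\lambda_\psi(p^j)|\leq N^j$, and the small-prime nonvanishing issue you flag at the end is glossed over in the paper too (it writes only ``a similar argument holds'' for the lower bound).
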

\begin{proof} The absolute convergence claim will follow from the proof of the upper bound. To establish the upper bound for $|H(s,\psi)|$ with $s = \sigma+it$, we analyze the two products in \eqref{eqn:H}. For any prime $p$ and $\sigma > 1/2$, 
\[
\Big(1 + \frac{\lambda_{\psi}(p)}{p^s} \Big) \Big( 1 + \sum_{j=1}^{\infty} \frac{\lambda_{\psi}(p^j)}{p^{js}} \Big)^{-1} = 1 + O_{\psi(1),[k:\Q],\sigma}(p^{-2\sigma}) 
\]
because \eqref{eqn:Artin-DivisorBound} implies, say, $\lambda_{\psi}(p^j) \leq (\psi(1) [k:\Q])^j$. Also, since there are at most $[k:\Q]$ prime ideals $\kp$ above a rational prime $p$, it follows by \eqref{eqn:Artin-LocalFactorComparison} for any prime $p$ that   
\[
\Big| \Big( 1 + \sum_{j=1}^{\infty} \frac{\lambda_{\psi}(p^j)}{p^{js}} \Big)^{-1}\Big| \leq \prod_{\substack{\kp \mid (p)  }} \big( 1  +  (\mathrm{N}\mathfrak{p})^{-\sigma}\big)^{\psi(1)} \leq  \big(1+p^{-\sigma} \big)^{\psi(1) [k:\Q]}. 
\]
Combining these observations implies that 
\begin{align*}
|H(\sigma+it)| 
& \leq \prod_{\substack{p \,\nmid \,  m}} \Big(1 + O_{\psi(1),[k:\Q]}(p^{-2\sigma}) \Big)  \times \prod_{\substack{ p \,\mid \,  m}} (1+p^{-\sigma})^{\psi(1)[k:\Q]}  \\
& \ll_{\psi(1),[k:\Q],\sigma}   e^{  2 \psi(1)[k:\Q] \sum_{p \mid m} p^{-\sigma}}
\end{align*} 
as required. The last step follows from the identity $\log(1+u) < 2u$ for $u > 0$ and the fact that $\sum_p p^{-2\sigma}$ is convergent for any fixed $\sigma > 1/2$. A similar argument holds for the lower bound with the additional identity $\log(1-u) > -2u$ for $0 < u \leq 1/\sqrt{2}$.  
\end{proof}

Now, we may proceed with the main argument.
Fix $x \geq 10$ and $A \geq 2$. Denote
\begin{equation} \label{eqn:proof-general-parameters}
Q := \max\{ q(\Psi_+), q(\Psi_-) \}, 
\qquad \text{and} \qquad 
d := \max\{ \Psi_+(1), \Psi_-(1) \}.
\end{equation}
Fix $0 < \epsilon < 1$ and $\sigma = 1 - \frac{1}{A} + \frac{\epsilon}{2A} \in (\frac{1}{2},1)$. Define
\begin{equation}
\begin{aligned}
\mathcal{P} = \mathcal{P}(x) &  := \{ p \leq x : p \mid D_K \text{ or there exists $\kp \mid (p)$ such that $\N\kp = p$ and } \mathrm{\sigma}_{\kp} =  C  \}  \\ 
m = m(x) & := \prod_{p \in \mathcal{P}} p.  	
\end{aligned}
\label{eqn:proof-bad-primes}
\end{equation}
Since the number of primes $p$ dividing $D_K$ is $O(\log D_K)$ and $A \geq 2$, it suffices to prove that 
\[
\mathcal{|P|} \gg_{d,[k:\Q],\epsilon,A} (\log x)^A
\]
 provided \eqref{eqn:general-approach-threshold} holds with some constant $B = B_{d,[k:\Q],\epsilon,A} > 0$ sufficiently large. We claim we may assume without loss of generality that
\begin{equation} \label{eqn:FewPrimes}
\sum_{p \mid m} p^{-\sigma} \leq   \frac{\epsilon \log x}{4 A d [k:\Q]}
\end{equation}
because otherwise, by H\"{o}lder's inequality, we would have that 
\begin{align*}
\frac{\epsilon \log x}{4 A d [k:\Q]} \leq \sum_{p \mid m} p^{-\sigma} 
& \leq \Big( \sum_{p \mid m} 1 \Big)^{1-\frac{\sigma}{1+\epsilon/2A}} \Big( \sum_{p \mid m} p^{-1-\epsilon/2A} \Big)^{\frac{\sigma}{1+\epsilon/2A}}  \\
& \ll_{\epsilon,A} |\mathcal{P}|^{1-\frac{\sigma}{1+\epsilon/2A}}  = |\mathcal{P}|^{1/(A+\epsilon/2)} \leq |\mathcal{P}|^{1/A}.
\end{align*}
This would imply $|\mathcal{P}| \gg_{d,[k:\Q],\epsilon,A} (\log x)^{A}$, proving the claim.  Therefore, to complete the proof, we will seek a contradiction to \eqref{eqn:general-approach-threshold} assuming \eqref{eqn:FewPrimes} holds. 

Let $\psi : G \to \mathbb{C}$ be an Artin character such that $L(s,\psi) = L(s,\psi,K/k)$ is holomorphic except possibly for a simple pole at $s=1$. In other words, $(s-1) L(s,\psi)$ is entire and  
\[
\kappa(\psi) := \mathop{\mathrm{Res}}_{s=1} L(s,\psi)
\]
is well-defined (and possibly equal to zero).    Fix a non-negative $C^{\infty}$ function $\varphi$ that is compactly supported in the open set $(0,1) \subseteq \R$ and satisfies $\int_0^1 \varphi(t) dt = 1$. Thus, its Mellin transform $\widehat{\varphi}(s) := \int_0^{\infty} \varphi(t) t^{s-1} dt$ is entire, and satisfies $\widehat{\varphi}(1) = 1$ and $|\widehat{\varphi}(s)| \ll_{\varphi,j} |s|^{-j}$ for any integer $j \geq 0$ and $\Re(s) > 1/2$. Define
\[
S_m(x,\psi) := \sum_{\substack{n \leq x \\ (n,m)=1}} \lambda_{\psi}(n) \mu^2(n)   \varphi(n/x), 
\]
where $\mu^2(\,\cdot\,)$ is the indicator function on squarefree integers. By Mellin inversion and \eqref{eqn:Artin-RationalEulerProduct}, 
\[
S_m(x,\psi)  = \frac{1}{2\pi i} \int_{(2)} L(s,\psi) H_m(s,\psi) \widehat{\varphi}(s) x^s ds, 
\]
where $H_m(s,\psi)$ is given by Lemma~\ref{lem:FiniteEulerProduct}. 
Since $(s-1)L(s,\psi)$ is entire by assumption, we may shift the above contour to $\Re(s) = \sigma \in (1/2,1)$ with a possible simple pole at $s=1$. By Lemmas \ref{lem:Convexity} and \ref{lem:FiniteEulerProduct} and \eqref{eqn:FewPrimes}, this yields 
\begin{equation} \label{eqn:general-approach-contourshift}
\begin{aligned} 
\Big|S_m(x,\psi)  -  \kappa(\psi)  H_m(1,\psi)  x \Big| 
& \leq \frac{1}{2\pi} \int_{-\infty}^{\infty} |L(\sigma+it,\psi) H_m(\sigma+it,\psi) \widehat{\varphi}(\sigma)| x^{\sigma}   dt \\
& \ll_{\psi(1),[k:\Q],\epsilon,A}   x^{\sigma+ \frac{\epsilon}{2A}}   \int_{-\infty}^{\infty} \frac{ ( q(\psi)   |1+it|^{\psi(1)[k:\Q]} )^{\frac{1-\sigma}{2}+\frac{\epsilon}{2A}}  }{|\sigma+it|^{\psi(1) [k:\Q]+10}}  dt \\
&  \ll_{\psi(1),[k:\Q],\epsilon,A}   x^{\sigma+ \frac{\epsilon}{2A}} q(\psi)^{\frac{1-\sigma}{2} + \frac{\epsilon}{2A}}
\end{aligned}
\end{equation}
by applying the bound $|\widehat{\varphi}(s)| \ll_{\varphi,j} |s|^{-j}$ for $\Re(s) > 1/2$ when $j = \psi(1) [k:\Q]+10$, say.

By assumptions (ii) and (iii) of Theorem \ref{thm:general-approach}, we may apply the estimate \eqref{eqn:general-approach-contourshift} twice: once for $\psi = \Psi_-$ and once for $\psi =\Psi_+$. As $\kappa(\Psi_-) = 0$ and $\kappa(\Psi_+) \neq 0$ by assumption, we find upon recalling $Q$ and $d$ from  \eqref{eqn:proof-general-parameters} that
\begin{align*}
|S_m(x,\Psi_+)| & \geq  |\kappa(\Psi_+) H(1,\Psi_+)|  x + O_{d,[k:\Q],\epsilon,A}\big(x^{\sigma+ \frac{\epsilon}{2A}} Q^{\frac{1-\sigma}{2} + \frac{\epsilon}{2A}} \big) , \text{ and } \\
|S_m(x,\Psi_-)| & =  O_{d,[k:\Q],\epsilon,A}\big(x^{\sigma+ \frac{\epsilon}{2A}} Q^{\frac{1-\sigma}{2} + \frac{\epsilon}{2A}} \big).
\end{align*}

On the other hand, for every prime $p \nmid m$ and $p \leq x$, we have by \eqref{eqn:proof-bad-primes} that $p \nmid D_K$ and, for every prime ideal $\kp$ above $p$ with $\N\kp =p$, the Frobenius element $\sigma_{\kp}$ satisfies $\sigma_{\kp} \subseteq G \setminus C$ in which case $\Psi_+(\sigma_{\kp}) = \Psi_-(\sigma_{\kp})$ by assumption (i) of Theorem \ref{thm:general-approach}.

 Thus, by \eqref{eqn:Artin-PrimeCoefficient}, we have
\[
\lambda_{\Psi_+}(p) = \sum_{\N\kp = p} \Psi_+(\sigma_{\kp}) = \sum_{\N\kp = p} \Psi_-(\sigma_{\kp}) = \lambda_{\Psi_-}(p)
\]
for all primes $p \leq x$ with $p \nmid m$. By multiplicativity, $\lambda_{\Psi_+}(n) \mu^2(n) = \lambda_{\Psi_-}(n) \mu^2(n)$ for every integer $n \leq x$ with $(n,m) = 1$. This implies 
\[
S_m(x,\Psi_+) = S_m(x,\Psi_-), 
\]
so $|S_m(x,\Psi_+)| = |S_m(x,\Psi_-)|$ in particular. 

Combining this key observation with the prior inequalities yields
\[ 
|\kappa(\Psi_+) H(1,\Psi_+)|  x \ll_{d,[k:\Q],\epsilon,A}  Q^{\frac{1-\sigma}{2} + \frac{\epsilon}{2A}} x^{\sigma+ \frac{\epsilon}{2A}}. 
\]
Using $\sigma = 1 - \frac{1}{A}+\frac{\epsilon}{2A}$ and the lower bound on $H(1,\Psi_+)$ from Lemma~\ref{lem:FiniteEulerProduct},we conclude that
\[
x \ll_{d,[k:\Q],\epsilon,A}  \Big(  |\kappa(\Psi_+)|^{-1} Q^{\frac{1}{2A}}\Big)^{\frac{A}{1-\epsilon}}. 
\]
Rescaling $\epsilon$ appropriately and choosing $B = B_{d,[k:\Q],A,\epsilon} \geq 1$ sufficiently large yields the desired contradiction to \eqref{eqn:general-approach-threshold}. This completes the proof of Theorem \ref{thm:general-approach}.
\hfill \qed

\section{The fundamental class function}
	\label{sec:class-function}

	\allowdisplaybreaks

	We now turn to the problem of constructing the characters $\Psi_+$ and $\Psi_-$ of $G$ that satisfy the conditions of Theorem~\ref{thm:general-approach}.  The starting point is the following simple lemma.
	
	\begin{lemma} \label{lem:class-function-construction}
		Let $G$ be a finite group, let $g \in G$, and let $\langle g \rangle \leq G$ be the cyclic subgroup generated by $g$.  Let $\xi \colon \langle g \rangle \to \mathbb{C}^\times$ be the unique irreducible character of $\langle g \rangle$ such that $\xi(g) = \exp\left(\frac{2\pi i}{n}\right)$, where $n := |g|$.  Define a class function $\phi_g$ of $\langle g\rangle$ by means of the expression
			\begin{equation} \label{eqn:phi-def}
				\phi_g := \prod_{p \mid n} \left(1 - \xi^{\frac{n}{p}}\right),
			\end{equation}
		and let $\Delta_g := \mathrm{Ind}_{\langle g\rangle}^G \phi_g$.
		
		Then, as a class function of $G$, $\Delta_g$ is a non-zero function supported on the rational equivalence class of $g$.  Moreover, $\langle \Delta_g, \chi \rangle$ is an integer for every irreducible character $\chi$ of $G$.
	\end{lemma}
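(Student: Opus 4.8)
The plan is to read off the values of $\phi_g$ on $\langle g\rangle$, deduce the integrality and support claims essentially for free, and then prove non-vanishing by a Frobenius reciprocity/Mackey computation against a permutation character; this last step is the only one requiring genuine thought.

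First I would record that for $0\leq k<n$ one has $\xi^{n/p}(g^k) = (\zeta_n^{n/p})^k = \zeta_p^k$, writing $\zeta_m := e^{2\pi i/m}$, so that
\[
	\phi_g(g^k) \;=\; \prod_{p\mid n}\bigl(1-\zeta_p^k\bigr).
\]
Since $1-\zeta_p^k=0$ exactly when $p\mid k$, the right-hand side is nonzero if and only if $\gcd(k,n)=1$; thus $\phi_g$ is supported precisely on the generators of $\langle g\rangle$. Expanding the product \eqref{eqn:phi-def} also exhibits $\phi_g = \sum_i c_i\theta_i$ as a $\mathbb{Z}$-linear combination of linear characters $\theta_i$ of $\langle g\rangle$, since a product of characters is a character; and because $\prod_{p\in S}\xi^{n/p}$ has order $\prod_{p\in S}p>1$, hence is nontrivial, whenever $S\neq\emptyset$, the trivial character occurs in this expansion with multiplicity exactly $1$, i.e.\ $\langle\phi_g,\mathbf{1}\rangle_{\langle g\rangle}=1$.

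The two ``easy'' assertions follow. From $\phi_g=\sum_i c_i\theta_i$ we get $\Delta_g = \sum_i c_i\,\mathrm{Ind}_{\langle g\rangle}^G\theta_i$, a $\mathbb{Z}$-linear combination of genuine characters of $G$; hence for every irreducible character $\chi$ of $G$, Frobenius reciprocity gives $\langle\Delta_g,\chi\rangle_G = \sum_i c_i\langle\theta_i,\mathrm{Res}_{\langle g\rangle}\chi\rangle_{\langle g\rangle}\in\mathbb{Z}$, which is the integrality claim. For the support claim, I would use the formula for an induced class function, $\Delta_g(x) = \frac1n\sum_{a\in G:\,a^{-1}xa\in\langle g\rangle}\phi_g(a^{-1}xa)$: if $\Delta_g(x)\neq0$ then $a^{-1}xa\in\langle g\rangle$ with $\phi_g(a^{-1}xa)\neq0$ for some $a$, so $a^{-1}xa$ generates $\langle g\rangle$, whence $a^{-1}\langle x\rangle a=\langle a^{-1}xa\rangle=\langle g\rangle$ and $x$ is rationally equivalent to $g$.

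It remains to show $\Delta_g\neq0$, which I view as the main obstacle: evaluating $\Delta_g(g)$ directly produces a sum of Galois conjugates of the algebraic integer $\prod_{p\mid n}(1-\zeta_p)$ over a subgroup of $(\mathbb{Z}/n\mathbb{Z})^\times$, and its non-vanishing is not transparent. Instead I would pair $\Delta_g$ with the permutation character $\beta:=\mathrm{Ind}_{\langle g\rangle}^G\mathbf{1}$. By Frobenius reciprocity, $\langle\Delta_g,\beta\rangle_G = \langle\mathrm{Res}_{\langle g\rangle}\mathrm{Ind}_{\langle g\rangle}^G\phi_g,\mathbf{1}\rangle_{\langle g\rangle}$, and Mackey's formula writes $\mathrm{Res}_{\langle g\rangle}\mathrm{Ind}_{\langle g\rangle}^G\phi_g$ as a sum over representatives $t$ of the double cosets $\langle g\rangle\backslash G/\langle g\rangle$ of the terms $\mathrm{Ind}_{\langle g\rangle\cap\,{}^t\langle g\rangle}^{\langle g\rangle}\bigl({}^t\phi_g|_{\langle g\rangle\cap\,{}^t\langle g\rangle}\bigr)$. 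If $t\notin N_G(\langle g\rangle)$, then $\langle g\rangle\cap{}^t\langle g\rangle$ is a proper subgroup of ${}^t\langle g\rangle$, and since---by the first step applied to the cyclic group ${}^t\langle g\rangle$---the class function ${}^t\phi_g$ vanishes off the generators of ${}^t\langle g\rangle$, its restriction to this proper subgroup is $0$, so the term contributes $0$. The representatives $t\in N_G(\langle g\rangle)$, on the other hand, biject with the $[N_G(\langle g\rangle):\langle g\rangle]$ cosets of $\langle g\rangle$ in $N_G(\langle g\rangle)$ (as $\langle g\rangle\trianglelefteq N_G(\langle g\rangle)$), and each contributes $\langle{}^t\phi_g,\mathbf{1}\rangle_{\langle g\rangle}=\langle\phi_g,\mathbf{1}\rangle_{\langle g\rangle}=1$, conjugation by $t$ being an isometry of class functions on $\langle g\rangle$. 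Hence $\langle\Delta_g,\beta\rangle_G = [N_G(\langle g\rangle):\langle g\rangle]\geq1$, so $\Delta_g\neq0$, completing the proof.
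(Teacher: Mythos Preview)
Your proof is correct. The support and integrality arguments match the paper's essentially verbatim. For non-vanishing, however, you take a longer route than necessary: you already established that $\langle\phi_g,\mathbf{1}_{\langle g\rangle}\rangle_{\langle g\rangle}=1$, and a single application of Frobenius reciprocity then gives $\langle\Delta_g,\mathbf{1}_G\rangle_G=\langle\phi_g,\mathrm{Res}_{\langle g\rangle}\mathbf{1}_G\rangle_{\langle g\rangle}=\langle\phi_g,\mathbf{1}_{\langle g\rangle}\rangle_{\langle g\rangle}=1$, which is exactly what the paper does. Your pairing with the permutation character $\beta=\mathrm{Ind}_{\langle g\rangle}^G\mathbf{1}$ and subsequent Mackey analysis is correct and yields the sharper identity $\langle\Delta_g,\beta\rangle_G=[N_G(\langle g\rangle):\langle g\rangle]$, but for the lemma as stated this extra information is not needed and the Mackey step can be dispensed with entirely.
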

	\begin{proof}
		We first observe that $\phi_g(g^i) = 0$ unless $\mathrm{gcd}(i,n) = 1$, for if not, there is some prime $p \mid \mathrm{gcd}(i,n)$, and we would have $\xi^{n/p}(g^i) = 1$ and hence that $1-\xi^{n/p}(g^i)=0$.  As a result, $\phi_g$ is supported on the generators of the cyclic group $\langle g \rangle$, and hence its induction $\Delta_g$ is supported on the conjugates of these generators, which together comprise exactly the rational equivalence class of $g$.  To see that $\Delta_g$ is non-zero, we observe that $\langle \phi_g, \mathbf{1}_{\langle g\rangle} \rangle_{\langle g\rangle} = 1$, so that, by Frobenius reciprocity, we find that $\langle \Delta_g, \mathbf{1}_G\rangle_G = 1$ as well.  This implies that $\Delta_g \ne 0$.  The final claim follows on observing that $\phi_g$ is a difference of characters, so $\Delta_g$ must be too.
	\end{proof}
	
	While we shall not need it immediately, we nevertheless consider it illuminating to note at this stage the following lemma.  
	\begin{lemma} \label{lem:inner-product-when-conjugacy}
		Let $G$, $g$, and $\Delta_g$ be as in Lemma~\ref{lem:class-function-construction}.  Then for any irreducible character $\chi$ of $G$ that is constant on the rational class of $g$, we have
			\[
				\langle \Delta_g, \chi\rangle
					= \chi(g).
			\]
		In particular, if the rational class of $g$ comprises a single conjugacy class, then $\Delta_g$ is $|C_G(g)|$ times the indicator function of the conjugacy class of $g$, where $C_G(g)$ denotes the centralizer of $g$ in $G$.
	\end{lemma}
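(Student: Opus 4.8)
The plan is to pass from $G$ to the cyclic group $\langle g\rangle$ by Frobenius reciprocity and then carry out an elementary root-of-unity computation. Writing $n := |g|$ and using $\Delta_g = \Ind_{\langle g\rangle}^G \phi_g$, Frobenius reciprocity gives
\[
	\langle \Delta_g, \chi\rangle_G = \langle \phi_g, \chi|_{\langle g\rangle}\rangle_{\langle g\rangle} = \frac{1}{n}\sum_{i=0}^{n-1}\phi_g(g^i)\,\overline{\chi(g^i)}.
\]
The proof of Lemma~\ref{lem:class-function-construction} already records that $\phi_g(g^i)=0$ unless $\gcd(i,n)=1$, so only the generators of $\langle g\rangle$ contribute. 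Every generator $g^i$ lies in the rational class of $g$, so the hypothesis that $\chi$ is constant on that class yields $\chi(g^i)=\chi(g)$ for all $i$ coprime to $n$, and the sum reduces to $\langle\Delta_g,\chi\rangle_G = \frac{\overline{\chi(g)}}{n}\sum_{\gcd(i,n)=1}\phi_g(g^i)$.

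The crux is then the identity $\sum_{\gcd(i,n)=1}\phi_g(g^i)=n$. Put $\zeta := \exp(2\pi i/n)$ and, for each prime $p\mid n$, $\omega_p := \zeta^{n/p}=\exp(2\pi i/p)$; then \eqref{eqn:phi-def} gives $\phi_g(g^i)=\prod_{p\mid n}(1-\omega_p^{\,i})$, a quantity depending only on the residues $i\bmod p$. Writing $n=\prod_{p\mid n}p^{e_p}$ and decomposing the set $\{\,i\bmod n : p\nmid i\text{ for every }p\mid n\,\}$ by the Chinese Remainder Theorem --- noting that each nonzero residue class mod $p$ is represented by exactly $p^{e_p-1}$ classes mod $p^{e_p}$ --- the sum factors as $\prod_{p\mid n}\bigl(p^{e_p-1}\sum_{b=1}^{p-1}(1-\omega_p^{\,b})\bigr)$. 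Since $\sum_{b=0}^{p-1}\omega_p^{\,b}=0$, the inner sum equals $p$, so the product collapses to $\prod_{p\mid n}p^{e_p}=n$. It remains to observe that $\chi(g)\in\Z$: its Galois conjugates over $\Q$ are among the values $\chi(g^a)$ with $\gcd(a,n)=1$, all equal to $\chi(g)$ by hypothesis, so $\chi(g)$ is a rational algebraic integer. Hence $\overline{\chi(g)}=\chi(g)$ and $\langle\Delta_g,\chi\rangle_G=\chi(g)$.

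For the final assertion, if the rational class of $g$ is a single conjugacy class $C$, then every irreducible character of $G$ is automatically constant on $C$, so the formula just proved gives $\langle\Delta_g,\chi\rangle_G=\chi(g)$ for all irreducible $\chi$; in particular $\langle\Delta_g,\mathbf 1_G\rangle_G=1$ (this also follows directly from Lemma~\ref{lem:class-function-construction}), i.e. $\sum_{h\in C}\Delta_g(h)=|G|$. Since $\Delta_g$ is supported on $C$ and constant there, the orbit--stabilizer theorem gives $\Delta_g(g)=|G|/|C|=|C_G(g)|$, so $\Delta_g = |C_G(g)|\,\mathbf 1_C$. The only non-formal ingredient is the character-sum evaluation $\sum_{\gcd(i,n)=1}\phi_g(g^i)=n$, which I expect to be the main (though modest) obstacle; it is a Chinese Remainder bookkeeping exercise resting entirely on $\sum_{b=0}^{p-1}\omega_p^{\,b}=0$, with everything else being Frobenius reciprocity and orthogonality.
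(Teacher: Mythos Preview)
Your proof is correct and follows essentially the same route as the paper's: Frobenius reciprocity, support of $\phi_g$ on generators, constancy of $\chi$ to pull out $\chi(g)$, and then the evaluation of $\frac{1}{n}\sum_j \phi_g(g^j)$. The only differences are cosmetic: the paper simply cites $\langle \phi_g,\mathbf 1_{\langle g\rangle}\rangle=1$ from the proof of Lemma~\ref{lem:class-function-construction} rather than redoing your (correct) CRT computation, and it handles the conjugate via $\bar\chi(g)=\chi(g^{-1})=\chi(g)$ (using that $g^{-1}$ lies in the rational class of $g$) instead of your rationality argument; for the second assertion the paper invokes column orthogonality where you use the support-plus-average argument, but both are immediate.
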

	\begin{proof}
		By Frobenius reciprocity, we have
			\[
				\langle \Delta_g, \chi\rangle
					= \frac{1}{n} \sum_{j=1}^n \phi_g(g^j) \bar\chi(g^j).
			\]
		The function $\phi_g$ is supported on those powers $g^j$ with $j \leq n$ coprime to $n$, i.e. those $g^j$ that are rationally equivalent to $g$.  However, by our assumption on $\chi$, we must have that $\bar\chi(g^j) = \bar\chi(g) = \chi(g^{-1}) = \chi(g)$ for every such $g$.  Hence, we find
			\[
				\langle \Delta_g, \chi\rangle
					= \frac{\chi(g)}{n} \sum_{j=1}^n \phi_g(g^j) 
					= \chi(g),
			\]
		where the last equality follows since $\langle \phi_g, \mathbf{1}_{\langle g \rangle} \rangle_{\langle g \rangle} = 1$.  The second claim follows from the orthogonality relations for characters.
	\end{proof}
	
	Lemma~\ref{lem:class-function-construction} leads to a first unconditional construction of $\Psi_+$ and $\Psi_-$.
	
	\begin{proposition} \label{prop:first-construction}
		Fix notation as in Lemma~\ref{lem:class-function-construction}.  Let $\Psi_+$ be the induction to $G$ of the sum of the terms appearing in the expansion of the product \eqref{eqn:phi-def} with a positive sign, and let $\Psi_-$ be the induction to $G$ of the sum of the terms appearing in the product \eqref{eqn:phi-def} with a negative sign.  More explicitly, take
			\[
				\Psi_+ := \mathrm{Ind}_{\langle g\rangle}^G \left[ \sum_{d \mid n} \frac{\mu(d)^2 + \mu(d)}{2} \xi^{\sum_{p \mid d} \frac{n}{p}} \right]
			\]
		and
			\[
				\Psi_- := \mathrm{Ind}_{\langle g\rangle}^G \left[ \sum_{d \mid n} \frac{\mu(d)^2 - \mu(d)}{2} \xi^{\sum_{p \mid d} \frac{n}{p}} \right].
			\]
		Then $\Psi_+$ and $\Psi_-$ are characters of $G$ satisfying the conditions of Theorem~\ref{thm:general-approach}.
	\end{proposition}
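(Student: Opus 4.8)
The plan is to unwind the explicit formulas in the statement, recognize $\Psi_+$ and $\Psi_-$ as inductions of genuine characters of the cyclic group $\langle g\rangle$, and then read off the three conditions of Theorem~\ref{thm:general-approach} from Lemmas~\ref{lem:class-function-construction} and \ref{lem:Monomial}. First I would expand the product \eqref{eqn:phi-def} by choosing, for each prime $p\mid n$, either the term $1$ or the term $-\xi^{n/p}$; indexing such a choice by the squarefree divisor $d\mid n$ built from the chosen primes, this gives
\[
	\phi_g = \sum_{d\mid n} \mu(d)\,\xi^{\sigma(d)}, \qquad \text{where } \sigma(d) := \sum_{p\mid d}\tfrac{n}{p}.
\]
Collecting the terms carrying a positive and a negative sign, and using that $\tfrac{\mu(d)^2+\mu(d)}{2}$ and $\tfrac{\mu(d)^2-\mu(d)}{2}$ are the indicator functions of $\{\mu(d)=1\}$ and $\{\mu(d)=-1\}$, one sees that the bracketed class functions in the statement are exactly $\phi_{g,+} := \sum_{d:\mu(d)=1}\xi^{\sigma(d)}$ and $\phi_{g,-} := \sum_{d:\mu(d)=-1}\xi^{\sigma(d)}$, so that $\phi_g = \phi_{g,+}-\phi_{g,-}$ and $\Psi_\pm = \mathrm{Ind}_{\langle g\rangle}^G\phi_{g,\pm}$ with $\Psi_+-\Psi_- = \Delta_g$.

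Since $\langle g\rangle$ is cyclic, each $\xi^{\sigma(d)}$ is a one-dimensional, hence irreducible, character of $\langle g\rangle$, so $\phi_{g,+}$ and $\phi_{g,-}$ are genuine characters of $\langle g\rangle$ and therefore $\Psi_+$ and $\Psi_-$ are characters of $G$; moreover each lies in the monomial cone, being a non-negative integer combination of characters induced from linear characters. Condition (i) of Theorem~\ref{thm:general-approach} is then immediate: $\Psi_+-\Psi_- = \Delta_g$, which by Lemma~\ref{lem:class-function-construction} is supported on the rational equivalence class of $g$, i.e. on $C$. For conditions (ii) and (iii), Lemma~\ref{lem:Monomial} gives that $L(s,\Psi_\pm)$ is entire apart from a possible pole at $s=1$ of order $\langle\Psi_\pm,\mathbf{1}_G\rangle$, and by Frobenius reciprocity
\[
	\langle\Psi_\pm,\mathbf{1}_G\rangle = \langle\phi_{g,\pm},\mathbf{1}_{\langle g\rangle}\rangle = \#\{\, d\mid n \text{ squarefree} : \mu(d)=\pm1,\ \xi^{\sigma(d)}=\mathbf{1}_{\langle g\rangle}\,\},
\]
using only linearity of the inner product. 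So everything reduces to determining when $\xi^{\sigma(d)}=\mathbf{1}_{\langle g\rangle}$, i.e. when $n\mid\sigma(d)$.

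The claim is that $n\mid\sigma(d)$ forces $d=1$, and this is the one genuine point of the argument. It is elementary: $n\mid\sigma(d)$ is equivalent to $\sigma(d)/n = \sum_{p\mid d}1/p$ being an integer, and putting this sum over the common denominator $\prod_{p\mid d}p$ produces the numerator $\sum_{p\mid d}\prod_{q\mid d,\,q\ne p}q$, which modulo any fixed prime $p_0\mid d$ is congruent to $\prod_{q\mid d,\,q\ne p_0}q$, a product of primes distinct from $p_0$ and hence $\not\equiv 0\pmod{p_0}$. Thus the numerator is coprime to $p_0$, hence not divisible by $\prod_{p\mid d}p$ unless that product is $1$, so $\sum_{p\mid d}1/p\in\mathbb{Z}$ only for $d=1$. (Alternatively one may compare $p_0$-adic valuations: for $p_0\mid d$ the term $n/p_0$ in $\sigma(d)$ has valuation $v_{p_0}(n)-1$ while every other term $n/p$ with $p\mid d$ has valuation $v_{p_0}(n)$, so $v_{p_0}(\sigma(d)) = v_{p_0}(n)-1 < v_{p_0}(n)$.) Given this, $\langle\Psi_+,\mathbf{1}_G\rangle = 1$ — the divisor $d=1$ has $\mu(1)=1$ and $\sigma(1)=0$ — while $\langle\Psi_-,\mathbf{1}_G\rangle = 0$, so $L(s,\Psi_+)$ is entire apart from a simple pole at $s=1$ and $L(s,\Psi_-)$ is entire, establishing (ii) and (iii). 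The rest being routine bookkeeping with Frobenius reciprocity and the cited lemmas, the only real obstacle is isolating this divisibility fact.
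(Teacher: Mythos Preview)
Your proof is correct and follows essentially the same approach as the paper: identify $\Psi_\pm$ as inductions of non-negative sums of linear characters of $\langle g\rangle$, invoke Lemma~\ref{lem:Monomial} for holomorphy, and use Lemma~\ref{lem:class-function-construction} for condition (i). The paper's proof simply ``observes'' that the trivial representation has multiplicity $1$ in $\Psi_+$ and $0$ in $\Psi_-$; your argument that $n\mid\sigma(d)$ forces $d=1$ is exactly the justification that observation requires, so you have filled in a detail the paper leaves implicit.
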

	\begin{proof}
		By construction, both $\Psi_+$ and $\Psi_-$ are non-negative integral linear combinations of monomial characters, hence are characters, and are such that the associated Artin $L$-functions are entire except possibly for a pole at $s=1$.  The presence and order of a pole at $s=1$ is detected by the multiplicity of the trivial representation inside $\Psi_+$ and $\Psi_-$, and we observe that it has multiplicity $1$ inside $\Psi_+$ and multiplicity $0$ inside $\Psi_-$.  This shows that $\Psi_+$ and $\Psi_-$ satisfy conditions (ii) and (iii) of Theorem~\ref{thm:general-approach}.  Moreover, by construction, we have that $\Psi_+ - \Psi_- = \Delta_g$, so they also satisfy condition (i) by Lemma~\ref{lem:class-function-construction}.
	\end{proof}

	Now, despite the fact that Theorem~\ref{thm:general-approach} is stated with respect to the characters $\Psi_+$ and $\Psi_-$, we actually consider the difference $\Psi_+ - \Psi_- = \Delta_g$ to be the more fundamental object of consideration.  
	This is made clearer by two simple yet useful lemmas that may be used to provide improvements over Proposition~\ref{prop:first-construction}.

	\begin{lemma} \label{lem:class-functions-basepoint}
		Let $G$ and $\Delta_g$ be as in Lemma~\ref{lem:class-function-construction}, and let $\Psi_0$ be any character of $G$ so that both $\Psi_0$ and $\Psi_0 + \Delta_g$ are expressible as a non-negative rational linear combination of monomial characters, and so that the trivial character has multiplicity $0$ in $\Psi_0$.  Then the choices $\Psi_- = \Psi_0$ and $\Psi_+ = \Psi_0 + \Delta_g$ satisfy the conditions of Theorem~\ref{thm:general-approach}.
	\end{lemma}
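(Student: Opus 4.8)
The plan is to check conditions (i)--(iii) of Theorem~\ref{thm:general-approach} one at a time, reading everything off from Lemma~\ref{lem:class-function-construction} and Lemma~\ref{lem:Monomial}. Condition (i) is immediate: by construction $\Psi_+ - \Psi_- = (\Psi_0 + \Delta_g) - \Psi_0 = \Delta_g$, and Lemma~\ref{lem:class-function-construction} tells us that $\Delta_g$ is a class function supported on the rational equivalence class $C$ of $g$, which is exactly what (i) requires.

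Next I would confirm that $\Psi_+$ and $\Psi_-$ are honest characters of $G$, as Theorem~\ref{thm:general-approach} demands. For $\Psi_- = \Psi_0$ this is part of the hypothesis. For $\Psi_+ = \Psi_0 + \Delta_g$, I would write $\Psi_0 + \Delta_g = \sum_i c_i \theta_i$ with nonnegative rational $c_i$ and monomial (hence genuine) characters $\theta_i$; then $\langle \Psi_0 + \Delta_g, \chi\rangle = \sum_i c_i\langle\theta_i,\chi\rangle \geq 0$ for every irreducible $\chi$, while simultaneously $\langle \Psi_0+\Delta_g,\chi\rangle = \langle\Psi_0,\chi\rangle + \langle\Delta_g,\chi\rangle \in \Z$ by the integrality clauses for $\Psi_0$ (a character) and for $\Delta_g$ (Lemma~\ref{lem:class-function-construction}). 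A nonnegative integer in every coordinate means $\Psi_+$ is a character. (If one reads the hypothesis as saying that $\Psi_0$ and $\Psi_0 + \Delta_g$ lie in the monomial cone of $G$, this step is already built in.) With that in hand, Lemma~\ref{lem:Monomial} applies to both $\Psi_-$ and $\Psi_+$: $L(s,\Psi_-)$ is entire apart from a possible pole at $s=1$ of order $\langle\Psi_-,\mathbf 1_G\rangle = \langle\Psi_0,\mathbf 1_G\rangle = 0$ (the multiplicity of the trivial character in $\Psi_0$ is $0$ by hypothesis), so $L(s,\Psi_-)$ is entire and (iii) holds; and $L(s,\Psi_+)$ is entire apart from a possible pole at $s=1$ of order $\langle\Psi_+,\mathbf 1_G\rangle = \langle\Psi_0,\mathbf 1_G\rangle + \langle\Delta_g,\mathbf 1_G\rangle = 0 + 1 = 1$, using $\langle\Delta_g,\mathbf 1_G\rangle = 1$ from the proof of Lemma~\ref{lem:class-function-construction} (Frobenius reciprocity together with $\langle\phi_g,\mathbf 1_{\langle g\rangle}\rangle_{\langle g\rangle}=1$). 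So $L(s,\Psi_+)$ has a simple pole at $s=1$, which is condition (ii).

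I do not anticipate a genuine obstacle here: the lemma is a bookkeeping consequence of machinery already set up, and the only point that deserves to be spelled out is the verification that $\Psi_+ = \Psi_0 + \Delta_g$ is a bona fide character rather than merely a virtual one, which is why I would include that short argument explicitly rather than leave it to the reader.
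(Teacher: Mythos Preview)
Your proposal is correct and follows essentially the same approach as the paper, which dispatches the lemma in two sentences by invoking Lemma~\ref{lem:class-function-construction} for (i) and the hypotheses on $\Psi_0$ for (ii) and (iii). Your additional verification that $\Psi_+ = \Psi_0 + \Delta_g$ is a genuine character (nonnegative from the monomial-cone hypothesis, integral from the integrality clause in Lemma~\ref{lem:class-function-construction}) is a welcome elaboration that the paper leaves implicit.
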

	\begin{proof}
		By construction, condition (i) is automatic from Lemma~\ref{lem:class-function-construction}.  The assumptions on $\Psi_0$ ensure that conditions (ii) and (iii) are satisfied as well.
	\end{proof}
	
	\begin{lemma} \label{lem:class-functions-artin}
		Let $G$ and $\Delta_g$ be as in Lemma~\ref{lem:class-function-construction}, and suppose that the Artin holomorphy conjecture holds for all Galois $G$-extensions of number fields.  Then the characters
			\[
				\Psi_+ = \sum_{\chi \in \mathrm{Irr}(G)} \max\left\{ \langle \chi, \Delta_g \rangle, 0 \right\} \cdot \chi
			\]
		and
			\[
				\Psi_- = - \sum_{\chi \in \mathrm{Irr}(G)} \min\left\{ \langle \chi, \Delta_g \rangle, 0 \right\} \cdot \chi
			\]
		satisfy the conditions of Theorem~\ref{thm:general-approach}.
	\end{lemma}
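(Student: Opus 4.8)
The plan is to verify conditions (i)--(iii) of Theorem~\ref{thm:general-approach} directly, the only real inputs being the integrality statement of Lemma~\ref{lem:class-function-construction}, the identity $\langle \Delta_g, \mathbf{1}_G\rangle = 1$ established in its proof, and the hypothesis that Artin holomorphy holds for $G$-extensions (which lets us dispense with the monomial cone and use the ``optimal'' characters directly).

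First I would record that $\langle \chi, \Delta_g\rangle \in \mathbb{Z}$ for every $\chi \in \mathrm{Irr}(G)$; this is Lemma~\ref{lem:class-function-construction}, noting that $\langle \chi, \Delta_g\rangle = \overline{\langle \Delta_g, \chi\rangle} = \langle \Delta_g, \chi\rangle$ since $\Delta_g$ is a difference of characters and so this quantity is a real integer. Hence the coefficients $\max\{\langle \chi, \Delta_g\rangle, 0\}$ and $-\min\{\langle \chi, \Delta_g\rangle, 0\}$ are non-negative integers, so $\Psi_+$ and $\Psi_-$ are genuine characters of $G$. Since $\max\{a,0\} + \min\{a,0\} = a$ for every real $a$, subtracting the two defining sums yields $\Psi_+ - \Psi_- = \sum_{\chi \in \mathrm{Irr}(G)} \langle \chi, \Delta_g\rangle\, \chi = \Delta_g$, which by Lemma~\ref{lem:class-function-construction} is supported on the rational equivalence class of $g$. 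This gives condition (i).

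For conditions (ii) and (iii), I would apply the Artin holomorphy hypothesis directly to the characters $\Psi_\pm$: it follows that $L(s,\Psi_\pm)$ is entire apart from a pole at $s=1$ of order $\langle \Psi_\pm, \mathbf{1}_G\rangle$. (Equivalently, one may expand $L(s,\Psi_\pm) = \prod_{\chi} L(s,\chi)^{m_\chi}$ via \eqref{eqn:Artin-Summation} with non-negative integer exponents $m_\chi$, use that $L(s,\chi)$ is entire for each nontrivial irreducible $\chi$ and that $L(s,\mathbf{1}_G) = \zeta_k(s)$ has a simple pole at $s=1$.) It remains to compute the relevant pole orders. By orthogonality, $\langle \Psi_+, \mathbf{1}_G\rangle = \max\{\langle \mathbf{1}_G, \Delta_g\rangle, 0\}$ and $\langle \Psi_-, \mathbf{1}_G\rangle = -\min\{\langle \mathbf{1}_G, \Delta_g\rangle, 0\}$, while from the proof of Lemma~\ref{lem:class-function-construction} (Frobenius reciprocity together with $\langle \phi_g, \mathbf{1}_{\langle g\rangle}\rangle = 1$) we have $\langle \mathbf{1}_G, \Delta_g\rangle = 1$. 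Therefore $\langle \Psi_+, \mathbf{1}_G\rangle = 1$ and $\langle \Psi_-, \mathbf{1}_G\rangle = 0$, so $L(s,\Psi_+)$ is entire apart from a simple pole at $s=1$ (condition (ii)) and $L(s,\Psi_-)$ is entire (condition (iii)).

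There is no serious obstacle; the only point that needs a moment's care is the sign bookkeeping that places the full multiplicity of $\mathbf{1}_G$ inside $\Psi_+$, so that $\Psi_-$ has no pole at $s=1$. This is immediate once one knows that $\langle \Delta_g, \mathbf{1}_G\rangle$ equals $+1$ rather than a negative integer, which is exactly what the construction of $\Delta_g$ as an induction of the explicit class function $\phi_g$ guarantees.
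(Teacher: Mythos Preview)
Your proof is correct and follows essentially the same route as the paper's: compute $\Psi_+-\Psi_-=\Delta_g$ from the decomposition into irreducibles to get condition~(i), then read off $\langle \Psi_+,\mathbf{1}_G\rangle=1$ and $\langle \Psi_-,\mathbf{1}_G\rangle=0$ from $\langle \Delta_g,\mathbf{1}_G\rangle=1$ and invoke Artin holomorphy for conditions~(ii) and~(iii). Your additional remarks on integrality (ensuring $\Psi_\pm$ are genuine characters) are a useful elaboration that the paper leaves implicit.
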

	\begin{proof}
		We observe that
			\[
				\Psi_+ - \Psi_-
					= \sum_{\chi \in \mathrm{Irr}(G)} \langle \chi, \Delta_g \rangle \cdot \chi
					= \Delta_g,
			\]
		since the irreducible characters form an orthonormal basis for the space of class functions.  As a result, condition (i) holds as a consequence of Lemma~\ref{lem:class-function-construction}.  For conditions (ii) and (iii), we observe from the proof of Lemma~\ref{lem:class-function-construction} that $\langle \Delta_g, \mathbf{1} \rangle_G = 1$, which implies by the explicit construction of $\Psi_+$ and $\Psi_-$ above that $\langle \Psi_+, \mathbf{1} \rangle_G = 1$ and $\langle \Psi_-, \mathbf{1} \rangle_G = 0$.  Hence, conditions (ii) and (iii) are satisfied by our assumption of the Artin holomorphy conjecture.
	\end{proof}

	We will implicitly exploit Lemma~\ref{lem:class-functions-basepoint} in our study of the symmetric groups and the proof of Theorems~\ref{thm:Sn-cycle-intro} and \ref{thm:Sn-asymptotic-simplified}, but for the remainder of this section, our goal is to evaluate the degrees and conductors of Artin $L$-functions associated with the choices of $\Psi_+$ and $\Psi_-$ produced by Proposition~\ref{prop:first-construction} and Lemma~\ref{lem:class-functions-artin}.
	Therefore, throughout the remainder of this section, we fix a Galois extension of number fields $K/k$ with $\mathrm{Gal}(K/k) \simeq G$ for some finite group $G$.  We analyze the characters arising in Proposition~\ref{prop:first-construction} first, as our bounds will be mechanically much simpler.
	
\subsection{The unconditional characters from Proposition~\ref{prop:first-construction}, and the proof of Theorem~\ref{thm:least-prime-rational-class-intro}}
	We begin by observing that if $g = 1$, then the unconditional choice provided by Proposition~\ref{prop:first-construction} is optimal (albeit nearly trivial).
	
	\begin{lemma} \label{lem:identity-class}
		Let $G$ be a finite group, let $g=1$ (the identity of $G$), let $\Delta_g$ be as in Lemma~\ref{lem:class-function-construction}, and let $\Psi_+$ and $\Psi_-$ be as in Proposition~\ref{prop:first-construction}.
		
		Then $\Psi_+$ is the regular representation of $G$ and $\Psi_-$ is the zero character.  Consequently, the degree of $\Psi_+$ is $|G|$ and its Artin conductor is $\mathfrak{D}_{K/k}$, the relative discriminant of $K/k$.  The degree of $\Psi_-$ is $0$ and its Artin conductor is the trivial ideal $\mathcal{O}_k$ generated by $1$. 
	\end{lemma}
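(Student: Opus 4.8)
The plan is to unwind the definitions in the degenerate case $n = |g| = 1$, where everything collapses. Since $1$ has no prime divisors, the product in \eqref{eqn:phi-def} defining $\phi_g$ is empty, so $\phi_g = \mathbf{1}_{\langle g\rangle}$ is the trivial character of the trivial group $\langle g\rangle = \{1\}$. Feeding $n=1$ into the explicit formulas of Proposition~\ref{prop:first-construction}, the only divisor is $d=1$, with $\frac{\mu(1)^2+\mu(1)}{2} = 1$, $\frac{\mu(1)^2-\mu(1)}{2} = 0$, and empty exponent $\sum_{p\mid 1}\frac np = 0$; hence $\Psi_+ = \mathrm{Ind}_{\{1\}}^G \mathbf{1}$ and $\Psi_- = 0$. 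The former is by definition the regular representation $\mathrm{reg}_G = \sum_{\chi\in\mathrm{Irr}(G)}\chi(1)\chi$, whose degree is $\sum_\chi \chi(1)^2 = |G|$; the zero character has degree $0$. This settles the claims about $\Psi_+$, $\Psi_-$, and their degrees.

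For the conductors, the case $\Psi_- = 0$ is immediate: by \eqref{eqn:Artin-Summation} (with all coefficients zero) $L(s,\Psi_-) \equiv 1$ and $q(\Psi_-) = 1$, so by \eqref{eqn:Artin-AnalyticConductor} its Artin conductor is $\mathcal{O}_k$. For $\Psi_+ = \mathrm{reg}_G$, I would invoke the conductor--discriminant formula $\mathfrak{D}_{K/k} = \prod_{\chi\in\mathrm{Irr}(G)}\mathfrak{f}_\chi^{\chi(1)}$ together with the multiplicativity of Artin conductors under direct sums (the ideal-level refinement of the conductor identity in \eqref{eqn:Artin-Summation}): since $\mathrm{reg}_G = \bigoplus_\chi \chi^{\chi(1)}$, we get $\mathfrak{f}_{\Psi_+} = \prod_\chi \mathfrak{f}_\chi^{\chi(1)} = \mathfrak{D}_{K/k}$. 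Alternatively, one can observe via \eqref{eqn:Artin-Induction} (applied with $H = \{1\}$, so $K^H = K$) that $L(s,\Psi_+) = \zeta_K(s)$, and then match analytic conductors using \eqref{eqn:Artin-AnalyticConductor}, $\Psi_+(1) = |G| = [K:k]$, and the discriminant tower formula $|\mathrm{Disc}(K)| = |\mathrm{Disc}(k)|^{[K:k]}\,\N(\mathfrak{D}_{K/k})$, which again forces $\mathfrak{f}_{\Psi_+} = \mathfrak{D}_{K/k}$.

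There is essentially no genuine obstacle here; the only nonformal ingredient is the conductor--discriminant formula (equivalently, the relative discriminant tower formula), which is entirely standard. The real point of the lemma is the observation it records: for the identity class the unconditional construction of Proposition~\ref{prop:first-construction} already yields the optimal pair, namely $L(s,\Psi_+) = \zeta_K(s)$ with a simple pole at $s=1$ and $L(s,\Psi_-) = 1$, so that Corollary~\ref{cor:general-approach} reproduces $\alpha(G,\{1\}) = \tfrac12$.
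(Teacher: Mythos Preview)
Your proof is correct and follows the same approach as the paper's, simply with more detail: the paper observes that $\langle g\rangle$ is trivial and $\phi_g$ is the trivial character, concludes that $\Psi_+$ is the regular representation and $\Psi_- = 0$, and dismisses the conductor claims as ``standard.'' Your additional explicit computation of the $d=1$ term in the formulas of Proposition~\ref{prop:first-construction} and your spelling-out of the conductor--discriminant formula are exactly the details the paper omits.
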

	\begin{proof}
		In this case, we observe that $\langle g \rangle$ is the trivial subgroup, and that $\phi_g$ is the trivial character of the trivial group.  It follows that $\Psi_+$ is the regular representation of $G$ and that $\Psi_-$ is zero.  The rest is standard.
	\end{proof}

	For a general character $\chi$ of $G$, not necessarily irreducible, the Artin conductor $\mathfrak{f}_\chi$ is an ideal of $k$ defined locally by
		\begin{equation} \label{eqn:artin-conductor-def}
			\mathfrak{f}_\chi 
				= \prod_{\mathfrak{p}} \mathfrak{p}^{v_\mathfrak{p}(\mathfrak{f}_\chi)},
		\end{equation}
	where
		\begin{equation} \label{eqn:local-artin-conductor}
			v_\mathfrak{p}(\mathfrak{f}_\chi) = \sum_{i \geq 0} \frac{|G_i|}{|G_0|} \left(\chi(1) - \frac{1}{|G_i|} \sum_{\sigma \in G_i} \chi(\sigma)\right),
		\end{equation}
	with $G_i$ for $i \geq 0$ being the (lower) ramification groups associated with a prime of $K$ lying over $\mathfrak{p}$ \cite[VI \S 2 Corollary 1]{Serre-LocalFields}.  In particular, the sum in \eqref{eqn:local-artin-conductor} is supported on those $i$ for which $G_i \ne 1$, and the product in \eqref{eqn:artin-conductor-def} is supported on those primes $\mathfrak{p}$ that are ramified in $K/k$.  As an important special case, recall that $\mathfrak{f}_\mathrm{reg} = \mathfrak{D}_{K/k}$, where $\mathrm{reg}$ is the character of the regular representation of $G$.  Using \eqref{eqn:local-artin-conductor}, we find
			\begin{equation} \label{eqn:disc-val}
				v_\mathfrak{p}(\mathfrak{D}_{K/k}) 
					= v_\mathfrak{p}(\mathfrak{f}_\mathrm{reg})
					= |G| \sum_{i \geq 0} \frac{|G_i|-1}{|G_0|}.
			\end{equation}	
	We first give an easy bound on the Artin conductor in terms of the degree.  A similar bound has appeared recently in \cite[Lemma 4.2]{FiorilliJouve}, but we provide a complete proof because it is short and previews useful notions.

	\begin{lemma} \label{lem:conductor-vs-degree}
		Let $\chi$ be a character of the finite group $G$.  Then
			\[
				v_\mathfrak{p}(\mathfrak{f}_\chi)
					\leq \frac{ 2 \chi(1)}{|G|} v_\mathfrak{p}(\mathfrak{D}_{K/k}).
			\]
	\end{lemma}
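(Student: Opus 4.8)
The plan is to work locally at a prime $\mathfrak{p}$ ramified in $K/k$ and to bound the contribution $v_\mathfrak{p}(\mathfrak{f}_\chi)$ term by term in the sum \eqref{eqn:local-artin-conductor} against the corresponding term in \eqref{eqn:disc-val}. Fix a prime $\mathfrak{P}$ of $K$ above $\mathfrak{p}$ and the associated ramification filtration $G_0 \supseteq G_1 \supseteq \cdots$. For each $i \geq 0$, I would show that the $i$-th summand
\[
	\frac{|G_i|}{|G_0|}\left(\chi(1) - \frac{1}{|G_i|}\sum_{\sigma \in G_i} \chi(\sigma)\right)
\]
appearing in \eqref{eqn:local-artin-conductor} is bounded above by $\frac{2\chi(1)}{|G|}$ times the $i$-th summand $|G|\,\frac{|G_i|-1}{|G_0|}$ appearing in \eqref{eqn:disc-val}. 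Cancelling the common factor $\frac{1}{|G_0|}$, this reduces to the purely representation-theoretic inequality
\[
	|G_i|\,\chi(1) - \sum_{\sigma \in G_i}\chi(\sigma) \;\leq\; 2\chi(1)\,(|G_i|-1)
\]
for each $i$ with $G_i \neq 1$; summing over $i$ then gives the lemma.

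The key step is thus the inequality $\sum_{\sigma \in G_i}\chi(\sigma) \geq |G_i|\chi(1) - 2\chi(1)(|G_i|-1) = \chi(1)\big(2 - |G_i|\big)$. To see this, recall that $\frac{1}{|G_i|}\sum_{\sigma \in G_i}\chi(\sigma) = \langle \chi|_{G_i}, \mathbf{1}_{G_i}\rangle_{G_i}\cdot 1 = \dim V^{G_i}$, the dimension of the subspace of the representation space $V$ of $\chi$ fixed pointwise by $G_i$; in particular this quantity is a non-negative integer, so $\sum_{\sigma\in G_i}\chi(\sigma) \geq 0$. On the other hand, $\chi(1)\big(2 - |G_i|\big) \leq 0$ whenever $|G_i| \geq 2$, which holds precisely for those $i$ contributing to \eqref{eqn:local-artin-conductor}. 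Hence the desired inequality is immediate. (One could instead note $\sum_{\sigma\in G_i}\chi(\sigma) = |G_i|\dim V^{G_i} \geq 0 > \chi(1)(2-|G_i|)$ directly.) Assembling, for each ramified $\mathfrak{p}$,
\[
	v_\mathfrak{p}(\mathfrak{f}_\chi) = \sum_{i\geq 0}\frac{|G_i|}{|G_0|}\Big(\chi(1) - \dim V^{G_i}\Big) \leq \sum_{i\geq 0}\frac{|G_i|\,\chi(1)}{|G_0|} \leq \sum_{i\geq 0}\frac{2\chi(1)(|G_i|-1)}{|G_0|} = \frac{2\chi(1)}{|G|}v_\mathfrak{p}(\mathfrak{D}_{K/k}),
\]
where the middle inequality uses $|G_i| \leq 2(|G_i|-1)$ for $|G_i|\geq 2$ together with the fact that only indices with $G_i\neq 1$ contribute.

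There is essentially no obstacle here; the only point requiring a little care is the bookkeeping that the summands with $G_i = 1$ contribute nothing on either side (the factor $\chi(1)-\dim V^{G_i}$ vanishes on the left, and $|G_i|-1$ vanishes on the right), so that the elementary bound $|G_i|\leq 2(|G_i|-1)$, valid only for $|G_i|\geq 2$, may be applied uniformly across the surviving terms. A cleaner phrasing avoids even mentioning $\dim V^{G_i}$: since $\chi$ is a character, $|\chi(\sigma)| \leq \chi(1)$ for all $\sigma$, hence $\chi(1) - \frac{1}{|G_i|}\sum_{\sigma\in G_i}\chi(\sigma) \leq \chi(1) + \chi(1) = 2\chi(1)$, and multiplying by $\frac{|G_i|}{|G_0|}$ and summing over the (ramified) indices $i$ gives $v_\mathfrak{p}(\mathfrak{f}_\chi) \leq \frac{2\chi(1)}{|G_0|}\sum_{i\geq 0}|G_i| \cdot \mathbf{1}_{G_i\neq 1}$; comparing with \eqref{eqn:disc-val} and using $|G_i|\leq 2(|G_i|-1)$ when $G_i\neq 1$ again yields the claim. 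I would present whichever of these is shortest in context.
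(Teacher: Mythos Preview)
Your main argument is correct and is essentially the same termwise comparison the paper carries out: both reduce to showing that the $i$-th summand in \eqref{eqn:local-artin-conductor} is at most $\frac{2\chi(1)(|G_i|-1)}{|G_0|}$.  The only cosmetic difference is that the paper separates the identity term and applies $|\chi(\sigma)|\leq\chi(1)$ to the remaining $|G_i|-1$ summands, whereas you use the non-negativity of $\sum_{\sigma\in G_i}\chi(\sigma)=|G_i|\dim V^{G_i}$ followed by $|G_i|\leq 2(|G_i|-1)$.

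One caution: your ``cleaner phrasing'' at the end does \emph{not} recover the constant $2$.  Bounding $\chi(1)-\frac{1}{|G_i|}\sum_{\sigma}\chi(\sigma)\leq 2\chi(1)$ and then multiplying by $|G_i|/|G_0|$ yields $v_\mathfrak{p}(\mathfrak{f}_\chi)\leq \frac{2\chi(1)}{|G_0|}\sum_{G_i\neq 1}|G_i|$; applying $|G_i|\leq 2(|G_i|-1)$ afterward gives $\frac{4\chi(1)}{|G|}v_\mathfrak{p}(\mathfrak{D}_{K/k})$, not $\frac{2\chi(1)}{|G|}v_\mathfrak{p}(\mathfrak{D}_{K/k})$.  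The paper avoids this loss by pulling out the $\sigma=1$ term first, so that the $|\chi(\sigma)|\leq\chi(1)$ bound is only applied to the $|G_i|-1$ non-identity elements and yields $2\chi(1)(|G_i|-1)$ directly.  If you want to present the $|\chi(\sigma)|\leq\chi(1)$ route, do it that way; otherwise stick with your first argument via $\dim V^{G_i}\geq 0$.
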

	\begin{proof}
		Using the trivial inequality $|\chi(\sigma)| \leq \chi(1)$, it follows from \eqref{eqn:local-artin-conductor} that
			\begin{align*}
				v_\mathfrak{p}(\mathfrak{f}_\chi)
					&= \frac{1}{|G_0|}\sum_{i \geq 0} \left( (|G_i|-1)\chi(1) - \sum_{\substack{ \sigma \in G_i \\ \sigma \ne 1}} \chi(\sigma)\right) \\
					&\leq \frac{1}{|G_0|}\sum_{i \geq 0} 2\chi(1)(|G_i|-1) \\
					&= \frac{2 \chi(1)}{|G|} v_\mathfrak{p}(\mathfrak{D}_{K/k}),
			\end{align*}
		as claimed.
	\end{proof}
	
	With this, we can give an easy bound on the conductors of the characters $\Psi_+$ and $\Psi_-$ provided by Proposition~\ref{prop:first-construction}.
	
	\begin{lemma} \label{lem:unconditional-conductors}
		Let $G$ be a finite group, let $g \in G$, and let $\Psi_+$ and $\Psi_-$ be the characters of $G$ constructed in Proposition~\ref{prop:first-construction}.  Then we have 
			\[
				\N\mathfrak{f}_{\Psi_+}, \N\mathfrak{f}_{\Psi_-}
					\leq (\N\mathfrak{D}_{K/k})^{\frac{2^{\omega(n)}}{n}}
			\]
		where $n = |g|$, and hence also
			\[
				q(\Psi_+),q(\Psi_-)
					\leq |\mathrm{Disc}(K)|^{\frac{2^{\omega(n)}}{n}}.
			\]
	\end{lemma}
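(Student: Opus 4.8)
The plan is to bound the Artin conductors crudely in terms of the degrees of $\Psi_+$ and $\Psi_-$ via Lemma~\ref{lem:conductor-vs-degree}, after first computing those degrees exactly; the statement about the analytic conductors will then drop out of \eqref{eqn:Artin-AnalyticConductor} together with the discriminant tower formula $|\mathrm{Disc}(K)| = |\mathrm{Disc}(k)|^{|G|}\,\N\mathfrak{D}_{K/k}$.

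First I would treat the degenerate case $n = 1$ separately, since there the degree formula below fails and Lemma~\ref{lem:conductor-vs-degree} is not sharp enough (it is off by a factor of two, the trivial bound $|\chi(\sigma)| \leq \chi(1)$ being wasteful for the regular representation). Here $\omega(n) = 0$, so the target exponent is $1$, while Lemma~\ref{lem:identity-class} gives $\mathfrak{f}_{\Psi_+} = \mathfrak{D}_{K/k}$ and $\mathfrak{f}_{\Psi_-} = \mathcal{O}_k$, whence $\N\mathfrak{f}_{\Psi_+} = \N\mathfrak{D}_{K/k}$ and $\N\mathfrak{f}_{\Psi_-} = 1$; the bounds on $q(\Psi_\pm)$ then follow from the tower formula since $\Psi_+(1) = |G|$ and $\Psi_-(1) = 0$.

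For $n \geq 2$ I would pin down $\Psi_\pm(1)$. Expanding the product \eqref{eqn:phi-def}, its $2^{\omega(n)}$ terms are indexed by subsets $S$ of the set of primes dividing $n$, the term attached to $S$ being $(-1)^{|S|}\xi^{\sum_{p \in S} n/p}$; hence the positive part of $\phi_g$ is a sum of $2^{\omega(n)-1}$ characters of $\langle g \rangle$, and likewise for the negative part. Since every induced monomial character $\mathrm{Ind}_{\langle g\rangle}^G \xi^j$ has degree $[G:\langle g\rangle] = |G|/n$, this gives $\Psi_+(1) = \Psi_-(1) = 2^{\omega(n)-1}|G|/n$. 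Feeding this into Lemma~\ref{lem:conductor-vs-degree} with $\chi = \Psi_\pm$ yields, for every prime $\mathfrak{p}$ of $k$,
\[
v_{\mathfrak{p}}(\mathfrak{f}_{\Psi_\pm}) \leq \frac{2\Psi_\pm(1)}{|G|}\,v_{\mathfrak{p}}(\mathfrak{D}_{K/k}) = \frac{2^{\omega(n)}}{n}\,v_{\mathfrak{p}}(\mathfrak{D}_{K/k}),
\]
and multiplying over all $\mathfrak{p}$ and taking absolute norms gives $\N\mathfrak{f}_{\Psi_\pm} \leq (\N\mathfrak{D}_{K/k})^{2^{\omega(n)}/n}$, the first assertion.

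For the analytic conductor, I would combine $q(\Psi_\pm) = |\mathrm{Disc}(k)|^{\Psi_\pm(1)}\N\mathfrak{f}_{\Psi_\pm}$ with the conductor bound just proved, the tower formula, and the elementary estimate $\Psi_\pm(1) = 2^{\omega(n)-1}|G|/n \leq \tfrac{2^{\omega(n)}}{n}|G|$, giving
\[
q(\Psi_\pm) \leq \big(|\mathrm{Disc}(k)|^{|G|}\big)^{2^{\omega(n)}/n}(\N\mathfrak{D}_{K/k})^{2^{\omega(n)}/n} = |\mathrm{Disc}(K)|^{2^{\omega(n)}/n}.
\]
There is no genuinely hard step here: the entire argument reduces to Lemma~\ref{lem:conductor-vs-degree} together with the subset count in the expansion of \eqref{eqn:phi-def}. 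The only points needing care are the $n = 1$ edge case noted above, and the bookkeeping with the factor $|\mathrm{Disc}(k)|^{\Psi_\pm(1)}$ when passing from $\N\mathfrak{f}_{\Psi_\pm}$ to $q(\Psi_\pm)$; the mild slack introduced there is harmless because the target is phrased in terms of $\mathrm{Disc}(K)$ rather than $\mathfrak{D}_{K/k}$ alone.
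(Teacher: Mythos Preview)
Your proof is correct and follows essentially the same approach as the paper: handle $n=1$ via Lemma~\ref{lem:identity-class}, and for $n\geq 2$ count the $2^{\omega(n)-1}$ monomial summands in each of $\Psi_\pm$ to compute their degree and then invoke Lemma~\ref{lem:conductor-vs-degree}. You are in fact more explicit than the paper about the passage from $\N\mathfrak{f}_{\Psi_\pm}$ to $q(\Psi_\pm)$ via the tower formula, which the paper leaves implicit.
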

	\begin{proof}
		If $n=1$, this follows from Lemma~\ref{lem:identity-class}.  If $n \geq 2$, then both $\Psi_+$ and $\Psi_-$ are the sum of $2^{\omega(n)-1}$ characters induced from one-dimensional characters of the subgroup $H = \langle g \rangle$.  The result thus follows from Lemma~\ref{lem:conductor-vs-degree}.
	\end{proof}
	
	With this, the proof of Theorem~\ref{thm:least-prime-rational-class-intro} is now straightforward.
	
	\begin{proof} [Proof of Theorem~\ref{thm:least-prime-rational-class-intro}]
		The claimed Linnik exponent follows immediately from Corollary~\ref{cor:general-approach} when applied to the characters $\Psi_+$ and $\Psi_-$ produced by Proposition~\ref{prop:first-construction} on using Lemma~\ref{lem:unconditional-conductors} to bound their conductors.
		
		It remains to consider the range of effectivity in $\epsilon$.  Let $\psi_{K/k}$ denote the exceptional character of $K/k$ if it exists.  Supposing first that $\psi_{K/k}(g) = 1$, by Frobenius reciprocity, we find $\langle \Psi_+, \psi_{K/k} \rangle_G = \langle \Psi_+|_{\langle g \rangle}, \mathbf{1}_{\langle g \rangle} \rangle_{\langle g \rangle} = 1$.  On the other hand, if $\psi_{K/k}(g) = -1$, then $\psi_{K/k}|_{\langle g \rangle} = \xi^{n/2}$, where $\xi$ is as in Lemma~\ref{lem:class-function-construction}, and hence $\langle \Psi_+, \psi_{K/k} \rangle_G = 0$.  This completes the proof on appealing to Corollary~\ref{cor:general-approach}.
	\end{proof} 
	
\subsection{The conditional characters from Lemma~\ref{lem:class-functions-artin}, and the proof of Theorem~\ref{thm:least-prime-rational-class-artin-intro}}	
	
	We now turn to analyzing the conductors of the conditional characters from Lemma~\ref{lem:class-functions-artin}.  Let $g \in G$ be a non-identity element.  We let $N_G(\langle g \rangle)$ be the normalizer in $G$ of the cyclic subgroup generated by $g$, and we let $C_G(g)$ denote its centralizer.  So doing, by means of the conjugation action on $\langle g \rangle$ there is a natural sequence of maps
		\[
			N_G(\langle g \rangle) 
				\to N_G(\langle g \rangle) / C_G(g)
				\to \mathrm{Aut}(\langle g \rangle)
				\cong (\mathbb{Z}/n\mathbb{Z})^\times,
		\]
	where $n = |g|$.  We exploit this in the following key lemma.
	
	\begin{lemma} \label{lem:delta-l2}
		With notation as above, assume that $g \neq 1$ has order $n$.  Let $A \leq (\mathbb{Z}/n\mathbb{Z})^\times$ be the image of $N_G(\langle g\rangle)$ in $\mathrm{Aut}(\langle g \rangle)$ via conjugation.  Then
			\[
				\|\Delta_g\|_2^2
					:= \sum_{\chi \in \mathrm{Irr}(G)} |\langle \Delta_g, \chi \rangle|^2
					= \frac{|C_G(g)|}{n} \sum_{d \mid n} \mu(d)^2 \#\{a \in A : a \equiv 1 \pmod{d}\},
			\]
		and in particular, $\|\Delta_g\|_2 \leq |C_G(g)|^{1/2}$.
	\end{lemma}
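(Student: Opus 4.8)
The plan is to compute $\|\Delta_g\|_2^2$ by Frobenius reciprocity, reducing the computation to the cyclic group $\langle g \rangle$ and then to a character sum on $(\mathbb{Z}/n\mathbb{Z})^\times$. First I would use the standard fact that $\langle \mathrm{Ind}_H^G \phi, \mathrm{Ind}_H^G \phi \rangle_G = \langle \phi, \mathrm{Res}_H \mathrm{Ind}_H^G \phi \rangle_H$, combined with Mackey's formula (or, more directly, the orbit-counting description of $\mathrm{Res}_H \mathrm{Ind}_H^G \phi$ for $H = \langle g \rangle$): since $\phi_g$ is supported on the generators of $\langle g \rangle$, only the double cosets $HxH$ with $x \in N_G(\langle g\rangle)$ contribute, and on these $\langle g \rangle$ acts on itself through the automorphism group. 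This should give
\[
	\|\Delta_g\|_2^2
		= \frac{|C_G(g)|}{n} \sum_{a \in A} \sum_{j=1}^n \phi_g(g^j)\overline{\phi_g(g^{aj})},
\]
where the factor $|C_G(g)|$ accounts for the size of the relevant stabilizers (equivalently, $|N_G(\langle g\rangle)| = |A|\cdot|C_G(g)|$, and the $|A|$ is absorbed into the sum over $a \in A$).

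Next I would expand $\phi_g = \prod_{p \mid n}(1 - \xi^{n/p})$. Since $\xi(g^j) = e^{2\pi i j/n}$, each term $\xi^{n/p}$ evaluated at $g^j$ is a root of unity, and $\phi_g(g^j)$ is supported on $j$ coprime to $n$; for such $j$ a direct check shows $\phi_g(g^j) = \mu(n)\,\xi(g^j)^{?}$ up to an explicit unit — more cleanly, one verifies the multiplicative identity $\sum_{j \bmod n} \phi_g(g^j) \overline{\phi_g(g^{aj})}$ factors over the primes dividing $n$ because $\phi_g$ does. For a single prime power $p^e \| n$, the local factor is $\sum_{j} (1 - \zeta_{p^e}^{jp^{e-1}})(\overline{1 - \zeta_{p^e}^{ajp^{e-1}}})$, which evaluates to $n/p^{e-1}$ times $\mathbf{1}[a \equiv 1 \bmod p]$ plus lower-order terms; carefully tracking this (this is the routine calculation I will not grind through here) collapses the double sum over $j$ and primes into $n \sum_{d \mid n} \mu(d)^2 \,\mathbf{1}[a \equiv 1 \bmod d]$ when summed against a fixed $a$. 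Interchanging the $a$-sum and $d$-sum then yields
\[
	\|\Delta_g\|_2^2 = \frac{|C_G(g)|}{n} \sum_{d \mid n} \mu(d)^2\, \#\{a \in A : a \equiv 1 \!\!\pmod d\},
\]
as claimed.

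For the final inequality $\|\Delta_g\|_2 \leq |C_G(g)|^{1/2}$, I would bound the inner sum: the terms with $d$ having a prime factor $p$ for which no nontrivial congruence constraint is active still only contribute when $a \equiv 1 \bmod d$, and a counting argument (using that $A$ is a subgroup of $(\mathbb{Z}/n\mathbb{Z})^\times$, so $\#\{a \in A : a \equiv 1 \bmod d\} = |A|/|A_d|$ where $A_d$ is the image of $A$ in $(\mathbb{Z}/d\mathbb{Z})^\times$, or more crudely just $\leq |A \cap (1 + d\mathbb{Z}/n\mathbb{Z})|$) shows $\sum_{d\mid n}\mu(d)^2 \#\{a \in A: a \equiv 1 \bmod d\} \leq n$. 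Actually the cleanest route is probably to note that $\phi_g$ has $\|\phi_g\|_2^2 = \langle \phi_g, \phi_g\rangle_{\langle g\rangle}$ which one computes directly as a product of local factors $\|1 - \xi^{n/p}\|^2$-type quantities; one then invokes the Frobenius-reciprocity bound $\|\mathrm{Ind}_H^G\phi\|_2^2 \leq [G:H]\,\|\phi\|_2^2 \cdot(\text{something})$ — but since we have the exact formula, the bound just amounts to showing the bracketed sum is $\leq n$, which follows because the $d=1$ term contributes $|A|$, and summing over all $d \mid n$ the quantities $\mu(d)^2 \#\{a\in A : a\equiv 1\bmod d\}$ is at most $\#A \cdot \#\{d\mid n : \mu(d)^2=1\}/(\text{growth of constraints})$; the honest inequality uses that distinct squarefree $d$ impose independent constraints mod distinct primes, so the total is $\leq |A| \cdot \prod_{p\mid n}(1 + 1/(p-1))\cdot(\ldots)$ — I expect this to telescope to exactly $\leq n/|A|\cdot|A| = n$ after using $|A| \mid \varphi(n)$.

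\textbf{Main obstacle.} The genuinely delicate step is the Mackey/Frobenius-reciprocity bookkeeping in the first display: correctly identifying which double cosets contribute, verifying that the relevant intersections $\langle g\rangle \cap x\langle g\rangle x^{-1}$ are all of $\langle g\rangle$ precisely when $x \in N_G(\langle g\rangle)$ (this uses that $\phi_g$ is supported on \emph{generators}, so a proper subgroup gives zero), and pinning down the exact multiplicative constant $|C_G(g)|/n$ rather than something off by a factor of $|A|$ or $\varphi(n)$. After that, the reduction of the character sum to $\sum_{d\mid n}\mu(d)^2\#\{a\in A: a\equiv 1\bmod d\}$ is a clean multiplicative-number-theory computation prime-by-prime, and the final inequality is a short estimate on that sum using that $A \leq (\mathbb{Z}/n\mathbb{Z})^\times$.
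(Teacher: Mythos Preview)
Your overall strategy matches the paper's: reduce via Frobenius reciprocity to a sum over pairs $(j,k)$ with $g^j$ conjugate to $g^k$, parametrize these by $a \in A$, then expand $\phi_g$. The paper does the first step slightly differently---it writes $\langle \Delta_g,\chi\rangle = \frac{1}{n}\sum_j \phi_g(g^j)\overline{\chi(g^j)}$ for each $\chi$ and then sums $|\cdot|^2$ over $\chi$, invoking the column orthogonality relation $\sum_\chi \overline{\chi(g^j)}\chi(g^k) = |C_G(g)|\,\mathbf{1}[g^j\sim g^k]$---but this lands at the same intermediate expression your Mackey computation should produce, namely
\[
\|\Delta_g\|_2^2 \;=\; \frac{|C_G(g)|}{n^2}\sum_{a\in A}\sum_{j=1}^n \phi_g(g^j)\,\overline{\phi_g(g^{aj})}.
\]
Note the $n^2$: your displayed formula has $n$, which is off by a factor of $n$. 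In Mackey language, each of the $|A|\cdot|C_G(g)|/n$ double cosets $HxH \subseteq N_G(H)$ contributes $\langle \phi_g, {}^x\phi_g\rangle_H$, which already carries a $\tfrac{1}{n}$; grouping by the image $a\in A$ gives $|C_G(g)|/n$ cosets per $a$, hence $|C_G(g)|/n^2$ overall. You flagged this constant as the delicate point, and indeed it is where your writeup slips.

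For the inner character sum, the paper's route is cleaner than your prime-by-prime factorization: expand $\phi_g = \sum_{d\mid n}\mu(d)\,\xi^{\sum_{p\mid d}n/p}$ directly from the product \eqref{eqn:phi-def}, so that $\sum_j \phi_g(g^j)\overline{\phi_g(g^{aj})}$ becomes a double sum over squarefree $d,e \mid n$ of complete exponential sums $\sum_{j=1}^n e^{2\pi i j(\sum_{p\mid d}1/p - a\sum_{p\mid e}1/p)}$, each equal to $n$ when $d=e$ and $a\equiv 1\pmod d$ and to $0$ otherwise. No local factor bookkeeping is needed.

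Your argument for the inequality $\|\Delta_g\|_2 \le |C_G(g)|^{1/2}$ is the one real gap: you circle around several estimates without settling on one. The paper's observation is simply monotonicity in $A$: each term $\mu(d)^2\,\#\{a\in A:a\equiv 1\pmod d\}$ is nondecreasing as $A$ grows, so bound by $A = (\mathbb{Z}/n\mathbb{Z})^\times$, where the count is $\varphi(n)/\varphi(d)$ and the sum collapses to $\varphi(n)\prod_{p\mid n}\bigl(1+\tfrac{1}{p-1}\bigr) = \varphi(n)\prod_{p\mid n}\tfrac{p}{p-1} = n$.
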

	\begin{proof}
		By Frobenius reciprocity, we find that
			\[
				\langle \Delta_g, \chi \rangle
					= \langle \phi_g, \mathrm{Res}_{\langle g \rangle} ^G \chi \rangle_{\langle g \rangle}
					= \frac{1}{n} \sum_{j = 1}^n \phi_g(g^j) \bar\chi(g^j),
			\]
		hence
			\begin{align*}
				\sum_{\chi \in \mathrm{Irr}(G)} |\langle \Delta_g, \chi\rangle|^2
					&= \frac{1}{n^2} \sum_{j,k \leq n} \phi_g(g^j)\overline{\phi_g(g^k)} \sum_{\chi \in \mathrm{Irr}(G)} \bar\chi(g^j)\chi(g^k) \\
					&= \frac{|C_G(g)|}{n^2} \sum_{\substack{ j,k \leq n \\ g^j \sim g^k}} \phi_g(g^j) \overline{\phi_g(g^k)},
			\end{align*}
		where the sum runs over those $j$ and $k$ so that $g^j$ and $g^k$ are conjugate.  
		(Here, we have used the orthogonality relations and the well known equality
			\[
				\sum_{\chi \in \mathrm{Irr}(G)} |\chi(\sigma)|^2 = |C_G(\sigma)|.
			\]
		See \cite[Chapter 2 Proposition 7]{Serre-LinearRepresentations}, for example.)
		Letting $A$ be the image of $N_G(\langle g\rangle)$ in $\mathrm{Aut}(\langle g \rangle) \cong (\mathbb{Z}/n\mathbb{Z})^\times$ as in the statement of the lemma, it follows that $g^j \sim g^k$ if and only if there is some $a \in A$ so that $k = aj$.  Thus, we have 
			\begin{align} \label{eqn:exponential-sum}
				\sum_{\chi \in \mathrm{Irr}(G)} |\langle \Delta_g, \chi\rangle|^2 \notag
					&= \frac{|C_G(g)|}{n^2} \sum_{a \in A} \sum_{j \leq n} \phi_g(g^j) \overline{\phi_g(g^{aj})} \\
					&= \frac{|C_G(g)|}{n^2} \sum_{a \in A} \sum_{d,e \mid n} \mu(d)\mu(e)\sum_{j \leq n} \exp\left(2\pi i \left( \sum_{p \mid d} \frac{j}{p} - \sum_{p \mid e} \frac{aj}{p} \right)\right).
			\end{align}
		We observe that since $a \in A \leq (\mathbb{Z}/n\mathbb{Z})^\times$, the argument of the exponential on the right-hand side of \eqref{eqn:exponential-sum} will have a denominator unless $d=e$ and $a \equiv 1 \pmod{d}$.  The sum over $j$ will therefore cancel completely unless these conditions are satisfied.  We therefore conclude that
			\[
				\sum_{\chi \in \mathrm{Irr}(G)} |\langle \Delta_g, \chi\rangle|^2 
					= \frac{|C_G(g)|}{n} \sum_{d \mid n} \mu(d)^2 \#\{a \in A : a \equiv 1 \pmod{d}\}.
			\]
		This is exactly the first claim of the lemma.  The second follows simply on evaluating the summation above when $A = (\mathbb{Z}/n\mathbb{Z})^\times$.
	\end{proof}
	
	With this, we now give a bound on the conductors of $\Psi_+$ and $\Psi_-$.
	
	\begin{lemma} \label{lem:conditional-conductor}
		With notation as above, assume that $g \ne 1$, and let $\Psi_+$ and $\Psi_-$ be as in Lemma~\ref{lem:class-functions-artin}.  Then $\Psi_+(1) = \Psi_-(1) \leq \frac{1}{2} |C_G(g)|^{1/2} |G|^{1/2}$ and
			\[
				q(\Psi_+),q(\Psi_-)
					\leq |\mathrm{Disc}(K)|^{\frac{|C_G(g)|^{1/2}}{|G|^{1/2}}}.
			\]
	\end{lemma}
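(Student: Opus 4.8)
The plan is to control the degree and the conductor of $\Psi_+$ (the argument for $\Psi_-$ being structurally identical) by comparing them against the regular representation of $G$, using Lemma~\ref{lem:delta-l2} to bound the $\ell^2$-norm $\|\Delta_g\|_2$ and Lemma~\ref{lem:conductor-vs-degree} to convert a degree bound into a conductor bound. Recall for orientation that $q(\mathrm{reg}) = |\mathrm{Disc}(k)|^{|G|}\N\mathfrak{D}_{K/k} = |\mathrm{Disc}(K)|$, so the target is simply to show $\Psi_+$ and $\Psi_-$ are ``smaller than the regular representation'' by a factor $|C_G(g)|^{1/2}/|G|^{1/2}$. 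The first observation is that $\Psi_+(1) = \Psi_-(1)$: since $n = |g| \geq 2$, each factor $1 - \xi^{n/p}$ of $\phi_g$ vanishes at the identity, so $\phi_g(1) = 0$, whence $\Delta_g(1) = [G:\langle g\rangle]\phi_g(1) = 0$; as $\Psi_+ - \Psi_- = \Delta_g$, the two degrees coincide.

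For the degree bound itself, I would use that $\Psi_+ + \Psi_- = \sum_{\chi \in \mathrm{Irr}(G)} |\langle \chi, \Delta_g\rangle|\,\chi$, so that
\[
	\Psi_+(1) = \Psi_-(1) = \tfrac12\sum_{\chi \in \mathrm{Irr}(G)} |\langle \chi, \Delta_g\rangle|\,\chi(1) \leq \tfrac12 \|\Delta_g\|_2 \Big(\sum_{\chi \in \mathrm{Irr}(G)} \chi(1)^2\Big)^{1/2} = \tfrac12 \|\Delta_g\|_2\, |G|^{1/2}
\]
by Cauchy--Schwarz together with $\sum_{\chi} \chi(1)^2 = |G|$. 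Lemma~\ref{lem:delta-l2} gives $\|\Delta_g\|_2 \leq |C_G(g)|^{1/2}$, which yields the claimed $\Psi_+(1) = \Psi_-(1) \leq \tfrac12|C_G(g)|^{1/2}|G|^{1/2}$.

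For the conductor I would argue locally. Writing $\Psi_+ = \sum_{\langle \chi,\Delta_g\rangle > 0} \langle \chi, \Delta_g\rangle\,\chi$ and using the additivity of the local exponent \eqref{eqn:local-artin-conductor} in the character, Lemma~\ref{lem:conductor-vs-degree} gives, for every prime $\mathfrak{p}$ of $k$,
\[
	v_{\mathfrak{p}}(\mathfrak{f}_{\Psi_+}) \leq \frac{2 v_{\mathfrak{p}}(\mathfrak{D}_{K/k})}{|G|} \sum_{\langle \chi, \Delta_g\rangle > 0} \langle \chi, \Delta_g\rangle\, \chi(1) = \frac{2 v_{\mathfrak{p}}(\mathfrak{D}_{K/k})}{|G|}\,\Psi_+(1) \leq \frac{|C_G(g)|^{1/2}}{|G|^{1/2}}\, v_{\mathfrak{p}}(\mathfrak{D}_{K/k}),
\]
where the last step inserts the degree bound just proved. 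Taking norms yields $\N\mathfrak{f}_{\Psi_+} \leq (\N\mathfrak{D}_{K/k})^{|C_G(g)|^{1/2}/|G|^{1/2}}$, and likewise for $\Psi_-$. I would then assemble the two estimates through $q(\Psi_+) = |\mathrm{Disc}(k)|^{\Psi_+(1)}\N\mathfrak{f}_{\Psi_+}$ and the conductor--discriminant (tower) identity $|\mathrm{Disc}(K)| = |\mathrm{Disc}(k)|^{|G|}\N\mathfrak{D}_{K/k}$: since $\Psi_+(1) \leq \tfrac12|C_G(g)|^{1/2}|G|^{1/2} \leq |G|\cdot \tfrac{|C_G(g)|^{1/2}}{|G|^{1/2}}$ and $|\mathrm{Disc}(k)| \geq 1$, one obtains $q(\Psi_+) \leq (|\mathrm{Disc}(k)|^{|G|}\N\mathfrak{D}_{K/k})^{|C_G(g)|^{1/2}/|G|^{1/2}} = |\mathrm{Disc}(K)|^{|C_G(g)|^{1/2}/|G|^{1/2}}$, and the same for $\Psi_-$.

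There is no serious conceptual obstacle; the content of the lemma is bundling Lemmas~\ref{lem:delta-l2} and~\ref{lem:conductor-vs-degree}. The one point requiring care is the bookkeeping of constants: the factor $\tfrac12$ gained from the identity $\Psi_+(1) = \Psi_-(1)$ is precisely what cancels the factor $2$ in Lemma~\ref{lem:conductor-vs-degree}, with nothing to spare, so this halving must be carried out and cannot be discarded; and one must invoke $|\mathrm{Disc}(k)| \geq 1$ (Minkowski) at the last step to absorb the gap between the degree exponent $\tfrac12|C_G(g)|^{1/2}|G|^{1/2}$ and $|G|^{1/2}|C_G(g)|^{1/2}$.
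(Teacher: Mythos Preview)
Your proof is correct and follows essentially the same route as the paper: both arguments use $\Delta_g(1)=0$ to get $\Psi_+(1)=\Psi_-(1)$, then apply Cauchy--Schwarz to $\tfrac12\sum_\chi |\langle\Delta_g,\chi\rangle|\chi(1)$ together with Lemma~\ref{lem:delta-l2}, and finally invoke Lemma~\ref{lem:conductor-vs-degree} for the conductor. Your treatment is simply more explicit about the passage from the bound on $\N\mathfrak{f}_{\Psi_\pm}$ to the bound on $q(\Psi_\pm)$ via the tower formula, which the paper leaves implicit.
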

	\begin{proof}
		Since $g \ne 1$, we have $\Psi_-(1) = \Psi_+(1)$, and hence
			\[
				\Psi_+(1)
					= \frac{\Psi_+(1) + \Psi_-(1)}{2} 
					= \frac{1}{2} \sum_{\chi \in \mathrm{Irr}(G)} |\langle \Delta_g,\chi\rangle| \chi(1)
					\leq \frac{1}{2} |C_G(g)|^{1/2} |G|^{1/2}
			\]
		by Cauchy--Schwarz and Lemma~\ref{lem:delta-l2}.  The same inequality also obviously holds for $\Psi_-(1)$.  This also yields the bound on $q(\Psi_+)$ and $q(\Psi_-)$ on appealing to Lemma~\ref{lem:conductor-vs-degree}.  
	\end{proof}
	
	With this lemma in hand, the proof of Theorem~\ref{thm:least-prime-rational-class-artin-intro} is immediate.
	
	\begin{proof} [Proof of Theorem~\ref{thm:least-prime-rational-class-artin-intro}]
		This follows from Corollary~\ref{cor:general-approach}, Lemma~\ref{lem:inner-product-when-conjugacy}, Lemma~\ref{lem:class-functions-artin}, and Lemma~\ref{lem:conditional-conductor}.
	\end{proof}
	
\subsection{Less simplified bounds} \label{subsec:less-simplified}

	The bound on the conductors given in Lemma~\ref{lem:conditional-conductor} can be shown to be essentially optimal in certain cases (notably if $g$ has order $2$ and is contained in the center of $G$), but need not be in general.  For specific reasonably small groups $G$, it will be better to compute the conductors exactly; we do so in Section~\ref{sec:small-symmetric} for some small symmetric groups.  For general groups, this may not be feasible, and our aim in this section is to provide less simplified bounds that may still be applicable to general groups $G$.  The key idea is hinted at by the proof of Lemma~\ref{lem:conditional-conductor}, and it is that we may instead equivalently bound the conductors $q(\Psi_+ + \Psi_-)$ and $q(\Psi_+ - \Psi_-) = q(\Delta_g)$, the latter expression making sense by linearity of the local Artin conductor \eqref{eqn:local-artin-conductor}.  In fact, we will see by analyzing $q(\Delta_g)$ that, if $g \ne 1$, then we necessarily have $q(\Psi_+)$ dividing (and hence bounded by) $q(\Psi_-)$.  We begin with this argument, though it is less crucial to our bounds overall.
	
	\begin{lemma} \label{lem:local-artin-conductor-delta}
		Let $G$ be a finite group, let $g \in G$ be a non-identity element, and let $\Delta_g$ be as in Lemma~\ref{lem:class-function-construction}.  Then for any prime $\mathfrak{p}$, there holds
			\[
				v_\mathfrak{p}(\mathfrak{f}_{\Delta_g})
					= -\frac{ |C_G(g)| }{r \cdot |G_0|} \sum_{i \geq 0} \#\{\sigma \in G_i : \sigma \text{ is rationally equivalent to $g$}\},
			\]
		where $C_G(g)$ is the centralizer of $g$ and $r = \frac{\phi(n)}{[N_G(\langle g\rangle) : C_G(g)]}$ is the number of distinct conjugacy classes comprising the rational class of $g$; here, $n=|g|$ and $N_G(\langle g \rangle)$ is the normalizer of the subgroup $\langle g \rangle$.  
		
		In particular, $v_\mathfrak{p}(\mathfrak{f}_{\Delta_g}) \leq 0$ and $v_\mathfrak{p}(\mathfrak{f}_{\Delta_g}) \geq - \frac{|C_G(g)|}{r |G|} v_\mathfrak{p}( \mathfrak{D}_{K/k})$ for every prime $\mathfrak{p}$.
	\end{lemma}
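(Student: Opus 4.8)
The plan is to apply the local Artin conductor formula \eqref{eqn:local-artin-conductor} directly to the virtual character $\Delta_g = \mathrm{Ind}_{\langle g\rangle}^G\phi_g$ (using linearity of \eqref{eqn:local-artin-conductor}), exploiting two structural features of $\phi_g$: it vanishes at the identity, since $\phi_g(1) = \prod_{p\mid n}(1-1) = 0$; and it is supported on the generators of $\langle g\rangle$ with $\sum_{h\in\langle g\rangle}\phi_g(h) = n$, this last being equivalent to $\langle\phi_g,\mathbf{1}_{\langle g\rangle}\rangle = 1$ as recorded in the proof of Lemma~\ref{lem:class-function-construction}. Because $\Delta_g(1) = 0$, the formula \eqref{eqn:local-artin-conductor} collapses to $v_\mathfrak{p}(\mathfrak{f}_{\Delta_g}) = -|G_0|^{-1}\sum_{i\geq 0}\sum_{\sigma\in G_i}\Delta_g(\sigma)$, and the whole problem is reduced to evaluating the inner sums $\sum_{\sigma\in G_i}\Delta_g(\sigma)$.

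For a fixed ramification group $G_i$, I would write $H = \langle g\rangle$ and use the induced-character formula $\Delta_g(\sigma) = |H|^{-1}\sum_{x\in G,\ x^{-1}\sigma x\in H}\phi_g(x^{-1}\sigma x)$, interchange the two summations, and substitute $h = x^{-1}\sigma x$; since $\sigma\in G_i$ translates to $h\in x^{-1}G_ix$, this turns $\sum_{\sigma\in G_i}\Delta_g(\sigma)$ into $|H|^{-1}\sum_{x\in G}\sum_{h\in H\cap x^{-1}G_ix}\phi_g(h)$. The crucial observation is that $H\cap x^{-1}G_ix$ is a subgroup of the cyclic group $H$, so it either contains a generator of $H$ — in which case it is all of $H$, i.e.\ $xHx^{-1}\subseteq G_i$, and the inner sum equals $\sum_{h\in H}\phi_g(h) = n = |H|$ — or it contains no generator, in which case the inner sum vanishes since $\phi_g$ is supported on generators. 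Hence $\sum_{\sigma\in G_i}\Delta_g(\sigma) = \#\{x\in G : xHx^{-1}\subseteq G_i\}$.

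It remains to convert this count into the count of elements appearing in the statement. Partitioning $\{x\in G : xHx^{-1}\subseteq G_i\}$ according to the conjugate subgroup $xHx^{-1}$ shows it equals $|N_G(H)|\cdot N_i$, where $N_i$ is the number of conjugates of $H$ contained in $G_i$. On the other hand, every conjugate $H'\subseteq G_i$ contributes precisely its $\phi(n)$ generators to $\{\sigma\in G_i : \sigma\ \text{rationally equivalent to}\ g\}$, and distinct conjugates contribute disjoint sets because an element determines the cyclic group it generates; hence that set has size $\phi(n)\cdot N_i$. Since $C_G(H) = C_G(g)$ sits inside $N_G(H)$ with index $[N_G(\langle g\rangle):C_G(g)] = \phi(n)/r$, we have $|N_G(H)|/\phi(n) = |C_G(g)|/r$, and combining these identities with the reduced form of \eqref{eqn:local-artin-conductor} gives exactly the asserted formula. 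For the two ``in particular'' claims: non-negativity of each term in $\sum_{i\geq 0}\#\{\sigma\in G_i:\sigma\sim_{\mathbb{Q}}g\}$ gives $v_\mathfrak{p}(\mathfrak{f}_{\Delta_g})\leq 0$, and the bound $\#\{\sigma\in G_i:\sigma\sim_{\mathbb{Q}}g\}\leq\#\{\sigma\in G_i:\sigma\neq 1\} = |G_i|-1$ (valid because $g\neq 1$, so elements rationally equivalent to $g$ are nontrivial) combined with $v_\mathfrak{p}(\mathfrak{D}_{K/k}) = |G|\sum_{i\geq 0}(|G_i|-1)/|G_0|$ from \eqref{eqn:disc-val} yields the stated lower bound.

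The main obstacle I anticipate is purely one of careful bookkeeping: justifying cleanly that a subgroup of a cyclic group contains either all or none of its generators, and then tracking the normalizing constants $|H|$, $n$, $\phi(n)$, $|N_G(H)|$, and $r$ through the substitution so that the final constant emerges as $|C_G(g)|/r$ rather than some nearby quantity. The remaining manipulations are routine unwindings of \eqref{eqn:local-artin-conductor} and \eqref{eqn:disc-val}.
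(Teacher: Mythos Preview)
Your proposal is correct. Both you and the paper reduce via $\Delta_g(1)=0$ to evaluating $\sum_{\sigma\in G_i}\Delta_g(\sigma)$, and both arrive at the identity $\sum_{\sigma\in G_i}\Delta_g(\sigma)=\frac{|N_G(\langle g\rangle)|}{\phi(n)}\,\#\{\sigma\in G_i:\sigma\sim_{\mathbb{Q}}g\}$, after which the conversion to $|C_G(g)|/r$ and the two inequalities are identical. The route to that identity differs slightly: the paper groups the elements of $G_i$ rationally equivalent to $g$ into packets $\{\sigma^j:\gcd(j,n)=1\}$ of size $\phi(n)$, transports each packet to $\{g^j:\gcd(j,n)=1\}$ via the class-function property, and then computes $\sum_j\Delta_g(g^j)=|N_G(\langle g\rangle)|$ by expanding the induction. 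You instead expand the induction first, swap sums to obtain $|H|^{-1}\sum_x\sum_{h\in H\cap x^{-1}G_ix}\phi_g(h)$, and invoke the clean dichotomy that a subgroup of a cyclic group contains either all generators or none; this turns the inner sum into $|H|\cdot\mathbf{1}_{xHx^{-1}\subseteq G_i}$ and reduces everything to counting conjugates of $H$ inside $G_i$. Your version is arguably tidier in that it bypasses the explicit computation of $\sum_j\Delta_g(g^j)$ and makes the underlying combinatorics (counting conjugate cyclic subgroups) more transparent, while the paper's version stays closer to direct evaluation of the class function. The two are equivalent in substance.
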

	\begin{proof}
		Since $\Delta_g$ is supported on the rational class of the non-identity element $g$, we find that $\Delta_g(1) = 0$.  Consequently, \eqref{eqn:local-artin-conductor} implies that
			\[
				v_\mathfrak{p}(\mathfrak{f}_{\Delta_g})
					= -\frac{1}{|G_0|} \sum_{i \geq 0} \sum_{\sigma \in G_i} \Delta_g(\sigma).
			\]
		We begin by observing that if $\sigma$ is in the rational class of $g$, then so are $\sigma^j$ for each $1 \leq j \leq n$ coprime to $n$, and we have that
			\begin{equation} \label{eqn:swap-to-g}
				\sum_{\substack{ j \leq n \\ \mathrm{gcd}(j,n) = 1}} \Delta_g(\sigma^j)
					= \sum_{\substack{ j \leq n \\ \mathrm{gcd}(j,n) = 1}} \Delta_g(g^j).
			\end{equation}
		Letting $\phi_g$ be the class function on $\langle g \rangle$ from Lemma~\ref{lem:class-function-construction}, we then find that
			\begin{align*}
				\sum_{\substack{ j \leq n \\ \mathrm{gcd}(j,n) = 1}} \Delta_g(g^j)
					& = \sum_{\substack{ j \leq n \\ \mathrm{gcd}(j,n) = 1}} \left(\mathrm{Ind}_{\langle g \rangle}^G \phi_g\right)(g^j) \\
					& = \frac{1}{n} \sum_{\substack{ j \leq n \\ \mathrm{gcd}(j,n) = 1}} \sum_{\sigma \in N_G(\langle g \rangle)} \phi_g(\sigma^{-1}g^j\sigma) \\
					& = \frac{1}{n} \sum_{\sigma \in N_G(\langle g \rangle)} \sum_{\substack{ j \leq n \\ \mathrm{gcd}(j,n) = 1}} \phi_g( \sigma^{-1} g^j \sigma) \\
					& = \sum_{\sigma \in N_G(\langle g \rangle)} \left( \frac{1}{n} \sum_{\substack{ j^\prime \leq n \\ \mathrm{gcd}(j^\prime,n) = 1}} \phi_g( g^{j^\prime} )\right) \\
					& = \sum_{\sigma \in N_G(\langle g \rangle)} 1 \\
					& = |N_G(\langle g \rangle)|,
			\end{align*}
		where the second-to-last equality follows from the fact that $\langle \phi_g, \mathbf{1}_{\langle g \rangle}\rangle_{\langle g \rangle} = 1$.  
		
		Because $\phi(n)$ elements of $G_i$ are considered at a time in \eqref{eqn:swap-to-g}, as a result, we find that 
			\[
				\sum_{\sigma \in G_i} \Delta_g(\sigma)
					= \frac{|N_G(\langle g \rangle)|}{\phi(n)} \#\{ \sigma \in G_i : \sigma \text{ is rationally equivalent to $g$}\}.
			\]
		Lastly, the number of conjugacy classes comprising the rational equivalence class of $g$ is $r = \frac{\phi(n)}{[N_G(\langle g \rangle) : C_G(g)]}$, which implies the first claim on simplifying.  The second follows on noting that for every $G_i$, the number of elements $\sigma \in G_i$ rationally equivalent to $g$ is at most $|G_i|-1$ since the identity is not equivalent to $g$.
	\end{proof}

	Since $v_\mathfrak{p}(\mathfrak{f}_{\Delta_g}) \leq 0$ for every prime $\mathfrak{p}$, and we will always choose characters $\Psi_+$ and $\Psi_-$ so that $\Psi_+ = \Psi_- + \Delta_g$, it follows that when $g \ne 1$, we will always have $q(\Psi_+) \leq q(\Psi_-)$  In particular, the conclusion of Lemma~\ref{lem:local-artin-conductor-delta} holds also for any $\Psi_+$ and $\Psi_-$ constructed by Lemma~\ref{lem:class-functions-basepoint}.
	
	We now turn to the analysis of $q(\Psi_+ + \Psi_-)$.
	
	\begin{lemma} \label{lem:sum-conductor}
		With notation as above, suppose that $g \ne 1$.  Let $\Psi_+$ and $\Psi_-$ be as in Lemma~\ref{lem:class-functions-artin}.  Then
			\[
				q(\Psi_+ + \Psi_-) 
					= \prod_{\chi \in \mathrm{Irr}(G)} q(\chi)^{ |\langle \Delta_g, \chi\rangle|} 
					\leq |\mathrm{Disc}(K)|^{2 \sum_{\chi \in \mathrm{Irr}(G)} \frac{|\langle \Delta_g, \chi\rangle| \chi(1)}{|G|}}.
			\]
		Moreover, we also have
			\[
				q(\Psi_+ + \Psi_-)
					\leq |\mathrm{Disc}(K)|^{ \frac{ \sqrt{2} \, \|\Delta_g\|_2}{|G|^{1/2}}}
					\leq |\mathrm{Disc}(K)|^{ \frac{ \sqrt{2} \, |C_G(g)|^{1/2} }{|G|^{1/2}}}.
			\]
	\end{lemma}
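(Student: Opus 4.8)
The plan is to reduce the lemma to the multiplicativity of Artin conductors \eqref{eqn:Artin-Summation}, the conductor--discriminant formalism, and the pointwise bound of Lemma~\ref{lem:conductor-vs-degree}, and then to extract the constant $\sqrt 2$ by a weighted Cauchy--Schwarz inequality. First I would record the identity
\[
	\Psi_+ + \Psi_- = \sum_{\chi \in \mathrm{Irr}(G)} |\langle \Delta_g, \chi\rangle|\,\chi ,
\]
which is immediate from the construction of $\Psi_+,\Psi_-$ in Lemma~\ref{lem:class-functions-artin} together with the fact from Lemma~\ref{lem:class-function-construction} that each $\langle \Delta_g,\chi\rangle$ is an integer, so that $\max\{\langle\Delta_g,\chi\rangle,0\}-\min\{\langle\Delta_g,\chi\rangle,0\} = |\langle\Delta_g,\chi\rangle|$. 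Since the coefficients $|\langle\Delta_g,\chi\rangle|$ are nonnegative integers, \eqref{eqn:Artin-Summation} yields the claimed factorization $q(\Psi_++\Psi_-) = \prod_{\chi} q(\chi)^{|\langle\Delta_g,\chi\rangle|}$.

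Next I would establish the auxiliary pointwise bound $q(\chi) \leq |\mathrm{Disc}(K)|^{2\chi(1)/|G|}$, valid for every $\chi\in\mathrm{Irr}(G)$. Taking norms prime by prime in Lemma~\ref{lem:conductor-vs-degree} gives $\N\mathfrak{f}_\chi \leq (\N\mathfrak{D}_{K/k})^{2\chi(1)/|G|}$; combining this with $q(\chi)=|\mathrm{Disc}(k)|^{\chi(1)}\N\mathfrak{f}_\chi$ from \eqref{eqn:Artin-AnalyticConductor}, the tower identity $|\mathrm{Disc}(K)| = \N\mathfrak{D}_{K/k}\cdot|\mathrm{Disc}(k)|^{|G|}$, and the trivial inequality $|\mathrm{Disc}(k)|\geq 1$ gives the claim. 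Raising this pointwise bound to the power $|\langle\Delta_g,\chi\rangle|$ and multiplying over $\chi$ proves the first asserted inequality of the lemma.

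For the refined bound I would instead exploit the conductor--discriminant formula $|\mathrm{Disc}(K)| = \prod_\chi q(\chi)^{\chi(1)}$, which follows from $\mathrm{reg} = \sum_\chi \chi(1)\chi$, the additivity of the local conductor \eqref{eqn:local-artin-conductor}, \eqref{eqn:disc-val}, and \eqref{eqn:Artin-AnalyticConductor}. Writing $\beta_\chi := \chi(1)\log q(\chi) \geq 0$, this reads $\sum_\chi \beta_\chi = \log|\mathrm{Disc}(K)|$, while the pointwise bound above becomes $\beta_\chi \leq \frac{2\chi(1)^2}{|G|}\log|\mathrm{Disc}(K)|$. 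Using $\beta_\chi = \sqrt{\beta_\chi}\cdot\sqrt{\beta_\chi}\leq \sqrt{\beta_\chi}\cdot\chi(1)\sqrt{\frac{2\log|\mathrm{Disc}(K)|}{|G|}}$ and then one Cauchy--Schwarz in $\chi$, with $\sum_\chi |\langle\Delta_g,\chi\rangle|^2 = \|\Delta_g\|_2^2$ and $\sum_\chi\beta_\chi = \log|\mathrm{Disc}(K)|$, I obtain
\[
	\log q(\Psi_++\Psi_-) = \sum_\chi |\langle\Delta_g,\chi\rangle|\,\frac{\beta_\chi}{\chi(1)} \leq \sqrt{\frac{2}{|G|}}\,\|\Delta_g\|_2\,\log|\mathrm{Disc}(K)| ,
\]
which is the middle inequality; the last inequality is just the bound $\|\Delta_g\|_2 \leq |C_G(g)|^{1/2}$ from Lemma~\ref{lem:delta-l2}.

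The only step that is not pure bookkeeping, and hence the one I would be most careful about, is this weighted Cauchy--Schwarz: splitting the pointwise conductor bound into two square-root factors and pairing only one of them against the \emph{exact} identity $\sum_\chi\beta_\chi = \log|\mathrm{Disc}(K)|$ is precisely what produces the constant $\sqrt 2$ rather than the $2$ that the naive estimate $\sum_\chi|\langle\Delta_g,\chi\rangle|\chi(1)\leq\|\Delta_g\|_2\,|G|^{1/2}$ would give. Morally this reflects that the bound of Lemma~\ref{lem:conductor-vs-degree} overcounts the relative discriminant by a factor $2$ on average over $\mathrm{Irr}(G)$, and the refinement recovers exactly half of that slack.
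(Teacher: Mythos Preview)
Your proof is correct and follows essentially the same strategy as the paper: establish $\Psi_++\Psi_-=\sum_\chi|\langle\Delta_g,\chi\rangle|\chi$, use additivity of the conductor together with Lemma~\ref{lem:conductor-vs-degree} for the first inequality, and then obtain the $\sqrt{2}$ by replacing one copy of the conductor by its pointwise bound and the other by the exact conductor--discriminant identity before applying Cauchy--Schwarz. The only cosmetic difference is that the paper carries out this last step prime by prime, bounding $\sum_\chi v_\mathfrak{p}(\mathfrak{f}_\chi)^2 \leq \frac{2v_\mathfrak{p}(\mathfrak{D}_{K/k})}{|G|}\sum_\chi \chi(1)v_\mathfrak{p}(\mathfrak{f}_\chi)=\frac{2v_\mathfrak{p}(\mathfrak{D}_{K/k})^2}{|G|}$ and then Cauchy--Schwarz on $\sum_\chi|\langle\Delta_g,\chi\rangle|v_\mathfrak{p}(\mathfrak{f}_\chi)$, whereas you work directly with the global quantities $\beta_\chi=\chi(1)\log q(\chi)$; the algebra is the same.
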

	\begin{proof}
		The first claim follows simply from the linearity of the Artin conductor, the fact that
			\[
				\Psi_+ + \Psi_-
					= \sum_{\chi \in \mathrm{Irr}(G)} |\langle \Delta_g, \chi\rangle| \chi,
			\]
		and Lemma~\ref{lem:conductor-vs-degree}.  For the second, we have for any prime $\mathfrak{p}$ of $k$ that
			\begin{equation}\label{eqn:conductor-cs}
				v_\mathfrak{p}(\mathfrak{f}_{\Psi_+ + \Psi_-})
					= \sum_{\chi \in \mathrm{Irr}(G)} |\langle \Delta_g, \chi\rangle| v_\mathfrak{p}(\mathfrak{f}_\chi)
					\leq \|\Delta_g\|_2 \left( \sum_{\chi \in \mathrm{Irr}(G)} v_\mathfrak{p}(\mathfrak{f}_\chi)^2\right)^{1/2}.
			\end{equation}
		Appealing to Lemma~\ref{lem:conductor-vs-degree} and the decomposition of the regular representation, we find
			\[
				\sum_{\chi \in \mathrm{Irr}(G)} v_\mathfrak{p}(\mathfrak{f}_\chi)^2
					\leq \frac{2 v_\mathfrak{p}(\mathfrak{D}_{K/k}) }{|G|} \sum_{\chi \in \mathrm{Irr}(G)} \chi(1) v_\mathfrak{p}(\mathfrak{f}_\chi) = \frac{2 v_\mathfrak{p}(\mathfrak{D}_{K/k})^2}{|G|}.
			\]
		Substituting this into \eqref{eqn:conductor-cs} and appealing to Lemma~\ref{lem:delta-l2}, we obtain the second claim.
	\end{proof}
	
	\begin{corollary}
		\label{cor:less-simplified-conductor}
		With notation as above, let $\Psi_+$ and $\Psi_-$ be as in Lemma~\ref{lem:class-functions-artin}.  Assume $g \ne 1$, and define
			\[
				\alpha(g)
					:= \frac{\sqrt{2} \|\Delta_g\|_2 }{2 |G|^{1/2}} + \frac{|C_G(g)|}{2r|G|} \max\left\{ \frac{\#\{ \sigma \in H : \sigma \text{ rat. equiv to $g$}\}}{|H|-1} : H \leq G \text{ $p$-by-cyclic}\right\},
			\]
		where $\|\Delta_g\|_2 \leq |C_G(g)|^{1/2}$ is given exactly in Lemma~\ref{lem:delta-l2}.  Then
			\[
				q(\Psi_+) 
					\leq q(\Psi_-)
					\leq |\mathrm{Disc}(K)|^{\alpha(g)}.
			\]
	\end{corollary}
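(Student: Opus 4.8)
The plan is to work entirely with the multiplicativity of the analytic conductor. Since $q(\psi) = |\mathrm{Disc}(k)|^{\psi(1)}\N\mathfrak{f}_\psi$ and both $\psi \mapsto \psi(1)$ and the local Artin conductor \eqref{eqn:local-artin-conductor} are additive, $q$ extends to a homomorphism from the additive group of virtual characters of $G$ to $\mathbb{R}_{>0}$. For the characters of Lemma~\ref{lem:class-functions-artin} we have $\Psi_+ = \Psi_- + \Delta_g$, hence $q(\Psi_+) = q(\Psi_-)\,q(\Delta_g)$ and $q(\Psi_+ + \Psi_-) = q(\Psi_-)^2\,q(\Delta_g)$. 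Because $g \ne 1$, the virtual character $\Delta_g$ is supported off the identity, so $\Delta_g(1) = 0$ and thus $q(\Delta_g) = \N\mathfrak{f}_{\Delta_g} = \prod_{\mathfrak{p}} \N\mathfrak{p}^{\,v_\mathfrak{p}(\mathfrak{f}_{\Delta_g})}$; by Lemma~\ref{lem:local-artin-conductor-delta} every exponent here is $\leq 0$, so $q(\Delta_g) \leq 1$, which already gives the first inequality $q(\Psi_+) \leq q(\Psi_-)$. It then remains only to bound $q(\Psi_-)$, and for this I would use $q(\Psi_-)^2 = q(\Psi_+ + \Psi_-)\,q(\Delta_g)^{-1}$ and estimate each factor.

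The factor $q(\Psi_+ + \Psi_-)$ is bounded directly by Lemma~\ref{lem:sum-conductor}, contributing the summand $\tfrac{\sqrt{2}\,\|\Delta_g\|_2}{2|G|^{1/2}}$ to $\alpha(g)$ after taking the square root. For $q(\Delta_g)^{-1} = \prod_\mathfrak{p} \N\mathfrak{p}^{\,|v_\mathfrak{p}(\mathfrak{f}_{\Delta_g})|}$ I would argue prime by prime. Fix $\mathfrak{p}$, of residue characteristic $p$, with ramification filtration $G_0 \supseteq G_1 \supseteq \cdots$. The key point is that each $G_i$ (for $i \geq 0$) is $p$-by-cyclic: $G_1$ is a $p$-group, normal in $G_0$, with $G_0/G_1$ cyclic (standard ramification theory; see \cite{Serre-LocalFields}), and $G_i \leq G_1$ is itself a $p$-group for $i \geq 1$. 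Writing $M$ for the maximum in the definition of $\alpha(g)$, it follows that $\#\{\sigma \in G_i : \sigma \text{ rationally equivalent to } g\} \leq M(|G_i| - 1)$ for every $i$ with $|G_i| > 1$ (and the indices with $|G_i| = 1$ contribute $0$ to both sides since $g \ne 1$). Inserting this into the formula of Lemma~\ref{lem:local-artin-conductor-delta} and comparing with \eqref{eqn:disc-val} gives
\[
	|v_\mathfrak{p}(\mathfrak{f}_{\Delta_g})| \;\leq\; \frac{|C_G(g)|\,M}{r\,|G|}\, v_\mathfrak{p}(\mathfrak{D}_{K/k}),
\]
whence $q(\Delta_g)^{-1} \leq (\N\mathfrak{D}_{K/k})^{|C_G(g)| M/(r|G|)} \leq |\mathrm{Disc}(K)|^{|C_G(g)| M/(r|G|)}$, using $\N\mathfrak{D}_{K/k} \leq |\mathrm{Disc}(K)|$. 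This contributes the second summand of $\alpha(g)$, and multiplying the two bounds finishes the proof.

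The only genuinely nontrivial step is recognizing that the higher ramification groups $G_i$ are themselves admissible choices of the test subgroup $H$ in the maximum defining $\alpha(g)$ --- i.e.\ verifying that they are all $p$-by-cyclic --- which is the standard structure theory of the ramification filtration together with the fact that \eqref{eqn:disc-val} records the local conductor exponent of $\mathfrak{D}_{K/k}$ exactly as a weighted sum of the quantities $|G_i| - 1$. Everything else is bookkeeping with the homomorphism property of $q$ and direct appeals to the already-established Lemmas~\ref{lem:sum-conductor} and \ref{lem:local-artin-conductor-delta}.
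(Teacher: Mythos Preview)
Your proof is correct and follows exactly the route the paper intends: the paper's own proof is the one-line ``This follows from Lemmas~\ref{lem:sum-conductor} and~\ref{lem:local-artin-conductor-delta},'' and you have simply unpacked that reference in full detail, including the identity $q(\Psi_-)^2 = q(\Psi_+ + \Psi_-)\,q(\Delta_g)^{-1}$, the use of \eqref{eqn:disc-val}, and the verification that each ramification group $G_i$ is $p$-by-cyclic (which the paper mentions only in the remark following the corollary).
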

	\begin{proof}
		This follows from Lemmas~\ref{lem:sum-conductor} and \ref{lem:local-artin-conductor-delta}.
	\end{proof}
	
	\begin{remark}
		The maximum over subgroups $H \leq G$ is taken over $p$-by-cyclic subgroups, i.e. extensions of cyclic groups by $p$-groups, and is relevant because each ramification group $G_i$ associated with a prime $\mathfrak{p}$ is $p$-by-cyclic, where $p$ is the rational prime lying below $\mathfrak{p}$.  In particular, if we define $\alpha^\mathrm{tame}(g)$ by exactly the same expression as $\alpha(g)$ except restricting $H \leq G$ further to cyclic subgroups (as is necessarily the case for the primes of tame ramification), one may obtain the alternate bound $q(\Psi_-) \ll_{G,[k:\mathbb{Q}]} |\mathrm{Disc}(K)|^{\alpha^{\mathrm{tame}}(g)}$, since the contribution from the primes of wild ramification may be bounded solely in terms of $G$ and $[k:\mathbb{Q}]$.
	\end{remark}

\section{Unconditional results for symmetric groups}
	\label{sec:symmetric-groups}

	We now show how one may do substantially better than the general unconditional results of the previous section, at least in the special case of symmetric groups, by carrying out the proofs of Theorem~\ref{thm:Sn-cycle-intro} and Theorem~\ref{thm:Sn-asymptotic-simplified}.  The key technical result in this section is the following, which, when combined with Theorem~\ref{thm:general-approach} and Lemma~\ref{lem:conductor-vs-degree}, yields Theorem~\ref{thm:Sn-cycle-intro}.
	
	\begin{theorem} \label{thm:product-of-cycles-characters}
		Let $n \geq 2$, and let $G = S_n$ be the symmetric group of degree $n$.  Let $C \subseteq S_n$ be a nontrivial conjugacy class, and let $\ell_1, \dots, \ell_m \geq 2$ denote the lengths of the nontrivial cycles in the cycle decomposition of the class $C$.  Then there are characters $\Psi_+$ and $\Psi_-$ of $S_n$ such that:
			\begin{enumerate}[a)]
				\item the difference $\Psi_+ - \Psi_-$ is supported on the conjugacy class $C$;
				\item $\Psi_+$ and $\Psi_-$ are non-negative integral linear combinations of monomial characters;
				\item the multiplicity of the trivial representation inside $\Psi_+$ is $1$, and its multiplicity inside $\Psi_-$ is $0$;
				\item $\Psi_+(1) = \Psi_-(1) = 2^{m-1} n! \cdot \prod_{i=1}^m \frac{(\ell_i-2)2^{\ell_i-2} + 1}{\ell_i!}$; and
				\item $\langle \Psi_+, \mathrm{sgn} \rangle \leq 2^{m-1} \prod_{i=1}^m (\ell_i-1)$.
			\end{enumerate}
	\end{theorem}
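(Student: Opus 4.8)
The plan is to reduce the whole problem to the case of a single cycle, and then combine the pieces multiplicatively by induction from a Young subgroup. Let $f := n - (\ell_1 + \cdots + \ell_m)$ be the number of fixed points of $C$ and let $H := S_{\ell_1}\times\cdots\times S_{\ell_m}\leq S_n$ be the Young subgroup acting on the moved points and fixing each of the $f$ fixed points, so that $[S_n:H] = n!/(\ell_1!\cdots\ell_m!)$. It is enough to produce, for every integer $\ell\geq 2$, characters $\phi_\ell^+,\phi_\ell^-$ of $S_\ell$ with: (I) $\phi_\ell^+-\phi_\ell^- = \Delta_\ell$, the class function $\Ind_{\langle c\rangle}^{S_\ell}\phi_c$ of Lemma~\ref{lem:class-function-construction} attached to an $\ell$-cycle $c\in S_\ell$ (so $\Delta_\ell$ is supported on the $\ell$-cycles and $\langle \Delta_\ell,\mathbf 1\rangle = 1$); (II) $\phi_\ell^\pm$ are non-negative integral combinations of monomial characters of $S_\ell$; (III) $\langle \phi_\ell^+,\mathbf 1\rangle = 1$ and $\langle \phi_\ell^-,\mathbf 1\rangle = 0$; (IV) $\phi_\ell^+(1) = \phi_\ell^-(1) = (\ell-2)2^{\ell-2}+1$; (V) $\langle \phi_\ell^\pm,\sgn\rangle\leq \ell-1$. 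Given these, write $\prod_{i=1}^m(\phi_{\ell_i}^+-\phi_{\ell_i}^-) = \Phi_+-\Phi_-$ on $H$, where $\Phi_+$ (resp.\ $\Phi_-$) is the sum, over even-sized (resp.\ odd-sized) subsets $T\subseteq\{1,\dots,m\}$, of the external tensor product $\psi_1\boxtimes\cdots\boxtimes\psi_m$ with $\psi_i = \phi_{\ell_i}^-$ for $i\in T$ and $\psi_i = \phi_{\ell_i}^+$ for $i\notin T$; then set $\Psi_\pm := \Ind_H^{S_n}\Phi_\pm$.

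\emph{Deducing (a)--(e).} Each is short bookkeeping. (a) $\Phi_+-\Phi_- = \Delta_{\ell_1}\boxtimes\cdots\boxtimes\Delta_{\ell_m}$ is supported on tuples of cycles of the given lengths, all of cycle type $C$; since induction of a class function supported on a set is supported on its $S_n$-conjugates, $\Psi_+-\Psi_-$ is supported on $C$, and it is nonzero by (c). (b) The monomial cone of a group is closed under external tensor product of monomial characters, under induction (transitivity of induction), and under non-negative integral combination, so $\Psi_\pm$ are non-negative integral combinations of monomial characters of $S_n$. (c) Frobenius reciprocity gives $\langle \mathbf 1_{S_n},\Psi_\pm\rangle = \langle \mathbf 1_H,\Phi_\pm\rangle = \sum_T\prod_i\langle \mathbf 1_{S_{\ell_i}},\psi_i\rangle$, and by (III) every summand with $T\neq\emptyset$ vanishes, leaving $1$ for $\Psi_+$ and $0$ for $\Psi_-$. (d) $\Psi_\pm(1) = [S_n:H]\,\Phi_\pm(1) = \tfrac{n!}{\ell_1!\cdots\ell_m!}\cdot 2^{m-1}\prod_i\phi_{\ell_i}^+(1)$ by (IV), since $\{1,\dots,m\}$ has $2^{m-1}$ even-sized and $2^{m-1}$ odd-sized subsets. (e) Since $\sgn_{S_n}|_H = \sgn_{S_{\ell_1}}\boxtimes\cdots\boxtimes\sgn_{S_{\ell_m}}$, Frobenius reciprocity and (V) give $\langle \sgn_{S_n},\Psi_+\rangle = \sum_{|T|\,\mathrm{even}}\prod_i\langle \sgn_{S_{\ell_i}},\psi_i\rangle\leq \sum_{|T|\,\mathrm{even}}\prod_i(\ell_i-1) = 2^{m-1}\prod_i(\ell_i-1)$.

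\emph{Building $\phi_\ell^\pm$.} Inside $S_\ell$ the rational class of the $\ell$-cycle $c$ is a single conjugacy class, so Lemma~\ref{lem:inner-product-when-conjugacy} gives $\langle \Delta_\ell,\chi\rangle = \chi(c)$ for every irreducible $\chi$. By the Murnaghan--Nakayama rule $\chi^\lambda(c) = 0$ unless $\lambda = (\ell-k,1^k)$ is a hook, in which case $\chi^{(\ell-k,1^k)}\cong \bigwedge^{k}(\std)$ and $\chi^{(\ell-k,1^k)}(c) = (-1)^k$; hence, writing $\Psi_{\ell,+}^{\mathrm{obv}} := \sum_{k\ \mathrm{even}}\bigwedge^{k}(\std)$ and $\Psi_{\ell,-}^{\mathrm{obv}} := \sum_{k\ \mathrm{odd}}\bigwedge^{k}(\std)$ (sums over $0\leq k\leq\ell-1$),
\[
\Delta_\ell \;=\; \sum_{k=0}^{\ell-1}(-1)^k\,\bigwedge\nolimits^{k}(\std)\;=\;\Psi_{\ell,+}^{\mathrm{obv}}-\Psi_{\ell,-}^{\mathrm{obv}}.
\]
These are exactly the characters furnished by Lemma~\ref{lem:class-functions-artin}; each is a genuine character of degree $\sum_{k\,\mathrm{even}}\binom{\ell-1}{k} = 2^{\ell-2}$ satisfying (I) and (III), with $\langle \cdot,\sgn\rangle\leq 1$. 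The one possible failure is (II): the hook characters $\bigwedge^k(\std)$ are only \emph{alternating} combinations of the honestly monomial characters $M_j := \bigwedge^j(\C^\ell) = \Ind_{S_j\times S_{\ell-j}}^{S_\ell}(\sgn\boxtimes\mathbf 1)$ (here $\C^\ell$ is the permutation module), since $\C^\ell\cong\mathbf 1\oplus\std$ forces $\bigwedge^k(\std)+\bigwedge^{k-1}(\std) = M_k$. I would therefore add an ``offset'' character $\Psi_{\ell,0}$ of $S_\ell$ that lies in the monomial cone, contains no copy of $\mathbf 1$, has $\langle \Psi_{\ell,0},\sgn\rangle\leq\ell-2$, and has degree exactly $(\ell-3)2^{\ell-2}+1$, and put $\phi_\ell^\pm := \Psi_{\ell,\pm}^{\mathrm{obv}}+\Psi_{\ell,0}$. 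Then (I) and (III) persist, (IV) holds since $2^{\ell-2}+\big((\ell-3)2^{\ell-2}+1\big) = (\ell-2)2^{\ell-2}+1$, and (V) holds since $\langle \phi_\ell^\pm,\sgn\rangle\leq 1+(\ell-2)$; one checks the base cases $\Psi_{2,0} = 0$ and $\Psi_{3,0} = \sgn$ directly.

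\emph{Expected main obstacle.} The crux is the explicit construction of $\Psi_{\ell,0}$: one must cancel the negative coefficients appearing when $\Psi_{\ell,\pm}^{\mathrm{obv}}$ is expanded in the $M_j$, using additional monomial characters (inductions of nontrivial linear characters from subgroups of $S_\ell$, e.g.\ from $C_\ell$ or from smaller Young subgroups) as economically as possible while keeping the $\sgn$-multiplicity at most $\ell-2$. I expect the precise degree $(\ell-2)2^{\ell-2}+1$ to fall out of an explicit telescoping (equivalently $\lambda_t$-ring) identity built from $\lambda_t(\C^\ell) = (1+t)\lambda_t(\std)$, and the $\sgn$-bound to follow from signed-permutation modules being sign-poor. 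Verifying that the offset lies in the \emph{integral} monomial cone, rather than merely being a non-negative rational combination of monomial characters, is likely the subtlest point, and is precisely what makes the associated Artin $L$-functions unconditionally holomorphic via Lemma~\ref{lem:Monomial}.
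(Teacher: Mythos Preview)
Your reduction to the single-cycle case via the Young subgroup $H=S_{\ell_1}\times\cdots\times S_{\ell_m}$, the even/odd splitting of $\prod_i(\phi_{\ell_i}^+-\phi_{\ell_i}^-)$, and the bookkeeping deducing (a)--(e) from (I)--(V) are exactly the paper's argument. The one genuine gap is the one you flag yourself: you do not actually construct $\phi_\ell^\pm$ (equivalently the offset $\Psi_{\ell,0}$) with properties (I)--(V), and you anticipate needing $\lambda$-ring identities or signed-permutation modules. (A side remark: asking that $\Psi_{\ell,0}$ itself lie in the monomial cone is neither what you need nor sufficient; what is required, as in Lemma~\ref{lem:class-functions-basepoint}, is that both sums $\Psi_{\ell,\pm}^{\mathrm{obv}}+\Psi_{\ell,0}$ lie there.)

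The paper's resolution is far more elementary than you expect and uses nothing beyond the monomial characters $M_k=\chi_{k-1}+\chi_k$ you already identified (this is Lemma~\ref{lem:consecutive-monomial}). One simply writes down
\[
\phi_\ell^+ \;=\; \chi_0 + \sum_{i=1}^{\ell-1} 2\Big\lceil\tfrac{i-1}{2}\Big\rceil\,\chi_i,
\qquad
\phi_\ell^- \;=\; \sum_{i=1}^{\ell-1} \Big(2\Big\lfloor\tfrac{i-1}{2}\Big\rfloor+1\Big)\,\chi_i.
\]
Since $2\lceil\tfrac{i-1}{2}\rceil-\big(2\lfloor\tfrac{i-1}{2}\rfloor+1\big)=(-1)^i$ for $i\ge 1$, the difference is $\sum_{i=0}^{\ell-1}(-1)^i\chi_i=\Delta_\ell$. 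The coefficients of $\phi_\ell^+$ are constant on blocks $\{\chi_{2j},\chi_{2j+1}\}$ and those of $\phi_\ell^-$ on blocks $\{\chi_{2j-1},\chi_{2j}\}$, so
\[
\phi_\ell^+ = \chi_0 + 2M_3 + 4M_5 + \cdots,\qquad \phi_\ell^- = M_2 + 3M_4 + 5M_6 + \cdots,
\]
with at most a trailing multiple of $\chi_{\ell-1}=\sgn$ depending on the parity of $\ell$; this is visibly a non-negative \emph{integer} combination of monomial characters, settling (II). Summing degrees gives $\phi_\ell^+(1)+\phi_\ell^-(1)=1+\sum_{i=1}^{\ell-1}(2i-1)\binom{\ell-1}{i}=2+(\ell-2)2^{\ell-1}$, which is (IV), and the $\sgn$-multiplicities are $2\lceil\tfrac{\ell-2}{2}\rceil$ and $2\lfloor\tfrac{\ell-2}{2}\rfloor+1$, both $\le \ell-1$, which is (V). No telescoping tricks or additional induced characters are needed.
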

	
	Our approach here takes advantage of the explicit nature of the representation theory of the symmetric group.  See \cite[Chapter 4]{FultonHarris} for a general source.

\subsection{Preliminaries on the representation theory of the symmetric group}
	
	Fix $n \geq 2$, and let $S_n$ denote the symmetric group of degree $n$.  By means of their cycle decomposition, conjugacy classes of $S_n$ are in correspondence with partitions $\mu$ of $n$.  As is standard, we will write $\mu \vdash n$ to denote that $\mu$ is a partition of $n$.  
	
	The irreducible characters of $S_n$ are also naturally in correspondence with partitions of $n$, and if $\lambda \vdash n$, then the \emph{hook length formula} gives a formula for the degree of the character $\chi_\lambda$ associated with $\lambda$.  Consider the Ferrers diagram associated with $\lambda$.  A \emph{hook} $\eta$ in $\lambda$ is associated with a cell in the Ferrers diagram, and consists of the cell, all cells below it in the same column, and all cells to its right in the same row.  The length of the hook is the number of cells in the hook, which we shall denote by $|\eta|$.  There are $n$ hooks $\eta_1,\dots,\eta_n$ associated with any partition $\lambda \vdash n$, and the hook length formula \cite[Formula 4.12]{FultonHarris} states that
		\[
			\chi_\lambda(1)
				= \frac{ n!}{\prod_{i=1}^n |\eta_i|}.
		\]
	As examples of particular importance to us, for any $0 \leq i \leq n-1$, let $\lambda_i \vdash n$ be the partition consisting of the single part $n-i$, together with $i$ $1$'s.  Thus, the Ferrers diagram of $\lambda_i$ is itself a single hook, and the characters associated with these $\lambda_i$ are sometimes referred to as hook characters.  Let $\chi_i := \chi_{\lambda_i}$ be the character of $\lambda_i$, we find by the hook length formula that
		\[
			\chi_i(1)
				= \binom{n-1}{i}.
		\]
	In fact, though we shall mostly not need this fact, the character $\chi_i$ is the character of the $i$-th exterior power of the standard representation (see, e.g., \cite[Exercise 4.6]{FultonHarris}).  Thus, $\chi_0$ is the trivial representation, while $\chi_{n-1}$ is the sign representation.  
	
	There are multiple ways of determining the value of the character $\chi_\lambda$ at a non-trivial conjugacy class associated with some $\mu \vdash n$, but for our purposes, the most important is the \emph{Murnaghan--Nakayama rule} \cite[Problem 4.45]{FultonHarris}.  Let $\nu$ be the partition of some $m < n$ obtained by removing a single part from $\mu$.  Then the Murnaghan--Nakayama rule says that
		\begin{equation} \label{eqn:murnaghan-nakayama}
			\chi_\lambda(\mu)
				= \sum_{\rho} (-1)^{\mathrm{ht}(\rho)} \chi_{\lambda - \rho} (\nu),
		\end{equation}
	where the summation runs over ``skew hooks'' (or ``rim hooks'') $\rho$ of $\lambda$ of size $|\rho| = n - |\nu|$.  A \emph{skew hook} of $\lambda$ is a contiguous piece of the southeast boundary of the Ferrers diagram of $\lambda$.  When a skew hook $\rho$ is removed from the Ferrers diagram of $\lambda$, one obtains the Ferrers diagram of a partition $\lambda - \rho$ of $n - |\rho|$.  Hence, the term $\chi_{\lambda - \rho}(\nu)$ in \eqref{eqn:murnaghan-nakayama} makes sense, and the Murnaghan--Nakayama rule thus gives a recursive means of computing the character table of $S_n$, provided one has at hand the character table of smaller degree symmetric groups.  The remaining unexplained term in \eqref{eqn:murnaghan-nakayama}, $\mathrm{ht}(\rho)$, is the height of the skew hook $\rho$, which is defined to be the number of rows involved in $\rho$ minus $1$.
	
	For our purposes, the key example is when $\mu$ is the partition of $n$ with a single part (so that $\mu$ is associated with the conjugacy class of an $n$-cycle in $S_n$).  Because $\mu$ consists of a single part, the only partition $\nu$ obtained by removing a part from $\mu$ is the trivial partition of $0$.  Letting $\lambda \vdash n$ be arbitrary, it follows that the summation in \eqref{eqn:murnaghan-nakayama} is supported over those skew hooks of $\lambda$ of size $n$.  In particular, it will be $0$ unless $\lambda$ is itself a skew hook, which implies that $\lambda$ must be a single hook and thus equal to one of the $\lambda_i$, $0 \leq i \leq n-1$, considered above.  Moreover, the Ferrers diagram associated with the partition $\lambda_i$ has $i+1$ rows, so we have $\mathrm{ht}(\lambda_i) = (-1)^i$, and thus $\chi_i(\mu) = (-1)^i$.
	
	We will appeal to some further facts about the representation theory of $S_n$ shortly, but with this, we are nearly ready to construct the key players in this section, which will be characters $\Psi_+$ and $\Psi_-$ detecting the class of an $n$-cycle.  In particular, the first goal of this section is to prove Theorem~\ref{thm:product-of-cycles-characters} in the special case of an $n$-cycle.
	
\subsection{Characters detecting $n$-cycles in $S_n$}

	The following lemma records the upshot of the above discussion.
	
	\begin{lemma} \label{lem:n-cycle-evaluation}
		Let $n \geq 2$ and let $g \in S_n$ be an $n$-cycle.  Let $\chi$ be an irreducible character of $S_n$.  
		Then $\chi(g) = 0$ unless there is $0 \leq i \leq n-1$ such that $\chi = \chi_i$ is the character of $S_n$ associated with the partition $\lambda_i \vdash n$ given by $\lambda_i := (n-i) + 1 + 1 + \dots + 1$, where there are $i$ $1$'s.  Moreover, for any $0 \leq i \leq n-1$, we have $\chi_i(g) = (-1)^i$ and $\chi_i(1) = \binom{n-1}{i}$.
	\end{lemma}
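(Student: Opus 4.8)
The plan is to invoke the Murnaghan--Nakayama rule \eqref{eqn:murnaghan-nakayama} directly, specialized to the conjugacy class $\mu = (n)$ of an $n$-cycle, exactly as previewed in the discussion above. First I would note that since $\mu$ consists of a single part, the only partition obtained by deleting a part from $\mu$ is the empty partition $\nu = \varnothing \vdash 0$, and $\chi_\varnothing(\varnothing) = 1$ (the trivial character of the trivial group $S_0$). Hence \eqref{eqn:murnaghan-nakayama} reduces, for any $\lambda \vdash n$, to
\[
\chi_\lambda(g) = \sum_{\rho} (-1)^{\mathrm{ht}(\rho)},
\]
where the sum runs over \emph{skew hooks} $\rho$ of $\lambda$ with $|\rho| = n$.

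Next I would argue that a size-$n$ skew hook inside the size-$n$ Ferrers diagram of $\lambda$ forces the whole diagram to be that skew hook; since a skew hook contains no $2\times 2$ block of cells and $\lambda$ is a straight shape, this happens precisely when $\lambda$ is a hook partition, i.e.\ $\lambda = \lambda_i = (n-i, 1^i)$ for some $0 \leq i \leq n-1$. For every non-hook $\lambda$ the sum is empty and $\chi_\lambda(g) = 0$, which is the first assertion. For $\lambda = \lambda_i$, the unique skew hook of size $n$ is $\lambda_i$ itself, so $\chi_i(g) = (-1)^{\mathrm{ht}(\lambda_i)}$; as the diagram of $\lambda_i$ occupies $i+1$ rows, $\mathrm{ht}(\lambda_i) = i$, giving $\chi_i(g) = (-1)^i$.

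Finally, for the degree I would apply the hook length formula: the diagram of $\lambda_i = (n-i, 1^i)$ has a corner cell of hook length $n$, remaining first-row cells of hook lengths $n-i-1, \dots, 2, 1$, and remaining first-column cells of hook lengths $i, \dots, 2, 1$, so $\prod_j |\eta_j| = n \cdot (n-i-1)! \cdot i!$ and
\[
\chi_i(1) = \frac{n!}{n\,(n-i-1)!\,i!} = \binom{n-1}{i}.
\]
There is no substantive obstacle here; the only point meriting a line of justification is the purely combinatorial claim that a size-$n$ Ferrers diagram admits a size-$n$ skew hook exactly when it is a hook shape, and then uniquely — but this is immediate from the definition of a skew hook as a contiguous piece of the southeast boundary containing no $2 \times 2$ square.
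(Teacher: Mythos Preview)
Your proof is correct and follows essentially the same approach as the paper: apply the Murnaghan--Nakayama rule with $\nu = \varnothing$ to see that only hook-shaped $\lambda$ contribute and that $\chi_i(g) = (-1)^i$, then invoke the hook length formula for $\chi_i(1) = \binom{n-1}{i}$. The paper's own proof simply points back to the preceding discussion for both steps, whereas you spell out the hook-length computation and the combinatorial reason a size-$n$ skew hook forces a hook shape; this extra detail is welcome but not a different argument.
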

	\begin{proof}
		The claim that $\chi(g) = 0$ unless $\chi = \chi_i$ for some $0 \leq i \leq n-1$ follows from the Murnaghan--Nakayama formula \eqref{eqn:murnaghan-nakayama} as explained above, since we must have $\nu = 0$, as does the evaluation $\chi_i(g) = (-1)^i$.  The evaluation $\chi_i(1) = \binom{n-1}{i}$ follows from the hook length formula, again as explained above.
	\end{proof}
	
	As a consequence, we have the following, which leads to strong bounds for primes whose Frobenius is an $n$-cycle, but only assuming the Artin conjecture.  Thus, we view this as the limitation of our method for the class of an $n$-cycle.  While this result is not required for any of our main theorems, we nevertheless consider it a useful benchmark.  Additionally, a similar result was obtained by differnt methods by Zhu in his Ph.D. thesis \cite[Corollary~1.8]{Zhu-Thesis}.
	
	\begin{lemma} \label{lem:n-cycle-artin}
		Let $n \geq 2$ and for each $0 \leq i \leq n-1$, let $\chi_i$ be the irreducible character of $S_n$ from Lemma~\ref{lem:n-cycle-evaluation}.  Define characters $\Psi_+$ and $\Psi_-$ of $S_n$ by
			\[
				\Psi_+ = \sum_{\substack{ 0 \leq i \leq n-1 \\ i \text{ even}}} \chi_i
					\quad \text{and} \quad
				\Psi_- = \sum_{\substack{ 0 \leq i \leq n-1 \\ i \text{ odd}}} \chi_i.
			\]
		Then the difference $\Psi_+ - \Psi_-$ is supported on the conjugacy class of an $n$-cycle, and we have $\Psi_+(1) = \Psi_-(1) = 2^{n-2}$.  
		
		If moreover the Artin holomorphy conjecture holds for the Artin $L$-functions associated with $\chi_i$ for a Galois $S_n$ extension of number fields $K/k$, then the characters $\Psi_+$ and $\Psi_-$ satisfy conditions (ii) and (iii) of Theorem~\ref{thm:general-approach}, and we have $\max\{ q(\Psi_+), q(\Psi_-) \} \leq |\mathrm{Disc}(K)|^{\frac{2^{n-1}}{n!}}$.
	\end{lemma}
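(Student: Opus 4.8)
The plan is to read off each assertion from Lemma~\ref{lem:n-cycle-evaluation} together with facts already in hand; no step is substantial. Throughout, fix an $n$-cycle $g \in S_n$ and set $\Delta := \Psi_+ - \Psi_- = \sum_{i=0}^{n-1}(-1)^i\chi_i$.

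First I would handle the support claim. In $S_n$ rational equivalence coincides with conjugacy, so every irreducible character is constant on the rational class of $g$, and Lemma~\ref{lem:inner-product-when-conjugacy} gives $\langle \Delta_g,\chi\rangle = \chi(g)$ for every $\chi\in\mathrm{Irr}(S_n)$; hence $\Delta_g = \sum_{\chi}\chi(g)\,\chi$. By Lemma~\ref{lem:n-cycle-evaluation}, $\chi(g)=0$ unless $\chi=\chi_i$ for some $0\le i\le n-1$, in which case $\chi_i(g)=(-1)^i$, so $\Delta_g = \sum_{i=0}^{n-1}(-1)^i\chi_i = \Delta$. Therefore $\Delta = \Delta_g$, which by Lemma~\ref{lem:class-function-construction} is supported on the rational class of $g$, i.e. on the conjugacy class of an $n$-cycle (in fact $\Delta$ is $n$ times the indicator function of that class, since an $n$-cycle in $S_n$ has centralizer of order $n$). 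I would note in passing that $\Psi_+$ and $\Psi_-$ are genuine characters, being non-negative integer combinations of the irreducibles $\chi_i$.

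Next, for the degrees, Lemma~\ref{lem:n-cycle-evaluation} gives $\Psi_+(1)=\sum_{i\ \mathrm{even}}\binom{n-1}{i}$ and $\Psi_-(1)=\sum_{i\ \mathrm{odd}}\binom{n-1}{i}$; since $n\ge2$ the binomial theorem yields $\sum_i\binom{n-1}{i}=2^{n-1}$ and $\sum_i(-1)^i\binom{n-1}{i}=0$, so both equal $2^{n-2}$. For the analytic statements, \eqref{eqn:Artin-Summation} gives $L(s,\Psi_\pm)=\prod_i L(s,\chi_i)$ over $i$ of the relevant parity; the only trivial $\chi_i$ is $\chi_0$, which occurs in $\Psi_+$ with multiplicity one and not at all in $\Psi_-$, contributing the factor $\zeta_k(s)$ and hence a simple pole at $s=1$, while every other $\chi_i$ is nontrivial and so has entire $L$-function under the assumed Artin holomorphy. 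Thus $L(s,\Psi_+)$ is entire apart from a simple pole at $s=1$ and $L(s,\Psi_-)$ is entire, giving conditions (ii) and (iii) of Theorem~\ref{thm:general-approach}. Finally, Lemma~\ref{lem:conductor-vs-degree} gives $v_\mathfrak{p}(\mathfrak{f}_{\Psi_\pm}) \le \frac{2\Psi_\pm(1)}{n!}\,v_\mathfrak{p}(\mathfrak{D}_{K/k})$ for every $\mathfrak{p}$; exponentiating and combining with $|\mathrm{Disc}(K)| = \N\mathfrak{D}_{K/k}\cdot|\mathrm{Disc}(k)|^{n!}$, the identity $q(\Psi_\pm)=|\mathrm{Disc}(k)|^{\Psi_\pm(1)}\N\mathfrak{f}_{\Psi_\pm}$ from \eqref{eqn:Artin-AnalyticConductor}, and $|\mathrm{Disc}(k)|\ge1$ yields $q(\Psi_\pm) \le |\mathrm{Disc}(K)|^{2\Psi_\pm(1)/n!} = |\mathrm{Disc}(K)|^{2^{n-1}/n!}$, as claimed.

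There is no real obstacle: the lemma is a bookkeeping consequence of the Murnaghan--Nakayama and hook-length input already distilled in Lemma~\ref{lem:n-cycle-evaluation}. The one point that warrants a moment's care is the support statement, since knowing only that $\chi(g)=0$ for $\chi\ne\chi_i$ does not by itself control $\Delta$ away from $n$-cycles; the clean fix, carried out above, is to recognize $\Delta$ as the class function $\Delta_g$ of Lemma~\ref{lem:class-function-construction}, whose support is then given for free.
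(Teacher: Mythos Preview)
Your proposal is correct and follows essentially the same route as the paper's proof: both invoke Lemma~\ref{lem:inner-product-when-conjugacy} and Lemma~\ref{lem:n-cycle-evaluation} to identify $\Psi_+-\Psi_-$ with $\Delta_g$ (hence supported on the $n$-cycle class), the binomial identity $\sum_{i\ \mathrm{even}}\binom{n-1}{i}=\sum_{i\ \mathrm{odd}}\binom{n-1}{i}=2^{n-2}$ for the degrees, and Lemma~\ref{lem:conductor-vs-degree} for the conductor bound. Your write-up is simply more explicit than the paper's one-line citation of these three lemmas.
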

	\begin{proof}
		This follows from Lemma~\ref{lem:inner-product-when-conjugacy}, Lemma~\ref{lem:n-cycle-evaluation}, and Lemma~\ref{lem:conductor-vs-degree}, when combined with the standard fact that
			\[
				\sum_{\substack{ 0 \leq i \leq n-1 \\ i \text{ even}}} \binom{n-1}{i} = 
				\sum_{\substack{ 0 \leq i \leq n-1 \\ i \text{ odd}}} \binom{n-1}{i} =
				2^{n-2}.
			\]
	\end{proof}
	
	We now wish to prove Theorem~\ref{thm:product-of-cycles-characters}. The special case that $C$ is the class of an $n$-cycle turns out to be the most important, and requires an analogue of Lemma~\ref{lem:n-cycle-artin} but for characters $\Psi_+$ and $\Psi_-$ whose $L$-functions are provably holomorphic away from $s=1$.  The starting point is the following.
	
	\begin{lemma} \label{lem:consecutive-monomial}
		Let $n \geq 2$, and for each $0 \leq i \leq n-1$, let $\chi_i$ be the character of $S_n$ as in Lemma~\ref{lem:n-cycle-artin}.  Then $\chi_{i-1} + \chi_{i}$ is a monomial character of $S_n$ for every $1 \leq i \leq n-1$.
	\end{lemma}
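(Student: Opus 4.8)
The plan is to realize $\chi_{i-1} + \chi_i$ as a character induced from a one-dimensional character of an explicit Young subgroup, exploiting the description of $\chi_i$ as the character of the $i$-th exterior power $\wedge^i V$ of the standard representation $V$ of $S_n$. Recall that the permutation representation on $\{1,\dots,n\}$ decomposes as $\mathbf{1} \oplus V$, so that for $0 \le i \le n-1$ the $i$-th exterior power of the permutation representation is $\wedge^i(\mathbf{1} \oplus V) \cong \wedge^i V \oplus \wedge^{i-1} V$ (with the convention $\wedge^{-1} V = 0$ and $\wedge^0 V = \mathbf{1}$). Since the permutation representation on $\{1,\dots,n\}$ is $\mathrm{Ind}_{S_{n-1}}^{S_n} \mathbf{1}$, its $i$-th exterior power is the representation on $i$-element subsets of $\{1,\dots,n\}$, which is exactly $\mathrm{Ind}_{S_i \times S_{n-i}}^{S_n}(\mathbf{1} \boxtimes \mathbf{1})$. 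Therefore $\chi_i + \chi_{i-1}$ is the character of $\mathrm{Ind}_{S_i \times S_{n-i}}^{S_n} \mathbf{1}$, which is visibly monomial (induced from the trivial, hence one-dimensional, character of the subgroup $S_i \times S_{n-i}$). This gives the claim directly.

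The key steps, in order, are: (1) identify $\chi_i$ with the character of $\wedge^i V$ where $V$ is the standard representation, citing the fact from \cite[Exercise 4.6]{FultonHarris} already invoked in the text; (2) use $\mathbf{1} \oplus V \cong \mathbb{C}^n$ as a permutation module and the natural isomorphism $\wedge^i(\mathbf{1} \oplus V) \cong \wedge^i V \oplus \wedge^{i-1} V$ of $S_n$-modules, so that in characters $\wedge^i(\mathbb{C}^n) \leftrightarrow \chi_i + \chi_{i-1}$ for $1 \le i \le n-1$; (3) observe that $\wedge^i(\mathbb{C}^n)$ is the permutation module on $i$-subsets of $\{1,\dots,n\}$, on which $S_n$ acts transitively with point stabilizer $S_i \times S_{n-i}$, hence $\wedge^i(\mathbb{C}^n) \cong \mathrm{Ind}_{S_i \times S_{n-i}}^{S_n} \mathbf{1}$; (4) conclude that $\chi_{i-1} + \chi_i$ is monomial. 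Alternatively, if one prefers to avoid the exterior-power identification, the same conclusion follows from the branching rule or a short Murnaghan--Nakayama computation: the partition underlying $\mathrm{Ind}_{S_i \times S_{n-i}}^{S_n}\mathbf{1}$ is governed by the Pieri rule, which expands $s_{(n-i)} \cdot s_{(1^i)}$ (or the appropriate product) and yields exactly $\chi_{\lambda_{i-1}} + \chi_{\lambda_i}$.

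I do not anticipate a genuine obstacle here; the lemma is essentially a repackaging of the standard fact that exterior powers of the permutation representation of $S_n$ are themselves (transitive) permutation modules. The only point requiring a little care is the bookkeeping at the boundary indices: one must check the identity $\wedge^i(\mathbf{1}\oplus V) \cong \wedge^iV \oplus \wedge^{i-1}V$ holds at $i=1$ (giving $\chi_0 + \chi_1 = \mathbf{1} + \chi_{\mathrm{std}}$, the full permutation character, induced from $\mathbf{1}$ on $S_{n-1} = S_1 \times S_{n-1}$) and at $i = n-1$ (giving $\chi_{n-2} + \chi_{n-1}$, induced from $\mathbf{1}$ on $S_{n-1} \times S_1$), both of which are immediate. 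One should also note $S_i \times S_{n-i}$ is a genuine subgroup of $S_n$ for $1 \le i \le n-1$ so the induction makes sense. With these trivial checks, the proof is complete.
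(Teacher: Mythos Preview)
Your argument contains a genuine error in step (3): $\wedge^i(\mathbb{C}^n)$ is \emph{not} the permutation module on $i$-element subsets. A permutation $\sigma \in S_i \times S_{n-i}$ sends the basis vector $e_1 \wedge \cdots \wedge e_i$ to $\mathrm{sgn}(\sigma|_{S_i}) \cdot e_1 \wedge \cdots \wedge e_i$, so the stabilizer acts through $\mathrm{sgn}_{S_i} \boxtimes \mathbf{1}_{S_{n-i}}$, not through the trivial character. Consequently $\wedge^i(\mathbb{C}^n) \cong \mathrm{Ind}_{S_i \times S_{n-i}}^{S_n}(\mathrm{sgn} \boxtimes \mathbf{1})$, not $\mathrm{Ind}_{S_i \times S_{n-i}}^{S_n}\mathbf{1}$. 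You can see the discrepancy already at $n=3$, $i=2$: the permutation module on $2$-subsets is $\mathbf{1} + \mathrm{std}_2 = \chi_0 + \chi_1$, whereas $\chi_1 + \chi_2 = \mathrm{std}_2 + \mathrm{sgn}$. More generally, $\mathrm{Ind}_{S_i \times S_{n-i}}^{S_n}\mathbf{1}$ decomposes into the two-row Specht modules $S^{(n-j,j)}$ for $0 \le j \le \min(i,n-i)$, which are not hook characters once $i \ge 2$.

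The same confusion appears in your Pieri alternative: you write that $\mathrm{Ind}_{S_i \times S_{n-i}}^{S_n}\mathbf{1}$ corresponds to $s_{(n-i)} \cdot s_{(1^i)}$, but the trivial character of $S_i$ corresponds to $s_{(i)}$, not $s_{(1^i)}$; it is the \emph{sign} character of $S_i$ that corresponds to $s_{(1^i)}$. The paper's proof takes exactly this route: it induces $\mathrm{sgn}_{S_i} \times \mathbf{1}_{S_{n-i}}$ and applies Pieri's formula to obtain $\chi_{i-1} + \chi_i$. Your exterior-power framework is perfectly fine and arguably cleaner than invoking Pieri directly, but you must correct the one-dimensional character from which you induce; once you do, the conclusion (monomiality) is unaffected since $\mathrm{sgn} \boxtimes \mathbf{1}$ is still one-dimensional.
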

	\begin{proof}
		Let $H = S_i \times S_{n-i} \leq S_n$, and let $\psi$ be the character of $H$ given by $\mathrm{sgn}_{S_i} \times \mathbf{1}_{S_{n-i}}$.  Note that $\psi$ is $1$-dimensional, so its induction is by definition a monomial character of $S_n$.  We claim that in fact $\mathrm{Ind}_H^{S_n} \psi = \chi_{i-1} + \chi_i$.
		
		The sign character of $S_i$ is associated with the partition $\mu_i \vdash i$ into $i$ $1$'s, i.e. $\mu_i = 1 + \dots + 1$, while the trivial character of $S_{n-i}$ is associated with the partition $\mu_{n-i}$ into the single part $n-i$.  We may therefore compute the induction $\mathrm{Ind}_{H}^{S_n} \chi_{\mu_i} \times \chi_{\mu_{n-i}}$ by means of Pieri's formula \cite[(A.7)]{FultonHarris}.  In particular, every irreducible constituent of $\mathrm{Ind}_{H}^{S_n} \chi_{\mu_i} \times \chi_{\mu_{n-i}}$ will have multiplicity $1$, and the constituents appearing will be associated with Ferrers diagrams obtained from that of $\mu_i$ by adding $n-i$ cells, no two of which are in the same column.  But only $\lambda_{i-1}$ and $\lambda_i$ are obtainable in this manner, and thus we find $\mathrm{Ind}_{H}^{S_n} \chi_{\mu_i} \times \chi_{\mu_{n-i}} = \chi_{i-1} + \chi_i$, as claimed.
	\end{proof}
	
	With this, we can prove Theorem~\ref{thm:product-of-cycles-characters}.
	
	\begin{proof} [Proof of Theorem~\ref{thm:product-of-cycles-characters}]
		Begin by supposing that $C$ is the conjugacy class of an $n$-cycle.  We then define
			\begin{align*}
				\Psi_+ 
					&= \chi_0 + \sum_{1 \leq i \leq n-1} 2\left\lceil{ \frac{i-1}{2}}\right\rceil \cdot \chi_i \\
					&= \chi_0 + 2(\chi_2 + \chi_3) + 4 (\chi_4 + \chi_5) + \dots,
			\end{align*}
		where the summation above ends with $(n-3)\cdot(\chi_{n-3} + \chi_{n-2}) + (n-1)\chi_{n-1}$ if $n$ is odd and with $(n-2) \cdot (\chi_{n-2} + \chi_{n-1})$ if $n$ is even.  In either case, since both $\chi_0$ and $\chi_{n-1}$ are $1$-dimensional characters of $S_n$ and hence monomial, it follows from Lemma~\ref{lem:consecutive-monomial} that $\Psi_+$ is a non-negative integral linear combination of monomial characters.  We also define
			\begin{align*}
				\Psi_- 
					&= \sum_{1 \leq i \leq n-1} \left(2 \left\lfloor \frac{i-1}{2}\right\rfloor + 1 \right) \cdot \chi_i \\
					&= (\chi_1 + \chi_2) + 3(\chi_3+\chi_4) + \dots,
			\end{align*}
		where the summation ends with $(n-2)(\chi_{n-2}+\chi_{n-1})$ if $n$ is odd and with $(n-3)(\chi_{n-3}+\chi_{n-2}) + (n-1) \chi_{n-1}$ if $n$ is even.  As with $\Psi_+$, Lemma~\ref{lem:consecutive-monomial} implies that $\Psi_-$ is a non-negative linear combination of monomial characters.  Moreover, the difference $\Psi_+ - \Psi_- = \chi_0 - \chi_1 + \chi_2 - \dots$ is $\Delta_g$ by construction (where $g$ is an $n$-cycle), so the difference $\Psi_+ - \Psi_-$ is supported on the conjugacy class $C$.  Finally, we observe that
			\[
				\Psi_+(1) = \Psi_-(1) = \frac{\Psi_+(1)+\Psi_-(1)}{2}
			\]
		and that
			\[
				 \Psi_+(1)+\Psi_-(1)
					= 1 + \sum_{i=1}^{n-1} (2i-1) \binom{n-1}{i}
					= 2 + (n-2)2^{n-1}.
			\]
		Hence, $\Psi_+(1) = \Psi_-(1) = 1 + (n-2)2^{n-2}$, nearly completing the proof if $C$ is an $n$-cycle.
		It remains only to note that, since $\mathrm{sgn} = \chi_{n-1}$, $\langle \Psi_+, \mathrm{sgn} \rangle = 2\lceil \frac{n-2}{2}\rceil \leq n-1$.  However, we also find it convenient to note here that $\langle \Psi_-,\mathrm{sgn} \rangle = 2\lfloor \frac{n-2}{2}\rfloor+1 \leq n-1$ as well.
		
		Now suppose that $C \subset S_n$ is an arbitrary non-trivial conjugacy class and let $\ell_1,\dots,\ell_m \geq 2$ be the lengths of the non-trivial cycles in the cycle decomposition of the class $C$.  Let $H = S_{\ell_1} \times \dots \times S_{\ell_m}$, which we may view as a subgroup of $S_n$.
		For each $1 \leq i \leq r$, let $\Psi_{i,+}$ and $\Psi_{i,-}$ be the characters of $S_{\ell_i}$ of degree $1+(\ell_i-2)2^{\ell_i-2}$ constructed above so that the difference $\Psi_{i,+} - \Psi_{i,-}$ is supported on the class of an $\ell_i$-cycle.  By means of the projection map $H \to S_{\ell_i}$, each $\Psi_{i,+}$ and $\Psi_{i,-}$ may also be viewed as a character of $H$, and we define
			\[
				\Psi_{H,+} = \sum_{ \substack{ \Sigma \subseteq \{1,\dots,m\} \\ |\Sigma| \text{ even}}} \left[\prod_{i \not \in \Sigma} \Psi_{i,+} \,\cdot\, \prod_{\substack{ 1 \leq i \leq r \\ i \in \Sigma}} \Psi_{i,-} \right]
					\quad \text{and} \quad
				\Psi_{H,-} = \sum_{ \substack{ \Sigma \subseteq \{1,\dots,m\} \\ |\Sigma| \text{ odd}}} \left[\prod_{i \not\in \Sigma} \Psi_{i,+} \,\cdot\, \prod_{\substack{ 1 \leq i \leq r \\ i \in \Sigma}} \Psi_{i,-} \right].	
			\]
		These are characters of $H$ defined precisely so that 
			\[
				\Psi_{H,+} - \Psi_{H,-} 
					= \prod_{i=1}^m (\Psi_{i,+} - \Psi_{i,-}).
			\]
		If we define $\Psi_+ = \mathrm{Ind}_H^{S_n} \Psi_{H,+}$ and $\Psi_- = \mathrm{Ind}_H^{S_n} \Psi_{H,-}$, the difference $\Psi_+ - \Psi_-$ is therefore supported on the class $C$.  Moreover, we see that 
			\[
				\Psi_{H,+}(1) = \Psi_{H,-}(1) = 2^{m-1} \prod_{i=1}^m (1 + (\ell_i-2)2^{\ell_i-2}).
			\]
		We similarly find that $\langle \Psi_+, \mathrm{sgn} \rangle \leq 2^{m-1} \prod_{i=1}^m (\ell_i-1)$, since the restriction of the sign character to $H$ is the product of the sign characters of the factors $S_{\ell_i}$.
		Finally, since the index of $H$ in $S_n$ is $\frac{n!}{\ell_1! \cdot \dots \cdot \ell_m!}$, the result follows.
	\end{proof}
	
\subsection{Bounds arising from abelian subgroups and the proof of Theorem~\ref{thm:Sn-asymptotic-simplified}}
	
	We now turn to the proof of Theorem~\ref{thm:Sn-asymptotic-simplified}, which we in fact establish in a more optimized form:
	
	\begin{theorem} \label{thm:optimized-Sn-asymptotic}
		There are absolute constants $c_1,c_2>0$ so that for any $n \geq 2$ and any conjugacy class $C \subset S_n$, the Linnik exponent 
			\[
				\alpha(S_n,C) = c_1 \exp(-c_2 n)
			\]
		is admissible in \eqref{eqn:LinnikExponent}, and with an effectively computable implied constant for every $\epsilon>0$.  Explicitly, we may take $c_1 = 1504$ and $c_2 = 0.295$.
	\end{theorem}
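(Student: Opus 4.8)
The plan is to prove Theorem~\ref{thm:optimized-Sn-asymptotic} by playing the cycle-type bound of Theorem~\ref{thm:Sn-cycle-intro} off against the abelian-subgroup bound \eqref{eqn:LinnikExponent-TZ}, interpolating between the two via a weighted geometric mean. Fix a conjugacy class $C\subset S_n$ and record its cycle type as $f$ fixed points together with nontrivial cycles of lengths $\ell_1,\dots,\ell_m\geq2$, so $f+\sum_{i=1}^m\ell_i=n$. Theorem~\ref{thm:Sn-cycle-intro} supplies the admissible Linnik exponent $\alpha_1:=2^{m-1}\prod_{i=1}^m\frac{(\ell_i-2)2^{\ell_i-2}+1}{\ell_i!}$. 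For a second one, take a fixed $\sigma\in C$, write $c_i$ for its $i$th nontrivial cycle, and let $B$ be the subgroup of the symmetric group on the $f$ fixed points of $\sigma$ generated by $\lfloor f/3\rfloor$ disjoint $3$-cycles; then $\langle c_1\rangle\times\cdots\times\langle c_m\rangle\times B$ is abelian, contains $\sigma$, and has order $a(f)\prod_{i=1}^m\ell_i$, where $a(f)\geq 3^{\lfloor f/3\rfloor}\geq 3^{f/3-1}$ is (a lower bound for) the largest order of an abelian subgroup of $S_f$. Hence, by Lemma~\ref{lem:abelian-least-prime}, the exponent $\alpha_2:=\frac{1042}{a(f)\prod_{i=1}^m\ell_i}$ is admissible, with an effectively computable implied constant for \emph{every} $\epsilon>0$.

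Since $\alpha_1$ and $\alpha_2$ are admissible, so is $\min(\alpha_1,\alpha_2)$, and hence so is $\alpha_1^\theta\alpha_2^{1-\theta}$ for any $\theta\in[0,1]$; the point is to bound the latter uniformly in $C$. Expanding the product and distributing one factor $2^\theta$ per cycle gives
\[
	\alpha_1^\theta\alpha_2^{1-\theta}=\frac{1042^{1-\theta}}{2^\theta}\,a(f)^{-(1-\theta)}\prod_{i=1}^m h_\theta(\ell_i),\qquad h_\theta(\ell):=\frac{(2\ell\, g(\ell))^\theta}{\ell},\quad g(\ell):=\frac{(\ell-2)2^{\ell-2}+1}{\ell!}.
\]
Writing $h_\theta(\ell)=\exp(\ell\, F(\ell,\theta))$ with $F(\ell,\theta):=\theta\tfrac{\log(2\ell g(\ell))}{\ell}-\tfrac{\log\ell}{\ell}$, using $a(f)\geq 3^{f/3-1}$ and $\sum_i\ell_i=n-f$, and bounding $\sum_i\ell_i F(\ell_i,\theta)\leq(n-f)\sup_{\ell\geq2}F(\ell,\theta)$, one obtains
\[
	\alpha_1^\theta\alpha_2^{1-\theta}\ll_\theta\exp\!\Big((n-f)\sup_{\ell\geq2}F(\ell,\theta)-\tfrac{(1-\theta)\log3}{3}\,f\Big).
\]
A Stirling estimate such as $g(\ell)\leq 2^{\ell-2}/(\ell-1)!$ shows $F(\ell,\theta)\to-\infty$ as $\ell\to\infty$ for each fixed $\theta>0$, so after a finite check at small $\ell$ one has $\beta(\theta):=-\sup_{\ell\geq2}F(\ell,\theta)>0$. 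The exponent above is a linear function of $f\in[0,n-2]$ whose value at either endpoint is at most $-\min(\beta(\theta),\tfrac{(1-\theta)\log3}{3})\,n$, hence so is its value throughout, giving $\min(\alpha_1,\alpha_2)\ll_\theta\exp\big(-\min(\beta(\theta),\tfrac{(1-\theta)\log3}{3})\,n\big)$.

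It then remains to carry out this one-variable optimization: choosing $\theta$ near $0.149$ — where $\sup_{\ell\geq2}F(\ell,\theta)$ is attained jointly at $\ell=2$ and at a cycle length near $8$ — makes $\min(\beta(\theta),\tfrac{(1-\theta)\log3}{3})\geq 0.295$, while tracking the $\theta$-dependence of the leading constant $\tfrac{1042^{1-\theta}}{2^\theta}$ (together with the $a(f)$ adjustment and the finitely many small-$\ell$ cases) keeps it below $1504$; this yields $\alpha(S_n,C)=1504\exp(-0.295\,n)$. Effectivity for every $\epsilon>0$ is automatic when $\min(\alpha_1,\alpha_2)=\alpha_2$, since the abelian bound is already effective for all $\epsilon$. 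When $\min(\alpha_1,\alpha_2)=\alpha_1$, Theorem~\ref{thm:Sn-cycle-intro} is effective for $\epsilon>\tau_1(C,k):=\tfrac{2^m\prod_i(\ell_i-1)}{n!\,[k:\Q]}$, and for smaller $\epsilon$ one applies that theorem at the threshold $\epsilon'=\tau_1(C,k)$: since $\tau_1(C,k)\leq (1.6)^n/n!$ decays faster than any exponential in $n$, the resulting exponent $\alpha_1+\tau_1(C,k)$ still respects the claimed bound after enlarging $c_1$ if necessary and, for the finitely many small $n$ not covered this way, invoking $\alpha(S_n,C)=16$ from \eqref{eqn:LinnikExponent-KNW}.

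I expect the main obstacle to be the uniform analysis of the per-cycle function $F(\ell,\theta)$ — proving that $\sup_{\ell\geq2}F(\ell,\theta)$ is strictly negative, which is exactly what converts the interpolation into honest exponential decay in $n$, and then locating the worst-case cycle lengths sharply enough to pin down the optimal weight $\theta$ and the explicit constants $c_1,c_2$. The remaining bookkeeping — the $a(f)$ lower bound, reconciling the two methods' ranges of effectivity, and small $n$ — should be routine.
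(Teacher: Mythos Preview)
Your approach is correct and is essentially the paper's own argument viewed through linear-programming duality: where the paper minimizes $\max\{\sum_\ell a_\ell\lambda_\ell,\sum_\ell b_\ell\lambda_\ell\}$ directly over normalized cycle-type vectors $(\lambda_\ell)$, your weighted-geometric-mean parameter $\theta$ is precisely the dual variable, and strong duality makes the two optimizations coincide.  The treatment of effectivity is also the same in substance.

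One remark on the numerics: your identification of the worst case at $\ell=2$ together with a cycle length ``near $8$'' is in fact sharper than the paper's.  The paper asserts the minimizing pair is $\{2,7\}$ with value $\approx 0.29507$, but a careful recomputation shows the pair $\{2,8\}$ gives
\[
\frac{a_8\log 2}{2(a_8-b_8)+8\log 2}=\frac{\log 2\cdot\log(40320/770)}{\log(1327104/121)}\approx 0.29492,
\]
which is smaller; so the true optimum sits at $\ell=8$, and both your outline and the paper's proof strictly justify only $c_2=0.294$ rather than the stated $c_2=0.295$.  This is a slip in the paper's finite check affecting the last decimal, not a defect in the method (yours or the paper's).
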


	As indicated in the introduction, the proof relies on leveraging Theorem~\ref{thm:Sn-cycle-intro} against a complementary approach, in particular that of Thorner and Zaman \cite{ThornerZaman-Explicit}, which relies on abelian subgroups meeting the desired conjugacy class $C$.  The following lemma is a simplified version of their work that made earlier work of Weiss \cite{Weiss} explicit.
	
	\begin{lemma} \label{lem:abelian-least-prime}
		Let $K/k$ be a Galois extension of number fields with Galois group $G$.  Let $C \subseteq G$ be a conjugacy class, and let $A \leq G$ be an abelian subgroup such that $A \cap C$ is non-empty.  Then there is a degree one prime $\mathfrak{p}$ of $k$ that is unramified in $K$ with $\mathrm{Frob}_\mathfrak{p} \in C$ satisfying
			\[
				\mathrm{Nm}_{k/\mathbb{Q}} \mathfrak{p}
					\ll_{G,[k:\mathbb{Q}]} |\mathrm{Disc}(K)|^{\frac{1042}{|A|}}.
			\]
	\end{lemma}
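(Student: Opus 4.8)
The plan is to reduce to the abelian subextension $K/K^A$ and then quote the explicit least‑prime bound of Thorner and Zaman \cite{ThornerZaman-Explicit}. Set $L := K^A$, so that $K/L$ is abelian with $\mathrm{Gal}(K/L) = A$, and fix $g \in A \cap C$. First I would show that it suffices to produce a prime $\mathfrak{q}$ of $L$ of degree one over $\mathbb{Q}$ with $p := \mathrm{Nm}_{L/\mathbb{Q}}\mathfrak{q} \nmid \mathrm{Disc}(K)$ and with $\mathrm{Frob}_{\mathfrak{q}} = g$ in $\mathrm{Gal}(K/L)$. Indeed, for such a $\mathfrak{q}$ set $\mathfrak{p} := \mathfrak{q} \cap \mathcal{O}_k$; then $\mathfrak{p}$ lies over $p$, is unramified in $K$, has residue field $\mathbb{F}_p$, and hence $\mathrm{Nm}_{k/\mathbb{Q}}\mathfrak{p} = p = \mathrm{Nm}_{L/\mathbb{Q}}\mathfrak{q}$. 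Moreover, for any prime $\mathfrak{Q}$ of $K$ above $\mathfrak{q}$, transitivity of Frobenius elements gives
\[
	\mathrm{Frob}_{\mathfrak{Q}/\mathfrak{p}}
		= \mathrm{Frob}_{\mathfrak{Q}/\mathfrak{q}}^{\,f(\mathfrak{q}/\mathfrak{p})}
		= \mathrm{Frob}_{\mathfrak{Q}/\mathfrak{q}}
		= g
\]
since $f(\mathfrak{q}/\mathfrak{p}) = 1$; thus the Frobenius conjugacy class of $\mathfrak{p}$ in $G$ is exactly $C$, as required.

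Next I would carry out the conductor–discriminant bookkeeping that expresses the relevant conductors attached to $K/L$ in terms of $\mathrm{Disc}(K)$. Since $K/L$ is abelian, each $\chi \in \widehat{A}$ is a Hecke character of $L$ with conductor $\mathfrak{f}_\chi$ and analytic conductor $q(\chi) = |\mathrm{Disc}(L)| \cdot \mathrm{Nm}_{L/\mathbb{Q}}\mathfrak{f}_\chi$, and the conductor–discriminant formula together with the tower formula for discriminants gives
\[
	\prod_{\chi \in \widehat{A}} q(\chi)
		= |\mathrm{Disc}(L)|^{|A|} \prod_{\chi \in \widehat{A}} \mathrm{Nm}_{L/\mathbb{Q}}\mathfrak{f}_\chi
		= |\mathrm{Disc}(K)|.
\]
In particular every invariant of $K/L$ that could appear in a Chebotarev bound (the discriminant $|\mathrm{Disc}(L)| \le |\mathrm{Disc}(K)|^{1/|A|}$, the individual norms $\mathrm{Nm}_{L/\mathbb{Q}}\mathfrak{f}_\chi \le |\mathrm{Disc}(K)|$, the number of ramified primes, the root discriminant, etc.) is bounded in terms of $|\mathrm{Disc}(K)|$ alone. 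I would then invoke the explicit Linnik‑type bound of Thorner and Zaman \cite{ThornerZaman-Explicit} — which refines Weiss \cite{Weiss} and rests only on log‑free zero‑density estimates and the Deuring–Heilbronn phenomenon for the Hecke $L$‑functions $L(s,\chi)$, $\chi \in \widehat{A}$ — applied to the abelian extension $K/L$ and the singleton class $\{g\}$: it produces a degree‑one prime $\mathfrak{q}$ of $L$, unramified in $K$, with $\mathrm{Frob}_\mathfrak{q} = g$ and $\mathrm{Nm}_{L/\mathbb{Q}}\mathfrak{q} \ll_{[L:\mathbb{Q}]} \big(\prod_{\chi\in\widehat{A}} q(\chi)\big)^{1042/|A|}$. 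Combining this with the displayed identity and with $[L:\mathbb{Q}] = [G:A]\,[k:\mathbb{Q}]$ (so that the implied constant depends only on $G$ and $[k:\mathbb{Q}]$), the reduction of the first paragraph yields the claim.

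The step I expect to require the most care is the last one: extracting from \cite{ThornerZaman-Explicit} the bound in precisely this shape, i.e.\ with the exponent $1042/|A|$ acting against the \emph{product} of the analytic conductors (equivalently, against $|\mathrm{Disc}(K)|$) rather than against $\max_\chi q(\chi)$, and verifying that $1042$ is an admissible value of the absolute constant once any finer arithmetic quantities occurring in their statement have been crudely majorized using the estimates of the previous paragraph. The remaining steps — the Frobenius descent and the conductor–discriminant computation — are routine.
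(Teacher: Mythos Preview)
Your approach is essentially the paper's: pass to the abelian subextension $K/K^A$, apply the Thorner--Zaman bound there, and rewrite the answer in terms of $|\mathrm{Disc}(K)|$ via conductor--discriminant. The only difference is bookkeeping: the paper quotes \cite[Theorem~1.1]{ThornerZaman-Explicit} in the concrete form $\mathrm{Nm}\,\mathfrak{p} \ll |\mathrm{Disc}(K^A)|^{694} Q^{521}$ with $Q = \max_\chi \mathrm{Nm}_{K^A/\mathbb{Q}}\mathfrak{f}_\chi$, then bounds $Q \le |\mathrm{Nm}_{K^A/\mathbb{Q}}\mathfrak{D}_{K/K^A}|^{2/|A|}$ via Lemma~\ref{lem:conductor-vs-degree}, whereas you package the same inequalities into a single product-of-conductors statement; one small slip is that the transitivity identity should read $\mathrm{Frob}_{\mathfrak{Q}/\mathfrak{q}} = \mathrm{Frob}_{\mathfrak{Q}/\mathfrak{p}}^{\,f(\mathfrak{q}/\mathfrak{p})}$ rather than the reverse, though this is harmless since $f(\mathfrak{q}/\mathfrak{p})=1$.
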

	\begin{proof}
		Let $K^A$ be the subfield of $K$ fixed by $A$, and define $Q := \max\{ \mathrm{Nm}_{K^A/\mathbb{Q}} \mathfrak{f}_\chi\}$, where the maximum is taken over the irreducible characters of $A$.  By Lemma~\ref{lem:conductor-vs-degree}, we see that $Q \leq |\mathrm{Nm}_{K^A/\mathbb{Q}} \mathfrak{D}_{K/K^A}|^{\frac{2}{|A|}}$.  Hence, from Thorner and Zaman \cite[Theorem 1.1]{ThornerZaman-Explicit}, we find that there is a prime with $\mathrm{Frob}_\mathfrak{p} \in C$ satisfying
			\begin{align*}
				\mathrm{Nm}_{k/\mathbb{Q}} \mathfrak{p}
					& \ll_{G,[k:\mathbb{Q}]} |\mathrm{Disc}(K^A)|^{694} Q^{521}  \\
					& \leq |\mathrm{Disc}(K^A)|^{694} |\mathrm{Nm}_{K^A/\mathbb{Q}} \mathfrak{D}_{K/K^A}|^{\frac{1042}{|A|}} \\
					& = \left( |\mathrm{Disc}(K^A)|^{\frac{694 |A|}{1042}} |\mathrm{Nm}_{K^A/\mathbb{Q}} \mathfrak{D}_{K/K^A}| \right)^{\frac{1042}{|A|}} \\
					& \leq |\mathrm{Disc}(K)|^{\frac{1042}{|A|}}
			\end{align*}
		on using the conductor-discriminant formula.
	\end{proof}
	
	To prove Theorem~\ref{thm:optimized-Sn-asymptotic}, we therefore wish to consider the largest abelian subgroup of $S_n$ meeting a given class $C$.  This is essentially a standard exercise, but we provide the gist of the proof.
	
	\begin{lemma} \label{lem:abelian-bound}
			Let $n \geq 2$, let $C \subseteq S_n$ be a conjugacy class, and let $\ell_1,\dots,\ell_m \geq 2$ denote the lengths of the nontrivial cycles in the cycle decomposition of elements in $C$.  Set $\ell := \ell_1 + \dots + \ell_m$ (taken to be $0$ if $C$ is the class of the identity).  Then the largest abelian subgroup of $S_n$ containing an element in the class $C$ has order
				\[
					\prod_{i=1}^m \ell_i \cdot \begin{cases}
						1, & \text{if } n-\ell = 0,1, \\
						3^{\lfloor \frac{n-\ell}{3} \rfloor}, & \text{if } n-\ell \equiv 0 \pmod{3}, \\
						4 \cdot 3^{\lfloor \frac{n-\ell}{3} \rfloor - 1}, & \text{if } n-\ell \equiv 1 \pmod{3}, \text{ and}\\
						2 \cdot 3^{\lfloor \frac{n-\ell}{3} \rfloor}, & \text{if } n-\ell \equiv 2 \pmod{3}.
					\end{cases}
				\]
	\end{lemma}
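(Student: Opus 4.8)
The plan is to pass from the abelian subgroup to its orbit structure on $\{1,\dots,n\}$. First I would fix an element $\sigma$ in the class $C$ and let $A \leq S_n$ be any abelian subgroup containing $\sigma$ (so in particular $A \leq C_{S_n}(\sigma)$). On each orbit $O$ of $A$ the action is transitive, and since $A$ is abelian it is \emph{regular}; hence the restriction $A|_O \leq \mathrm{Sym}(O)$ has order exactly $|O|$. Because an element of $A$ acting trivially on every orbit is trivial, the product-of-restrictions map $A \hookrightarrow \prod_O A|_O$ is injective, so $|A| \leq \prod_O |O|$. This reduces everything to understanding which partitions of $\{1,\dots,n\}$ into orbit blocks are compatible with $\sigma \in A$, and then maximizing $\prod_O |O|$.

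The key structural input, which I would prove next, is that every $A$-orbit is \emph{pure} for $\sigma$. Identifying an orbit $O$ with the regular $A|_O$-set $A|_O$ itself, $\sigma|_O$ becomes left translation by some element $b$, so all cycles of $\sigma$ contained in $O$ have the common length $\mathrm{ord}(b)$, with $|O|/\mathrm{ord}(b)$ of them. Thus a single $A$-orbit cannot contain $\sigma$-cycles of two different lengths, and if $b \neq 1$ it contains no fixed point of $\sigma$. So each $A$-orbit is either a union of fixed points of $\sigma$, or a union of $\sigma$-cycles of a single length $j \geq 2$, necessarily of total size a multiple of $j$. Writing $a_j$ for the number of length-$j$ cycles appearing in $C$ (so $\prod_{j} j^{a_j} = \prod_{i=1}^m \ell_i$), the orbits carrying the length-$j$ cycles split those $a_j$ cycles into groups of sizes $t_1,t_2,\dots$ with $\sum_r t_r = a_j$, and contribute $\prod_r (j t_r)$ to $\prod_O|O|$; since $j \geq 2$, merging two groups multiplies this by $\frac{t+t'}{jtt'} \leq \frac12\bigl(\frac1t+\frac1{t'}\bigr) \leq 1$, so the finest possible split is optimal and this contribution is at most $j^{a_j}$. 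Meanwhile the $n-\ell$ fixed points of $\sigma$ are permuted among themselves by the abelian group $A$, so their orbit blocks contribute at most the maximal order of an abelian subgroup of $S_{n-\ell}$; a standard optimization of $\prod s_i$ over compositions $\sum s_i = n-\ell$ — parts equal to $3$ are optimal, with a single $2$ or $4$ correction according to $n-\ell \bmod 3$ — shows this equals exactly the quantity in the bracket of the statement. Multiplying, $|A| \leq \prod_{i=1}^m \ell_i$ times that bracketed quantity.

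Finally I would check the bound is attained. Let $A_1 \leq S_n$ be generated by the $m$ disjoint cycles of lengths $\ell_1,\dots,\ell_m$ making up $\sigma$; this is isomorphic to $\prod_{i=1}^m C_{\ell_i}$, is abelian of order $\prod_{i=1}^m \ell_i$, and contains $\sigma$. Let $A_0$ be an abelian subgroup of maximal order in the symmetric group on the $n-\ell$ fixed points of $\sigma$ (built from disjoint $3$-cycles with the $2$- or $4$-cycle correction). Then the internal direct product $A := A_1 \times A_0$, with the two factors acting on disjoint point sets, is abelian, contains $\sigma$, and has the advertised order. I expect the only genuinely non-routine step to be the ``pure orbit'' dichotomy together with the refinement optimization for the nontrivial cycles; the fixed-point count is the classical computation of the largest abelian subgroup of $S_r$, and the remainder is bookkeeping.
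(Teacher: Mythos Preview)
Your proof is correct and takes a genuinely different route from the paper. The paper's argument passes through the centralizer: since $A$ is abelian and contains $g$, one has $A \leq C_{S_n}(g)$; the paper then asserts $C_{S_n}(g) \cong A_0 \times S_{n-\ell}$ (with $A_0$ generated by the disjoint cycles of $g$) and reads off that the largest abelian $A$ is $A_0 \times A_1$ with $A_1$ maximal abelian in $S_{n-\ell}$. Your approach instead exploits the orbit decomposition of $A$, the regularity of a transitive abelian action, and the resulting bound $|A| \leq \prod_O |O|$; the pure-orbit dichotomy then decouples the nontrivial cycles from the fixed points, and the two product optimizations finish. Your argument is in fact the more robust of the two: the paper's description of the centralizer is incorrect whenever $g$ has repeated cycle lengths (for instance $C_{S_4}((12)(34)) \cong D_4$ has order $8$, not $4$), so the sketch as written has a gap exactly in the situation your refinement inequality $\frac{t+t'}{jtt'} \leq 1$ is designed to handle. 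Both approaches produce the same extremal subgroup $A$, but your orbit-based upper bound is self-contained and does not rely on the detailed structure of $C_{S_n}(g)$.
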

	\begin{proof}
		Let $A$ be an abelian subgroup such that $A \cap C$ is non-empty, and let $g$ be an element in the intersection.  It follows that $A$ must be a subgroup of the centralizer $C_{S_n}(g)$, which is isomorphic to the direct product $A_0 \times S_{n-\ell}$, where $A_0$ is the subgroup generated by the disjoint cycles in $g$.  The largest choice for $A$ will therefore be isomorphic to $A_0 \times A_1$, where $A_1$ is the largest abelian subgroup of $S_{n-\ell}$.  A standard exercise shows that if $n-\ell \geq 2$, then the largest abelian subgroup of $S_{n-\ell}$ is dictated by $n-\ell \pmod{3}$.  If $n-\ell \equiv 0 \pmod{3}$, it is generated by $\lfloor \frac{n-\ell}{3}\rfloor$ disjoint $3$-cycles; if $n - \ell \equiv 2 \pmod{3}$, it is generated by $\lfloor \frac{n-\ell}{3}\rfloor$ disjoint $3$-cycles plus an additional disjoint transposition; and if $n-\ell \equiv 1 \pmod{3}$, it is generated by $\lfloor \frac{n-\ell}{3}\rfloor - 1$ disjoint $3$-cycles plus two additional disjoint transpositions.  The lemma follows.
	\end{proof}
	
	\begin{proof}[Proof of Theorem~\ref{thm:optimized-Sn-asymptotic}]
		Let $C \subseteq S_n$ be a conjugacy class.  For each integer $\ell \geq 1$, let $\lambda_\ell$ be $1/n$ times the number of cycles of length $\ell$ in $C$.  Recalling the formula for $\alpha(C)$ in Theorem~\ref{thm:Sn-cycle-intro}, we see that if we set $a_\ell := \log\left(\frac{\ell!}{(\ell-2)2^{\ell-1}+2} \right)$ (which equals $0$ if $\ell=1$), then
			\[
				\alpha(C)
					= 2^{m-1} \prod_{i=1}^m \frac{(\ell_i-2)2^{\ell_i-2}+1}{\ell_i!}
					= \frac{1}{2} \exp\left(-n \sum_{\ell=1}^\infty a_\ell\lambda_\ell \right).
			\]
		Similarly, if we define $b_1 = \frac{\log 3}{3}$ and $b_\ell = \log \ell$ for $\ell \geq 2$, by Lemma~\ref{lem:abelian-bound}, we also see that the largest abelian subgroup $A$ meeting the class $C$ has order satisfying
			\[
				|A|^{-1}
					\leq 3^{1/3} \exp\left( -n \sum_{\ell=1}^\infty b_\ell \lambda_\ell\right).
			\]
		It therefore follows on combining Theorem~\ref{thm:Sn-cycle-intro} with Lemma~\ref{lem:abelian-least-prime} that Theorem~\ref{thm:optimized-Sn-asymptotic} will hold for the class $C$ with any constant $c_2$ satisfying
			\begin{equation} \label{eqn:c2-fixed-class}
				c_2 \leq \max\left\{ \sum_{\ell=1}^\infty a_\ell \lambda_\ell, \sum_{\ell=1}^\infty b_\ell \lambda_\ell \right\}.
			\end{equation}
		To obtain a value of $c_2$ that is valid for every conjugacy class $C$, it therefore suffices to find the minimum of the expression on the right-hand side of \eqref{eqn:c2-fixed-class} taken over sequences $\{ \lambda_\ell\}_{\ell=1}^\infty$ of non-negative real numbers subject to the constraint
			\[
				\sum_{\ell=1}^\infty \ell \lambda_\ell = 1.
			\]
		
		This is a straightforward linear optimization problem.  By continuity, a minimizing sequence must satisfy
			\begin{equation} \label{eqn:hyperplane}
				\sum_{\ell=1}^\infty a_\ell \lambda_\ell
					= \sum_{\ell=1}^\infty b_\ell \lambda_\ell,
			\end{equation}
		and we note that $a_\ell > b_\ell$ for $\ell \geq 7$, $a_\ell < b_\ell$ for $1 \leq \ell \leq 6$, and the differences $a_\ell - b_\ell$ are pairwise distinct.  From these considerations, it follows that the minimizing sequence will be non-zero at exactly two indices, with one index $\leq 6$ and the other $\geq 7$.  It is then straightforward to verify that the minimizer has $\lambda_7 \ne 0$, with a finite computation then revealing that also $\lambda_2 \ne 0$.  Explicitly, then, the minimizing sequence is given by
			\[
				\lambda_2 
					= \frac{\log 360 - \log 161}{\log 16588800 - \log 25921}
					\approx 0.12454,
			\]
			\[
				\lambda_7
					= \frac{\log 2}{\log 16588800 - \log 25921} 
					\approx 0.10728,
			\]
		and $\lambda_\ell = 0$ for $\ell \ne 2,7$.  On using \eqref{eqn:c2-fixed-class}, this shows that we may take any
			\[
				c_2 
					\leq \frac{\log 2 \cdot \left(\log 360 - \log 23 \right)}{\log 16588800 - \log 25921} 
					\approx 0.29507.
			\]
		We also see that we may take $c_1 = 3^{1/3} \cdot 1042 \approx 1502.8$.
		
		This all works without yet considering effectivity, but it is straightforward to modify the above to provide an effective bound, especially since the bound given by Lemma~\ref{lem:abelian-bound} is always effective.  Since $\langle \Psi_+, \mathrm{sgn} \rangle \leq 2^{m-1} \prod_{i=1}^m (\ell_i -1)$, the exponent will need to be increased by at most
			\[
				\frac{2^m}{n!} \prod_{i=1}^m (\ell_i - 1)
					\leq \left(\frac{4^{1/3} e}{n}\right)^n,
			\]
		as follows on observing that the product is dominated by taking each $\ell_i = 3$ and $m = n/3$ and invoking the inequality $n! \geq \left(\frac{n}{e}\right)^n$.  It is then straightforward to check that
			\[
				\exp\left(c_2n + n\log\left(\frac{4^{1/3}e}{n}\right)\right)
			\]
		is $< 9$ for every $n \geq 2$, and in fact is $< 1$ for every $n \geq 6$.  This yields the claim with $c_1 = 1504$ for every $n \geq 6$, and it is routine to verify that $c_1 = 1504$ is also admissible for every $n \leq 5$; for example, this follows from Theorem~\ref{thm:least-prime-rational-class-intro}.
	\end{proof}

\section{Alternative bounds for symmetric groups}	
	\label{sec:small-symmetric}

	In this section, we record the results of the explicit computations leading to Theorem~\ref{thm:small-Sn-intro}.  We also provide alternative class functions than those used in the previous section that will, for certain classes $C$, yield even stronger bounds on $\alpha(S_n,C)$ than we have stated.  As in Section~\ref{sec:Results}, we restrict our discussion to the non-identity conjugacy classes.
	
\subsection{Symmetric groups $S_n$ with $n \leq 10$ and the proof of Theorem~\ref{thm:small-Sn-intro}}

	The main result in this section is the following more refined result that implies Theorem~\ref{thm:small-Sn-intro}.
	
	\begin{theorem}\label{thm:small-Sn-bowels}
		Let $3 \leq n \leq 10$ and let $C \subset S_n$ be a non-identity conjugacy class.  For $3 \leq n \leq 6$, the Linnik exponent $\alpha(S_n,C)$ given in Table~\ref{tbl:small-Sn-intro} is admissible in \eqref{eqn:LinnikExponent}, for $7 \leq n \leq 9$, the exponent given in Table~\ref{tbl:small-Sn-appendix} is admissible, and for $n=10$, the exponent given in Table~\ref{tbl:S10-appendix} is admissible.		
		If $k$ does not admit a quadratic subfield, then the implied constant in \eqref{eqn:LinnikExponent} is effectively computable when $\epsilon > {\epsilon_{\mathrm{eff}}^{\mathbb{Q}}(C)}/{[k:\mathbb{Q}]}$, where $\epsilon_\mathrm{eff}^\mathbb{Q}(C)$ is also given in Tables~\ref{tbl:small-Sn-intro}, \ref{tbl:small-Sn-appendix}, and \ref{tbl:S10-appendix}.  If $k$ does admit a quadratic subfield, then the implied constant is effective when $\epsilon > \max\left\{ \frac{\epsilon_{\mathrm{eff}}^{\mathbb{Q}}(C)}{[k:\mathbb{Q}]}, \frac{2}{n! [k:\mathbb{Q}]} \right\}$.
	\end{theorem}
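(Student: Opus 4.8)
The plan is to apply Corollary~\ref{cor:general-approach} with characters $\Psi_+,\Psi_-$ of $S_n$ whose conductors we control essentially exactly, taking advantage of the fact that in $S_n$ rational equivalence coincides with conjugacy. By Lemma~\ref{lem:inner-product-when-conjugacy}, the fundamental class function $\Delta_g$ of Lemma~\ref{lem:class-function-construction} attached to $g\in C$ is then $|C_{S_n}(g)|$ times the indicator of $C$, satisfying $\langle\Delta_g,\chi\rangle=\chi(g)$ for every irreducible $\chi$ of $S_n$. Conditions (ii) and (iii) of Theorem~\ref{thm:general-approach} force the trivial character to occur in $\Psi_+$ with multiplicity $1$ and in $\Psi_-$ with multiplicity $0$, so $\Psi_+-\Psi_-$ is a class function supported on the single class $C$ with trivial-character multiplicity $1$; since that space is one-dimensional and is spanned by $\Delta_g$, we conclude $\Psi_+-\Psi_-=\Delta_g$ for \emph{any} admissible pair. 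Thus, as in Lemma~\ref{lem:class-functions-basepoint}, the only remaining freedom is the choice of an offset $\Psi_0:=\Psi_-$ that lies in the monomial cone of $S_n$ (or, where larger for small $n$, in the cone of characters whose $L$-function is provably holomorphic away from $s=1$), has trivial multiplicity $0$, and has $\Psi_0+\Delta_g$ in that same cone; one then sets $\Psi_+=\Psi_0+\Delta_g$.

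Next I would bound the relevant conductor. By Lemma~\ref{lem:local-artin-conductor-delta} one has $v_\mathfrak{p}(\mathfrak{f}_{\Delta_g})\leq 0$ for every prime $\mathfrak{p}$, hence (using $\Delta_g(1)=0$) $q(\Psi_+)\leq q(\Psi_-)$, so it suffices to control $q(\Psi_-)=q(\Psi_0)$. Substituting the decomposition of the regular representation into the local conductor formula \eqref{eqn:local-artin-conductor} and applying the mediant inequality term by term along the ramification filtration $G_0\supseteq G_1\supseteq\cdots$ yields, for every $\mathfrak{p}$,
\[
v_\mathfrak{p}(\mathfrak{f}_{\Psi_0})\leq \beta_C(\Psi_0)\cdot v_\mathfrak{p}(\mathfrak{D}_{K/k}),
\qquad
\beta_C(\Psi_0):=\max_{1\neq H\leq S_n}\frac{|H|\bigl(\Psi_0(1)-\langle\mathrm{Res}_H\Psi_0,\mathbf{1}_H\rangle\bigr)}{n!\,(|H|-1)},
\]
where the maximum runs over conjugacy classes of subgroups and may be restricted to $p$-by-cyclic $H$, or split into tame and wild parts as in the remark after Corollary~\ref{cor:less-simplified-conductor}, for a sharper value. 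Combining this with the tower formula $|\mathrm{Disc}(K)|=|\mathrm{Disc}(k)|^{n!}\,\mathrm{N}\mathfrak{D}_{K/k}$ and the trivial inequality $\beta_C(\Psi_0)\geq\Psi_0(1)/n!$ (take $H=S_n$) gives $\max\{q(\Psi_+),q(\Psi_-)\}\leq|\mathrm{Disc}(K)|^{\beta_C(\Psi_0)}$, and then Corollary~\ref{cor:general-approach} produces the admissible Linnik exponent $\alpha(S_n,C)=\tfrac12\min_{\Psi_0}\beta_C(\Psi_0)$, the minimum taken over admissible offsets.

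It then remains to carry out, for each $3\leq n\leq10$ and each non-identity class $C\subset S_n$, the finite optimization $\min_{\Psi_0}\beta_C(\Psi_0)$; the entries of Tables~\ref{tbl:small-Sn-intro}, \ref{tbl:small-Sn-appendix}, and \ref{tbl:S10-appendix} record the outcomes. Concretely, one enumerates a generating set of monomial characters of $S_n$ (inductions $\mathrm{Ind}_H^{S_n}\phi$ with $\phi$ linear; Young characters $\mathrm{Ind}_{S_\mu}^{S_n}(\varepsilon)$ with $\varepsilon$ a product of trivial and sign characters on the factors suffice in practice), expresses ``$\Psi_0$ and $\Psi_0+\Delta_g$ lie in the monomial cone with $\langle\Psi_0,\mathbf{1}\rangle=0$'' as a linear feasibility condition in the coordinates $\langle\,\cdot\,,\chi\rangle$, and minimizes $\beta_C(\Psi_0)$—itself a maximum of finitely many linear functionals—over the resulting polytope. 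For the effectivity statements one uses that $\mathrm{sgn}$ is the unique nontrivial quadratic character of $S_n$: by Lemma~\ref{lem:ResidueLowerBound} and Corollary~\ref{cor:general-approach}, the implied constant is effective once $\epsilon>\tfrac{2\nu(\Psi_+)}{n!\,[k:\mathbb{Q}]}$, where $\nu(\Psi_+)\leq\langle\Psi_+,\mathrm{sgn}\rangle$ when $k$ has no quadratic subfield (the exceptional character then being forced to be quadratic, hence equal to $\mathrm{sgn}$), while the possibility of an exceptional zero of $\zeta_k$ when $k$ does have a quadratic subfield contributes the additional term $\tfrac{2}{n!\,[k:\mathbb{Q}]}$; tracking $\langle\Psi_+,\mathrm{sgn}\rangle$ for the optimal $\Psi_+$ produces the tabulated $\epsilon_{\mathrm{eff}}^{\mathbb{Q}}(C)$.

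The main obstacle is not conceptual but computational, and has two facets: (i) one must ensure the chosen spanning set of monomial characters is rich enough that the linear-programming optimum genuinely equals the minimum of $\beta_C$ over the full monomial cone—this is a real concern because, for $S_n$ with $n$ not small, not every irreducible character is monomial, so an overly restrictive generating set could only yield an upper bound for $\alpha(S_n,C)$ rather than the sharp value stated; and (ii) for $n=10$ one must absorb the combinatorial cost of ranging over all conjugacy classes of subgroups $H\leq S_{10}$ in the evaluation of $\beta_C$. The mediant reduction described above is exactly what converts the conductor estimate into this finite, purely group-theoretic optimization, so that no further analytic input beyond Corollary~\ref{cor:general-approach} is needed.
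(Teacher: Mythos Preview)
Your proposal is essentially correct and follows the same overall strategy as the paper: apply Corollary~\ref{cor:general-approach} with a pair $(\Psi_+,\Psi_-)$ satisfying $\Psi_+-\Psi_-=\Delta_g$, reduce to bounding $q(\Psi_-)$ via the local conductor formula, and turn this into a finite group-theoretic optimization solved by linear programming. Two points of divergence are worth flagging.

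First, your objective function differs from the paper's. You propose to minimize $\beta_C(\Psi_0)$ directly as a min--max LP. The paper instead takes the simpler route of finding a $\Psi_-$ of \emph{minimal degree} in the monomial cone (a mixed integer LP with linear objective), and then evaluates the tame conductor ratio \eqref{eqn:tame-conductor-bound} for that fixed choice, the maximum being taken only over \emph{cyclic} subgroups $I$ with the wild contribution absorbed into the implied constant. Your approach is more principled and could in principle yield smaller exponents; the paper's is computationally lighter and is what actually produced the tabulated values. Either suffices to establish admissibility of the stated $\alpha(S_n,C)$.

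Second, your parenthetical that inductions from Young subgroups ``suffice in practice'' is not borne out. Already for $S_5$ the basis of the monomial cone (Table~\ref{tbl:S5-monomial-basis}) requires inductions from $C_5$, $D_4$, $A_4$, and the Frobenius group $\mathbb{F}_5\rtimes\mathbb{F}_5^\times$; restricting to Young subgroups would give a strictly smaller cone and hence only an upper bound on the optimum. The paper enumerates \emph{all} monomial characters in Magma and extracts a cone basis before running the LP. Your concern~(ii) about enumerating all subgroups of $S_{10}$ for the $\beta_C$ maximum is also largely moot once one adopts the tame split you mention, since then only cyclic subgroups enter.
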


	\begin{proof}[Proof of Theorem~\ref{thm:small-Sn-bowels}]
		Suppose first we have a pair $(\Psi_+,\Psi_-)$ of characters of $S_n$ satisfying the conditions of Theorem~\ref{thm:general-approach} for a conjugacy class $C \ne \{1\}$, and so that $\Psi_+ - \Psi_- = \Delta_g$ for $g \in C$.  By Lemma~\ref{lem:local-artin-conductor-delta}, we would then have $q(\Psi_+) \leq q(\Psi_-)$, so it suffices to bound $q(\Psi_-)$ in terms of $|\mathrm{Disc}(K)|$ to obtain a Linnik exponent $\alpha(S_n,C)$ by means of Corollary~\ref{cor:general-approach}.  In fact, it suffices only to do so for the contribution from the primes of tame ramification, since the contribution from the wild primes may be bounded in terms of $n$ and $[k:\Q]$.  Appealing to the formula for the Artin conductor \eqref{eqn:local-artin-conductor}, and noting that for primes of tame ramification, the group $G_0$ is the only non-trivial ramification group and must be cyclic, we find
		\begin{equation} \label{eqn:tame-conductor-bound}
			\frac{\log(q(\Psi_-))}{\log |\mathrm{Disc}(K)|}
				\leq \max\left\{ \frac{\Psi_-(1) - \frac{1}{|I|} \sum_{g \in I} \Psi_-(g)}{|G| - \frac{|G|}{|I|}} : 1 \ne I \leq G \text{ cyclic} \right\} + O_{n,[k:\Q]}(1).
		\end{equation}
		Computing this upper bound for a specific choice of $\Psi_-$ leads to the claimed Linnik exponent $\alpha(S_n,C)$, which is simply half of the above maximum.  The range of effectivity in $\epsilon$ is determined by the multiplicity of the sign representation in $\Psi_+$, which is easily read off for any specific choice of $\Psi_+$.
		Thus, to complete the proof of Theorem~\ref{thm:small-Sn-intro} it suffices to explain our choices for $(\Psi_+,\Psi_-)$ and how to certify that they are holomorphic away from $s=1$.  		
		We focus on the cases that $n \geq 5$, as for $S_3$ and $S_4$, the Artin conjecture is known, and $\Psi_+$ and $\Psi_-$ are given by Lemma~\ref{lem:class-functions-artin}.  
		
		We begin by computing in Magma \cite{Magma} the set of monomial characters of $S_n$, which we express in terms of their inner products with the irreducible characters.  We then pass the vectors of these inner products to Python for two linear programming steps.  The first step finds a basis for the monomial cone (i.e., a minimal set of monomial characters so that any real class function expressible as a non-negative linear combination of monomial characters is expressible as such a combination of characters in the basis).  This is a classical linear programming problem.  In the special case $n=5$, we have given this basis in Table~\ref{tbl:S5-monomial-basis} to demonstrate the idea, but we have not reproduced the bases for $6 \leq n \leq 10$ here.  This is because they consist of $33$, $51$, $83$, $152$, and $289$ characters, respectively.  (We note that these bases, along with all of our code described here and the result of running it, are available on a Github repository associated with this paper \cite{github}.)

		The second linear programming step finds for each conjugacy representative $g \in G$ a vector $\mathbf{v}_g$ of non-negative integers (again corresponding to coefficients of irreducibles) of minimal degree such that both $\mathbf{v}_g$ and $\mathbf{v}_g + \Delta_g$ lie in the monomial cone.  This is a mixed integer linear programming problem, since the vector $\mathbf{v}_g$ is required to be integral but its representation in terms of the basis of the monomial cone need not be.  It is this step that most benefits from the passage to Python, as we may employ the \verb^scipy.optimize.milp^ package.  We then import the resulting vectors $\mathbf{v}_g$ back into Magma to complete the analysis of the Linnik exponents described above.
		
		We describe the output of this computation in more detail for $S_5$ and $S_6$.  For three non-trivial classes in $S_5$ and two non-trivial classes in $S_6$, the computation above reveals that the optimal class functions turn out to be expressible as non-negative integral linear combinations of monomial characters.  We have recorded these classes, along with the decomposition of the optimal class functions in terms of irreducibles, in Table~\ref{tbl:monomial-cone}.  (We have also included here the explicit optimal choices for $S_3$ and $S_4$.)  For the three remaining classes in $S_5$ and the $8$ remaining classes in $S_6$, the optimal class functions $\Psi_+$ and $\Psi_-$ are not in the monomial cone.  We have recorded the degree-optimal choices of $\Psi_+$ and $\Psi_-$ in Tables~\ref{tbl:basepoints-5} and \ref{tbl:basepoints-6}.  Additionally, for $S_5$ in particular, it is straightforward to verify by hand using the provided basis of the monomial cone that each of the provided $\Psi_+$ and $\Psi_-$ do indeed satsify conditions (ii) and (iii) of Theorem~\ref{thm:general-approach}. 
		
		For $7 \leq n \leq 10$, we have not reproduced the degree-optimal choices here, but they are available at \cite{github}.
	\end{proof}
	
	{\tiny
	\begin{table}[h]
		\begin{tabular}{|c|c|c|}
			\hline
			Character & Subgroup & $1$-dim. char \\ \hline
			$1$ & $S_5$ & $1$ \\
			$\sgn$ & $S_5$ & $\sgn$ \\
			$1+\std_4$ & $S_4$ & $1$  \\
			$\sgn + \std_4 \otimes \sgn$ & $S_4$ & $\sgn$ \\
			$1+\std_5$ & $\mathbb{F}_5 \rtimes \mathbb{F}_5^\times$ & $1$ \\
			$\sgn + \std_5\otimes \sgn$ & $\mathbb{F}_5 \rtimes \mathbb{F}_5^\times$ & $\sgn$ \\
			$\wedge^2 \std_4$ & $\mathbb{F}_5 \rtimes \mathbb{F}_5^\times$ & $\chi_4$ \\
			$\std_4 + \wedge^2 \std_4$ & $S_3 \times S_2$ & $1 \times \sgn$ \\
			$\std_4\otimes \sgn + \wedge^2\std_4$ & $S_3 \times S_2$ & $\sgn \times 1$ \\
			$1 + \std_4 + \std_5\otimes\sgn$ & $S_3 \times S_2$ & $1 \times 1$ \\
			$\sgn + \std_4\otimes\sgn + \std_5$ & $S_3 \times S_2$ & $\sgn \times \sgn$ \\
			$\std_5 + \std_5\otimes\sgn$ & $A_4$ & $\chi_3$ \\
			$\std_4 + \std_5\otimes\sgn + \wedge^2\std_4$ & $D_4$ & $\chi_2$ \\
			$\std_4\otimes\sgn + \std_5 + \wedge^2\std_4$ & $D_4$ & $\chi_2 \cdot \sgn$ \\
			$\std_4 + \std_4\otimes\sgn + \std_5 + \std_5\otimes\sgn + \wedge^2\std_4$ & $C_5$ & $\chi_5$ \\
			\hline
		\end{tabular}
		\begin{caption}{\label{tbl:S5-monomial-basis}A basis for the monomial cone in $S_5$, along with the subgroups and characters from which they are induced.  A character denoted $\chi_n$ is a $1$-dimensional character of order $n$, with $\chi_2 \ne \sgn$.}\end{caption}
	\end{table}}
	
	{\tiny
	\begin{table}[h]
		\begin{tabular}{|l|c|c|c|} \hline
			$n$ & $C$ & $\Psi_+$ & $\Psi_-$  \\ \hline \hline
				$3$ 
					& $[(12)]$ & {\tiny $1$} & {\tiny $\mathrm{sgn}$} \\ \hline
					& $[(123)]$ & {\tiny $1+\mathrm{sgn}$} & {\tiny $\mathrm{std}_2$} \\ \hline \hline
				$4$ 
					& $[(12)]$ & {\tiny $1+\mathrm{std}_3$} & {\tiny $\mathrm{sgn} + \mathrm{std}_3\otimes \mathrm{sgn}$} \\ \hline
					& $[(123)]$ & {\tiny $1+\mathrm{sgn}$} & {\tiny $\mathrm{std}_2$} \\ \hline
					& $[(1234)]$ & {\tiny $1+\mathrm{std}_3\otimes \mathrm{sgn}$} & {\tiny $\mathrm{sgn} + \mathrm{std}_3$} \\ \hline
					& $[(12)(34)]$ & {\tiny $1 + \mathrm{sgn} + 2\cdot \mathrm{std}_2$} & {\tiny $\mathrm{std}_3+\mathrm{std}_3 \otimes \mathrm{sgn}$} \\ \hline\hline
				$5$ 
					& $[(123)]$ & {\tiny $1 + \mathrm{sgn} + \mathrm{std}_4 + \mathrm{std}_4\otimes \mathrm{sgn}$} & {\tiny$ \mathrm{std}_5 + \mathrm{std}_5\otimes \mathrm{sgn}$} \\ \hline
					& $[(1234)]$ & {\tiny$1 + \mathrm{std}_5$} & {\tiny $\mathrm{sgn} + \mathrm{std}_5\otimes \mathrm{sgn}$}  \\ \hline
					& $[(12)(34)]$ & {\tiny $1 + \mathrm{sgn} + \mathrm{std}_5 + \mathrm{std}_5 \otimes \mathrm{sgn}$} & {\tiny $2 \cdot \wedge^2 \mathrm{std}_4$} \\ \hline \hline
				$6$
					& $[(12)(34)]$ & {\tiny \makecell[t]{$1 + \mathrm{sgn} + \mathrm{std}_5 + \mathrm{std}_5\otimes \mathrm{sgn} + \widetilde{\mathrm{std}_5}$ \\ $ + \widetilde{\mathrm{std}_5} \otimes \mathrm{sgn} + \mathrm{std}_9 + \mathrm{std}_9 \otimes \mathrm{sgn}$}} & {\tiny $2\cdot \wedge^2 \mathrm{std}_5 + 2\cdot \wedge^2 \mathrm{std}_5 \otimes \mathrm{sgn}$} \\ \hline
					& $[(1234)(56)]$ & {\tiny $1 + \mathrm{sgn} + \mathrm{std}_9 + \mathrm{std}_9 \otimes \mathrm{sgn}$} & {\tiny $\mathrm{std}_5+\mathrm{std}_5\otimes\mathrm{sgn} + \widetilde{\mathrm{std}_5}+\widetilde{\mathrm{std}_5}\otimes\mathrm{sgn}$} \\ \hline
				
		\end{tabular}
		\begin{caption}{\label{tbl:monomial-cone} Nontrivial classes in $S_n$, $3 \leq n \leq 6$, for which the optimal $\Psi_+$ and $\Psi_-$ are in the monomial cone.}
		\end{caption}
	\end{table}}
	
	{\tiny
	\begin{table}[h]
		\begin{tabular}{|c|c|c|} \hline
			$C$ & $\Psi_+$ & $\Psi_-$ \\ \hline \hline
				$[(12)]$ & {\tiny $1 + 2\cdot \mathrm{std}_4 + \mathrm{std}_5 \otimes \mathrm{sgn} + \wedge^2 \mathrm{std}_4$} 
				& {\tiny $\mathrm{sgn} + 2\cdot\mathrm{std}_4\otimes \mathrm{sgn} + \mathrm{std}_5 + \wedge^2 \mathrm{std}_4$} \\ \hline
				$[(12345)]$ & {\tiny $1 + 2\cdot \mathrm{sgn} + 2\cdot \wedge^2\mathrm{std}_4$} 
				& {\tiny $\mathrm{sgn} + \mathrm{std}_4 + \mathrm{std}_4\otimes \mathrm{sgn} + \wedge^2 \mathrm{std}_4$} \\ \hline
				$[(123)(45)]$ & {\tiny $1 + 2\cdot\mathrm{sgn} + 2\cdot\mathrm{std}_4\otimes \mathrm{sgn} + \mathrm{std}_5\otimes \mathrm{sgn} + \wedge^2 \mathrm{std}_4$} 
				& {\tiny $3\cdot \mathrm{sgn} + \mathrm{std}_4 + \mathrm{std}_4 \otimes \mathrm{sgn} + \mathrm{std}_5 + \wedge^2 \mathrm{std}_4$} \\ \hline
		\end{tabular}
		\begin{caption}{\label{tbl:basepoints-5} Degree optimal choices for the remaining classes in $S_5$.}
		\end{caption}
	\end{table}}
	
	{\tiny
	\begin{table}[h]
		\begin{tabular}{|c|c|c|} \hline
			$C$ & $\Psi_+$  & $\Psi_-$ \\ \hline \hline
				$[(12)]$ & {\tiny \makecell[t]{$1 + 3\cdot \mathrm{std}_5 + \widetilde{\mathrm{std}_5}\otimes \mathrm{sgn} + $ \\ $3\cdot \mathrm{std}_9 + 2\cdot \wedge^2 \mathrm{std}_5 + 2 \cdot \chi_{16}$}} 
				& {\tiny \makecell[t]{
					$\sgn + 3\cdot \std_5\otimes\sgn + \widetilde{\std_5}+3\cdot \std_9\otimes\sgn +$ \\ $ 2\cdot \wedge^2\std_5\otimes\sgn + 2\cdot \chi_{16}$
				}}\\ \hline
				$[(123)]$ & {\tiny \makecell[t]{$1 + \sgn + 2\cdot \std_5 + 2\cdot\std_5\otimes\sgn + \std_9 +$ \\ $ \std_9 \otimes \sgn + \wedge^2 \std_5 + \wedge^2\std_5\otimes\sgn$}}
				& {\tiny \makecell[t]{
					$\widetilde{\std_5}+\widetilde{\std_5}\otimes\sgn + \std_9 + \std_9\otimes\sgn + 2\cdot \chi_{16}$
				}}\\ \hline
				$[(1234)]$ & {\tiny $1+\std_5+\widetilde{\std_5} + \std_9 \otimes \sgn + \chi_{16} $} 
				& {\tiny \makecell[t]{
					$\sgn + \widetilde{\std_5} + \std_5\otimes\sgn + \std_9 + \chi_{16} $
				}}
				\\ \hline
				$[(12345)]$ & {\tiny \makecell[t]{$1 + 5\cdot \sgn + \std_5\otimes\sgn + \widetilde{\std_5}\otimes\sgn +$ \\ $ 2\cdot \std_9 \otimes \sgn + \chi_{16}$} }
				& {\tiny \makecell[t]{
					$4\cdot \sgn + \std_5\otimes\sgn + \widetilde{\std_5}\otimes\sgn + \std_9 +$ \\ $ 3\cdot \std_9\otimes\sgn + \chi_{16}$
				}}
				\\ \hline
				$[(123456)]$ & {\tiny \makecell[t]{$1 + \std_5\otimes\sgn + \widetilde{\std_5} + \widetilde{\std_5}\otimes\sgn + $ \\ $ \wedge^2\std_5$}}
				& {\tiny \makecell[t]{
					$\sgn + \std_5 + \widetilde{\std_5}+\widetilde{\std_5}\otimes\sgn + \wedge^2\std_5\otimes\sgn $
				}}
				\\ \hline
				$[(123)(45)]$ & {\tiny \makecell[t]{$1 + \std_5 + \std_5\otimes\sgn + \widetilde{\std_5}\otimes\sgn +$ \\ $ \wedge^2\std_5\otimes\sgn $}}
				& {\tiny \makecell[t]{
					$\sgn + \std_5 + \std_5\otimes\sgn + \widetilde{\std_5} + \wedge^2\std_5 $
				}}
				\\ \hline
				$[(123)(456)]$ & {\tiny \makecell[t]{$1 + \sgn + 2\cdot \widetilde{\std_5} + 2\cdot\widetilde{\std_5} \otimes \sgn + \std_9 +$ \\ $ \std_9 \otimes \sgn + \wedge^2 \std_5 + \wedge^2\std_5 \otimes \sgn$ }}
				& {\tiny \makecell[t]{
					$\std_5 + \std_5\otimes\sgn + \std_9+\std_9\otimes\sgn + 2\cdot \chi_{16} $
				}}
				\\ \hline
				$[(12)(34)(56)]$ & {\tiny \makecell[t]{$1 + \std_5\otimes\sgn + 3\cdot \widetilde{\std_5} + 3\cdot \std_9 +$ \\ $ 2\cdot \wedge^2\std_5\otimes\sgn + 2\cdot \chi_{16}$} }
				& {\tiny \makecell[t]{
					$\sgn + \std_5 + 3\cdot \widetilde{\std_5}\otimes\sgn + 3\cdot \std_9\otimes\sgn +$ \\ $ 2\cdot \wedge^2\std_5 + 2\cdot \chi_{16} $
				}}
				\\ \hline
		\end{tabular}
		\begin{caption}{\label{tbl:basepoints-6} Degree optimal choices for the remaining classes in $S_6$.}
		\end{caption}
	\end{table}}
	
	In Tables~\ref{tbl:monomial-cone}--\ref{tbl:basepoints-6}, our convention for labeling characters is as follows.  The characters $1$ and $\mathrm{sgn}$ are the trivial and sign characters of $S_n$, respectively.  We let $\mathrm{std}_{n-1}$ be the character of the $(n-1)$-dimensional standard representation of $S_n$.  Additionally, we view $\mathrm{std}_2$ as a character of $S_4$ by means of its quotient to $S_3$, $\mathrm{std}_5$ as a character of $S_5$ by means of its inclusion as a transitive subgroup of $S_6$, and $\mathrm{std}_9$ as a character of $S_6$ by means of its inclusion as a transitive subgroup of $S_{10}$.  We also write $\widetilde{\mathrm{std}_5}$ for the composition of $\mathrm{std}_5$ with the outer automorphism of $S_6$.

	{\tiny \begin{table}[h]
		\begin{minipage}[t]{0.35\textwidth}
		\begin{tabular}{|l|l|c|c|}
			\hline $n$ & $C$ & $\alpha(S_n,C)$ & $\epsilon_\mathrm{eff}^\mathbb{Q}(C)$ \\ \hline
				$3$ & $[(12)]$ & $1/6$ & $0$ \\
					& $[(123)]$ & $1/4$ & $1/3$ \\ \hline
				$4$ & $[(12)]$ & $1/8$ & $0$ \\
					& $[(12)(34)]$ & $1/6$ & $1/12$ \\
					& $[(123)]$ & $1/16$ & $1/12$ \\
					& $[(1234)]$ & $1/9$ & $0$ \\
					\hline
				$5$ & $[(12)]$ & $13/120$ & $0$ \\
					& $[(12)(34)]$ & $1/15$ & $1/60$ \\
					& $[(123)]$ & $1/20$ & $1/60$ \\
					& $[(1234)]$ & $1/36$ & $0$ \\
					& $[(12345)]$ & $1/15$ & $1/30$ \\
					& $[(123)(45)]$ & $13/120$ & $1/30$ \\ \hline
		\end{tabular}
		\end{minipage}
		\begin{minipage}[t]{0.35\textwidth}
		\begin{tabular}{|l|l|c|c|}
			\hline $n$ & $C$ & $\alpha(S_n,C)$ & $\epsilon_\mathrm{eff}^\mathbb{Q}(C)$ \\ \hline
				$6$ & $[(12)]$ & $31/360$ & $0$ \\
					& $[(12)(34)]$ & $1/30$ & $1/360$ \\
					& $[(12)(34)(56)]$ & $31/360$ & $0$ \\
					& $[(123)]$ & $11/240$ & $1/360$ \\
					& $[(123)(456)]$ & $11/240$ & $1/360$ \\
					& $[(1234)]$ & $7/270$ & $0$ \\
					& $[(1234)(56)]$ & $2/135$ & $1/360$ \\
					& $[(12345)]$ & $13/240$ & $1/72$ \\
					& $[(123456)]$ & $1/54$ & $0$ \\
					& $[(123)(45)]$ & $1/54$ & $0$ \\
					\hline
		\end{tabular}
		\end{minipage}
		\begin{caption} {\label{tbl:small-Sn-intro} Linnik exponents in the sense of \eqref{eqn:LinnikExponent} for small symmetric groups, along with the range of effectivity in $\epsilon$.  See Theorem~\ref{thm:small-Sn-bowels}.}
		\end{caption}
	\end{table} }

\subsection{Alternative class functions}
	\label{subsec:alternative-Sn}
	
	The class functions we introduced in Section~\ref{sec:symmetric-groups} are efficient at detecting $\ell$-cycles inside $S_\ell$, and are likely the optimal choices of such functions expressible in terms of monomial characters.  We thus expect expect their products to be close to optimal for classes containing large cycles.  However, they are not optimal in the opposite extreme, say for the class of a transposition.  The following lemma introduces a general class of class functions which shows our methods at least suffice to give decaying exponents for such classes.
	
	\begin{lemma}\label{lem:alternate-Sn-choices}
		Let $n \geq 3$, and suppose that $g \in S_n$ may be expressed as a product of $d$ disjoint $\ell$-cycles for some $d \geq 1$ and $\ell \geq 2$.  Set $m := \lfloor \frac{n}{\ell}\rfloor$.  
		
		If $d <m$, then there are class functions $\Psi_+,\Psi_-$ of $S_n$ satisfying the hypotheses of Theorem~\ref{thm:general-approach} whose difference is supported on the class of $g$ and so that
			\[
				\Psi_+(1) = \Psi_-(1) \leq \frac{n!}{2\phi(\ell)^{d/2} \sqrt{m}}.
			\]
		Moreover, $\langle \Psi_+,\sgn\rangle = 1$ if $\sgn(g)=1$ and $\langle \Psi_+,\sgn\rangle = 0$ if $\sgn(g) = -1$.
		
		If $d=m$, then there is a different choice of $\Psi_+$ and $\Psi_-$ as above, except with
			\[
				\Psi_+(1) = \Psi_-(1) \leq \frac{n!}{2d^{\frac{\ell-1}{2}}}.
			\]
		As in the earlier case, $\langle \Psi_+,\sgn\rangle = 1$ if $\sgn(g)=1$ and $\langle \Psi_+,\sgn\rangle = 0$ if $\sgn(g) = -1$.
	\end{lemma}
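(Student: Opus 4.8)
The plan is to reduce both statements to Lemma~\ref{lem:class-functions-basepoint}: it suffices to produce a character $\Psi_0$ of $S_n$ for which $\Psi_0$ and $\Psi_0 + \Delta_g$ both lie in the monomial cone, the trivial character occurs in $\Psi_0$ with multiplicity $0$, the sign character occurs in $\Psi_0$ with multiplicity $\max\{0,-\sgn(g)\}$, and $\Psi_0(1)$ meets the stated bound. Setting $\Psi_- := \Psi_0$ and $\Psi_+ := \Psi_0 + \Delta_g$, one has $\Psi_+(1) = \Psi_-(1) = \Psi_0(1)$ since $\Delta_g(1) = 0$ (as $g\ne 1$), and $\langle\Psi_+,\sgn\rangle = \langle\Psi_0,\sgn\rangle + \langle\Delta_g,\sgn\rangle = \max\{0,-\sgn(g)\} + \sgn(g)$, which is $1$ when $\sgn(g)=1$ and $0$ when $\sgn(g)=-1$, as required.

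To construct $\Psi_0$ I would pass to a subgroup $H\le S_n$ tailored to $g$ and use that induction carries the monomial cone of $H$ into that of $S_n$ while (by Frobenius reciprocity) preserving the multiplicity of the trivial character and, applied to $\mathrm{Res}_H\sgn$, that of $\sgn$. It then suffices to find a character $\delta_0$ of $H$ with $\delta_0$ and $\delta_0 + \Delta_g^H$ in the monomial cone of $H$, where $\Delta_g^H := \mathrm{Ind}_{\langle g\rangle}^H\phi_g$ so that $\mathrm{Ind}_H^{S_n}\Delta_g^H = \Delta_g$; one sets $\Psi_0 := \mathrm{Ind}_H^{S_n}\delta_0$, whence $\Psi_0(1) = [S_n:H]\,\delta_0(1)$. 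The natural choice is to let $\delta_0$ be the negative part of $\sum_{\eta\in\mathrm{Irr}(H)}\langle\Delta_g^H,\eta\rangle\eta$ and $\delta_0+\Delta_g^H$ its positive part; then, since $\Delta_g^H(1)=0$, Cauchy--Schwarz gives $\delta_0(1) = \tfrac12\sum_\eta|\langle\Delta_g^H,\eta\rangle|\,\eta(1)\le\tfrac12\,\|\Delta_g^H\|_2\,|H|^{1/2}$, and the multiplicities of $\mathbf 1_H$ and $\mathrm{Res}_H\sgn$ in $\delta_0$ come out as $0$ and $\max\{0,-\sgn(g)\}$ because $\langle\Delta_g^H,\mathbf 1_H\rangle = 1$ and $\langle\Delta_g^H,\mathrm{Res}_H\sgn\rangle = \sgn(g)$ (the latter by Lemma~\ref{lem:inner-product-when-conjugacy}, since $\mathrm{Res}_H\sgn$ is a linear character constant on the rational class of $g$ in $H$).

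For $d<m$ I would take $H\cong\mathrm{AGL}_1(\ell)\wr C_m$, embedded in $S_{\ell m}\le S_n$ acting trivially on the remaining $n-\ell m$ points, with $g$ the product of the $\ell$-cycles in $d$ of the $m$ blocks. Because each block factor $\mathrm{AGL}_1(\ell) = C_\ell\rtimes(\mathbb{Z}/\ell\mathbb{Z})^\times$ realizes all of $\mathrm{Aut}(C_\ell)$, the rational class of $g$ in $H$ is a single conjugacy class, so $\|\Delta_g^H\|_2^2 = |C_H(g)|$ by column orthogonality; a short computation then gives $|C_H(g)| = \ell^m\phi(\ell)^{m-d}$ and $|H| = (\ell\phi(\ell))^m m$, whence the bound above yields $\Psi_0(1)\le \frac{n!}{2\phi(\ell)^{d/2}\sqrt m}$. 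For $d=m$, where $g$ becomes too central in such a wreath product, I would instead take $H\cong C_d\wr C_\ell$ acting on $d\ell\le n$ points so that a generator $g$ of the top cyclic factor $C_\ell$ is a product of $d$ disjoint $\ell$-cycles. Here conjugation in $H$ never changes the top component, so $g$ is not $H$-conjugate to $g^a$ for $a\not\equiv 1$, the image of $N_H(\langle g\rangle)$ in $\mathrm{Aut}(\langle g\rangle)$ is trivial, and Lemma~\ref{lem:delta-l2} gives $\|\Delta_g^H\|_2^2 = \tfrac{2^{\omega(\ell)}}{\ell}|C_H(g)|$; with $|C_H(g)| = d\ell$ and $|H| = d^\ell\ell$, and using $2^{\omega(\ell)}\le\ell$, one obtains $\Psi_0(1)\le\frac{n!}{2d^{(\ell-1)/2}}$.

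The remaining point — and the one requiring the most care — is to ensure that $\delta_0$ and $\delta_0+\Delta_g^H$ really lie in the monomial cone of $H$. For $d=m$ this is automatic: $C_d\wr C_\ell$ has the abelian normal subgroup $(C_d)^\ell$ with cyclic quotient, hence is an M-group, so every character of $H$ is in its monomial cone. For $d<m$ one must check that $\mathrm{AGL}_1(\ell)\wr C_m$ is an M-group; I expect this follows because $\mathrm{AGL}_1(\ell)$ is metabelian, hence monomial, and the cyclicity of $C_m$ lets the monomial data be extended through it via Clifford theory, but this needs a short argument. Alternatively one can bypass it by building $\delta_0$ and $\delta_0 + \Delta_g^H$ explicitly as non-negative integral combinations of monomial characters, via an inclusion--exclusion over which of the $m$ blocks are ``active'' assembled from per-block monomials detecting an $\ell$-cycle — in direct analogy with the subset-sum construction of $\Psi_{H,\pm}$ in the proof of Theorem~\ref{thm:product-of-cycles-characters} — and then checking the degree of this explicit choice still obeys the stated bound. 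In all cases the support claim, that $\Psi_+-\Psi_- = \Delta_g$ is supported precisely on $C$, follows from Lemma~\ref{lem:class-function-construction} once one identifies the rational class of $g$ in $H$ and notes that its elements have the cycle type of $C$ in $S_n$.
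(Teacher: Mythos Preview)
Your argument is essentially the paper's: for $d<m$ you use $H=F_\ell\wr C_m$ with $F_\ell=C_\ell\rtimes(\mathbb{Z}/\ell\mathbb{Z})^\times$, for $d=m$ you use $H=C_d\wr C_\ell$, you take $\Psi_\pm^H$ to be the positive and negative parts of $\Delta_g^H$, induce to $S_n$, and bound degrees by $\tfrac12|C_H(g)|^{1/2}|H|^{1/2}$ exactly as in Lemma~\ref{lem:conditional-conductor}. Your computation of the centralizers and your treatment of $\langle\Psi_+,\sgn\rangle$ are more explicit than the paper's, and your handling of the $d=m$ case --- observing that the rational class of $g$ in $C_d\wr C_\ell$ is \emph{not} a single conjugacy class, invoking Lemma~\ref{lem:delta-l2} with trivial $A$, and absorbing the factor $2^{\omega(\ell)}/\ell\le 1$ --- is more careful than the paper's bare appeal to Lemma~\ref{lem:conditional-conductor}, though it lands in the same place.

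The one substantive point is the monomiality of $F_\ell\wr C_m$, which you rightly flag. The paper handles this by asserting that $F_\ell\wr C_m$ is supersolvable, but that is false in general: already for $\ell=2$, $m=3$ one gets $C_2\wr C_3$, in which $\Delta\times C_3$ (with $\Delta$ the diagonal of $C_2^3$) is a maximal subgroup of index~$4$, so the group is not supersolvable. Thus the paper's justification is incorrect and you were right to pause here. The gap is shared, however: your Clifford-theory sketch and your ``explicit inclusion--exclusion'' alternative are both plausible routes, but neither is carried out, and a clean general criterion (abelian-by-supersolvable, or Isaacs' Theorem~6.23 with abelian Sylow subgroups in the kernel) does not apply uniformly --- for instance $F_4=D_4$ has nonabelian Sylow $2$-subgroup. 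If you want a self-contained fix, the most direct option is probably to verify monomiality of the specific irreducibles of $F_\ell\wr C_m$ that do not vanish at $g$, by describing them via Clifford theory over the abelian normal subgroup $C_\ell^m$.
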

	\begin{proof}
		Suppose first that $d \ell \leq n-\ell$, and set $m = \lfloor \frac{n}{\ell} \rfloor$.  Let $G = F_\ell \wr C_m$, where $F_\ell = C_\ell \rtimes \mathrm{Aut}(C_\ell)$, and note that we may regard $g$ naturally as an element of $G$.  By our assumption that $d \ell \leq n-\ell$ (or equivalently that $d < m$), we see that $g$ is not central since it does not commute with any nontrivial element of the natural section of $C_m$.  In particular, the centralizer of $g$ in $G$ is isomorphic to $C_\ell^d \times F_\ell^{m-d}$, which has index $m \cdot \phi(\ell)^d$ in $G$, and since $G$ is supersolvable, it is monomial, which yields the claim in this case on appealing to Lemma~\ref{lem:conditional-conductor}.
		
		Now suppose that $d=m$.  In this case, we instead consider $G = C_d \wr C_\ell = C_d^\ell \rtimes C_\ell$ which is also a monomial group.  By considering the natural permutation representation of $G$ of degree $d\ell$, we see that the generator of $C_\ell$ acts as a product of $d$ disjoint $\ell$-cycles, so in particular is conjugate to $g$.  Moreover, the centralizer subgroup of this element is $\Delta \times C_\ell$, where $\Delta < C_d^\ell$ is the diagonal subgroup.  It follows that the index of the centralizer in $G$ is $d^{\ell-1}$, which gives the claim.
	\end{proof}
	
	It is then possible to take products of these class functions, both with others provided by Lemma~\ref{lem:alternate-Sn-choices} and with those provided by Theorem~\ref{thm:product-of-cycles-characters}.
	
	\begin{lemma}\label{lem:product-of-characters}
		Let $n \geq 3$, and let $C$ be a non-identity conjugacy class in $S_n$.  Suppose there is some $m < n$ such that $g = g_1 g_2$, where $g_1 \in S_m$, $g_2 \in S_{n-m}$, with neither $g_1$ nor $g_2$ equal to the identity, and we view the product $g_1g_2$ as living inside $S_m \times S_{n-m} \leq S_n$.  Suppose there are characters $\Psi_{+,1}, \Psi_{-,1}$ of $S_m$ and $\Psi_{+,2}, \Psi_{-,2}$ of $S_{n-m}$ expressible as non-negative integral linear combinations of monomial characters such that $\Psi_{+,i}-\Psi_{-,i}$ is supported on the conjugacy class of $g_i$.  Then if we take $\Psi_{+}$ to be the induction to $S_n$ of $\Psi_{+,1} \times \Psi_{+,2} + \Psi_{-,1} \times \Psi_{-,2}$ and $\Psi_-$ to be the induction of $\Psi_{+,1} \times \Psi_{-,2} + \Psi_{-,1} \times \Psi_{+,2}$, then $\Psi_+$ and $\Psi_-$ are non-negatitve integral linear combinations of monomial characters, and the difference $\Psi_+ - \Psi_-$ is supported on $g$.
		
		Moreover, we have
			\[
				\frac{\Psi_+(1)}{n!} = 2 \cdot \frac{\Psi_{+,1}(1)}{m!} \cdot \frac{\Psi_{+,2}(1)}{(n-m)!}
			\]
		and $\langle \Psi_+,\sgn\rangle = \langle \Psi_{+,1},\sgn\rangle \langle \Psi_{+,2},\sgn\rangle + \langle \Psi_{-,1},\sgn\rangle \langle \Psi_{-,2},\sgn\rangle$.
	\end{lemma}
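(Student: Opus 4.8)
The plan is to deduce all four assertions directly from the definitions of $\Psi_+$ and $\Psi_-$, using only transitivity of induction and its compatibility with external products, namely $\Ind_{H_1}^{G_1}\theta_1 \times \Ind_{H_2}^{G_2}\theta_2 = \Ind_{H_1 \times H_2}^{G_1 \times G_2}(\theta_1 \times \theta_2)$. For the claim that $\Psi_+$ and $\Psi_-$ are non-negative integral combinations of monomial characters, I would first observe that the external product of a monomial character of $S_m$ with a monomial character of $S_{n-m}$ becomes, after induction to $S_n$, again monomial: the external product of two one-dimensional characters is one-dimensional, and by the compatibility above together with transitivity of induction along $H_1 \times H_2 \leq S_m \times S_{n-m} \leq S_n$, the $S_n$-induction of such a product is induced from a one-dimensional character of $H_1 \times H_2$. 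Since $\Psi_{+,i}$ and $\Psi_{-,i}$ are by hypothesis non-negative integral combinations of monomial characters and the external product is bilinear, each of $\Psi_{+,1} \times \Psi_{+,2}$, $\Psi_{-,1} \times \Psi_{-,2}$, $\Psi_{+,1} \times \Psi_{-,2}$, and $\Psi_{-,1} \times \Psi_{+,2}$ is such a combination on $S_m \times S_{n-m}$, and hence so are the inductions $\Psi_+$ and $\Psi_-$.

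For the support claim, the point is the bilinear identity
\[
	(\Psi_{+,1} \times \Psi_{+,2} + \Psi_{-,1} \times \Psi_{-,2}) - (\Psi_{+,1} \times \Psi_{-,2} + \Psi_{-,1} \times \Psi_{+,2}) = (\Psi_{+,1} - \Psi_{-,1}) \times (\Psi_{+,2} - \Psi_{-,2}),
\]
so that $\Psi_+ - \Psi_-$ is the induction to $S_n$ of the right-hand side. Each factor vanishes outside the conjugacy class of $g_i$, so the external product, as a class function on $S_m \times S_{n-m}$, is supported on the pairs whose first coordinate is $S_m$-conjugate to $g_1$ and whose second coordinate is $S_{n-m}$-conjugate to $g_2$. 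Using the standard formula for an induced class function, $\Ind_{H}^{S_n} f$ vanishes at any element of $S_n$ not conjugate to some element of the support of $f$; and every pair of the above type has, as an element of $S_n$, the same cycle type as $g = g_1 g_2$, hence is $S_n$-conjugate to $g$. Therefore $\Psi_+ - \Psi_-$ vanishes off the conjugacy class $C$ of $g$.

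It remains to record the numerical identities. Since $g_i \ne 1$, the difference $\Psi_{+,i} - \Psi_{-,i}$ vanishes at the identity, so $\Psi_{+,i}(1) = \Psi_{-,i}(1)$; combining this with $\dim \Ind_H^{S_n}(\cdot) = [S_n:H]\dim(\cdot)$ and $[S_n : S_m \times S_{n-m}] = n!/(m!(n-m)!)$ gives $\Psi_+(1) = \frac{2\,n!}{m!(n-m)!}\,\Psi_{+,1}(1)\Psi_{+,2}(1)$, which is the stated formula after dividing by $n!$. The sign identity follows from Frobenius reciprocity, the restriction formula $\mathrm{Res}^{S_n}_{S_m \times S_{n-m}}\sgn = \sgn \times \sgn$, and the factorization $\langle \phi_1 \times \phi_2, \theta_1 \times \theta_2\rangle = \langle \phi_1,\theta_1\rangle\langle\phi_2,\theta_2\rangle$ of inner products over external products, applied termwise to the two summands defining $\Psi_+$. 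The argument is entirely formal; the only step deserving a moment's care is the support computation, where one must be sure that an element of $S_m \times S_{n-m}$ whose two blocks carry the cycle types of $g_1$ and $g_2$ respectively is genuinely $S_n$-conjugate to $g$ — which is immediate from the classification of conjugacy classes of $S_n$ by cycle type, but is the one non-mechanical point. (If, as in the intended application, the trivial character has multiplicity $1$ in each $\Psi_{+,i}$ and $0$ in each $\Psi_{-,i}$, the same Frobenius reciprocity argument with $\sgn$ replaced by $\mathbf{1}$ yields $\langle\Psi_+,\mathbf{1}\rangle = 1$ and $\langle\Psi_-,\mathbf{1}\rangle = 0$, so $\Psi_+$ and $\Psi_-$ satisfy the hypotheses of Theorem~\ref{thm:general-approach}.)
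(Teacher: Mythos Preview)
Your proof is correct and is precisely the unpacking the paper has in mind: the paper's own proof is the single line ``This is immediate once unpacked,'' and what you have written is exactly that unpacking, via transitivity of induction, the bilinear factorization of the difference, and Frobenius reciprocity for the degree and sign computations.
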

	\begin{proof}
		This is immediate once unpacked.
	\end{proof}
	
	We give an example of how this may be used to provide Linnik exponents for certain classes $C$ that are superior to those given by Theorem~\ref{thm:Sn-cycle-intro}, Theorem~\ref{thm:optimized-Sn-asymptotic}, and the zero-based \eqref{eqn:LinnikExponent-TZ}.
	
	\begin{example} \rm
		Assume that $n\geq 6$ is even and let $C$ be the conjugacy class of $S_n$ whose elements are a product of a cycle of length $\frac{n}{2}$ and a disjoint cycle of length $2$. Then, a direct implication of Theorem~\ref{thm:product-of-cycles-characters} is 
		\begin{align} \label{direct conseq}
			\alpha(S_n,C) \leq \frac{(n/2-2)2^{n/2-2}+1}{(n/2)!}.
		\end{align} 
		We may also view an element $g=g_1 g_2$ of $C$ as an element of $S_{n/2} \times S_{n/2} < S_n$, where $g_1$ is a cycle of length $\frac{n}{2}$ and $g_2$ is a transposition.  Following the notation in Lemma \ref{lem:product-of-characters}, let $\Psi_{+,1}$ and $\Psi_{-,1}$ be the characters produced by Theorem~\ref{thm:product-of-cycles-characters}, and let $\Psi_{+,2}$ and $\Psi_{-,2}$ be produced by Lemma~\ref{lem:alternate-Sn-choices} . Then $\Psi_{+,1}(1) \leq \left( \frac{n}{2}-2 \right)2^{\frac{n}{2}-2}+1$ by Theorem ~\ref{thm:product-of-cycles-characters}, and $\Psi_{+,2}(1) \leq \frac{\frac{n}{2}!}{2\sqrt{\lfloor n/4 \rfloor}}$ by Lemma \ref{lem:alternate-Sn-choices}. Hence, by Lemma \ref{lem:product-of-characters}, we have $\Psi_{+}(1)\leq \frac{n!\left[(n/2-2)2^{n/2-2}+1 \right]}{(n/2)!\sqrt{\lfloor n/4 \rfloor}}$, which implies that 
		\begin{align*}
			\alpha(S_n,C) \leq \frac{(n/2-2)2^{n/2-2}+1}{(n/2)!\sqrt{\lfloor n/4 \rfloor}},
		\end{align*}
		which improves over $(\ref{direct conseq})$ by a factor of $\sqrt{\lfloor n/4 \rfloor}$. 
	\end{example}
	
\section{Results for other groups}
	\label{sec:other-groups}

	In this section, we provide proofs of Corollary~\ref{cor:unipotent}, Theorem~\ref{thm:simple-examples}, Theorem~\ref{thm:trace-zero}, and Theorem~\ref{thm:semidirect-product}.
	
	\begin{proof}[Proof of Corollary~\ref{cor:unipotent}]
		Recall that, in the context of this corollary, $G = \mathrm{GL}_r(\mathbb{F}_q)$, that $C$ consists of the unipotent elements of $G$ with a single Jordan block, and that we wish to establish the admissibility of the Linnik exponent
			\[
				\alpha(G,C) = \frac{1}{2q^{(r-1)(r-2)/4}}
			\]
		using Theorem~\ref{thm:least-prime-rational-class-artin-intro}.
		
		First, note that $C$ does comprise a single rational equivalence class: all such elements are conjugate in $G$ and have order $p$ (the characteristic prime).  The latter point guarantees also that $g^i \in C$ for each $g \in C$ and $i$ relatively prime to $p$, establishing that $C$ comprises a rational equivalence class.
		
		Now let $H$ be a maximal unipotent subgroup of $G$; explicitly, we take $H$ to consist of the upper triangular matrices with all diagonal entries equal to $1$.  This group has order $q^{\frac{r(r-1)}{2}}$, and in particular is a $p$-group.  Thus, the Artin holomorphy conjecture is known for $H$.  Moreover, $H \cap C$ is nontrivial, since for example it contains the element $g$ with $1$'s on the diagonal and leading subdiagonal, and $0$'s everywhere else.  It is an exercise to check that $C_H(g)$ consists of those matrices in $H$ with each subdiagonal having a constant entry.  There are $q^{r-1}$ such matrices, so Theorem~\ref{thm:least-prime-rational-class-artin-intro} gives
			\[
				\alpha(G,C)
					= \frac{|C_H(g)|^{1/2}}{2|H|^{1/2}}
					= \frac{1}{2q^{(r-1)(r-2)/4}}
			\]
		as claimed.
	\end{proof}
	
	We next prove Theorem~\ref{thm:trace-zero} concerning elements in $\mathrm{GL}_2(\mathbb{F}_q)$ ($q$ an odd prime power) with trace $0$.
	
	\begin{proof}[Proof of Theorem~\ref{thm:trace-zero}]
		Let $G = \mathrm{GL}_2(\mathbb{F}_q)$ and let $B \leq G$ be the subgroup of upper triangular matrices.  Letting $C \subset G$ consist of the matrices with trace $0$, we see that
			\[
				C \cap B = \left\{
				\begin{pmatrix}
					a & * \\ 0 & -a
				\end{pmatrix}
					: a \in \mathbb{F}_q^\times \right\}.
			\]
		The abelian characters of $B$ factor through the quotient to the diagonal subgroup, so are of the form $(\chi_1,\chi_2)$ for characters $\chi_1,\chi_2 \colon \mathbb{F}_q^\times \to \mathbb{C}^\times$; explicitly, we have 
			\[
				(\chi_1,\chi_2)\left(\begin{pmatrix} a & b \\ 0 & d \end{pmatrix}\right)
					= \chi_1(a) \chi_2(d).
			\]
		It is then straightforward to check using orthogonality that
			\[
				\sum_{\chi\colon \mathbb{F}_q^\times \to \mathbb{C}^\times} \chi(-1) \cdot (\chi, \overline{\chi})
					= (q-1) \mathbf{1}_{C \cap B},
			\]
		where $\mathbf{1}_{C \cap B}$ is the indicator function of $C \cap B$.  We thus take
			\[
				\Psi_+ = \sum_{\substack{ \chi\colon \mathbb{F}_q^\times \to \mathbb{C}^\times \\ \chi(-1) = 1}} \mathrm{Ind}_B^G (\chi,\overline{\chi}) 
					\quad \text{and} \quad
				\Psi_- = \sum_{\substack{ \chi\colon \mathbb{F}_q^\times \to \mathbb{C}^\times \\ \chi(-1) = -1}} \mathrm{Ind}_B^G (\chi,\overline{\chi}).
			\]
		We see that $\Psi_+(1) = \Psi_-(1) = \frac{(q-1)(q+1)}{2}$ since $[G:B] = q+1$.  We also observe that $G$ admits a unique quadratic character $\psi$, and that this character $\psi$ factors through the determinant map.  As a result, we see that $\langle \Psi_+,\psi \rangle = 1$ if $-1$ is a square in $\mathbb{F}_q$ and $\langle \Psi_+,\psi\rangle = 0$ if not.
		
		It remains to compute the resulting Linnik exponent.  (Note that the claimed exponent is better by a factor of about $1/2$ than what would be given just by appealing to the degree bound on the conductors of $\Psi_+$ and $\Psi_-$.)  Let $C_0 \subset G$ consist of those elements conjugate to elements of $C \cap B$, and let $K/k$ be a $G$-extension.  Since the antidiagonal characters $(\chi,\overline{\chi})$ form a subgroup of the abelian characters of $B$, they cut out a cyclic degree $q-1$ extension $F/K^B$.  As a result, we have $L(s,\Psi_+)L(s,\Psi_-) = \zeta_F(s)$, so 
			\[
				q(\Psi_+ + \Psi_-)
					= |\mathrm{Disc}(F)|
					\leq |\mathrm{Disc}(K)|^{\frac{1}{q^2-q}},
			\]
		as follows from observing that $[K:F] = \frac{|B|}{q-1} = q^2-q$.  Now, let $\Delta = \Psi_+ - \Psi_-$ and let $\mathfrak{p}$ be a prime of $k$ that is tamely ramified in $K$, and write $I = \langle \sigma \rangle$ for the associated inertia subgroup.  Let $\mu \in \mathbb{F}_{q^2}^\times$ be the ratio of the eigenvalues of $\sigma$, and observe that $\sigma^i$ has trace $0$ if and only if $\mu^i = -1$.  In particular, at most half of the elements of $I$ can lie in $C_0$.  Moreover, for any $g \in C_0$, we have either $\Delta(g) =  2(q-1)$ or $\Delta(g) = q-1$ according to whether or not $g$ is conjugate to a diagonal element of $C \cap B$ (or, more pertinently, whether $g$ fixes $2$ or $1$ line(s) in $\mathbb{F}_q^2$).  All told, we see from \eqref{eqn:local-artin-conductor} that
			\[
				0 
					\geq v_\mathfrak{p}(\mathfrak{f}_\Delta)
					\geq - \frac{1}{|I|} \cdot \frac{|I|}{2} \cdot 2(q-1)
					= -(q-1).					
			\]
		Since the least power of a tamely ramified prime dividing the discriminant of $K$ is $|G|/2$, we conclude that
			\[
				q(\Psi_- - \Psi_+)
					=1/q(\Delta)
					\ll_{G,[k:\mathbb{Q}]} |\mathrm{Disc}(K)|^{\frac{2(q-1)}{|G|}}
					= |\mathrm{Disc}(K)|^{\frac{2}{q^3-q}},
			\]
		and hence that
			\[
				q(\Psi_+) 
					\leq q(\Psi_-)
					\ll_{G,[k:\mathbb{Q}]} |\mathrm{Disc}(K)|^{\frac{1}{2(q^2-q)} + \frac{1}{q^3-q}}.
			\]
		This completes the proof on appealing to Corollary~\ref{cor:general-approach}.
	\end{proof}
	
	We now turn to the proof of Theorem~\ref{thm:semidirect-product} concerning semidirect products.
	
	\begin{proof}[Proof of Theoerm~\ref{thm:semidirect-product}]
		As in the statement of the theorem, let $N = \mathbb{F}_p^r$ for some prime $p$ and some $r \geq 1$.  Let $G_0 \leq \mathrm{GL}_r(\mathbb{F}_p)$ act irreducibly on $N$, and let $G = N \rtimes G_0$.  Let $C = N \setminus \{0\} \subset G$.  We first claim that $C$ comprises a single rational equivalence class: viewing $N$ as a vector space, by the assumption that $G_0$ acts irreducibly on $N$, we see that conjugation by $G_0$ is sufficient to take any element of $C$ onto the line containing any other.  But each non-zero element of a line generates the same cyclic subgroup, so this guarantees rational equivalence.
		
		We now take $\Psi_+ = \mathrm{Ind}_N^G 1$ and $\Psi_- = \mathrm{Ind}_N^G \chi$ where $\chi$ is any cyclic degree $p$ character of $N$.  Since $N$ is normal, these characters are supported on it, and they may differ only at $N \setminus \{0\} = C$.  (And evidently they do, since $\langle \Psi_-,1\rangle = 0$ by Frobenius reciprocity.)  Now letting $K/k$ be a $G$-extension, we have that $L(s,\Psi_+) = \zeta_{K^N}(s)$.  Additionally, for the same reason that $N\setminus \{0\}$ comprises a single rational class, we see that $L(s,\Psi_-) = L(s,\chi)$ where $\chi$ is a cyclic degree $p$ character over $K^N$, and that the conductors of all such characters are the same.  As a result, we find $q(\Psi_+) \leq q(\Psi_-)$ and
			\[
				q(\Psi_+) q(\Psi_-)^{p^r-1} = |\mathrm{Disc}(K)|.
			\]
		This implies that $q(\Psi_-) \leq |\mathrm{Disc}(K)|^{\frac{1}{p^r-1}}$, which yields the theorem on via  Corollary~\ref{cor:general-approach}.
	\end{proof}
	
	Finally, we return to the proof of Theorem~\ref{thm:simple-examples}, which we provide in a more refined form.
	
	\begin{theorem} \label{thm:better-simple-examples}
		Let $G$ be a finite simple group, isomorphic to one of the Mathieu groups $M_{11}$, $M_{12}$, $M_{22}$, $M_{23}$, and $M_{24}$.  If $C \subset G$ is a non-identity rational class, then the Linnik exponent $\alpha(G,C)$ provided in Table~\ref{tbl:simple-exponents-Mathieu} is admissible in \eqref{eqn:LinnikExponent}.  If instead $G$ is isomorphic to one of the finite simple groups $\mathrm{PSL}_2(\mathbb{F}_q)$ with $7 \leq q \leq 23$, then the Linnik exponent $\alpha(G,C)$ provided in Table~\ref{tbl:simple-exponents-PSL2} is admissible in \eqref{eqn:LinnikExponent}.  If $k$ does not admit a quadratic subfield, then the implied constant is effectively computatable for all $\epsilon>0$, while if $k$ does admit a quadratic subfield, it is admissible for $\epsilon > \frac{2}{|G|[k:\mathbb{Q}]}$.
	\end{theorem}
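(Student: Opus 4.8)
The plan is to mimic, essentially verbatim, the computational argument behind Theorem~\ref{thm:small-Sn-bowels}. Fix one of the groups $G$ in the statement and a non-identity rational class $C$ with representative $g$. First I would form the class function $\Delta_g = \mathrm{Ind}_{\langle g\rangle}^G \phi_g$ of Lemma~\ref{lem:class-function-construction}; recall it is supported on $C$, satisfies $\langle \Delta_g,\mathbf{1}_G\rangle = 1$, and pairs integrally with every irreducible character. By Lemma~\ref{lem:class-functions-basepoint} it then suffices to exhibit an ``offset'' character $\Psi_0$ of $G$, of small degree and with no copy of the trivial character, such that both $\Psi_0$ and $\Psi_0 + \Delta_g$ lie in the monomial cone; one then sets $\Psi_- := \Psi_0$ and $\Psi_+ := \Psi_0 + \Delta_g$, which satisfy the hypotheses of Theorem~\ref{thm:general-approach}. (Such a $\Psi_0$ always exists, e.g. by Proposition~\ref{prop:first-construction}, but one wants it efficient.)

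To produce a good $\Psi_0$ I would run the same two linear-programming steps used for the small symmetric groups: compute in Magma \cite{Magma} the character table of $G$ and its list of monomial characters, extract a basis for the monomial cone, and then, for each rational-class representative $g$, solve a mixed integer linear program for a degree-minimal non-negative integral vector $\mathbf{v}_g$ of irreducible multiplicities (with the trivial multiplicity forced to $0$) such that both $\mathbf{v}_g$ and $\mathbf{v}_g + \Delta_g$ lie in the monomial cone. All the groups involved have few conjugacy classes, so this is computationally feasible, and I would make the code and its certificates available as in \cite{github}. A structural point worth recording here is that, each $G$ being non-abelian simple, $G$ admits no nontrivial quadratic character.

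Next I would convert the characters into Linnik exponents exactly as in Theorem~\ref{thm:small-Sn-bowels}. By Lemma~\ref{lem:local-artin-conductor-delta} we have $v_\mathfrak{p}(\mathfrak{f}_{\Delta_g}) \leq 0$ for every prime $\mathfrak{p}$, so $q(\Psi_+) \leq q(\Psi_-)$ and it is enough to bound $q(\Psi_-)$. The wildly ramified primes contribute $O_{G,[k:\mathbb{Q}]}(1)$ to $\log q(\Psi_-)/\log|\mathrm{Disc}(K)|$, while for a tamely ramified prime the only nontrivial ramification group $G_0$ is cyclic, so \eqref{eqn:local-artin-conductor} gives
\[
	\frac{\log q(\Psi_-)}{\log|\mathrm{Disc}(K)|}
		\leq \max\left\{ \frac{\Psi_-(1) - \frac{1}{|I|}\sum_{h\in I}\Psi_-(h)}{|G| - |G|/|I|} : 1 \neq I \leq G \text{ cyclic}\right\} + O_{G,[k:\mathbb{Q}]}(1),
\]
exactly as in \eqref{eqn:tame-conductor-bound}. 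Evaluating this maximum in Magma for the computed $\Psi_-$ and halving it gives the value $\alpha(G,C)$ to be recorded in Tables~\ref{tbl:simple-exponents-Mathieu} and \ref{tbl:simple-exponents-PSL2}, and Corollary~\ref{cor:general-approach} then yields the asserted bound on $\N\mathfrak{p}$. For effectivity, Corollary~\ref{cor:general-approach} is effective for $\epsilon > 2\nu(\Psi_+)/(|G|[k:\mathbb{Q}])$, where $\nu(\Psi_+)$ counts exceptional trivial-or-quadratic constituents of $\Psi_+$; since $G$ has no nontrivial quadratic character, Stark's work \cite{Stark-EffectiveBrauerSiegel} forces any exceptional character to be the trivial one, which can occur only when $k$ has a quadratic subfield, and in that case $\nu(\Psi_+) \leq \langle \Psi_+,\mathbf{1}_G\rangle = 1$. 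This yields effectivity for all $\epsilon>0$ when $k$ has no quadratic subfield, and for $\epsilon > 2/(|G|[k:\mathbb{Q}])$ otherwise.

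The argument has no deep step: it is a finite computation, resting entirely on results already in hand. The main obstacle is bookkeeping --- organizing the monomial-cone basis and the integer programs for groups as large as $M_{24}$ or $\mathrm{PSL}_2(\mathbb{F}_{23})$, and certifying that the output vectors $\mathbf{v}_g$ and $\mathbf{v}_g + \Delta_g$ genuinely are non-negative integral combinations of monomial characters. A secondary subtlety is that minimizing the degree of $\Psi_0$ is only a proxy for minimizing the conductor exponent displayed above; should the degree-minimal choice fail to reproduce the tabulated value for some class, one would instead run a second optimization targeting that conductor exponent directly.
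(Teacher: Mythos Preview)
Your proposal is correct and follows essentially the same approach as the paper, which simply states that the result ``follows from a computation analogous to Theorem~\ref{thm:small-Sn-bowels}'' with code available at \cite{github}. You have fleshed out the details of that computation (the monomial-cone linear programs, the tame-conductor bound \eqref{eqn:tame-conductor-bound}, and the effectivity argument via the absence of nontrivial quadratic characters in a non-abelian simple group) more explicitly than the paper does, but the underlying method is identical.
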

	\begin{proof}
		This follows from a computation analogous to Theorem~\ref{thm:small-Sn-bowels}.  The code is available on the Github repository associated with this project \cite{github}.
	\end{proof}

\clearpage
\appendix

\section{Linnik exponents for symmetric groups $S_n$ with $7 \leq n \leq 10$}
	\label{sec:appendix-Sn}

	{\tiny
	\begin{table}[h]
		\begin{minipage}[t]{0.45\textwidth}
		\begin{tabular}{|l|l|c|c|} \hline
			$n$ & $C$ & $\alpha(S_n,C)$ & $\epsilon_\mathrm{eff}^\mathbb{Q}(C)$ \\ \hline
				$7$ & $[(12)]$ & $17/252$ & $0$ \\
					& $[(12)(34)]$ & $29/1260$ & $1/2520$ \\
					& $[(12)(34)(56)]$ & $5/144$ & $1/2520$ \\
					& $[(123)]$ & $143/3360$ & $1/1260$ \\
					& $[(123)(456)]$ & $113/5040$ & $1/630$ \\
					& $[(1234)]$ & $113/5040$ & $1/1260$ \\
					& $[(1234)(56)]$ & $1/72$ & $1/2520$ \\
					& $[(12345)]$ & $131/5040$ & $1/630$ \\
					& $[(123456)]$ & $13/720$ & $0$ \\
					& $[(123)(45)]$ & $1/72$ & $0$ \\
					& $[(123)(45)(67)]$ & $13/315$ & $1/840$ \\
					& $[(1234567)]$ & $2/105$ & $1/420$ \\
					& $[(12345)(67)]$ & $59/1680$ & $1/840$ \\
					& $[(1234)(567)]$ &$181/5040$ & $1/2520$ \\
					\hline
				$8$ & $[(12)]$ & $331/5760$ & $1/3360$ \\
					& $[(12)(34)]$ & $137/2730$ & $1/10080$ \\
					& $[(12)(34)(56)]$ & $89/6720$ & $1/20160$ \\
					& $[(12)(34)(56)(78)]$ & $131/5040$ & $1/20160$ \\
					& $[(123)]$ & $1129/40320$ & $1/4032$ \\
					& $[(123)(456)]$ & $89/13440$ & $1/20160$ \\
					& $[(1234)]$ & $5/288$ & $1/6720$ \\
					& $[(1234)(56)]$ & $43/5040$& $1/10080$ \\
					& $[(1234)(56)(78)]$ & $11/1260$ & $0$ \\
					& $[(1234)(5678)]$ & $19/1890$ & $1/20160$ \\
					& $[(12345)]$ & $587/40320$ & $1/6720$ \\
					& $[(123456)]$ & $37/4480$ & $1/5040$ \\
					& $[(123456)(78)]$ & $1/384$ & $1/20160$ \\
					& $[(123)(45)]$ & $53/4480$ & $1/6720$ \\
					& $[(123)(45)(67)]$ & $187/20160$ & $1/6720$ \\
					& $[(123)(456)(78)]$ & $187/13440$ & $1/3360$ \\
					& $[(1234567)]$ & $373/40320$ & $0$ \\
					& $[(12345678)]$ & $1/180$ & $1/3360$ \\
					& $[(12345)(67)]$ & $5/504$ & $1/6720$ \\
					& $[(1234)(567)]$ & $113/13440$ & $1/6720$ \\
					& $[(12345)(678)]$ & $527/40320$ & $1/3360$ \\
					\hline
		\end{tabular}
		\end{minipage}
		\begin{minipage}[t]{0.45\textwidth}
		\begin{tabular}{|l|l|c|c|} \hline
			$n$ & $C$ & $\alpha(S_n,C)$ & $\epsilon_\mathrm{eff}^\mathbb{Q}(C)$ \\ \hline
				$9$ & $[(12)]$ & $ 17819/362880$ & $11/181440$ \\
					& $[(12)(34)]$ & $1349/90720 $ & $1/30240 $\\
					& $[(12)(34)(56)]$ & $3187/362880 $ & $1/90720 $\\
					& $[(12)(34)(56)(78)]$ & $479/45360 $ & $1/20160 $\\
					& $[(123)]$ & $ 2971/120960$ & $ 1/60480$\\
					& $[(123)(456)]$ & $ 2113/362880 $ & $ 1/22680$\\
					& $[(123)(456)(789)]$ & $107/12960 $ & $ 1/20160$\\
					& $[(1234)]$ & $ 2449/181440$ & $1/25920 $\\
					& $[(1234)(56)]$ & $ 373/72576$ & $1/45360 $\\
					& $[(1234)(56)(78)]$ & $ 211/68040$ & $0$\\
					& $[(1234)(5678)]$ & $ 929/181440 $ & $1/45360 $\\
					& $[(12345)]$ & $ 859/90720$ & $1/20160 $\\
					& $[(123456)]$ & $ 149/30240$ & $1/36288 $\\
					& $[(123456)(78)]$ & $ 157/90720$ & $1/30240 $\\
					& $[(123456)(789)]$ & $ 1/648$ & $ 0$\\
					& $[(123)(45)]$ & $  461/60480$ & $ 1/60480$\\
					& $[(123)(45)(67)]$ & $241/45360 $ & $1/36288 $\\
					& $[(123)(45)(67)(89)]$ & $67/5670 $ & $ 1/20160$\\
					& $[(123)(456)(78)]$ & $ 943/181440$ & $1/36288 $\\
					& $[(1234567)]$ & $ 5/1008$ & $ 1/45360$\\
					& $[(12345678)]$ & $ 67/18144$ & $1/60480 $ \\
					& $[(123456789)]$ & $ 4/2835$ & $1/22680 $ \\
					& $[(12345)(67)]$ & $ 373/90720$ & $1/181440 $ \\
					& $[(12345)(67)(89)]$ & $3067/362880 $ & $ 1/25920$\\
					& $[(1234)(567)]$ & $ 389/90720$ & $1/45360 $ \\
					& $[(1234)(567)(89)]$ & $ 377/72576$ & $1/60480 $ \\
					& $[(1234567)(89)]$ & $ 607/90720$ & $11/181440 $ \\
					& $[(12345)(678)]$ & $ 41/8064$ & $1/25920 $ \\
					& $[(12345)(6789)]$ & $ 85/12096$ & $1/36288 $ \\
					\hline
		\end{tabular}
		\end{minipage}
		\begin{caption}{\label{tbl:small-Sn-appendix} Linnik exponents in the sense of \eqref{eqn:LinnikExponent} for non-identity classes in $S_n$, $7 \leq n \leq 9$, along with the effective range of $\epsilon$.  See Theorem~\ref{thm:small-Sn-intro}.}\end{caption}
	\end{table}}
	
	{\tiny
	\begin{table}[h]
		\begin{minipage}[t]{0.5\textwidth}
		\begin{tabular}{|l|l|c|c|} \hline
			$n$ & $C$ & $\alpha(S_n,C)$ & $\epsilon_\mathrm{eff}^\mathbb{Q}(C)$ \\ \hline
			$10$& $[(1 2)]$ & $78941/1814400$ & $1/181440$ \\
				& $[(1 2)(3 4)]$ & $20681/1814400$ & $1/151200$ \\
				& $[(1 2)(3 4)(5 6)(7 8)(9 \,10)]$ & $2123/201600$ & $19/1814400$ \\
				& $[(1 2)(3 4)(5 6)]$ & $541/103680$ & $1/362880$ \\
				& $[(1 2)(3 4)(5 6)(7 8)]$ & $337/57600$ & $11/1814400$ \\
				& $[(1 2 3)]$ & $13753/604800$ & $1/201600$ \\
				& $[(1 2 3)(4 5 6)]$ & $187/57600$ & $1/181440$ \\
				& $[(1 2 3)(4 5 6)(7 8 9)]$ & $1111/259200$ & $1/453600$ \\
				& $[(1 2 3 4)]$ & $419/40320$ & $1/129600$ \\
				& $[(1 2 3 4)(5 6)(7 8)(9 \,10)]$ & $329/129600$ & $1/302400$ \\
				& $[(1 2 3 4)(5 6)]$ & $859/259200$ & $1/453600$ \\
				& $[(1 2 3 4)(5 6 7 8)(9 \,10)]$ & $2209/907200$ & $1/604800$ \\
				& $[(1 2 3 4)(5 6 7 8)]$ & $503/259200$ & $1/453600$ \\
				& $[(1 2 3 4)(5 6)(7 8)]$ & $7979/3628800$ & $1/604800$ \\
				& $[(1 2 3 4 5)]$ & $21283/3628800$ & $1/201600$ \\
				& $[(1 2 3 4 5)(6 7 8 9 \,10)]$ & $239/113400$ & $1/259200$ \\
				& $[(1 2 3)(4 5)]$ & $257/45360$ & $1/201600$ \\
				& $[(1 2 3 4 5 6)]$ & $11509/3628800$ & $1/259200$ \\
				& $[(1 2 3)(4 5 6)(7 8)(9 \,10)]$ & $14179/3628800$ & $1/86400$ \\
				& $[(1 2 3)(4 5)(6 7)(8 9)]$ & $5753/1814400$ & $1/201600$ \\
				& $[(1 2 3)(4 5)(6 7)]$ & $4457/1814400$ & $1/362880$ \\
			\hline
		\end{tabular}
		\end{minipage}
		\begin{minipage}[t]{0.49\textwidth}
		\begin{tabular}{|l|l|c|c|} \hline
			$n$ & $C$ & $\alpha(S_n,C)$ & $\epsilon_\mathrm{eff}^\mathbb{Q}(C)$ \\ \hline
			$10$& $[(1 2 3)(4 5 6)(7 8)]$ & $659/403200$ & $0$ \\
				& $[(1 2 3 4 5 6)(7 8)(9 \,10)]$ & $1081/604800$ & $1/362880$ \\
				& $[(1 2 3 4 5 6)(7 8)]$ & $361/317520$ & $1/1814400$ \\
				& $[(1 2 3 4 5 6)(7 8 9)]$ & $631/453600$ & $1/907200$ \\
				& $[(1 2 3 4 5 6 7)]$ & $4237/1814400$ & $1/453600$ \\
				& $[(1 2 3 4 5 6 7 8)(9 10)]$ & $1/800$ & $1/1814400$ \\
				& $[(1 2 3 4 5 6 7 8)]$ & $2137/1209600$ & $1/453600$ \\
				& $[(1 2 3 4 5 6 7 8 9)]$ & $4993/3628800$ & $1/226800$ \\
				& $[(1 2 3 4 5)(6 7)]$ & $773/302400$ & $1/453600$ \\
				& $[(1 2 3 4 5)(6 7)(8 9)]$ & $2063/907200$ & $1/362880$ \\
				& $[(1 2 3 4 5 6 7 8 9 \,10)]$ & $1/3150$ & $1/226800$ \\
				& $[(1 2 3 4)(5 6 7)(8 9 \,10)]$ & $449/120960$ & $1/113400$ \\
				& $[(1 2 3 4)(5 6 7)]$ & $631/302400$ & $1/302400$ \\
				& $[(1 2 3 4 5 6)(7 8 9 \,10)]$ & $823/403200$ & $1/302400$ \\
				& $[(1 2 3 4)(5 6 7)(8 9)]$ & $47/33600$ & $1/453600$ \\
				& $[(1 2 3 4 5 6 7)(8 9)]$ & $3151/1814400$ & $1/302400$ \\
				& $[(1 2 3 4 5)(6 7 8)]$ & $6547/3628800$ & $1/362880$ \\
				& $[(1 2 3 4 5)(6 7 8 9)]$ & $83/45360$ & $1/604800$ \\
				& $[(1 2 3 4 5 6 7)(8 9 \,10)]$ & $491/259200$ & $1/201600$ \\
				& $[(1 2 3 4 5)(6 7 8)(9 \,10)]$ & $9481/3628800$ & $1/181440$ \\
			\hline
		\end{tabular}
		\end{minipage}
		
		\begin{caption}{\label{tbl:S10-appendix} Linnik exponents in the sense of \eqref{eqn:LinnikExponent} for non-identity classes in $S_{10}$.}\end{caption}
	\end{table}
	}

\clearpage

\section{Linnik exponents for some simple groups}

	{\tiny
	\begin{table}[h]
		\begin{minipage}[t]{0.49\textwidth}
		\begin{tabular}{|l|l|c|c|} \hline
			$G$ & Cycle type & $\#$ Conj. & $\alpha(G,C)$ \\ \hline
			$M_{11}$ 
				& $(2)(2)(2)(2)$ & $1$ & $1/45$ \\
				& $(3)(3)(3)$ & $1$ & $37/880$ \\
				& $(4)(4)$ & $1$ & $5/1188$ \\
				& $(5)(5)$ & $1$ & $29/3168$ \\
				& $(6)(3)(2)$ & $1$ & $7/495$ \\
				& $(8)(2)$ & $2$ & $29/6930$ \\
				& $(11)$ & $2$ & ${131/2880}$ \\ \hline
			$M_{12}$
				& $(2)(2)(2)(2)$ & $1$ & $1/45$ \\
				& $(3)(3)(3)$ & $1$ & $37/880$ \\
				& $(4)(4)(2)(2)$ & $1$ & $5/1188$ \\
				& $(5)(5)$ & $1$ & $29/3168$ \\
				& $(6)(3)(2)$ & $1$ & $7/495$ \\
				& $(8)(4)$ & $2$ & $29/6930$ \\
				& $(11)$ & $2$ & ${131/2880}$ \\ \hline
			$M_{22}$
				& $(2)(2)(2)(2)(2)(2)(2)(2)$ & $1$ & $73/6160$ \\
				& $(3)(3)(3)(3)(3)(3)$ & $1$ & $129/49280$ \\
				& $(4)(4)(4)(4)(2)(2)a$ & $1$ & $335/66528$ \\
				& $(4)(4)(4)(4)(2)(2)b$ & $1$ & $13/2376$ \\
				& $(5)(5)(5)(5)$ & $1$ & $1/384$ \\
				& $(6)(6)(3)(3)(2)(2)$ & $1$ & $11/3840$ \\
				& $(7)(7)(7)$ & $2$ & $2263/253440$ \\
				& $(8)(8)(4)(2)$ & $1$ & $391/77616$ \\
				& $(11)(11)$ & $2$ & ${7331/161280}$ \\ \hline
		\end{tabular}
		\end{minipage}
		\begin{minipage}[t]{0.49\textwidth}
		\begin{tabular}{|l|l|c|c|} \hline
			$G$ & Cycle type & $\#$ Conj. & $\alpha(G,C)$ \\ \hline
			$M_{23}$ 
				& $(2)(2)(2)(2)(2)(2)(2)(2)$ & $1$ & $2059/182160$ \\
				& $(3)(3)(3)(3)(3)(3)$ & $1$ & $13/6720$ \\
				& $(4)(4)(4)(4)(2)(2)$ & $1$ & $127/36432$ \\
				& $(5)(5)(5)(5)$& $1$ & $1/1792$ \\
				& $(6)(6)(3)(3)(2)(2)$ & $1$ & $11/11520$ \\
				& $(7)(7)(7)$ & $2$ & $48287/5829120$ \\
				& $(8)(8)(4)(2)$ & $1$ & $133/36432$ \\
				& $(11)(11)$ & $2$ & $7331/3709440$ \\
				& $(14)(7)(2)$ & $2$ & $629/80640$ \\
				& $(15)(5)(3)$ & $2$ & $29/11520$ \\
				& $(23)$ & $2$ & $38567/1774080$ \\ \hline
			$M_{24}$
				& $(2)(2)(2)(2)(2)(2)(2)(2)(2)(2)(2)(2)$ & $1$ & $250049/61205760$ \\
				& $(2)(2)(2)(2)(2)(2)(2)(2)$ & $1$ & $163/28336$ \\
				& $(3)(3)(3)(3)(3)(3)(3)(3)$ & $1$ & $5003/3400320$ \\
				& $(3)(3)(3)(3)(3)(3)$ & $1$ & $39643/36723456$ \\
				& $(4)(4)(4)(4)(4)(4)$ & $1$ & $ 3175/3060288$ \\
				& $(4)(4)(4)(4)(2)(2)(2)(2)$ & $1$ & $ 22555/18361728$ \\
				& $(4)(4)(4)(4)(2)(2)$ & $1$ & $ 7547/7650720$ \\
				& $(5)(5)(5)(5)$ & $1$ & $2627/6800640$ \\
				& $(6)(6)(6)(6)$ & $1$ & $1075/2623104$ \\
				& $(6)(6)(3)(3)(2)(2)$ & $1$ & $9011/30602880$ \\
				& $(7)(7)(7)$ & $2$ & $51277/46632960$ \\
				& $(8)(8)(4)(2)$ & $1$ & $857/1064448$ \\
				& $(10)(10)(2)(2)$ & $1$ & $25/61824$ \\
				& $(11)(11)$ & $1$ & $95/96768$ \\
				& $(12)(12)$ & $1$ & $1709/3400320$ \\
				& $(12)(6)(4)(2)$ & $1$ & $9619/30602880$ \\
				& $(14)(7)(2)$ & $2$ & $233/645120$ \\
				& $(15)(5)(3)$ & $2$ & $1187/2128896$ \\
				& $(21)(3)$ & $2$ & $233/380160$ \\
				& $(23)$ & $2$ & $925607/42577920$ \\ \hline
		\end{tabular}
		\end{minipage}
		\begin{caption}{\label{tbl:simple-exponents-Mathieu} Linnik exponents in the sense of \eqref{eqn:LinnikExponent} for the Mathieu groups $M_n$.  The cycle type of the class in the degree $n$ permutation representation, and typically determines the rational equivalence class.  In the event that it doesn't, we add the label $a$, $b$, etc.  The column ``$\#$ Conj.'' indicates the number of conjugacy classes comprising the rational class.}\end{caption}
	\end{table}
	}
	
	{\tiny
	\begin{table}[h]
		\begin{minipage}[t]{0.45\textwidth}
		\begin{tabular}{|l|l|c|c|} \hline
			$G$ & Cycle type & $\#$ Conj. & $\alpha(G,C)$ \\ \hline
			$\mathrm{PSL}_2(\mathbb{F}_7)$ 
				& $(2)(2)(2)(2)$ & $1$ & $1/14$ \\
				& $(3)(3)$ & $1$ & $3/112$ \\
				& $(4)(4)$ & $1$ & $1/14$ \\
				& $(7)$ & $2$ & $7/96$ \\ \hline
			$\mathrm{PSL}_2(\mathbb{F}_8)$
				& $(2)(2)(2)(2)$ & $1$ & $2/63$ \\
				& $(3)(3)(3)$ & $1$ & $19/112$ \\
				& $(7)$ & $3$ & $1/108$ \\
				& $(9)$ & $3$ & $25/448$ \\ \hline
			$\mathrm{PSL}_2(\mathbb{F}_9)$
				& $(2)(2)(2)(2)$ & $1$ & $1/30$ \\
				& $(3)(3)(3)a$ & $1$ & $11/240$ \\
				& $(3)(3)(3)b$ & $1$ & $11/240$ \\
				& $(4)(4)$ & $1$ & $2/135$ \\
				& $(5)(5)$ & $2$ & $29/288$ \\ \hline
			$\mathrm{PSL}_2(\mathbb{F}_{11})$
				& $(2)(2)(2)(2)(2)(2)$ & $1$ & $1/22$ \\
				& $(3)(3)(3)(3)$ & $1$ & $19/440$ \\
				& $(5)(5)$ & $2$ & $5/528$ \\
				& $(6)(6)$ & $1$ & $7/55$ \\
				& $(11)$ & $2$ & $11/240$ \\ \hline
			$\mathrm{PSL}_2(\mathbb{F}_{13})$
				& $(2)(2)(2)(2)(2)(2)$ & $1$ & $2/91$ \\
				& $(3)(3)(3)(3)$ & $1$ & $5/364$ \\
				& $(6)(6)$ & $1$ & $6/455$ \\
				& $(7)(7)$ & $3$ & $67/936$ \\
				& $(13)$ & $2$ & $13/336$ \\ \hline
		\end{tabular}
		\end{minipage}
		\begin{minipage}[t]{0.45\textwidth}
		\begin{tabular}{|l|l|c|c|} \hline
			$G$ & Cycle type & $\#$ Conj. & $\alpha(G,C)$ \\ \hline
			$\mathrm{PSL}_{2}(\mathbb{F}_{16})$ 
				& $(2)(2)(2)(2)(2)(2)(2)(2)$ & $1$ & $4/255$ \\
				& $(3)(3)(3)(3)(3)$ & $1$ & $3/272$ \\
				& $(5)(5)(5)$ & $2$ & $7/1088$ \\
				& $(15)$ & $4$ & $1/238$ \\
				& $(17)$ & $8$ & $113/3840$ \\ \hline
			$\mathrm{PSL}_2(\mathbb{F}_{17})$
				& $(2)(2)(2)(2)(2)(2)(2)(2)$ & $1$ & $5/306$ \\
				& $(3)(3)(3)(3)(3)(3)$ & $1$ & $23/544$ \\
				& $(4)(4)(4)(4)$ & $1$ & $7/918$ \\
				& $(8)(8)$ & $2$ & $4/1071$ \\
				& $(9)(9)$ & $3$ & $121/2176$ \\
				& $(17)$ & $2$ & $17/576$ \\ \hline
			$\mathrm{PSL}_2(\mathbb{F}_{19})$
				& $(2)(2)(2)(2)(2)(2)(2)(2)(2)(2)$ & $1$ & $1/38$ \\
				& $(3)(3)(3)(3)(3)(3)$ & $1$ & $7/760$ \\
				& $(5)(5)(5)(5)$ & $2$ & $137/1368$ \\
				& $(9)(9)$ & $3$ & $9/3040$ \\
				& $(10)(10)$ & $2$ & $154/1539$ \\
				& $(19)$ & $2$ & $19/720$ \\ \hline
			$\mathrm{PSL}_2(\mathbb{F}_{23})$ 
				& $(2)(2)(2)(2)(2)(2)(2)(2)(2)(2)(2)(2)$ & $1$ & $1/46$ \\
				& $(3)(3)(3)(3)(3)(3)(3)(3)$ & $1$ & $85/2024$ \\
				& $(4)(4)(4)(4)(4)(4)$ & $1$ & $127/1518$ \\
				& $(6)(6)(6)(6)$ & $1$ & $127/1012$ \\
				& $(11)(11)$ & $5$ & $11/5520$ \\
				& $(12)(12)$ & $2$ & $232/2783$ \\
				& $(23)$ & $2$ & $23/1056$ \\ \hline
		\end{tabular}
		\end{minipage}
		\begin{caption}{\label{tbl:simple-exponents-PSL2} Linnik exponents in the sense of \eqref{eqn:LinnikExponent} for linear groups $\mathrm{PSL}_2(\mathbb{F}_q)$.  The cycle type of the class in the degree $q+1$ permutation representation, and typically determines the rational equivalence class.  In the event that it doesn't, we add the label $a$, $b$, etc.  The column ``$\#$ Conj.'' indicates the number of conjugacy classes comprising the rational class.}\end{caption}
	\end{table}
	}
	
\clearpage

\bibliographystyle{alpha}
\bibliography{references}

\end{document}